\documentclass[11pt,reqno]{amsart}

\usepackage{geometry}
\usepackage{amssymb}
\usepackage{mathtools}
\usepackage{booktabs}
\usepackage{placeins}
\usepackage[final,protrusion=true,expansion=true]{microtype}
\usepackage[dvipsnames]{xcolor}
\usepackage[colorlinks=true,
            citecolor=red,
            linkcolor=blue,
            urlcolor=red,
            bookmarksnumbered=true,
            bookmarksopen=true]{hyperref}
\hypersetup{
  pdftitle={Disproof of the Yau--Tian--Donaldson conjecture},
  pdfauthor={Jihao Liu},
  pdfkeywords={Yau--Tian--Donaldson conjecture; K-polystability; uniform K-stability; cscK metric; test configuration; projective bundle}
}

\numberwithin{equation}{section}
\DeclareMathOperator{\Aut}{Aut}
\DeclareMathOperator{\DF}{DF}
\DeclareMathOperator{\End}{End}
\DeclareMathOperator{\Hom}{Hom}
\DeclareMathOperator{\Pic}{Pic}
\DeclareMathOperator{\Scal}{Scal}
\DeclareMathOperator{\Var}{Var}
\newcommand{\Spec}{\operatorname{Spec}}
\newcommand{\vol}{\operatorname{vol}}

\newtheorem{thm}{Theorem}[section]
\newtheorem{alphthm}{Theorem}

\newtheorem{conj}[thm]{Conjecture}
\newtheorem{cor}[thm]{Corollary}
\newtheorem{lem}[thm]{Lemma}
\newtheorem{prop}[thm]{Proposition}

\theoremstyle{definition}
\newtheorem{cons}[thm]{Construction}
\newtheorem{defn}[thm]{Definition}
\newtheorem{deflem}[thm]{Definition-Lemma}
\newtheorem{nota}[thm]{Notation}
\newtheorem{rem}[thm]{Remark}
\newtheorem{setup}[thm]{Set-up}

\title{Disproof of the Yau--Tian--Donaldson conjecture}
\author{Jihao Liu}
\address{Department of Mathematics, Peking University, No. 5 Yiheyuan Road, Haidian District, Beijing 100871, China}
\address{Beijing International Center for Mathematical Research, Peking University, No. 5 Yiheyuan Road, Haidian District, Beijing 100871, China}
\email{liujihao@math.pku.edu.cn}
\subjclass[2020]{53C55, 14L24, 32Q20}
\keywords{Yau--Tian--Donaldson conjecture, K-polystability, uniform K-stability, constant scalar curvature K\"ahler metric, test configuration, projective bundle}
\date{\today}

\begin{document}

\begin{abstract}
We construct a polarized smooth projective fivefold and prove that it is K-polystable but
does not admit a constant scalar curvature K\"ahler metric.  This disproves the
Yau--Tian--Donaldson conjecture for constant scalar curvature metrics. 

The main result of this paper was obtained using generative AI, particularly GPT-5.6-sol, Fable 5, and the Danus system. A detailed report on the use of generative AI in this paper is enclosed in the appendix, joint with Bin Dong and Guoxiong Gao.
\end{abstract}

\maketitle

\tableofcontents

\section{Introduction}\label{sec:introduction}

\subsection{Background}

Calabi's program asks for an extremal K\"ahler metric in a prescribed K\"ahler
class \cite{Cal82,Cal85}.  The K-energy of Mabuchi and the moment-map
interpretation of scalar curvature due to Fujiki and Donaldson gave analytic
form to the expectation that the existence problem should be governed by an
algebro-geometric stability condition \cite{Mab86,Fuj92,Don97,Don99}.
The obstructions of Matsushima and Futaki are recorded in
\cite{Mat57,Fut83}.  In the setting where the first Chern class is positive,
the expectation that K\"ahler--Einstein existence should be governed by algebraic
stability goes back to Yau \cite{Yau93}.  Building on the
generalized Futaki invariant of Ding and Tian \cite{DT92}, Tian introduced
K-stability for special degenerations of Fano manifolds, proved the necessity
direction, and formulated the K\"ahler--Einstein conjecture
\cite[Definition~1.1, Theorem~1.2, and Conjecture~1.4]{Tia97}.

Donaldson introduced test configurations of arbitrary positive exponent for
general polarized varieties \cite[Definition~2.1.1]{Don02}.  He conjectured
that a smooth polarized variety admits a constant scalar curvature K\"ahler
(cscK) metric precisely when it satisfies the resulting stability condition
\cite[p.~290]{Don02}.  His balanced-embedding
theorem had already proved a finite-dimensional consequence in geometric
invariant theory (GIT) of cscK
existence when the automorphism group is discrete
\cite[Corollary~4]{Don01}.  In the terminology now in use, Donaldson's
product-equality clause \cite[Definition~2.1.2]{Don02} is a polystability
condition.  We state it in the form relevant to this paper.

\begin{conj}[{\cite[Conjecture]{Don02}}, Yau--Tian--Donaldson conjecture]\label{conj:ytd-sufficiency}
Let $(Y,H)$ be a smooth polarized projective complex variety. Then $(Y,H)$ is
K-polystable with respect to all normal ample algebraic test configurations
of every positive exponent if and only if $c_1(H)$ contains a constant scalar curvature K\"ahler metric.
\end{conj}
\begin{rem}\label{rem:definition-k-polystability}
The normality convention in Conjecture~\ref{conj:ytd-sufficiency} is the
standard repair of Donaldson's original scheme-theoretic formulation; otherwise, the conjecture would be trivially false. See Corollay \ref{cor:literal-donaldson} below. We recall the history of this repair here.

Ross
and Thomas developed the slope and Hilbert--Mumford viewpoints and clarified
the distinction between product and trivial configurations
\cite{RT06,RT07}.  Li and Xu exhibited a nonproduct zero-invariant
configuration with an embedded point
\cite[Section~8.2, especially Example~4 and Remark~4]{LX14}.  Stoppa called a
test configuration trivial in codimension two when it is equivariantly a
product away from a closed subscheme of codimension at least two, and modified
the equality condition accordingly \cite[Definitions~1--2]{Sto11}.  Odaka
used the term almost trivial for the corresponding codimension-one condition
\cite[Definitions~3.3--3.4]{Oda15}.  Boucksom--Hisamoto--Jonsson later used
almost trivial to mean that the normalization is trivial, and characterized
this condition by vanishing of the $L^p$-norm
\cite[Corollary~B]{BHJ17}.  These conditions are not equivalent in general:
triviality in codimension one implies almost triviality in the normalization
sense, but the converse can fail
\cite[Definition~2.36, Example~2.37, and Theorem~2.38(iii)]{BJ22}.
Stoppa observed that his repaired K-stability condition can equivalently be
tested on normal test configurations, excluding only the trivial
configuration \cite[p.~2]{Sto11}.  Li and Xu likewise formulated K-stability
using normal test configurations
\cite[Definition~6 and Remark~2(2)]{LX14}. 

We use the K-polystability convention of Codogni--Stoppa, restricting to normal total spaces, \cite[Definition~18]{CS19}. We remark that Codogni and Stoppa conjectured that, for every reductive subgroup $G\subset\Aut(Y,H)$, K-polystability is equivalent to nonnegativity of the Donaldson--Futaki invariant on every $G$-equivariant test configuration, with equality if and only if its normalization is a product \cite[Conjecture~1]{CS19}.
\end{rem}

\subsection{History of the Yau--Tian--Donaldson conjecture} Several parts and variations of the Yau--Tian--Donaldson conjecture (Conjecture~\ref{conj:ytd-sufficiency}) are theorems. In this subsection we briefly recall the historical progress towards this conjecture.

\medskip

\noindent\textbf{The necessity part.} On the
necessity side, Donaldson proved K-semistability
\cite[Theorem~1]{Don05b}, and Stoppa proved K-stability when the automorphism
group is discrete \cite[Theorem~1.2]{Sto09}, as corrected in
\cite[p.~1]{Sto11}.
For normal test configurations, Mabuchi gave the first general proof of
K-polystability \cite[Main Theorem]{Mab08}; a missing step in that preprint was
supplied in \cite[p.~2]{Mab09}.  Berman--Darvas--Lu later gave a different
proof \cite{BDL20}.  Sz\'ekelyhidi introduced relative
K-stability \cite[Definition~2.2]{Sze07} and formulated the extremal
correspondence \cite[Conjecture~1.1]{Sze07}, while Stoppa--Sz\'ekelyhidi
proved its necessity direction \cite[Theorem~4]{SS11}.  K\"ahler and
transcendental extensions were developed in \cite{DR17b,SD18,SD20}.

\medskip

\noindent\textbf{The Fano case.} For smooth Fano manifolds with the anticanonical polarization, the
sufficiency direction was proved by Tian
\cite{Tia15a,Tia15b} and Chen--Donaldson--Sun \cite{CDS15a,CDS15b,CDS15c}.  Li and Xu's reduction to special test
configurations \cite[Corollary~1]{LX14} and Berman's necessity
theorem \cite[Theorem~1.1]{Ber16} place this result in the modern
K-polystable framework.  Further analytic proofs are due to
Datar--Sz\'ekelyhidi \cite[Theorem~1]{DS16}.  Chen--Sun--Wang obtained another
proof \cite[Theorem~1.2]{CSW18}.  For singular log Fano pairs, variational and
equivariant methods relate K-polystability, reduced uniform K-stability, and
weak K\"ahler--Einstein existence \cite{BBJ21,LTW21,LTW22,Li22a}.  The
finite-generation input is due to Liu--Xu--Zhuang
\cite[Theorems~1.1 and~1.6]{LXZ22}.

The algebraic theory also includes log discrepancies, local volumes, valuative criteria, and
stability-threshold methods \cite{Oda12,Oda13,Fuj19,Li17,FO18,BJ20}.
Blum--Xu proved uniqueness for
K-polystable Fano degenerations \cite[Theorem~1.1(2)]{BX19}.  Zhuang proved
that equivariant K-polystability for a reductive group implies geometric
K-polystability \cite[Theorem~1.1(2)]{Zhu21}.  Related quantization
arguments are due to K.~Zhang \cite{Zha24}; see also Xu's survey and monograph
\cite{Xu21,Xu25}.

The toric theory is also extensive.  Wang--Zhu treated toric Fano solitons
\cite{WZ04}.  Donaldson developed an analytic program
\cite{Don05a,Don08} and proved cscK existence for polarized toric surfaces
satisfying his toric K-stability condition \cite[Corollary~1]{Don09}.
Chen--Li--Sheng proved a uniform condition necessary for toric extremal
metrics \cite{CLS14}. 

We also remark that Li proved the uniform correspondence for polarized
toric manifolds in every dimension \cite[Theorem~1.12]{Li22b}.

\medskip

\noindent\textbf{Progress on general polarizations.} For general polarizations, later work has increasingly separated K-polystability from quantitative and completed notions.  Sz\'ekelyhidi
introduced uniform K-stability in his thesis
\cite[Section~3.1.1]{Sze06}.  Relations among test configurations, geodesic
rays, Okounkov bodies, and concave transforms were developed in
\cite{PS07,PS09,WN12,BC11}.  Dervan introduced the minimum norm
\cite{Der16}.  The non-Archimedean slope formalism is developed in
\cite{BHJ19,BHJ22}, and Hisamoto introduced a reduced form adapted to
automorphisms \cite{His16}.  Dervan--Reboulet characterized coercivity on
fixed Fubini--Study spaces by uniform arc K-polystability
\cite[Theorem~1.1]{DR24}.  Darvas--Rubinstein formulated a general
existence--properness principle \cite[Theorem~3.4]{DR17a} and applied it
conditionally to the cscK problem \cite[Theorem~10.1]{DR17a}.  Darvas--Lu
developed the metric and geodesic structure of spaces of rays
\cite[Theorems~1.3 and~1.4]{DL20}.  Chen--Cheng obtained the estimates
used in their program \cite[Theorem~1.1]{CC21a}, proved the direction from
properness to existence in the discrete-automorphism case
\cite{CC21b}, and extended it to general $\Aut^0$ in
\cite[Theorem~1.6]{CC18}.

Finally, stronger correspondences use completed or quantitative stability conditions.
Boucksom--Jonsson use $\widehat K$-polystability on the finite-energy
non-Archimedean completion \cite[Theorem~A]{BJ25b}.  Darvas--Zhang
characterize the existence of a unique cscK metric by uniform
$K^\beta$-stability for some $\beta>0$ and obtain an
automorphism-equivariant criterion through models defined by log discrepancies
\cite[Theorems~1.1 and~1.6]{DZ25}.  The entropy-regularization program is
formulated in \cite[Conjecture~2.5]{BJ18}, and the model criterion in
\cite[Theorem~1.10 and Conjecture~1.8]{Li22b}.  Trusiani's special Fujita
approximation theorem solves the regularization problem and
yields an equivalence with $\Aut^0(X,L)$-uniform K-stability
\cite[Theorem~A, Theorem~B, and Corollary~A]{Tru26}.  K\"ahler and
transcendental variants are due to Mesquita-Piccione and
Mesquita-Piccione--Witt Nystr\"om \cite{MP25,MPWN25}.  Further existence,
properness, and threshold results appear in
\cite{SW08,He19,JSS19,DZ24,Tru24}.  Further parts of the non-Archimedean
program appear in \cite{BJ22,BJ23,BJ25a}.
For flat families of smooth polarized varieties with finite automorphism
groups, Dervan proved that the cscK locus is very general
\cite[Theorem~1.1]{Der25}.  

\subsection{The counterexample}

Despite all the aforementioned progress, it is worth mentioning that the full version of the original Yau--Tian--Donaldson conjecture \cite[Conjecture]{Don02} (Conjecture~\ref{conj:ytd-sufficiency}) has remained open up to now. Indeed, some experts are skeptical of \cite[Conjecture]{Don02} and expect that (a version of) uniform K-stability is necessary (cf. \cite{Sze06}). It is particularly worth mentioning that \cite{ACG+08} constructed polarized smooth projective fourfolds whose Ross--Thomas slope degenerations of the zero and infinity sections all have positive modified Futaki invariant, but which do not admit any extremal K\"ahler metric, hence do not admit any cscK metric. However, proving that an example produced by this mechanism is K-polystable (or even K-semistable) is a very difficult task, as one needs to consider all normal ample algebraic test configurations instead of only the ones obtained from the Ross--Thomas degenerations. As mentioned in \cite{ACG+08}, verbatim: 
\begin{center}
\textit{While we cannot prove that there is no other (algebraic)
test configuration which would detect this instability, it is difficult to imagine how such a test configuration could be constructed.}   
\end{center}
In other words, \cite{ACG+08} provided a mechanism for the construction of potential counterexamples to the Yau--Tian--Donaldson conjecture, i.e. well-described polarized smooth projective varieties. However, proving the K-polystability of polarized smooth projective varieties constructed in such a way essentially requires new mathematical input, and is not a streamlined verification.

\medskip

\noindent\textbf{Main Theorem.} The main theorem of this paper is the construction of a polarized smooth projective fivefold via (a variation of) the mechanism of \cite{ACG+08}, and the proof that this polarized smooth projective fivefold is K-polystable but does not admit any extremal K\"ahler metric.

As a consequence, the Yau--Tian--Donaldson conjecture (Conjecture~\ref{conj:ytd-sufficiency}) is false. We emphasize that the Yau--Tian--Donaldson conjecture for (log) Fano varieties (cf. \cite{Tia15a,Tia15b,CDS15a,CDS15b,CDS15c,LXZ22}) and the uniform and completed K-stability variations of the cscK Yau--Tian--Donaldson conjecture (cf. \cite{BJ25b,DZ25,Tru26}) are not affected by this counterexample.

\begin{alphthm}[The counterexample]\label{thm:main}
There exist smooth projective connected complex curves $C_0,C_1,C_2,C_3$ of
genera
\begin{equation}\label{eq:intro-genera}
(3846511,10591,76,46)
\end{equation}
such that
\begin{equation}\label{eq:intro-hom-orthogonality}
\Hom\bigl(\Pic^0(C_i),\Pic^0(C_j)\bigr)=0
\qquad\text{for }i\ne j.
\end{equation}
Choose line bundles $M_i,L_i$ on $C_i$ with
\begin{equation}\label{eq:intro-degrees}
\begin{aligned}
(\deg M_i)_{i=0}^3&=(461999,13962,1068,260),\\
(\deg L_i)_{i=0}^3&=(13397971,-11635,-712,-104).
\end{aligned}
\end{equation}
Put
\begin{equation}\label{eq:intro-fivefold}
B=\prod_{i=0}^3C_i,
\qquad
M=\boxtimes_{i=0}^3M_i,
\qquad
L=\boxtimes_{i=0}^3L_i,
\end{equation}
and, in the quotient convention, put
\begin{equation}\label{eq:intro-polarization}
\pi\colon X=\mathbb P_B(\mathcal O_B\oplus L)\to B,
\qquad
A=\mathcal O_X(1)\otimes\pi^*M.
\end{equation}
Then the following statements hold.
\begin{enumerate}
    \item $(X,A)$ is a smooth polarized complex projective fivefold, and
    $\Aut^0(X)=\mathbb C^*$ acts by fiber scaling.
    \item For every positive integer $e$, every normal ample algebraic test
    configuration with generic polarized fiber $(X,A^e)$ has nonnegative
    Donaldson--Futaki invariant.  Equality holds only for a polarized product
    test configuration induced by an integral one-parameter subgroup of fiber
    scaling together with a scalar character on the polarization.
    \item The class $c_1(A)$ contains no extremal K\"ahler metric and hence no
    constant scalar curvature K\"ahler metric.
\end{enumerate}
Consequently, $(X,A)$ is an explicit counterexample to
Conjecture~\ref{conj:ytd-sufficiency}.
\end{alphthm}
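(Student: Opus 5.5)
The proof splits along the three items of the statement. The main structure is the semisimple-principal-fibration (``admissible'') Calabi ansatz of Apostolov--Calderbank--Gauduchon--T{\o}nnesen-Friedman \cite{ACG+08}, which turns everything into a one-variable problem on the momentum interval.

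For item (1), I would first construct curves $C_i$ of the given genera with pairwise $\Hom(\Pic^0(C_i),\Pic^0(C_j))=0$. Very general curves of distinct genera $\ge 2$ have simple Jacobians of distinct dimensions, so this is automatic. The orthogonality gives $\Pic(B)=\bigoplus_i \mathrm{pr}_i^*\Pic(C_i)$ and $\mathrm{NS}(B)=\mathbb Z^4$, spanned by the fiber classes.

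Smoothness of $X$ and ampleness of $A=\mathcal O_X(1)\otimes\pi^*M$ follow from the criterion on $\mathbb P(\mathcal O\oplus L)$. Ampleness is equivalent to $M$ and $M\otimes L$ being ample on $B$, i.e.\ $\deg M_i>0$ and $\deg M_i+\deg L_i>0$, which is checked from the degrees. To identify $\Aut^0(X)$, I would push the Lie algebra down to $B$. Since every $C_i$ has genus $\ge 2$, $H^0(T_B)=0$, so vector fields are vertical. The vertical fields form $H^0(B,\mathcal O\oplus L\oplus L^{-1})$ modulo scalars. Because the degrees of $L_i$ have mixed signs, $L$ and $L^{-1}$ are not effective, and only fiber scaling survives.

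For item (3), the plan is the ACG obstruction. Using the admissible ansatz, the momentum interval is $[-1,1]$ and the metric is described by a profile $\Theta(z)$. The base curves have constant curvature, and the relevant data are the ratios $x_i=\deg L_i/\deg M_i$-type parameters, i.e.\ $\pm 1/x_i$ with the signs given. Since $\Aut^0=\mathbb C^*$, an extremal metric would be, by the Calabi/Apostolov--Calderbank--Gauduchon uniqueness and reductivity results, invariant under a maximal compact torus and hence admissible. Its extremal polynomial $F(z)=\Theta(z)\prod_i(1+x_iz)$ is then forced: it is the unique degree-$6$ polynomial solving the ODE with boundary conditions $F(\pm1)=0$ and $F'(\pm1)=\mp 2\prod_i(1\pm x_i)$. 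An extremal metric exists iff $F>0$ on $(-1,1)$. I would compute $F$ explicitly with exact rational arithmetic and exhibit a point $z_0$ with $F(z_0)<0$. The huge degrees are presumably tuned precisely so that $F$ acquires a double-root-like dip inside the interval.

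Item (2) is the main obstacle. First, by equivariance I would reduce to $\mathbb C^*$-equivariant normal test configurations: Codogni--Stoppa style reduction, or directly a torus-equivariant degeneration argument using the fiber $\mathbb C^*$. Next, I would use the non-Archimedean formalism \cite{BHJ17}. The Donaldson--Futaki invariant of a normal test configuration is $\mathrm{DF}=M^{NA}+(\text{nonnegative correction})$, with $M^{NA}=H^{NA}+(\text{energy terms})$, where $H^{NA}$ is the log-discrepancy entropy term. The key structural input is the Hom-orthogonality. It forces every $\mathbb C^*$-invariant valuation/divisorial model to be compatible with the product structure. Concretely, a test configuration induces a filtration, and after averaging it is dominated by a toric filtration in the momentum variable: a convex piecewise-linear function $f$ on $[-1,1]$. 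This is the analogue of the toric reduction to Donaldson's functional.

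I would then prove
\[
\DF \ge \mathcal L(f)=\int_{-1}^1 f''\,F\,dz+\text{(entropy gain)}.
\]
Here the entropy gain is the strict positivity of $H^{NA}$ coming from nonzero log discrepancies of divisorial valuations centered over the base. I would show this gain beats the negative part of $F$: test configurations detecting the negative dip $F(z_0)<0$ must have a kink at an interior $z_0$. Such a kink corresponds to a divisor over the interior of $X$ lying over a nontrivial point of the base curves, and its log discrepancy contributes a term proportional to the genus-weighted volume. The large genera make this dominate. Equality forces $f$ affine, i.e.\ a product by fiber scaling plus a character. The hard part is this comparison inequality; I would try to derive it from a Fujita-type valuative formula restricted to quasi-monomial $\mathbb C^*$-invariant valuations, checking the numerics with exact computation.
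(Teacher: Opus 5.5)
Item (1) is fine. A very general curve of each genus has $\End(\Pic^0)=\mathbb Z$, which can replace the paper's Zarhin-based construction. The plan for items (2) and (3), however, rests on the wrong sign of the extremal polynomial.

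You propose to exhibit $z_0\in(-1,1)$ with $F(z_0)<0$. In fact the moment system gives $A_c=0$, $B_c=135/29$, and
\[
 F_{x^*}(z)=\frac{45}{3472}(1-z^2)(z^2-4z-1)^2\geq0 \qquad(-1\leq z\leq1),
\]
whose only interior zero is a double zero at the irrational point $z_0=2-\sqrt5$. This is forced, not accidental. If $F$ were negative somewhere in the interior, it would be negative at some rational $\mu$. The single-crease test configuration of Proposition~\ref{prop:rational-single-crease} is the toric degeneration of the $\mathbb P^1$-fibers glued over all of $B$, with entry $\max\{0,dj-d\mu rk\}$ on $V_{rk,j}$. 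It is normal, ample and $T$-equivariant with reduced central fiber, and its invariant is exactly $\DF=dF(\mu)/a_0<0$. Its valuation is a multiple of $\operatorname{ord}$ along a fixed section lying over all of $B$, and every log-discrepancy contribution is already included in that number. So there is no ``entropy gain'' from divisors over special points of the base curves that could rescue it.

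A negative dip would therefore make $(X,A)$ K-unstable, so your items (2) and (3) are mutually inconsistent. The actual mechanism is the reverse:
\begin{itemize}
\item $F\geq0$ yields K-semistability. The profiles $F+\epsilon x^2(1-x)^2$ give admissible metrics with Calabi energy tending to zero, and Donaldson's lower bound then excludes negative $\DF$ at every exponent.
\item The irrational double zero obstructs extremal metrics without being detectable by any algebraic degeneration.
\end{itemize}

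Two further steps also fail as stated.

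In (3), ``invariant under the maximal compact torus, hence admissible'' is unjustified. Circle invariance does not impose the admissible ansatz. Uniqueness of extremal metrics also cannot be applied in $c_1(A)$, because that class contains no admissible extremal metric to compare with. The paper proceeds instead as follows:
\begin{itemize}
\item It moves to nearby classes $\Omega(x_t)$ where $F_{x_t}>0$, which holds because the transverse derivative at $z_0$ is positive.
\item It deforms a hypothetical extremal metric into those classes by LeBrun--Simanca openness.
\item It identifies the deformed metrics with pullbacks of admissible ones by Berman--Berndtsson uniqueness.
\item It passes $dz_k=-\iota_K\omega_k$ and $g_k(K,K)=\Theta_{t_k}(z_k)$ to the limit, obtaining $g_0(K,K)=0$ at a point off the fixed sections, a contradiction.
\end{itemize}

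In (2), the hard part is the equality clause, which your sketch reduces to ``$f$ affine'' without an argument. Moreover, the reduction to $\mathbb C^*$-equivariant configurations is exactly the Codogni--Stoppa conjecture, not an available theorem. Torus specialization only produces a possibly non-finitely-generated filtration, and knowing that it is a product does not automatically transfer back to the original configuration. The paper proves that both fiber-scaling initial filtrations of a zero-invariant configuration have affine convex transforms. It does this using Chow stability of the base blocks, Sz\'ekelyhidi's strictness theorem, and exclusion of the irrational crease by rationality of algebraic Duistermaat--Heckman densities. It then recovers the original configuration as a product via an integral product comparator after base change, together with a relative Smith-spectrum rigidity argument.
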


To the author's knowledge, Theorem~\ref{thm:main} gives the first smooth
polarized projective variety which is K-polystable in the
Donaldson--Futaki sense with respect to every normal ample algebraic test
configuration of every positive exponent, but whose polarization class
contains no cscK metric.  The K-polystability
assertion includes both global nonnegativity and product rigidity in the
zero-invariant case, and it imposes no equivariance assumption.

The construction continues a line of examples originating in the
ruled-surface work of T\o nnesen-Friedman \cite{TF98}, the admissible
projective-bundle theory \cite{ACG+11}, and the splitting theorem of
Apostolov--Huang \cite{AH15}. The authors of \cite{ACG+08} constructed polarized fourfolds whose
admissible extremal polynomial is positive at every rational point but has an
irrational repeated interior zero.  The detecting degeneration in that
example was not algebraic \cite[Example~1, pp.~580--581]{ACG+08}, and the
authors left open whether an algebraic test configuration detects the
instability \cite[p.~551]{ACG+08}.
For projectivizations over a curve, Jubert--Yin recently proved a
relative uniform correspondence with respect to compatible test
configurations \cite[Theorem~A and Corollary~A]{JY26}.
Moreover, the Futaki character of the fourfold constructed in
\cite[Example~1]{ACG+08} is nonzero
\cite[Proposition~6 and the discussion following Proposition~8]{ACG+08}.
Sz\'ekelyhidi later interpreted the irrational degeneration as a
non-finitely-generated filtration in work with an appendix by Boucksom
\cite[Section~4]{Sze15}, while Dervan proved instability in
the transcendental K\"ahler framework \cite[Example~2.15]{Der18}.
Apostolov--Pym--Streets continued to describe the general polarized problem
as open \cite[Section~1.1 and Remark~4.6(5)]{APS26}.

It is worth mentioning that our construction, although it generally follows the mechanism of \cite{ACG+08}, has a slight difference: the construction in \cite{ACG+08} provides a fourfold, while our example is a fivefold. The five-dimensional construction in Theorem~\ref{thm:main} has a
four-dimensional base and one projective-line direction. The large genera and degrees in \eqref{eq:intro-genera}--\eqref{eq:intro-degrees} give an
integral realization of the required boundary polynomial.  The fourth curve
factor and the vanishing in \eqref{eq:intro-hom-orthogonality} also give the rigidity and
multiplication properties needed to classify arbitrary algebraic test
configurations. Therefore, although the extra dimension does not alter the irrational destabilizing mechanism, it makes the full K-polystability assertion
provable.

\begin{rem}
   It is also worth mentioning that the identity component of the automorphism group of $X$ in Theorem~\ref{thm:main} is $\Aut^0(X)=\mathbb C^*$; in particular, the automorphism group of $X$ is infinite. It remains interesting to ask whether the Yau--Tian--Donaldson conjecture holds under the extra condition that $\Aut^0(X)$ is trivial (cf. \cite[Theorem~1.1]{Der25}) or finite.
\end{rem}

\subsection{The equality case}

The proof of Theorem~\ref{thm:main} consists of three parts: (i) the non-existence of extremal K\"ahler metrics, (ii) the K-semistability, and (iii) the passage from K-semistability to K-polystability. The major difficulty of the proof, which requires essential new mathematical input, is in part (iii), as one needs to consider all normal ample test configurations with zero Donaldson--Futaki invariant. Our way of doing this is to make a complete classification of all of them for the precise example in Theorem~\ref{thm:main}.

\begin{alphthm}[Zero-invariant test configurations are products]
\label{thm:equality-classification}
Let $(X,A)$ and the data $B,M,L$ be as in
Theorem~\ref{thm:main}.  Let $e$ be a positive integer, and let $\mathcal T$ be a normal ample
algebraic test configuration whose generic polarized fiber is $(X,A^e)$.
Then $\DF(\mathcal T)\geq0$.  If $\DF(\mathcal T)=0$, then there exist
integers $\alpha,\beta$ with the following property.  For $m\geq1$ and
$0\leq j\leq em$, put
\begin{equation}\label{eq:intro-equality-blocks}
V_{em,j}=H^0(B,M^{em}\otimes L^j).
\end{equation}
The increasing filtration of $\mathcal T$, in the convention of
Definition~\ref{defn:test-configuration}, has the single jump
\begin{equation}\label{eq:final-affine-jump}
\alpha m+\beta j
\end{equation}
on every block $V_{em,j}$.  The configuration is the polarized
product in which fiber scaling contributes $\beta j$ and the scalar
character contributes $\alpha m$ to this jump.
\end{alphthm}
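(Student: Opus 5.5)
Throughout, I work with the increasing filtration $F_\lambda R$ of the section ring $R=\bigoplus_m H^0(X,A^{em})$ attached to $\mathcal T$. On the projective bundle,
\[
H^0(X,A^{em})=\bigoplus_{j=0}^{em}H^0(B,M^{em}\otimes L^j)=\bigoplus_j V_{em,j},
\]
and this is the weight decomposition under fiber scaling. The plan has three steps, applied to an arbitrary normal ample $\mathcal T$.

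\textbf{Step 1: reduce to a filtration compatible with the grading.} Degenerating along the fiber-scaling $\mathbb C^*$ (taking initial terms with respect to the $j$-grading), I replace $\mathcal T$ by a $\mathbb C^*$-equivariant test configuration $\mathcal T'$. By the standard convexity/lower semicontinuity of $\DF$ under such degenerations, $\DF(\mathcal T')\le\DF(\mathcal T)$, with equality forcing $\mathcal T$ to be already equivariant up to normalization. An equivariant filtration splits blockwise into filtrations $F_\lambda V_{em,j}$. The Hom-orthogonality \eqref{eq:intro-hom-orthogonality} and the product structure of $B$ then let me further split along the Künneth decomposition
\[
V_{em,j}=\bigotimes_i H^0(C_i,M_i^{em}\otimes L_i^j).
\]
On each block, $\DF$ decomposes into a "vertical" contribution, coming from the jumps as a function of $(m,j)$ (an Okounkov-body concave transform on the moment interval), and "horizontal" contributions along the base. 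The horizontal pieces are Donaldson--Futaki invariants of curve test configurations. These are nonnegative by semistability of the line bundles on curves, since the slopes are controlled by the degree data \eqref{eq:intro-degrees}, and they vanish only when the filtration on each $H^0(C_i,\cdot)$ has a single jump.

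\textbf{Step 2: the vertical part.} For the vertical part, I use the boundary-polynomial computation from the nonexistence section. The extremal polynomial $P$ is positive at all rational points but has an irrational double root. The vertical $\DF$ becomes
\[
\int_I P(x)\,\phi''(x)\,dx+\text{boundary terms},
\]
with $\phi$ the concave Legendre transform of the jump function. I would show $\phi$ is piecewise linear with rational breakpoints, by finite generation of the filtration of an ample algebraic test configuration. Nonnegativity then follows from $P\ge 0$. Equality forces $\phi''$ to be supported at zeros of $P$. Since every zero is irrational and breakpoints are rational, $\phi$ must be affine. This means the jump on $V_{em,j}$ is $\alpha m+\beta j$ with $\alpha,\beta\in\mathbb Z$, where integrality comes from the integral weights of the $\mathbb C^*$-action.

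\textbf{Step 3: conclude product structure.} A single affine jump on every block identifies the filtration with that of the product configuration generated by fiber scaling with weight $\beta$, twisted by the character $\alpha$. Normality of $\mathcal T$ then gives $\mathcal T\cong$ this product, since normal test configurations are determined by their filtrations.

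\textbf{Main obstacle.} I expect the hardest part to be Step 1's equality case: proving that a zero-invariant $\mathcal T$ is already equivariant and blockwise split, rather than only its degeneration. Here I would first try the Codogni--Stoppa style argument. A zero-$\DF$ configuration degenerating to a product is itself a product modulo normalization, with rigidity supplied by \eqref{eq:intro-hom-orthogonality}. This rigidity rules out nontrivial mixing between the tensor factors and between different $j$-blocks.
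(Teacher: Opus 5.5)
Your Step~2 mechanism is right and matches the paper: $\DF$ is written as $\int F\,d\phi''$ plus boundary terms, and equality forces $\phi''$ to be supported at the irrational double zero. The equality case, however, which you correctly flag as the crux, does not go through as proposed.

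\textbf{The specialization step gives no information.} Specializing $\mathcal T$ along fiber scaling means taking Grassmannian limits of the filtered pieces of each $H^0(X,A^{em})$. This preserves the dimension of every filtered piece, hence the full multiset of jumps in every degree. Hilbert functions, total weights, and therefore $\DF$ are exactly unchanged. So there is no inequality $\DF(\mathcal T')\le\DF(\mathcal T)$ whose equality case could detect non-equivariance: equality holds for every $\mathcal T$. Moreover, the specialized filtration need not be finitely generated, so $\mathcal T'$ need not even be a test configuration.

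\textbf{The fallback is circular.} Your fallback is that a zero-invariant configuration specializing to a product is a product up to normalization. That is precisely \cite[Conjecture~1]{CS19}, which the statement you are proving confirms for this example, so it cannot serve as input.

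\textbf{Asymptotic versus exact.} The same gap resurfaces in Step~3. Steps~1--2 give only asymptotic information: a convex transform and an affine limiting profile. That yields neither a single jump $\alpha m+\beta j$ on each block nor integrality of the coefficients.

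\textbf{How the paper closes this.} The paper never proves $\mathcal T$ equivariant. Instead it proceeds as follows.
\begin{enumerate}
\item Moment rigidity of $\rho$ shows that the two opposite initials share one affine transform $a+bx$.
\item Rationality of algebraic Duistermaat--Heckman moments gives $a,b\in\mathbb Q$.
\item Base change $t=(t')^N$ followed by normalization produces an integral product comparator $\mathcal P_N$.
\item The relative Smith spectrum of the marked lattices of $\mathcal T_N$ and $\mathcal P_N$ is identified with the residual jumps of the initial oriented by the sign of $\beta_N$ (Lemma~\ref{lem:oriented-relative-smith}).
\item Quadratic decay of these jumps, together with a divisorial-valuation argument on normal section algebras (Proposition~\ref{prop:marked-smith-rigidity}), gives $\mathcal T_N=\mathcal P_N$ exactly.
\item Deck invariants descend this to jumps $\lceil eam+bj\rceil$, and the row-budget bound removes the rounding.
\end{enumerate}

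\textbf{Tools in Steps~1--2 that are not available.}

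First, a filtration of $V_{em,j}=\bigotimes_iH^0(C_i,M_i^{em}\otimes L_i^j)$ need not split as a tensor product of filtrations, and Hom-orthogonality of the Jacobians says nothing about this. There is also no formula writing $\DF$ as a vertical term plus Donaldson--Futaki invariants of curve test configurations. The paper controls the transverse directions instead by the following chain:
\begin{enumerate}
\item Chow polystability of each block embedding gives $c_{d,j},a_{d,j}\ge0$. Hence $\widetilde{\DF}(\mathcal T)\ge\DF_{\mathrm{sc}}(\phi)$, and when $\DF=0$ one gets a uniform bound $\sum_jg_{d,j}\le C_{\mathcal T}$.
\item Bergman-kernel multiplication estimates on the curves compare the block means at $dy$ and $dz$ with the midpoint block $2dx$.
\item These force the asymptotic Chow invariant to vanish on every rational ray.
\item Sz\'ekelyhidi's strictness theorem for filtrations on the cscK base then kills the transverse variation, and convexity propagates this to irrational slices.
\end{enumerate}

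Second, rational breakpoints of $\phi$ do not follow from finite generation, because finite generation of the source does not pass to the initials. What algebraicity actually provides is that the Duistermaat--Heckman density is piecewise polynomial with rational coefficients and rational breakpoints. Turning this into exclusion of a crease at $\lambda$ requires first knowing $G=\phi(x)$. It then requires a case analysis on the signs of the two slopes, using arithmetic of $p$ (for instance $p(\lambda)\notin\mathbb Q$).

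Finally, the paper obtains $\DF\ge0$ itself analytically. It uses Donaldson's Calabi lower bound together with admissible metrics whose Calabi energy tends to zero, not any vertical/horizontal splitting.
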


Theorem~\ref{thm:equality-classification} essentially says that the Codogni--Stoppa conjecture \cite[Conjecture~1]{CS19} holds in the normal category for the example in Theorem~\ref{thm:main}. Let $T\simeq\mathbb C^*$ be the fiber-scaling torus. For $G=T$,
Theorem~\ref{thm:equality-classification} proves that every
normal zero-invariant test configuration, without an equivariance assumption,
is the product induced by a one-parameter subgroup of $T$, up to a scalar
character. This confirms \cite[Conjecture~1]{CS19} for this specific example.

Note that no counterpart of the Fano reduction
to special test configurations is available here. The
proof of Theorem~\ref{thm:equality-classification}, therefore, is essentially new: the idea is to compare every graded space of sections together with the
marking of its generic fiber, rather than restricting the class of degenerations.

We remark that Corollary~\ref{cor:scheme-strengthening} extends this result to arbitrary ample test configurations: equality holds precisely when the
normalization is such a product and the normalization morphism is an isomorphism away from codimension two.

\subsection{The reduced Donaldson--Futaki quotient}

We next explain why K-polystability has no positive uniform margin in
this example, i.e. the polarized manifold is not uniformly relatively K-polystable. For the family in Theorem~\ref{thm:uniform-threshold}, the
reduced Donaldson--Futaki quotient has infimum zero, and the family violates
every positive uniform relative inequality.

\begin{alphthm}[Vanishing of the reduced Donaldson--Futaki quotient]
\label{thm:uniform-threshold}
Let $(X,A)$ be the polarized fivefold in Theorem~\ref{thm:main}, and put
$T=\Aut^0(X)=\mathbb C^*$.  Let $(Q_n)_{n\geq0}$ be the Fibonacci sequence
defined by
\begin{equation}\label{eq:intro-fibonacci-sequence}
Q_0=0,
\qquad
Q_1=1,
\qquad
Q_{n+1}=Q_n+Q_{n-1}\quad(n\geq1).
\end{equation}
For $n\geq3$, put
\begin{equation}\label{eq:intro-fibonacci-parameters}
p_n=Q_n,
\qquad
q_n=Q_{n+2},
\qquad
\mu_n=\frac{p_n}{q_n}.
\end{equation}
For every $n\geq3$ and every sufficiently divisible positive integer $r_n$,
there exists a normal nonproduct $T$-equivariant algebraic test configuration
$\mathcal T_n$ of exponent $r_n$ for $(X,A)$ whose single increasing
filtration entry, in the convention of
Definition~\ref{defn:test-configuration}, on
\begin{equation}\label{eq:intro-fibonacci-block}
V_{r_nk,j}=H^0(B,M^{r_nk}\otimes L^j),
\qquad k\geq1,
\qquad 0\leq j\leq r_nk,
\end{equation}
is
\begin{equation}\label{eq:intro-fibonacci-weight}
\max\{0,q_nj-p_nr_nk\}.
\end{equation}
Moreover,
\begin{equation}\label{eq:intro-fibonacci-limit}
\frac13<\mu_n<\frac12,
\qquad
\mu_n\to\lambda=\frac{3-\sqrt5}{2}=\varphi^{-2},
\qquad
\varphi=\frac{1+\sqrt5}{2}.
\end{equation}
Writing $\DF_n=\DF(\mathcal T_n)$ and
$J^{\mathrm{NA}}_{T,n}=J_T^{\mathrm{NA}}(\mathcal T_n)$ for the reduced
non-Archimedean $J$-functional of
\cite[Definitions~3.6.1 and~3.6.3]{NS21}, we have
\begin{equation}\label{eq:intro-uniform-rates}
\DF_n=\frac{K\mu_n(1-\mu_n)}{a_0q_n^3}>0,
\qquad
\frac{J^{\mathrm{NA}}_{T,n}}{q_n}
\geq\frac{19}{46080},
\end{equation}
where
\begin{equation}\label{eq:intro-uniform-constants}
\begin{split}
C_{\mathrm{bd}}&=461999\cdot2327\cdot356\cdot52,
\qquad K=90C_{\mathrm{bd}},\\
p(x)&=C_{\mathrm{bd}}(1+29x)(6-5x)(3-2x)(5-2x),
\qquad a_0=\int_0^1p(x)\,dx.
\end{split}
\end{equation}
Consequently,
\begin{equation}\label{eq:intro-zero-reduced-threshold}
\inf_{n\geq3}\frac{\DF_n}{J^{\mathrm{NA}}_{T,n}}=0.
\end{equation}
Thus the reduced Donaldson--Futaki quotient of this explicit family has
infimum zero.
The pair $(X,A)$ is not uniformly relatively K-polystable in the sense of
\cite[Definition~3.7.1(4)]{NS21}.
\end{alphthm}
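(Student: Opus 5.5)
We will build $\mathcal T_n$ as a deformation to the normal cone of a section, following Ross--Thomas, but with a rational slope chosen to approximate the irrational double root $\lambda=\varphi^{-2}$ of the extremal polynomial. Concretely, let $B_\infty\subset X$ be the section $\mathbb P(L)$. Take the $T$-invariant filtration of the graded ring $R=\bigoplus_k H^0(X,A^{r_nk})=\bigoplus_{k}\bigoplus_{0\le j\le r_nk}V_{r_nk,j}$ with weight $\max\{0,q_nj-p_nr_nk\}$ on $V_{r_nk,j}$. This is the piecewise-linear filtration attached to the valuation $q_n\,\mathrm{ord}_{B_\infty}$ shifted by $p_nr_nk$. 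Since the weight is a max of two affine functions of the fiber coordinate, the Rees algebra is finitely generated once $r_n$ is divisible by $q_n$ (and sufficiently divisible). The filtration then defines a test configuration of exponent $r_n$, namely the degeneration to the normal cone of $B_\infty$ with slope $p_n/q_n$. Normality follows because the Rees algebra is the algebra of sections of a toric-in-the-fiber, $T$-equivariant construction: locally over $B$ it is a product of the base with a normal toric surface degeneration given by a convex polygon. It is nonproduct because the weight is not affine in $(k,j)$, as $0<\mu_n<1$. The inequality $\tfrac13<\mu_n<\tfrac12$ and the convergence $\mu_n=Q_n/Q_{n+2}\to\varphi^{-2}$ are elementary consequences of Binet's formula and the Fibonacci recursion; we check the inequality from $Q_{n+2}=2Q_n+Q_{n-1}$ with $0<Q_{n-1}<Q_n$ for $n\ge3$.

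To compute $\DF_n$, we use that $h^0(B,M^{r_nk}\otimes L^j)$ is, by Riemann--Roch on the product of curves (all degrees large enough to kill $h^1$ where needed), an explicit product of four linear functions in $(r_nk,j)$. After rescaling $x=j/(r_nk)$, this produces the boundary polynomial $p(x)$ up to lower-order terms from the genera. Following the admissible/ACG computation, the Donaldson--Futaki invariant of a filtration that is convex piecewise-linear in $x$ with kink at $\mu$ equals an integral of the form $\int_\mu^1 (x-\mu)\,F(x)\,dx$. Here $F$ is the extremal-polynomial combination of $p$, $p'$ and the curvature terms. By construction of the degrees in Theorem~\ref{thm:main}, this integral reduces to $K\mu(1-\mu)(\mu^2-3\mu+1)^2$-type expressions. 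We then adjust for the $T$-reduction, namely orthogonal projection against the fiber-scaling weight $j$. With the normalization coming from the $q_n$ scaling of the weight and the denominator $a_0=\int_0^1p$, the whole expression should collapse to $K\mu_n(1-\mu_n)/(a_0q_n^3)$. The cleanest way to see this is to use the Cassini identity $Q_{n}Q_{n+2}-Q_{n+1}^2=\pm1$. This gives $|q_n^2\mu_n^2-3q_n^2\mu_n+q_n^2|=|p_n^2-3p_nq_n+q_n^2|=1$, so the quadratic factor vanishing at $\lambda$ contributes exactly $1/q_n^4$ after rescaling; combined with the factor $q_n$ from the weight scaling, this yields the $q_n^{-3}$. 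Positivity is then immediate.

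For $J^{\mathrm{NA}}_{T,n}$ we use \cite[Definitions~3.6.1 and~3.6.3]{NS21}: it is the infimum over twists by $T$ and scalar characters of the $J$-functional of the twisted filtration. Twisting by $\xi\in\mathbb R$ replaces the weight by $\max\{0,q_nx-p_n\}+\xi x+c$, in units of $r_nk$. $J^{\mathrm{NA}}$ is then the sup minus the mean with respect to the Duistermaat--Heckman measure $p(x)\,dx/a_0$. We will bound this infimum below by $q_n$ times the $L^1$-distance, with respect to $p(x)dx$, from $\max\{0,x-\mu_n\}$ to affine functions on $[0,1]$. Because $\mu_n\in(1/3,1/2)$ lies in a fixed compact subinterval, a direct estimate gives a uniform positive lower bound. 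We will use the crude bounds on $p$ on $[0,1]$ and the tent-shaped residual, aiming at the constant $19/46080$. Dividing, $\DF_n/J^{\mathrm{NA}}_{T,n}=O(q_n^{-4})\to0$, which gives \eqref{eq:intro-zero-reduced-threshold} and the failure of uniform relative K-polystability.

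The main obstacle is the exact DF computation. Lower-order Riemann--Roch terms (genus contributions and the scalar-curvature average) must combine with the extremal-polynomial identity so that the result is \emph{exactly} $K\mu(1-\mu)/(a_0q^3)$, not merely asymptotic. I would first verify the closed form $\DF(\mu)=\frac{K}{a_0}\mu(1-\mu)(\mu^2-3\mu+1)^2\cdot(\text{scale})$ for general rational $\mu$ using the admissible formula, and only then specialize via Cassini.
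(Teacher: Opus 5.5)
There is a genuine gap, in the Donaldson--Futaki computation. Your architecture matches the paper's: a single rational crease at the Fibonacci approximants of the irrational double zero, Cassini giving $(\mu_n^2-3\mu_n+1)^2=q_n^{-4}$, and a reduced $J$ of linear size in $q_n$.

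The construction and the Fibonacci inequalities are fine. The configuration is the Ross--Thomas deformation to the normal cone of $E_+=\mathbb P(L)$ with parameter $1-\mu_n<1$, base-changed by $t=(t')^{q_n}$, which stays normal. Its entry is $(q_n-p_n)r_nk-\min\{q_n\operatorname{ord}_{E_+},(q_n-p_n)r_nk\}$, not a shift by $p_nr_nk$.

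Your formula $\DF\sim\int_\mu^1(x-\mu)F(x)\,dx$ is not the mechanism. If $F$ is the boundary polynomial, this integral stays of order one as $\mu\to\lambda$. That gives $\DF_n\asymp q_n$ and no decay. If $F$ is read as $2q-\overline Sp$, it omits the endpoint term $(1-\mu)p(1)$. What is needed is the exact identity
\[
 \frac{a_0}{d}\,\DF(f)=f(0)p(0)+f(1)p(1)+\int_0^1(2q-\overline Sp)f\,dx .
\]
It comes from $2(b_1a_0-a_1b_0)/a_0^2$, using exact Riemann--Roch for $\dim V_{rk,j}$. You also need exact power sums on each side of the lattice point $rk\mu$. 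This is where $q_n\mid r_n$ is used, and the crease half-terms cancel by continuity.

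Because $F''=2q-\overline Sp$ with $F(0)=F(1)=0$, $F'(0)=p(0)$, $F'(1)=-p(1)$, two integrations by parts turn the right side into $\int_0^1F\,d(f')$. A single crease therefore gives the point value $\DF=dF(\mu)/a_0$. Only this point evaluation lets Cassini produce $q_n^{-3}$. No ``$T$-reduction adjustment'' enters $\DF_n$, which is the ordinary invariant.

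There are two further problems.

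First, $J^{\mathrm{NA}}$ is mean minus minimum of the entries, not supremum minus mean. This is because entries are the negatives of central weights. Your mention of a tent-shaped residual suggests you take supremum minus mean of $f_\mu+sx$. Minimizing that over $s$ gives $\int_0^1\bigl((1-\mu)x-(x-\mu)_+\bigr)\,d\nu$, which is not $J_T^{\mathrm{NA}}/d$. The correct value is
\[
 \min\Bigl\{\int_0^\mu(\mu-x)\,d\nu,\ \int_\mu^1(x-\mu)\,d\nu\Bigr\},
\]
because the objective is piecewise linear in $s$ and is minimized at $s=0$ or $s=-1$. Your $L^1$-distance-to-affine lower bound does not depend on this sign and gives a uniform constant. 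You still have to exhibit one that is at least $19/46080$.

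Second, $\DF_n/J^{\mathrm{NA}}_{T,n}\to0$ does not by itself contradict uniform relative K-polystability. In \cite{NS21} that notion is an inequality for $M^{\mathrm{NA}}_{\mathrm{rel}}$. You must show the Futaki character of $T$ vanishes; here $\DF_{\mathrm{fib}}=-F(1)/a_0=0$. Then the extremal correction is zero, so $M^{\mathrm{NA}}_{\mathrm{rel}}=M^{\mathrm{NA}}\le\DF_n$.
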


Theorems~\ref{thm:main} and~\ref{thm:uniform-threshold} separate K-polystability and
uniform K-stability explicitly. K-polystability excludes nonproduct
zero-invariant configurations, while the positive gap collapses along the
rational Fibonacci approximants to the irrational crease.  Uniform relative
K-stability therefore rejects the example which K-polystability
accepts.

\begin{rem} Hattori exhibited K-stable but not uniformly K-stable examples among polarized normal pairs \cite[Corollary~7.5]{Hat26} that are not klt (but log canonical), and among
connected deminormal surfaces \cite[Corollary~7.7]{Hat26}.  These are
algebraic separation results and do not assert the nonexistence of cscK
metrics. There are also smooth examples concerning J-stability
\cite[Theorem~7.3]{Hat26}. Hattori also conjectured that K-stable but not uniformly K-stable examples should exist for normal polarized varieties \cite[Conjecture~7.6]{Hat26}. However, the construction of Hattori essentially used the strictly log canonical property and seems difficult to generalize to the smooth/klt category. The construction of our paper is inspired by \cite{ACG+08} instead.
\end{rem}

\subsection{Scheme-theoretic consequences}

Finally, as we discussed in Remark~\ref{rem:definition-k-polystability}, the definition of K-polystability via test configurations has changed throughout history. In Theorem~\ref{thm:main}, we only consider normal ample test configurations, which seems to be the standard category of test configurations to consider in modern terminology. Another modern convention is to consider arbitrary ample test configurations and to allow zero Donaldson--Futaki invariant exactly for those that are products in codimension two, in the spirit of \cite{Sto11}. This condition should not be identified with the weaker zero-norm or almost-trivial condition in the current terminology. We show that, even adopting this convention, the Yau--Tian--Donaldson conjecture still fails.

\begin{cor}[Scheme-theoretic extension]
\label{cor:scheme-strengthening}
Let $(X,A)$ be the polarized fivefold in Theorem~\ref{thm:main}. Let $e$ be a positive integer, and let $(\mathcal Y,\mathcal H)$ be an ample algebraic test
configuration whose generic polarized fiber is $(X,A^e)$.  No normality
assumption is imposed on $\mathcal Y$, and the central fiber may be
nonreduced.  Let $\nu\colon\mathcal Y^\nu\to\mathcal Y$ denote the
normalization.  Then
\begin{equation}\label{eq:intro-scheme-semistability}
\DF(\mathcal Y,\mathcal H)\geq0.
\end{equation}
Equality holds if and only if $(\mathcal Y^\nu,\nu^*\mathcal H)$ is the
polarized product test configuration induced by an integral one-parameter
subgroup of fiber scaling together with a scalar character, and $\nu$ is an
isomorphism away from a closed subset of $\mathcal Y$ of codimension at least
two.
\end{cor}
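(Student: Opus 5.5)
The plan is to reduce to the normal case, Theorem~\ref{thm:equality-classification}, using the standard comparison between a test configuration and its normalization. Let $(\mathcal Y,\mathcal H)$ be ample with generic fiber $(X,A^e)$. Since $X$ is smooth, hence normal, the normalization $\nu\colon\mathcal Y^\nu\to\mathcal Y$ is a $\mathbb C^*$-equivariant finite birational morphism. It is an isomorphism over $\mathbb C^*$, so $(\mathcal Y^\nu,\nu^*\mathcal H)$ is a normal ample test configuration with the same generic polarized fiber.

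The first key step is the inequality
\[
\DF(\mathcal Y,\mathcal H)=\DF(\mathcal Y^\nu,\nu^*\mathcal H)+c\,\lambda,
\qquad c>0,
\]
where $\lambda\ge0$ measures the cokernel of $\mathcal O_{\mathcal Y}\to\nu_*\mathcal O_{\mathcal Y^\nu}$. I would follow Ross--Thomas and \cite[Proposition~5.1]{RT07}, or equivalently the BHJ formula.

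Concretely, the $\mathbb C^*$-equivariant sections satisfy $H^0(\mathcal Y_0,\mathcal H^k)\subset H^0(\mathcal Y^\nu_0,\nu^*\mathcal H^k)$ inside the common generic data. The quotient $\nu_*\mathcal O_{\mathcal Y^\nu}/\mathcal O_{\mathcal Y}$ is supported on the central fibre. Its twisted Euler characteristic has degree at most $n=5$ in $k$, with leading coefficient the $n$-dimensional multiplicity $\lambda$ of the conductor cycle on divisorial components of $\mathcal Y_0$. Both test configurations have the same Hilbert polynomial $a_0k^n+a_1k^{n-1}+\cdots$, since the fibres over $1$ agree. The total weights therefore satisfy
\[
w_k(\mathcal Y)=w_k(\mathcal Y^\nu)+\lambda k^{n+1}/(n+1)!+O(k^n)
\]
up to sign conventions. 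Inserting this into $\DF=(b_0a_1-b_1a_0)/a_0^2$ gives $\DF(\mathcal Y)-\DF(\mathcal Y^\nu)=\lambda\cdot(\text{positive constant})$.

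I also need $\lambda=0$ exactly when the non-normal locus has codimension at least two in $\mathcal Y$. This holds because $\lambda$ is the length of $\nu_*\mathcal O/\mathcal O$ at the generic points of its support that are divisors in the $(n+1)$-dimensional $\mathcal Y$. Those supports lie in $\mathcal Y_0$, so any divisorial component of the support is a component of $\mathcal Y_0$. Nonreducedness of $\mathcal Y_0$ alone does not contribute.

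With that formula, inequality~\eqref{eq:intro-scheme-semistability} follows at once from $\DF(\mathcal Y^\nu)\ge0$ in Theorem~\ref{thm:equality-classification}. For equality, we need $\DF(\mathcal Y^\nu)=0$ and $\lambda=0$. Theorem~\ref{thm:equality-classification} then identifies $\mathcal Y^\nu$ with the stated polarized product, induced by fiber scaling with weights $\beta j$ and the scalar character $\alpha m$. Also, $\lambda=0$ means $\nu$ is an isomorphism off codimension two.

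Conversely, if $\mathcal Y^\nu$ is such a product, its invariant is $\DF=\alpha\cdot0+\beta\cdot\mathrm{Fut}$. The Futaki character of fiber scaling must vanish here, since otherwise some product configuration would have negative $\DF$, contradicting the semistability already established. Together with $\lambda=0$ this gives $\DF(\mathcal Y)=0$.

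The main obstacle is making the comparison formula precise in this generality. The central fibre may be nonreduced and $\mathcal Y$ need not be $S_2$, so I must verify carefully that only codimension-one non-normality contributes to the $k^{n+1}$ coefficient of the weight. I would first try to express the weight difference as $\chi$ of the equivariant coherent sheaf $\nu_*\mathcal O/\mathcal O$ twisted by $\mathcal H^k$. I would then compute its leading coefficient by the equivariant Riemann--Roch/localization along $\mathcal Y_0$ used by Ross--Thomas. This makes lower-dimensional support manifestly negligible.
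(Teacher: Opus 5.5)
Your route is the paper's: normalization comparison, then Theorem~\ref{thm:equality-classification} applied to $(\mathcal Y^\nu,\nu^*\mathcal H)$, then the sign flip of $\DF$ under inverting the product for the converse. However, the comparison step as you display it is wrong and does not give the conclusion.

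You correctly say that $\chi(\mathcal Q\otimes\mathcal H^k)$, with $\mathcal Q=\nu_*\mathcal O_{\mathcal Y^\nu}/\mathcal O_{\mathcal Y}$, has degree at most $n$ in $k$. But you then write the weight discrepancy as $\lambda k^{n+1}/(n+1)!+O(k^n)$, and later speak of the $k^{n+1}$ coefficient. A discrepancy at order $k^{n+1}$ would shift the leading weight coefficient $b_0$, which in fact agrees for $\mathcal Y$ and $\mathcal Y^\nu$. More importantly, the unspecified $O(k^n)$ remainder is exactly the coefficient $b_1$ on which $\DF$ depends. So ``inserting this'' determines nothing about the sign of $\DF(\mathcal Y)-\DF(\mathcal Y^\nu)$.

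The missing point is that the comparison is exact, not asymptotic. Write $p\colon\mathcal Y\to\mathbb A^1$ and $p^\nu=p\circ\nu$.
\begin{itemize}
\item For large $k$, relative Serre vanishing gives an equivariant inclusion of free $\mathbb C[t]$-modules of the same rank $h(k)$, namely $\Gamma(\mathbb A^1,p_*\mathcal H^k)\subset\Gamma(\mathbb A^1,p^\nu_*\nu^*\mathcal H^k)$. Its cokernel $\Gamma(\mathbb A^1,p_*(\mathcal Q\otimes\mathcal H^k))$ has $\mathbb C[t]$-length $\ell(k)$.
\item The determinant of this inclusion is a unit times $t^{\ell(k)}$. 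Hence the increasing-entry totals differ by exactly $\ell(k)$; Smith normal form of the underlying lattices suffices for this.
\item Since $\mathcal Q$ is supported on the central fibre, $\ell(k)=\chi(\mathcal Q\otimes\mathcal H^k)=ck^n+O(k^{n-1})$ with $c\geq0$. So $B_0$ is unchanged, $B_1$ increases by $c$, and $\DF(\mathcal Y)-\DF(\mathcal Y^\nu)=2c/A_0$. This is Lemma~\ref{lem:normalization-comparison}.
\item Moreover, $c>0$ if and only if $\operatorname{Supp}\mathcal Q$ has an $n$-dimensional component. That is, $c>0$ exactly when $\nu$ fails to be an isomorphism in codimension one of the $(n+1)$-dimensional $\mathcal Y$.
\end{itemize}
With this, the obstacle you anticipate disappears. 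No equivariant Riemann--Roch or localization is needed, and whether $\mathcal Y$ is $S_2$ or $\mathcal Y_0$ is nonreduced plays no role.

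You should also record two small facts the paper checks. First, $\mathcal Y$ is integral: flatness makes $R\to R[t^{-1}]$ injective into a domain, so the normalization is a single birational model. Second, $\mathcal Y^\nu$ is flat, because it is torsion-free over $\mathbb C[t]$.
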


For completeness, we provide the following well-known corollary, which indicates that allowing codimension-two defects in the equality clause is necessary for the Yau--Tian--Donaldson conjecture.

\begin{cor}
\label{cor:literal-donaldson}
Under the unrepaired scheme-theoretic convention in which zero
Donaldson--Futaki invariant is permitted only for an actual product test
configuration, K-polystability is not necessary for the existence of a cscK
metric.  The polarized curve
$(\mathbb P^1,\mathcal O_{\mathbb P^1}(1))$ carries a cscK metric but does not
satisfy this literal equality condition.
\end{cor}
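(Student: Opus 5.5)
The plan is to exhibit an explicit non-product test configuration for $(\mathbb P^1,\mathcal O_{\mathbb P^1}(1))$ with zero Donaldson--Futaki invariant. I would use the well-known embedded-point construction in the spirit of \cite[Section~8.2]{LX14} and \cite{Sto11}. The cscK half of the statement is classical: the Fubini--Study metric in $c_1(\mathcal O_{\mathbb P^1}(1))$ has constant curvature. The work is therefore to produce one ample test configuration $(\mathcal Y,\mathcal H)$ with $\DF(\mathcal Y,\mathcal H)=0$ that is not isomorphic, as a polarized $\mathbb C^*$-equivariant family, to a product configuration.

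\textbf{Construction.} Start with the trivial test configuration $\mathcal Y^\nu=\mathbb P^1\times\mathbb A^1$, with trivial action and pulled-back polarization $\mathcal O(1)$; its $\DF$ vanishes. Then modify the central fiber to acquire an embedded point. Concretely, fix a point $x\in\mathbb P^1$ and let $\mathcal Y$ be the non-normal scheme obtained by replacing the structure sheaf near $(x,0)$ by the subring of functions whose restriction to $(x,0)$ satisfies one extra linear condition. Equivalently, on the level of the filtration, shift the weight of a codimension-one subspace of sections by one in a way supported only at the point $(x,0)$.

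It is cleaner to do this algebraically via filtrations. For exponent $r$, take the $\mathbb Z$-filtration on $R_k=H^0(\mathbb P^1,\mathcal O(rk))$ obtained by placing weight $1$ on the one-dimensional complement of $H^0(\mathcal O(rk)\otimes\mathfrak m_x)$ and weight $0$ on $H^0(\mathcal O(rk)\otimes\mathfrak m_x)$. The Rees algebra of this filtration is finitely generated; its Proj is a test configuration with normalization the trivial product. The normalization map is an isomorphism away from the single point $(x,0)$, which has codimension two in the surface $\mathcal Y$.

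\textbf{Vanishing of $\DF$.} The total weight is $w_k=1$ for all $k$, while $\dim R_k=rk+1$. Hence in the expansion $w_k/(k\,N_k)=F_0+F_1k^{-1}+\dots$ the leading terms $b_0$, $b_1$ of $w_k=b_0k^2+b_1k+b_2$ vanish, so $F_0=F_1=0$ and $\DF=0$. This is the standard phenomenon that $\DF$ only sees the normalization up to codimension-two defects.

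\textbf{Non-product.} The central fiber $\mathcal Y_0$ has an embedded point at $x$. Equivalently, the filtration is not induced by any one-parameter subgroup of $\mathrm{PGL}_2$ twisted by a character: such a filtration splits $R_k$ into weight spaces with total weight linear in the $\mathfrak{sl}_2$ weights. One can then check that a filtration with jumps only of codimension one in every degree, and constant total weight, is impossible for a product unless it is trivial. But it is visibly nontrivial, because the weight-$1$ piece is nonzero.

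The step I expect to need the most care is this last identification. The cleanest route is to note that a product test configuration has reduced central fiber isomorphic to $\mathbb P^1$, whereas here $\mathcal Y_0$ is nonreduced, having the embedded point. Alternatively, one can observe that $\mathcal Y$ is not normal while every product configuration over $\mathbb A^1$ with smooth fiber is smooth.
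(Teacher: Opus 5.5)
Your overall strategy matches the paper's. You use a nonnormal configuration whose normalization is the trivial product, with total increasing weight $w(k)=1$ against $h(k)=rk+1$ so that $\DF=0$, and a nonreduced central fiber to rule out products. The gap is in the construction: the filtration you write down does not define a test configuration.

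Take the only multiplicative reading, in which sections in $H^0(\mathcal O(rk)\otimes\mathfrak m_x)$ get entry $0$ and the rest get entry $1$. Then the degree-$k$ extension lattice is $\{s(t)\in R_k\otimes\mathbb C[t] : s(0)(x)=0\}$, i.e.\ the sections of $\mathcal O(rk)$ on $\mathbb P^1\times\mathbb A^1$ that vanish at the point $(x,0)$. Every positive-degree homogeneous element of this Rees algebra vanishes at $(x,0)$. Hence its $\operatorname{Proj}$ is $(\mathbb P^1\times\mathbb A^1)\setminus\{(x,0)\}$, which is not proper and is normal. Its central fiber is the reduced curve $\mathbb P^1\setminus\{x\}$, with no embedded point. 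For the same reason the algebra is not finitely generated. Take $x=[0:1]$; modulo $t$, a product of two positive-degree elements either has $X$-degree at least two or lies in $t$ times the algebra. So the monomial $XY^{rk-1}$ is indecomposable in every degree.

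The other reading puts central weight $1$ on a chosen line $\ell_k$ complementary to $H^0(\mathcal O(rk)\otimes\mathfrak m_x)$. There multiplicativity fails, since $\ell_a\ell_b\neq0$ while the weight-$2$ piece is zero. Your sheaf-level description has the same defect: one linear condition on the value of $f$ at $(x,0)$ cuts out the ideal $\mathfrak m_{(x,0)}$, which does not contain $1$.

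The missing idea concerns what a colength-one subring of $\mathcal O_{\mathbb P^1\times\mathbb A^1}$ containing $\mathbb C[t]$ looks like. It is the kernel of a point derivation at $(x,0)$ that kills $t$. So the condition is on the first-order jet in the $\mathbb P^1$ direction, measured in a fixed local frame of the polarization. On sections, this means entry $0$ on the hyperplane of $H^0(\mathcal O(rk))$ spanned by all monomials except $XY^{rk-1}$, and entry $1$ on $XY^{rk-1}$. Note that $XY^{rk-1}$ itself vanishes at $x$, the opposite of your choice.

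This is exactly the paper's algebra $\mathsf A=\mathbb C[t,y,tx,x^2,x^3]$, which has the properties you need:
\begin{itemize}
\item $x$ is integral over $\mathsf A$ (as $x^2\in\mathsf A$), so the normalization is the finite map from $\mathbb P^1\times\mathbb A^1$ and $\operatorname{Proj}\mathsf A$ is proper.
\item The sixth Veronese is generated in degree one, so $\mathcal O(6)$ is relatively ample.
\item The normalization quotient is supported at the single point $(t,a,b,u)$.
\item The central fiber $\mathbb C[y,u,a,b]/(u^2,ua,ub,b^2-a^3)$ has the embedded prime $(u,a,b)$.
\end{itemize}
With this replacement, your weight count, your $\DF=0$ computation, and your nonreducedness argument against products go through exactly as in the paper.
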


\subsection{Sketch of the proofs}

We now explain the three parts of the proof.  We first construct the polarized
fivefold and compute an admissible boundary polynomial with a unique irrational
interior zero.  Smooth admissible metrics whose error in scalar curvature tends to
zero, together with Donaldson's lower bound, give K-semistability at every
exponent.  Circle invariance, equivariant openness, uniqueness, and
naturality identify nearby extremal metrics with explicit admissible
profiles.  Their normalized momenta and profiles converge uniformly.  At
the interior zero of the limiting profile, this would make a nonzero
fiber-scaling vector field have zero length for the limiting extremal metric,
which is impossible.  In particular, this argument does not require a
classification of the limiting metric.

For Theorem~\ref{thm:equality-classification}, we begin with an arbitrary
normal ample test configuration $\mathcal T$ for $(X,A^e)$ with
$\DF(\mathcal T)=0$ and specialize its filtration in the two directions of
the fiber-scaling action.  The two convex transforms are affine in the fiber
coordinate, and their coefficients coincide and are rational.  After a
ramified base change clears the denominators, the normalization of
$\mathcal T$ is compared with an integral product test configuration by
Proposition~\ref{prop:integral-product-comparator}.
Lemma~\ref{lem:oriented-relative-smith} identifies the relative Smith
integers with the residual jumps in the correct direction, and
Proposition~\ref{prop:marked-smith-rigidity} upgrades their asymptotic
vanishing to equality of the marked section algebras.  Descending through the
base change leaves a possible rounding in the affine weights;
Lemma~\ref{lem:affine-initials-row-budget} excludes that rounding and proves
that the original test configuration is a product.

Finally, Theorem~\ref{thm:uniform-threshold} follows from a separate explicit
calculation.  Rational Fibonacci creases converge to the irrational double
zero.  The Fibonacci recurrence gives cubic decay of the Donaldson--Futaki invariant
in the denominator, while the reduced non-Archimedean $J$-functional remains
of linear size.

\subsection{Structure of the paper}
Section~\ref{sec:preliminaries} fixes the conventions used throughout the
paper.  Section~\ref{sec:fivefold} constructs the polarized fivefold and proves
the analytic assertions.  Section~\ref{sec:affine-initials} proves affine
rigidity for the two opposite initial filtrations.  Section~\ref{sec:initials}
identifies their common rational profile and constructs the integral product
comparator after base change.  Section~\ref{sec:classification} proves the
equality classification.  Section~\ref{sec:main-proof} proves
Theorem~\ref{thm:main} and derives
Corollaries~\ref{cor:scheme-strengthening} and~\ref{cor:literal-donaldson}.
Section~\ref{sec:uniform-threshold} constructs the rational single-crease test
configurations and proves Theorem~\ref{thm:uniform-threshold}.

\begin{rem}\label{rem:ai-provenance}
The main result of this paper was obtained using generative AI, particularly GPT-5.6-sol, Fable 5, and the Danus system. Danus is a specialized agent built on the Rethlas system and is substantially more capable of conducting fundamental mathematical research. See \cite{Liu+26} and \cite{Ju+26} for detailed introductions to the Danus system and the Rethlas system, respectively. The author is fully responsible for the correctness of the paper.

The use of generative AI in this paper is discussed in detail in Appendix~\ref{sec:generative-ai-use}, which is joint work with Bin Dong and Guoxiong Gao.
\end{rem}

\subsection*{Acknowledgements}
The work was partially supported by the National Key R\&D Program of China \#\allowbreak 2024YFA1014400.

The author would like to thank Bin Dong, Guoxiong Gao, Chi Li, Gang Tian, and Kewei Zhang for their enormous efforts in assisting with the verification of the paper and for many useful comments. The author would like to thank Bin Wu for carrying out the comparison tests with the other agent systems. The author would like to thank other members of the Rethlas and Danus team (namely Leheng Chen, Guoxiong Gao, Jiedong Jiang, Haocheng Ju, Shurui Liu, Zeming Sun, Yuefeng Wang, Bin Wu, Liang Xiao, and Bin Dong) for their contributions to the development of these systems. The author would like to thank Ruochuan Liu and Gang Tian for constant support and encouragement.

\section{Preliminaries}\label{sec:preliminaries}

We recall the conventions for test configurations and scalar curvature that
will be used throughout the paper.
We work over the field of complex numbers $\mathbb C$.

\subsection{Test configurations and K-polystability}

In this subsection, we fix the stability convention used in the statements
of the main results.

\begin{defn}[Test configurations]\label{defn:test-configuration}
Let $(Y,H)$ be a polarized projective variety.  A test configuration of
exponent $e\geq 1$ for $(Y,H)$ means an algebraic test configuration in the
sense of \cite[Definitions~2.1.1--2.1.2]{Don02} whose generic polarized fiber
is $(Y,H^e)$.  It is \emph{normal ample} if its total space is normal and its
relative polarization is ample.  A polarized product test configuration is
one induced by an algebraic one-parameter subgroup of $\Aut(Y)$, together
with a scalar character on the polarization.
After fixing an equivariant product identification over $\mathbb G_m$, let $\tau(s)$ be
the largest integer for which $t^{-\tau(s)}s$ extends over the test
configuration.  This is the jump of the decreasing extension-order
filtration.  We use the increasing entry $i(s)=-\tau(s)$; equivalently,
$i(s)\leq\lambda$ precisely when $t^\lambda s$ extends.  These entries are
the negatives of the algebraic weights on the central fiber.  We therefore
label product data by their contributions to
the increasing entries: fiber scaling contributes $\beta j$ on the block
indexed by $j$, and the scalar character contributes $cm$ in section degree
$m$.  The algebraic weights on sections of the central fiber are
$-\beta j$ and $-cm$.  This convention fixes all character
signs used throughout the paper.
The probability measure obtained from the normalized increasing entries is
called the \emph{entry law}.  It is the reflection under $x\mapsto-x$ of
the standard central-weight Duistermaat--Heckman law.
\end{defn}

\begin{defn}[K-polystability]\label{defn:ordinary-k-polystability}
A polarized projective variety $(Y,H)$ is \emph{K-semistable} with respect to
normal ample algebraic test configurations if every such test configuration
of every positive exponent has nonnegative Donaldson--Futaki invariant.  It
is \emph{K-polystable} if, in addition, equality occurs only for polarized
product test configurations. 
\end{defn}

\subsection{Scalar curvature}

In this subsection, we record the normalization used in the analytic and
algebraic formulas.

\begin{nota}[Scalar-curvature normalization]
\label{nota:scalar-curvature-normalization}
Throughout the paper, $\Scal(\omega)$ denotes the Riemannian scalar curvature
divided by $4\pi$.  This is the normalization used in the
Donaldson--Futaki formulas in this paper; it does not affect whether a K\"ahler metric
has constant scalar curvature or is extremal.
\end{nota}
 
\section{The fivefold and its K\"ahler geometry}\label{sec:fivefold}

In this section, we construct the polarized fivefold of
Theorem~\ref{thm:main}, compute its admissible boundary polynomial, and prove
Propositions~\ref{prop:approximate-csck} and~\ref{prop:no-extremal-metric}.

\subsection{The polarized fivefold}\label{subsec:polarized-fivefold}

In this subsection, we choose four curves with pairwise Hom-orthogonal
Jacobians and construct a smooth polarized fivefold whose connected
automorphism group consists of fiber scalings.

\begin{thm}[Hyperelliptic Jacobians,
{\cite[Theorem~2.1, p.~124]{Zar00}}]
\label{thm:zarhin-hyperelliptic-jacobians}
Let $K$ be a field of characteristic zero, and let $f\in K[x]$ be an
irreducible polynomial of degree at least five.  Assume that the Galois group
of $f$ is either the full symmetric group or the alternating group.  If
$J_f$ is the Jacobian of the smooth projective model of $y^2=f(x)$, then
\begin{equation}\label{eq:zarhin-endomorphism-ring}
 \End_{\overline K}(J_f)=\mathbb Z.
\end{equation}
In particular, $J_f$ is absolutely simple.
\end{thm}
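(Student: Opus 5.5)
This is Zarhin's theorem, quoted from the literature, so the plan is to reconstruct his argument rather than derive it from earlier results in the paper. Write $n=\deg f\geq5$, let $\mathfrak R\subset\overline K$ be the set of roots of $f$, and let $X=J_f$ be of dimension $g=\lfloor (n-1)/2\rfloor$. Passing to a finite extension does not change $\End_{\overline K}$, so we may assume all endomorphisms are defined over $K$. This is harmless because $A_n$, and $S_n$ has only $A_n$ as a large normal subgroup, so the Galois image on $\mathfrak R$ stays $S_n$ or $A_n$. Precisely, we may enlarge $K$ by a field linearly disjoint from the splitting field.

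\textbf{Step 1: the $2$-torsion representation.} The group $X[2]$ is the $\mathbb F_2$-space $\mathbb F_2^{\mathfrak R}$ modulo the diagonal, restricted to the sum-zero subspace. For $n$ odd this is the sum-zero part; for $n$ even one takes the sum-zero part modulo $(1,\dots,1)$. The Galois group acts through its permutation action. For $n\geq5$, the "heart" of the permutation module of $S_n$ or $A_n$ is an absolutely simple $\mathbb F_2[G]$-module. Moreover, one checks directly that $\End_G(X[2])=\mathbb F_2$: the module is "very simple" in Zarhin's sense.

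\textbf{Step 2: reduction.} $\End^0(X)=\End(X)\otimes\mathbb Q$ is a semisimple algebra, and $\End(X)\otimes\mathbb Z_2$ embeds in $\End_{\mathbb Z_2}(T_2X)$, commuting with Galois. Reducing modulo $2$ gives $\End(X)/2\End(X)\hookrightarrow\End_G(X[2])=\mathbb F_2$. This uses that the image is saturated: $\End(X)\otimes\mathbb Z_2$ is saturated in $\End T_2$, because $\End(X)$ is saturated in $\End(T_\ell)$. Since $\End(X)$ is a free $\mathbb Z$-module of finite rank, its rank is $1$, so $\End(X)=\mathbb Z$.

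This needs care. Zarhin's actual argument uses that $\End(X)\otimes\mathbb F_2\subset\End_G(X[2])$, which holds because the Galois-invariant endomorphisms act Galois-equivariantly on $X[2]$ and $\End(X)/2$ injects into $\End(X[2])$. The injection follows from saturation: an endomorphism killing $X[2]$ is divisible by $2$.

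\textbf{Conclusion and main obstacle.} Absolute simplicity is immediate, since a nontrivial isogeny decomposition would produce idempotents in $\End^0$. The main obstacle is Step 1, namely verifying $\End_G(X[2])=\mathbb F_2$ for $A_n$ when $n\geq5$, including the even-$n$ quotient. I would handle it by invoking double transitivity of $A_n$ for $n\geq4$: this gives two orbits on ordered pairs, so the endomorphism algebra of the permutation module has dimension $2$, and hence the heart has endomorphisms $\mathbb F_2$. The exceptional small cases are excluded by $n\geq5$.
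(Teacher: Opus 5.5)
The paper gives no proof of this statement; it is quoted from \cite{Zar00}. Your reconstruction has a genuine gap at its first move: you cannot assume that all endomorphisms are defined over $K$ while keeping the Galois group $S_n$ or $A_n$.

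Let $K'$ be the field of definition of $\End_{\overline K}(X)$, and let $L=K(\mathfrak R)=K(X[2])$. Over $K'$ the Galois image on $\mathfrak R$ is $\operatorname{Gal}(L/L\cap K')$. This is a normal subgroup of $S_n$ or $A_n$, and the list of such subgroups includes $\{1\}$, which occurs exactly when $L\subseteq K'$. The field $K'$ is dictated by $X$, so you cannot replace it by a field linearly disjoint from $L$. The general a priori bound is $K'\subseteq K(X[4])$, a field that contains $L$.

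If $L\subseteq K'$, the image of $\End(X)/2$ only has to commute with the trivial group, and your Step~2 says nothing. Ruling out this case is the actual content of the theorem, and your justification cannot do it. For a supersingular $X$ in characteristic $p$ one has $\End_{K'}(X)\otimes\mathbb Z_2=\End(T_2X)$, which forces $K(X[2])\subseteq K'$. Nothing in your argument distinguishes that situation; note that characteristic zero is never used anywhere in your proposal.

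The missing idea is Zarhin's notion of a \emph{very simple} module. Since ${}^\sigma\phi=\sigma\phi\sigma^{-1}$ on torsion points, the image $R$ of $\End_{\overline K}(X)\otimes\mathbb F_2$ in $\End_{\mathbb F_2}(X[2])$ is only \emph{normalized} by the Galois image $G$, not centralized by it. Zarhin calls $X[2]$ very simple if every unital $\mathbb F_2$-subalgebra normalized by $G$ is either $\mathbb F_2$ or all of $\End_{\mathbb F_2}(X[2])$. This is strictly stronger than $\End_G(X[2])=\mathbb F_2$, which is what you called very simple, and he proves it for the heart under $S_n$ and $A_n$ with $n\geq5$. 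The argument then splits:
\begin{itemize}
\item If $R=\mathbb F_2$, your saturation argument gives $\End_{\overline K}(X)=\mathbb Z$.
\item If $R=\End_{\mathbb F_2}(X[2])$, then $\operatorname{rk}_{\mathbb Z}\End_{\overline K}(X)=4g^2>2g^2$, which is impossible in characteristic zero. This is exactly where the hypothesis on $K$ enters.
\end{itemize}

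Separately, your double-transitivity argument for Step~1 is not valid. In characteristic two the permutation module is not semisimple, and endomorphisms of the heart need not extend to it. Concretely, $A_4$ is doubly transitive on four points, yet it acts on its two-dimensional heart through $A_4/V_4\cong\mathbb Z/3$ ($V_4$ the Klein four-group), so that heart has endomorphism algebra $\mathbb F_4$. Absolute simplicity for $n\geq5$ therefore needs a genuine argument, such as Mortimer's theorem or Zarhin's own lemmas, not just the count of orbits on ordered pairs.
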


\begin{thm}[Chow base change,
{\cite[Theorem~1.2]{Yu19}}]
\label{thm:chow-base-change}
Let $k\subseteq K$ be a primary field extension, and let $G$ and $H$ be
semi-abelian varieties over $k$.  Then base change induces an isomorphism
\begin{equation}\label{eq:chow-base-change}
 \Hom_k(G,H)\simeq\Hom_K(G_K,H_K).
\end{equation}
\end{thm}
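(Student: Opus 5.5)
The plan is the standard rigidity-plus-spreading-out argument: show that homomorphisms between semi-abelian varieties form a discrete (étale) object, so a $K$-homomorphism cannot vary algebraically and must be defined over $k$.

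\emph{Injectivity.} Base change $k\to K$ is faithfully flat. If a $k$-homomorphism $\phi\colon G\to H$ becomes zero on $G_K$, then $\phi$ agrees with the zero map after a faithfully flat extension, hence is zero. So $\Hom_k(G,H)\to\Hom_K(G_K,H_K)$ is injective.

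\emph{Reduction to finitely generated extensions.} A given $\psi\in\Hom_K(G_K,H_K)$ is described by finitely many equations with coefficients in $K$. Hence it is defined over a finitely generated subextension $k\subseteq K_0\subseteq K$. The extension $K_0/k$ is again primary, since $k$ remains separably algebraically closed in $K_0$. So we may assume $K$ is finitely generated over $k$. Then $K=k(S)$ for a variety $S$ over $k$, and $S$ is geometrically connected because $K/k$ is primary. Shrinking $S$ to a nonempty open $U$, we spread $\psi$ out to a homomorphism of $U$-group schemes $\Psi\colon G_U\to H_U$.

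\emph{Rigidity.} The central input is that the functor $T\mapsto\Hom_T(G_T,H_T)$ is represented by a $k$-scheme $\mathcal H$ that is étale over $k$ (locally of finite type and unramified). I would prove this in three steps.
\begin{enumerate}
\item Write $1\to T_G\to G\to A_G\to0$, and similarly for $H$.
\item Any homomorphism $G\to H$ maps $T_G$ into $T_H$, since the image of a torus in $A_H$ is trivial. It therefore induces maps $T_G\to T_H$ and $A_G\to A_H$.
\item Homomorphisms of tori are governed by the étale sheaf $\Hom(X^*(T_H),X^*(T_G))$. Homomorphisms of abelian varieties are rigid: infinitesimal deformations vanish and the Hom scheme is étale. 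The remaining datum, compatibility with the extension classes in $\operatorname{Ext}^1(A,T)\cong$ (dual abelian variety)$\otimes$ characters, is a closed and open condition.
\end{enumerate}
Together these give an unramified Hom scheme. Equivalently, one checks directly that the tangent space to $\mathcal H$ at any point vanishes using the rigidity lemma. Representability then follows from the abelian and toric cases together with the extension structure.

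\emph{Descent of the point.} The family $\Psi$ gives a morphism $U\to\mathcal H$. Since $U$ is connected and $\mathcal H$ is étale, the image lies in one connected component $\Spec k'$, with $k'/k$ finite separable. This produces a $k$-embedding $k'\hookrightarrow\mathcal O(U)\subseteq K$. Because $k$ is separably algebraically closed in $K$, we get $k'=k$, so the morphism factors through a $k$-rational point of $\mathcal H$. That point is a $k$-homomorphism $\phi\colon G\to H$ with $\phi_K=\psi$, which gives surjectivity.

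I expect the main obstacle to be the rigidity step for genuinely semi-abelian (non-split) $G,H$. For abelian varieties and for tori the statements are classical; the care lies in controlling the extension data. The fallback is to avoid representability altogether: argue directly that two points of $U$ yield the same homomorphism, using connectedness of $U_{\bar k}$ and the fact that homomorphisms on torsion points (finite étale) and on character groups are locally constant. Then conclude by Galois descent with respect to $\operatorname{Gal}(k^{\mathrm{sep}}/k)$, combined with primary-ness to handle the inseparable part.
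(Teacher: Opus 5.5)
The paper gives no proof of this statement. It is imported from [Yu19, Theorem~1.2], and its only use (Lemma~\ref{lem:orthogonal-curves}) is the case $k=\overline{K_i}$, $K=\mathbb C$, $G=H=\Pic^0(C_i)$. That is just Chow's classical theorem for abelian varieties over an algebraically closed field.

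Your proposal is therefore an independent proof, and its strategy is sound. You represent $\underline{\Hom}(G,H)$ by an étale $k$-scheme $\mathcal H$ and use that a primary extension adds no points to an étale scheme; this is the standard argument in the abelian case. Once $\mathcal H$ exists, your injectivity step, the reduction to finitely generated $K$, and the spreading-out over $U$ are all unnecessary. A $K$-point of $\mathcal H$ lands in one component $\Spec k'$ with $k'/k$ finite separable, giving a $k$-embedding $k'\hookrightarrow K$. This forces $k'=k$, so $\mathcal H(k)=\mathcal H(K)$ immediately. In the case the paper needs, $\mathcal H$ is even a disjoint union of copies of $\Spec k$.

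All the content sits in your rigidity step, which is only sketched. To make ``closed and open'' precise, you need two vanishings over every $k$-scheme $S$:
\begin{itemize}
\item $\underline{\Hom}(A_G,T_H)=0$, because $\Gamma(A_{G,S},\mathcal O)=\mathcal O(S)$ and $T_H$ is affine;
\item $\underline{\Hom}(T_G,A_H)=0$, by restricting to the finite étale prime-to-$p$ torsion, which is schematically dense in $T_{G,S}$ once $T_G$ is split étale-locally.
\end{itemize}
Then $\underline{\Hom}(G,H)$ is the equalizer of the two maps $(\alpha,\beta)\mapsto\alpha_*[G]$ and $(\alpha,\beta)\mapsto\beta^*[H]$. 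These go from the étale scheme $E=\underline{\Hom}(T_G,T_H)\times\underline{\Hom}(A_G,A_H)$ to $P=\underline{\mathrm{Ext}}^1(A_G,T_H)$. By Weil--Barsotti, $P$ is an abelian variety, a form of $A_G^\vee\otimes X_*(T_H)$ (cocharacters, not characters). Since $P$ is separated, the equalizer is closed in $E$, hence open and closed, hence étale.

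Your ``equivalent'' tangent-space check ($\Hom(G,\mathbb G_a)=0$) gives unramifiedness only after representability is known, so it is not an alternative route to $\mathcal H$.

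You can also avoid representability altogether. Descend the toric and abelian parts of $\psi$ using the two classical cases. Descend the equality of extension classes through the injection $P(k)\hookrightarrow P(K)$. Then conclude $\phi_K=\psi$ from $\Hom_K(A_{G,K},T_{H,K})=0$.

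Finally, your fallback would not work as stated. Prime-to-$p$ torsion and character groups only see $k^{\mathrm{sep}}$, and Galois descent says nothing about $\bar k/k^{\mathrm{sep}}$. Moreover, $U$ need not have $k^{\mathrm{sep}}$-points when $K_0/k$ is inseparable. The purely inseparable part is exactly what the vanishing of first-order deformations, i.e.\ étaleness of $\mathcal H$, supplies.
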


\begin{lem}\label{lem:orthogonal-curves}
There exist smooth projective connected curves $C_0,C_1,C_2,C_3$ of genera
\begin{equation}\label{eq:curve-genera}
 (g_0,g_1,g_2,g_3)=(3846511,10591,76,46)
\end{equation}
such that
\begin{equation}\label{eq:jacobian-hom-vanishing}
 \Hom\bigl(\Pic^0(C_i),\Pic^0(C_j)\bigr)=0
 \qquad\text{for }i\ne j.
\end{equation}
\end{lem}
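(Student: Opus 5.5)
We realize each $C_i$ as a hyperelliptic curve $y^2=f_i(x)$ with $f_i\in\mathbb Q[x]$ irreducible and with Galois group $S_{d_i}$. The genus $g_i$ is obtained by taking $\deg f_i=d_i=2g_i+1$ (or $2g_i+2$). So we need degrees $7693023$, $21183$, $153$, $93$, all at least five. Polynomials over $\mathbb Q$ with full symmetric Galois group exist in every degree; Hilbert irreducibility even makes them generic, and explicit families such as $x^n-x-1$ are available (Osada/Selmer). Theorem~\ref{thm:zarhin-hyperelliptic-jacobians} with $K=\mathbb Q$ then gives $\End_{\overline{\mathbb Q}}(J_i)=\mathbb Z$ for $J_i=\Pic^0(C_i)$. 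In particular each $J_i$ is absolutely simple of dimension $g_i$.

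Next we show $\Hom_{\overline{\mathbb Q}}(J_i,J_j)=0$ for $i\ne j$. A nonzero homomorphism between absolutely simple abelian varieties is an isogeny, so it forces $\dim J_i=\dim J_j$. Since the four genera $3846511,10591,76,46$ are pairwise distinct, every such Hom vanishes. This is the main reason distinct genera suffice, and no finer independence of the $f_i$ is needed.

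Finally we pass from $\overline{\mathbb Q}$ to $\mathbb C$. The extension $\overline{\mathbb Q}\subseteq\mathbb C$ is primary because $\overline{\mathbb Q}$ is algebraically closed, hence separably closed in $\mathbb C$. Abelian varieties are semi-abelian, so Theorem~\ref{thm:chow-base-change} gives
\[
\Hom_{\mathbb C}(J_{i,\mathbb C},J_{j,\mathbb C})\simeq\Hom_{\overline{\mathbb Q}}(J_i,J_j)=0.
\]
Moreover $\Pic^0(C_{i,\mathbb C})=J_{i,\mathbb C}$, since the Jacobian commutes with base change. This proves \eqref{eq:jacobian-hom-vanishing}.

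The only step needing care is the first one: exhibiting, for each of the four specific degrees, an irreducible rational polynomial whose Galois group is $S_d$ or $A_d$. I would cite Osada's theorem that $x^n-x-1$ has Galois group $S_n$ for all $n$, which covers all four degrees uniformly. Failing that, I would use Hilbert irreducibility: a generic specialization of the general polynomial of degree $d$ has group $S_d$. This suffices because the lemma only asserts existence.
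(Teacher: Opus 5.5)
Your proof is correct and follows the same route as the paper: Zarhin's theorem gives absolutely simple hyperelliptic Jacobians, pairwise distinct genera rule out isogenies, and Chow's base-change theorem along a primary extension carries the conclusion to $\mathbb C$. The differences are minor. The paper takes $f_i$ to be the generic monic polynomial whose $2g_i+1$ roots are algebraically independent, so the Galois group $S_{n_i}$ is elementary and no external input such as Osada's theorem is needed; it then embeds each $\overline K_i$ separately into $\mathbb C$, base-changes $\End$ of each Jacobian, and runs the isogeny argument over $\mathbb C$. Your curves all live over the common field $\overline{\mathbb Q}$, which lets you run the isogeny argument there and base-change $\Hom(J_i,J_j)$ directly.
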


\begin{proof}
We construct the curves over finitely generated fields and then embed those
fields into $\mathbb C$.  Theorem~\ref{thm:zarhin-hyperelliptic-jacobians}
will give simple Jacobians, and the distinct genera will exclude isogenies.
For each $i$, put $n_i=2g_i+1$.  Choose $n_i$ algebraically independent
variables over $\mathbb Q$, let $f_i$ be the monic polynomial whose roots are
those variables, and let $K_i$ be the field generated by the coefficients of
$f_i$.  Let $L_i$ be the rational function field generated by the roots.
Every permutation of the algebraically independent roots defines a distinct
$K_i$-automorphism of $L_i$.  Conversely, $L_i$ is the splitting field of
$f_i$, and every $K_i$-automorphism of $L_i$ permutes those roots.  Thus
$f_i\in K_i[x]$ is separable and has Galois group $S_{n_i}$.  Since this
group acts transitively on the roots, $f_i$ is
irreducible.  The smooth projective model of
\begin{equation}\label{eq:hyperelliptic-curve-equation}
 y^2=f_i(x)
\end{equation}
has genus $g_i$.  By
Theorem~\ref{thm:zarhin-hyperelliptic-jacobians},
\begin{equation}\label{eq:jacobian-endomorphisms}
 \End_{\overline K_i}\bigl(\Pic^0(C_i)\bigr)=\mathbb Z.
\end{equation}
Thus each Jacobian is absolutely simple.

We next pass from the fields $K_i$ to the complex numbers.  Choose embeddings
$K_i\hookrightarrow\mathbb C$, extend them to
$\overline K_i\hookrightarrow\mathbb C$, and base change the curves.  Since
$\mathbb C/\overline K_i$ is a primary extension,
Theorem~\ref{thm:chow-base-change}, applied to the Jacobian over
$\overline K_i$, gives
\[
 \End_{\overline K_i}\bigl(\Pic^0(C_i)\bigr)
 \simeq
 \End_{\mathbb C}\bigl(\Pic^0(C_i)_{\mathbb C}\bigr).
\]
Hence the four resulting complex Jacobians remain
simple.

It remains to prove the vanishing of the homomorphism groups in
\eqref{eq:jacobian-hom-vanishing}.
Suppose that $i\ne j$ and that
\begin{equation}\label{eq:hypothetical-jacobian-map}
 \varphi\colon\Pic^0(C_i)\to\Pic^0(C_j)
\end{equation}
is nonzero.  Simplicity of the target implies that $\varphi$ is surjective,
while simplicity of the source implies that the identity component of
$\ker(\varphi)$ is trivial.  Therefore $\varphi$ is an isogeny.  This is
impossible because $g_i\ne g_j$.  We obtain
\eqref{eq:jacobian-hom-vanishing}.
\end{proof}

\begin{lem}\label{lem:split-projective-bundle-ampleness}
Let $C_i$ be a smooth projective connected curve of genus $g_i$ for
$0\leq i\leq3$, put $B=\prod_{i=0}^3C_i$, and let
$N=\boxtimes_{i=0}^3N_i$ and $L=\boxtimes_{i=0}^3L_i$ be line bundles on
$B$.  Fix a positive integer $s$.  Assume that
\begin{equation}\label{eq:split-projective-bundle-positive-ends}
 \deg N_i>0
 \quad\text{and}\quad
 \deg(N_i\otimes L_i^s)>0
 \qquad(0\leq i\leq3).
\end{equation}
In the quotient convention, put
\begin{equation}\label{eq:split-projective-bundle-line-bundle}
 \pi\colon Y=\mathbb P_B(\mathcal O_B\oplus L)\to B,
 \qquad
 H=\mathcal O_Y(s)\otimes\pi^*N.
\end{equation}
Then $H$ is ample.
\end{lem}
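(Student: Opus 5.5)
We reduce ampleness of $H$ to ampleness on $B$ by exhibiting $H$ as a tensor product of relatively ample and pulled-back ample pieces, using the splitting of $\mathcal O_B\oplus L$. In the quotient convention, $\pi_*\mathcal O_Y(s)=\operatorname{Sym}^s(\mathcal O_B\oplus L)=\bigoplus_{k=0}^sL^k$. Hence
\[
\pi_*H=\bigoplus_{k=0}^s N\otimes L^k .
\]
For each $i$, the degree $\deg(N_i\otimes L_i^k)$ is an affine function of $k$ that is positive at $k=0$ and $k=s$. So it is positive for all $0\le k\le s$. Since $B$ is a product of curves and $N\otimes L^k=\boxtimes_i(N_i\otimes L_i^k)$ is a box product of positive-degree line bundles, each summand is ample on $B$. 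The plan is to deduce ampleness of $H$ from ampleness of this split bundle.

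The cleanest route is to show that the vector bundle $E=\pi_*H$ is ample, since it is a direct sum of ample line bundles. Then $\mathcal O_{\mathbb P(E)}(1)$ is ample on $\mathbb P_B(E)$. The natural surjection $\pi^*E\to H$ is surjective because $\mathcal O_Y(s)$ is relatively globally generated. It defines a $B$-morphism $\phi\colon Y\to\mathbb P_B(E)$ with $\phi^*\mathcal O_{\mathbb P(E)}(1)=H$. On each fiber this morphism is the $s$-th Veronese embedding of $\mathbb P^1$, so $\phi$ is a closed embedding, and $H$ is ample as the pullback of an ample bundle under a finite (indeed closed) embedding.

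As an alternative, or as a check, we can use Kleiman-type numerics. $Y$ is a toric-fibered $\mathbb P^1$-bundle with two disjoint sections. The first is $\Sigma_0$, corresponding to the quotient $\mathcal O_B\oplus L\to\mathcal O_B$, on which $H$ restricts to $N$. The second is $\Sigma_\infty$, corresponding to $\mathcal O_B\oplus L\to L$, on which $H$ restricts to $N\otimes L^s$. In this approach one would check that the nef cone is generated by these boundary restrictions together with the fiber degree $s>0$. The embedding argument above avoids that analysis, so we follow it.

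The only point needing care is the quotient convention: one must verify that $\mathcal O_Y(1)$ restricts to $\mathcal O_B$ and to $L$ on the two sections, and not to their duals. Otherwise the hypotheses would need $L^{-s}$. Once $\pi_*\mathcal O_Y(s)=\bigoplus_kL^k$ is fixed correctly, the remaining steps are standard: direct sums of ample bundles are ample, and Veronese embeddings are closed immersions.
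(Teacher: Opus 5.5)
Your argument is correct, but it takes a different route from the paper's. The paper never forms $\mathbb P_B(\pi_*H)$. It puts $\epsilon_i=\min\{\deg N_i,\deg(N_i\otimes L_i^s)\}$ and fixes $k$ with $k\epsilon_i\geq 2g_i+1$, so that every summand $N^k\otimes L^j$ ($0\leq j\leq sk$) of $H^0(Y,H^k)$ has factor degrees at least $2g_i+1$ and is globally generated, and the two endpoint bundles are very ample. It then checks by hand that $|H^k|$ separates points and tangent vectors. The full fiber system handles points and vertical vectors within one fiber. The endpoint summands $j=0$ and $j=sk$, multiplied by a nonvanishing endpoint monomial, handle points over distinct base points and nonvertical tangent vectors.

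You instead factor $H$ through the relative Veronese map $\phi\colon Y\to\mathbb P_B(E)$, with $E=\bigoplus_{k=0}^sN\otimes L^k$, and import Hartshorne's theorem that a direct sum of ample line bundles is an ample vector bundle. This is shorter and more general. The only input from the product-of-curves structure is ampleness of the summands, and that even holds over an arbitrary base: $(N\otimes L^k)^s=N^{s-k}\otimes(N\otimes L^s)^k$ is ample whenever $N$ and $N\otimes L^s$ are. The paper's route, by contrast, is self-contained, uses only the $2g+1$ criterion on curves, and produces an explicit very ample power. In effect it unpacks your two black boxes (ampleness of $\mathcal O_{\mathbb P(E)}(1)$ and the fiberwise Veronese embedding) into the point and tangent separation.

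One small remark on your final step. You do not need $\phi$ to be a closed immersion, which requires more than the fiberwise statement. It suffices that $\phi$ is proper, as a $B$-morphism from a $B$-proper scheme to a separated one, and quasi-finite, being injective on every fiber. Hence $\phi$ is finite, and ampleness pulls back along finite morphisms.
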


\begin{proof}
Put
\begin{equation}\label{eq:split-projective-bundle-epsilon}
 \epsilon_i=\min\{\deg N_i,\deg(N_i\otimes L_i^s)\}>0.
\end{equation}
For $k\geq1$ and $0\leq j\leq sk$, linearity of the degree in $j$ gives
\begin{equation}\label{eq:split-projective-bundle-intermediate-degree}
 \deg(N_i^k\otimes L_i^j)
 =k\deg N_i+j\deg L_i\geq k\epsilon_i.
\end{equation}
Choose $k$ so large that the right-hand side is at least $2g_i+1$ for
every $i$.  If $J$ is a line bundle of degree at least $2g_i+1$ on $C_i$
and $Z\subset C_i$ has length two, then the dual of
$H^1(C_i,J\otimes\mathcal I_Z)$ is the space of sections of a line bundle
of degree
\begin{equation}\label{eq:curve-length-two-dual-degree}
 2g_i-2-\deg J+2<0.
\end{equation}
Thus $H^0(C_i,J)\to H^0(Z,J\vert_Z)$ is surjective, so $J$ is very ample
and hence globally generated.  It follows
that every line bundle
$N^k\otimes L^j$ in the decomposition
\begin{equation}\label{eq:split-projective-bundle-sections}
 H^0(Y,H^k)
 =\bigoplus_{j=0}^{sk}H^0(B,N^k\otimes L^j)
\end{equation}
is then globally generated, while the two endpoint line bundles
$N^k$ and $N^k\otimes L^{sk}$ are very ample on $B$.

The evaluation maps of the summands in
\eqref{eq:split-projective-bundle-sections} are surjective at every point of
$B$.  Hence the complete linear system of $H^k$ restricts to the complete
linear system of $\mathcal O_{\mathbb P^1}(sk)$ on every fiber of $\pi$ and
separates points in each fiber.  If two points lie over distinct base points,
choose at the first point a nonzero endpoint fiber coordinate.  The
corresponding endpoint summand, $j=0$ or $j=sk$, contains a base section
which is nonzero at the first base point and vanishes at the second, so it
separates the two points even when they lie in different fiber charts.

We next separate tangent vectors.  Let $v$ be a tangent vector at $y\in Y$.
If $d\pi(v)\neq0$, choose an endpoint monomial $\mu$ which is nonzero at
$y$ and a base section $s$ which vanishes at $\pi(y)$ and satisfies
$d s_{\pi(y)}(d\pi(v))\neq0$.  Then
\[
 d(\pi^*s\cdot\mu)_y(v)
 =\mu(y)\,d s_{\pi(y)}(d\pi(v))\neq0,
\]
because $s(\pi(y))=0$, so the derivative of $\mu$ contributes nothing.  If
$d\pi(v)=0$, then $v$ is vertical and is separated by the complete fiber
system.  Thus $H^k$ separates points and tangent vectors on $Y$, so it is
very ample.  Therefore $H$ is ample.
\end{proof}

\begin{thm}[Finite descent of ampleness,
{\cite[Tag~0B5V]{Sta26}}]
\label{thm:finite-descent-ampleness}
Let $f\colon Y\to Z$ be a finite surjective morphism of Noetherian schemes
proper over $\mathbb C$, and let $H$ be a line bundle on $Z$.  Then $H$ is
ample if and only if $f^*H$ is ample.
\end{thm}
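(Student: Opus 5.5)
The statement is cited from the Stacks Project, so the plan is to reduce it to two standard ingredients: the cohomological (Serre) criterion for ampleness on proper schemes, and the projection formula for finite morphisms.

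\emph{Ample implies pulled-back ample.} This is the easy direction and needs neither surjectivity nor any cohomology. A finite morphism is affine. On a scheme proper over $\mathbb C$, an ample line bundle has some power very ample relative to $\Spec\mathbb C$. The pullback of an ample bundle under an affine (indeed quasi-affine) morphism of finite type is ample. Concretely, the preimages $f^{-1}(Z_s)$ of the affine loci $Z_s$, for sections $s$ of $H^n$, are affine and cover $Y$. So $f^*H$ is ample.

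\emph{Pulled-back ample implies ample.} This is the substantive direction, and I would use Serre's criterion on the proper scheme $Z$: $H$ is ample if and only if, for every coherent sheaf $\mathcal F$ on $Z$, $H^i(Z,\mathcal F\otimes H^n)=0$ for all $i>0$ and $n\gg0$. I would run noetherian induction on the support of $\mathcal F$, or equivalently dévissage. First reduce to $Z$ reduced, then to $Z$ integral, using that ampleness can be checked on $Z_{\mathrm{red}}$ and on irreducible components. Restricting $f$ to a suitable component of $Y$ dominating each component of $Z$ keeps it finite surjective.

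The key construction for integral $Z$ is a coherent sheaf $\mathcal G$ on $Y$ and a map $f_*\mathcal G\to\mathcal F$ that is an isomorphism, or at least surjective, at the generic point of $Z$. For example, take $\mathcal G=\mathcal O_Y^{\oplus r}$ followed by a generic-rank comparison. Alternatively, use a nonzero map $f_*\mathcal O_Y\to\mathcal O_Z$ generically, via the trace when $f$ is generically finite flat, after passing to an integral component of $Y$. The kernel and cokernel are then supported on proper closed subsets, where the induction hypothesis applies.

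By the projection formula,
\[
H^i\bigl(Z,(f_*\mathcal G)\otimes H^n\bigr)\cong H^i\bigl(Y,\mathcal G\otimes f^*H^n\bigr),
\]
and the right-hand side vanishes for $n\gg0$ because $f^*H$ is ample. Here I use $R^qf_*=0$ for the finite morphism $f$. The long exact sequences then propagate the vanishing to $\mathcal F$. This step is the main obstacle: one must handle the generically-only-surjective map carefully, with both the kernel and the cokernel supported in smaller dimension. Surjectivity of $f$ is exactly what guarantees the generic map can be made nonzero.

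An alternative route for the hard direction avoids dévissage: use the Nakai--Moishezon / Kleiman criterion. Since $f$ is finite and surjective, every integral closed subvariety $V\subseteq Z$ is dominated by some $W\subseteq Y$ finite onto $V$. Then
\[
(f^*H)^{\dim W}\cdot W=\deg(W/V)\,(H^{\dim V}\cdot V),
\]
so positivity transfers from $f^*H$ to $H$. Over $\mathbb C$ with $Z$ proper, Nakai--Moishezon holds for proper schemes (Kleiman), which gives the conclusion directly. I would present this second argument, since it is shorter. Its key check is the existence of $W$, which follows from surjectivity together with taking an irreducible component of $f^{-1}(V)$ that dominates $V$.
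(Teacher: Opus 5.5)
Your proposal is correct, and the argument you intend to present takes a genuinely different route from the one the paper relies on. The paper gives no proof of its own beyond citing Stacks Tag~0B5V, and that argument is essentially your first sketch. The easy direction holds because a finite morphism is affine. The converse goes through Serre's cohomological criterion, the identification $H^i(Z,f_*\mathcal G\otimes H^n)\cong H^i(Y,\mathcal G\otimes f^*H^n)$, and a d\'evissage adapted to finite surjective morphisms.

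Your Nakai--Moishezon route replaces the entire d\'evissage by the degree formula for an irreducible component $W$ of $f^{-1}(V)$ dominating $V$. This is much shorter and makes the role of surjectivity transparent. The cost is in the input. You need Nakai's criterion for \emph{proper}, not necessarily projective, schemes (Kleiman); the projective version would presuppose what is being proved. You also need intersection numbers on proper schemes and their projection formula under finite maps, and the argument is confined to schemes proper over a field. The standard proof of Nakai's criterion on proper schemes uses only the easy direction of finite pullback (through a finite morphism to projective space), so there is no circularity. The cohomological proof, by contrast, needs only Serre's criterion and works over any Noetherian base.

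If you ever write out the first route, one step needs repair. A map $f_*\mathcal G\to\mathcal F$ that is merely surjective at the generic point $\xi$ of $Z$ can have a kernel of full support. So it is not true that both kernel and cokernel live on proper closed subsets. Nor can you insist on an isomorphism onto $\mathcal F$ at $\xi$: the dimension $\dim_{K(Z)}(f_*\mathcal G)_\xi$ is constrained by the algebra $(f_*\mathcal O_Y)_\xi$, and if that algebra is a quadratic field extension, only even dimensions occur.

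The usual fix is to map a torsion-free sheaf of the form $f_*\mathcal G$ to $\mathcal F^{\oplus r}$ by a map that is an isomorphism at $\xi$. Such a map is injective, its cokernel has smaller support and is handled by induction, and vanishing for $\mathcal F^{\oplus r}$ implies vanishing for $\mathcal F$. Alternatively, descending induction on the cohomological degree copes with a generically surjective map.

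On a proper $Y$, the choice $\mathcal G=\mathcal O_Y^{\oplus r}$ usually admits no nonzero map at all; this already happens for $f=\mathrm{id}_{\mathbb P^1}$ and $\mathcal F=\mathcal O(-1)$. So $\mathcal G$ must be twisted, for instance by a nonzero ideal. Finally, the reduction to irreducible components cannot be quoted from Lemma~\ref{lem:componentwise-ampleness}, which this paper deduces from the very statement under discussion; an independent cohomological proof is needed there.
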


\begin{proof}
This is \cite[Tag~0B5V]{Sta26}.
\end{proof}

\begin{lem}\label{lem:componentwise-ampleness}
Let $Z$ be a Noetherian scheme proper over $\mathbb C$, with irreducible components
$Z_1,\ldots,Z_h$ in its reduction, and let $H$ be a line bundle on $Z$.
If $H\vert_{Z_a}$ is ample for every $a$, then $H$ is ample on $Z$.
\end{lem}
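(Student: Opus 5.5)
The plan is to reduce to Theorem~\ref{thm:finite-descent-ampleness} by means of the natural finite surjective morphism from the disjoint union of the components. Put
\[
f\colon Y=\bigsqcup_{a=1}^h Z_a\longrightarrow Z,
\]
where each $Z_a$ is given its reduced induced structure and $f$ restricts on $Z_a$ to the closed immersion $Z_a\hookrightarrow Z_{\mathrm{red}}\hookrightarrow Z$. I would verify the hypotheses of Theorem~\ref{thm:finite-descent-ampleness} in three steps.

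First, $Y$ is Noetherian and proper over $\mathbb C$: it is a finite disjoint union of closed subschemes of the Noetherian scheme $Z$, which is proper over $\mathbb C$. Second, $f$ is finite, because it is a closed immersion on each of finitely many pieces and closed immersions are finite. Third, $f$ is surjective, because every point of $Z$ lies in $Z_{\mathrm{red}}$, hence in some irreducible component $Z_a$.

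Theorem~\ref{thm:finite-descent-ampleness} then says that $H$ is ample if and only if $f^*H$ is ample on $Y$. Now $f^*H$ restricts to $H\vert_{Z_a}$ on the open and closed piece $Z_a$, and these restrictions are ample by hypothesis. It remains to check that a line bundle on a finite disjoint union is ample when its restriction to each piece is ample. For this, choose $n$ so that $H\vert_{Z_a}^{\otimes n}$ is very ample for every $a$. Then either embed the disjoint union into a single projective space as a union of disjoint linear sections, or use the cohomological criterion: $H^i$ of a coherent sheaf on $Y$ is the direct sum of the $H^i$ on the pieces, so Serre vanishing on each piece gives it on $Y$, since there are only finitely many pieces. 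Hence $f^*H$ is ample, and so is $H$.

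I do not expect any real obstacle. The only points needing care are that nilpotents are harmless here, which is handled by $f$ factoring through $Z_{\mathrm{red}}$ together with the finite-descent theorem, and the disjoint-union step, which is routine.
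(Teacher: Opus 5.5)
Your proposal is correct and is the same as the paper's proof. The paper also applies Theorem~\ref{thm:finite-descent-ampleness} to the finite surjective morphism $\coprod_{a} Z_a\to Z$ and notes that the pullback of $H$ is ample because it is ample on each open and closed summand. Your extra justification of the disjoint-union step, via very ample embeddings or via Serre vanishing summed over finitely many pieces, spells out what the paper leaves implicit.
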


\begin{proof}
The morphism
\begin{equation}\label{eq:componentwise-finite-cover}
 \coprod_{a=1}^h Z_a\to Z
\end{equation}
is finite and surjective.  The pullback of $H$ by
\eqref{eq:componentwise-finite-cover} is ample because its restriction to
each open and closed summand $Z_a$ is ample.
Theorem~\ref{thm:finite-descent-ampleness} therefore shows that $H$ is ample
on $Z$.
\end{proof}

\begin{cons}\label{cons:explicit-fivefold}
Choose curves as in Lemma~\ref{lem:orthogonal-curves}.  For $0\leq i\leq3$,
choose line bundles $M_i$ and $L_i$ on $C_i$ with degree vectors
\begin{equation}\label{eq:line-bundle-degrees}
\begin{aligned}
 (\deg M_0,\deg M_1,\deg M_2,\deg M_3)
   &=(461999,13962,1068,260),\\
 (\deg L_0,\deg L_1,\deg L_2,\deg L_3)
   &=(13397971,-11635,-712,-104).
\end{aligned}
\end{equation}
Put
\begin{equation}\label{eq:base-and-line-bundles}
 B=\prod_{i=0}^3 C_i,
 \qquad
 M=\boxtimes_{i=0}^3M_i,
 \qquad
 L=\boxtimes_{i=0}^3L_i.
\end{equation}
Using the Grothendieck quotient convention, define
\begin{equation}\label{eq:fivefold-and-polarization}
 \pi\colon X=\mathbb P_B(\mathcal O_B\oplus L)\to B,
 \qquad
 A=\mathcal O_X(1)\otimes\pi^*M.
\end{equation}
\end{cons}

\begin{prop}\label{prop:basic-geometry}
In Construction~\ref{cons:explicit-fivefold}, $X$ is a smooth projective
fivefold, $A$ is ample, and
\begin{equation}\label{eq:connected-automorphisms}
 \Aut^0(X)=\mathbb C^*.
\end{equation}
The group $\mathbb C^*$ acts by scaling the $L$-summand relative to the
$\mathcal O_B$-summand.
\end{prop}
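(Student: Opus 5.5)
Smoothness, projectivity and dimension are immediate: $X$ is a $\mathbb P^1$-bundle over the smooth fourfold $B=\prod C_i$, so $X$ is a smooth projective fivefold. For ampleness of $A$ I will apply Lemma~\ref{lem:split-projective-bundle-ampleness} with $s=1$, $N=M$. This needs $\deg M_i>0$ and $\deg(M_i\otimes L_i)>0$, and from \eqref{eq:line-bundle-degrees} these are $461999,13962,1068,260>0$ and
\[
 461999+13397971,\quad 13962-11635=2327,\quad 1068-712=356,\quad 260-104=52,
\]
all positive. The factors $2327,356,52$ already appear in $C_{\mathrm{bd}}$.

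For the automorphism group, $X$ contains a $\mathbb C^*$ acting by scaling the $L$-summand. This action is effective, since it fixes the two sections $\mathbb P(\mathcal O_B)$ and $\mathbb P(L)$ and scales each $\mathbb P^1$ fiber nontrivially. The task is to show $\Aut^0(X)$ is exactly this torus, in three steps.

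\emph{Step one: $\Aut^0(X)$ preserves $\pi$.} By Blanchard's lemma, the connected group $\Aut^0(X)$ acts on $B$ compatibly with $\pi$, because the fibers $\mathbb P^1$ are contracted by $\pi_*$ and $\pi_*\mathcal O_X=\mathcal O_B$. This gives an exact sequence
\[
 1\to\Aut^0_B(X)\to\Aut^0(X)\to\Aut^0(B).
\]

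\emph{Step two: the image in $\Aut^0(B)$ is trivial.} Every genus $g_i\ge2$, so each $C_i$ has finite automorphism group. Moreover $H^0(B,T_B)=\bigoplus_i H^0(C_i,T_{C_i})=0$ by Künneth, so $\Aut^0(B)$ is trivial and the image is trivial.

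\emph{Step three: the relative automorphisms.} The Lie algebra of $\Aut_B(X)$ is $H^0(X,T_{X/B})=H^0(B,\pi_*T_{X/B})$. For $E=\mathcal O\oplus L$ one has $\pi_*T_{X/B}=\mathrm{ad}(E)=\End_0(E)$, which decomposes as
\[
 \End_0(E)=\mathcal O\oplus L\oplus L^{-1},
\]
with $\mathcal O$ the traceless diagonal part. Hence
\[
 H^0(X,T_{X/B})=\mathbb C\oplus H^0(B,L)\oplus H^0(B,L^{-1}).
\]
By Künneth, $H^0(B,L^{\pm1})=\bigotimes_i H^0(C_i,L_i^{\pm1})$. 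The degrees of $L_i$ have mixed signs: $\deg L_0>0$ while $\deg L_1<0$. So $H^0(C_1,L_1)=0$ and $H^0(C_0,L_0^{-1})=0$, and both products vanish. The Lie algebra is therefore one-dimensional, spanned by the scaling field, so $\Aut^0(X)=\Aut^0_B(X)$ is a one-dimensional connected group containing the scaling $\mathbb C^*$, hence equal to it.

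The step requiring the most care is Step one, the Blanchard argument showing that $\Aut^0(X)$ descends to $B$. I would make this rigorous by noting that $\Aut^0(X)$ preserves the numerical class of the fiber, so it permutes the rational curves contracted by $\pi$. An alternative is the infinitesimal version: the sequence
\[
 0\to T_{X/B}\to T_X\to\pi^*T_B\to0
\]
together with $H^0(X,\pi^*T_B)=H^0(B,T_B)=0$ gives $H^0(X,T_X)=H^0(X,T_{X/B})$ directly. This already pins down the Lie algebra as one-dimensional, which suffices for the connected component. I would use this infinitesimal route as the main argument.
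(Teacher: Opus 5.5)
Your proposal is correct and is essentially the paper's proof. It uses the same three ingredients: ampleness from Lemma~\ref{lem:split-projective-bundle-ampleness} with $N=M$, $s=1$; the vanishing $H^0(B,T_B)=0$; and $\pi_*T_{X/B}\simeq\mathcal O_B\oplus L\oplus L^{-1}$ with $H^0(B,L^{\pm1})=0$ forced by $\deg L_0>0>\deg L_1$. Your tangent-sequence argument giving $H^0(X,T_X)=H^0(X,T_{X/B})$ slightly streamlines the paper, which instead descends automorphisms to $B$ through chains of rational curves; your version suffices because only $\dim\Aut^0(X)=h^0(X,T_X)=1$ is needed.

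One arithmetic slip: $\deg(M_3\otimes L_3)=260-104=156$, which equals $\ell_3(1)=52\cdot3$, not $52$. Positivity, and hence the argument, is unaffected.
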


\begin{proof}
We first prove smoothness and ampleness.  We then show that every connected
automorphism acts trivially on $B$ and compute the relative automorphism
group from the Euler sequence.

\medskip

\noindent\textbf{Step 1.} In this step, we prove that $X$ is a smooth
projective fivefold and that $A$ is ample.
The degree vectors at the two ends of the fiber interval are
\begin{equation}\label{eq:endpoint-degrees}
 (\deg M_i)_i=(461999,13962,1068,260)
\end{equation}
and
\begin{equation}\label{eq:other-endpoint-degrees}
 (\deg(M_i\otimes L_i))_i=(13859970,2327,356,156).
\end{equation}
Every entry in \eqref{eq:endpoint-degrees} and
\eqref{eq:other-endpoint-degrees} is positive.
Lemma~\ref{lem:split-projective-bundle-ampleness}, applied with $N=M$ and
$s=1$, therefore shows that $A$ is ample.  Since $B$ is a
smooth projective fourfold and $X$ is the projectivization of a rank-two
vector bundle on $B$, the variety $X$ is smooth, projective, and
$\dim X=5$.

\medskip

\noindent\textbf{Step 2.} In this step, we show that the connected
automorphism group acts trivially on $B$.
The signs in \eqref{eq:line-bundle-degrees} imply that
\begin{equation}\label{eq:mixed-bundle-sections}
 H^0(B,L)=H^0(B,L^{-1})=0.
\end{equation}
The tensor-product decomposition of sections gives both vanishings because
$L_1$ and $L_0^{-1}$ have negative degree.

Every map from $\mathbb P^1$ to a curve $C_i$ is constant because
$g_i\geq2$.  Hence every rational curve in $X$ is contained in a fiber of
$\pi$.  Conversely, any two points of a fiber are joined by that fiber.
The fibers of $\pi$ are therefore exactly the equivalence classes generated
by chains of rational curves.  Every automorphism of $X$ preserves this
equivalence relation and descends, using
$\pi_*\mathcal O_X=\mathcal O_B$, to an automorphism of $B$.  Moreover,
\begin{equation}\label{eq:base-vector-fields}
 H^0(B,T_B)=0
\end{equation}
because $T_B=\bigoplus_i\operatorname{pr}_i^*T_{C_i}$ and each $T_{C_i}$
has negative degree.  The identity component of the automorphism scheme of
the smooth projective variety $B$ has this space as its Lie algebra.  In
characteristic zero it is smooth, and hence its zero Lie algebra implies
$\Aut^0(B)=1$.  It follows that $\Aut^0(X)$ acts trivially on $B$ and has
Lie algebra $H^0(X,T_{X/B})$.

\medskip

\noindent\textbf{Step 3.} We compute $H^0(X,T_{X/B})$ and conclude the
proof in this step.
In the quotient convention, the relative tangent bundle fits into
\begin{equation}\label{eq:relative-tangent-sequence}
 0\to\mathcal O_X\to
 \pi^*(\mathcal O_B\oplus L)^\vee\otimes\mathcal O_X(1)
 \to T_{X/B}\to0.
\end{equation}
The projective-bundle formula gives
$\pi_*\mathcal O_X=\mathcal O_B$ and
$R^1\pi_*\mathcal O_X=0$.  Pushing forward
\eqref{eq:relative-tangent-sequence} and using
$\pi_*\mathcal O_X(1)=\mathcal O_B\oplus L$ therefore give
\begin{equation}\label{eq:relative-tangent-pushforward}
 \pi_*T_{X/B}
 \simeq
 \End(\mathcal O_B\oplus L)/(\mathcal O_B\cdot\operatorname{id})
 \simeq
 \mathcal O_B\oplus L\oplus L^{-1}.
\end{equation}
By \eqref{eq:mixed-bundle-sections},
$H^0(X,T_{X/B})\simeq\mathbb C$.  The diagonal automorphisms
$\operatorname{diag}(1,t)$ induce an effective copy of $\mathbb C^*$ in
$\Aut^0(X)$.  A connected one-dimensional algebraic group containing this
copy equals it, and \eqref{eq:connected-automorphisms} follows.
\end{proof}

\subsection{The admissible boundary polynomial}
\label{subsec:boundary-polynomial}

In this subsection, we compute the one-variable polynomial that controls
both the algebraic and analytic threshold of the polarization $A$.

\begin{setup}\label{setup:boundary-data}
For $0\leq i\leq3$, put
\begin{equation}\label{eq:alpha-delta-beta}
 \alpha_i=\deg M_i,
 \qquad
 \delta_i=\deg L_i,
 \qquad
 \beta_i=1-g_i.
\end{equation}
For $0\leq x\leq1$, define
\begin{equation}\label{eq:p-and-q}
 \ell_i(x)=\alpha_i+\delta_i x,
 \qquad
 p(x)=\prod_{i=0}^3\ell_i(x),
 \qquad
 q(x)=\sum_{i=0}^3\beta_i\prod_{j\ne i}\ell_j(x).
\end{equation}
The four affine factors are
\begin{equation}\label{eq:four-affine-factors}
\begin{aligned}
 \ell_0(x)&=461999(1+29x),&
 \ell_1(x)&=2327(6-5x),\\
 \ell_2(x)&=356(3-2x),&
 \ell_3(x)&=52(5-2x).
\end{aligned}
\end{equation}
They are positive on $[0,1]$.  Finally, set
\begin{equation}\label{eq:hilbert-coefficients}
 a_0=\int_0^1p(x)\,dx,
 \qquad
 a_1=\frac{p(0)+p(1)}2+\int_0^1q(x)\,dx,
 \qquad
 \overline S=\frac{2a_1}{a_0}.
\end{equation}
\end{setup}

\begin{lem}\label{lem:boundary-polynomial}
Let $F$ be the polynomial determined by
\begin{equation}\label{eq:boundary-equation}
 F''=2q-\overline S p,
 \qquad
 F(0)=0,
 \qquad
 F'(0)=p(0).
\end{equation}
Then
\begin{equation}\label{eq:boundary-polynomial}
 F(x)=90(461999)(2327)(356)(52)
 x(1-x)(x^2-3x+1)^2.
\end{equation}
Moreover,
\begin{equation}\label{eq:boundary-right-end}
 \overline S=-\frac{135}{29},
 \qquad
 F(1)=0,
 \qquad
 F'(1)=-p(1),
\end{equation}
and the unique zero of $F$ in $(0,1)$ is
\begin{equation}\label{eq:irrational-contact}
 \lambda=\frac{3-\sqrt5}{2},
\end{equation}
with multiplicity two.
\end{lem}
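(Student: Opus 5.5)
This is a direct polynomial computation, and the plan is to verify the claimed closed form rather than integrate blindly. Put $C=461999\cdot2327\cdot356\cdot52$, so that $p(x)=C(1+29x)(6-5x)(3-2x)(5-2x)$ by \eqref{eq:four-affine-factors}. The quantity $q$ is determined by $\beta_i=1-g_i$, which equals $-3846510,-10590,-75,-45$. Since $\ell_i$ carries the factor $\alpha_i/\text{(bracket constant)}$, I will write $q=C\,\tilde q$, where $\tilde q(x)=\sum_i(\beta_i/c_i)\prod_{j\ne i}\tilde\ell_j(x)$. Here $\tilde\ell_i$ is the bracketed affine factor and $c_i\in\{461999,2327,356,52\}$. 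One checks $3846510/461999$, $10590/2327$, $75/356$ and $45/52$ as rational coefficients; presumably the genera were chosen so that $\tilde q$ has manageable coefficients.

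\textbf{Step 1.} Compute $a_0=\int_0^1p$ and $a_1=\tfrac12(p(0)+p(1))+\int_0^1q$, both as rational multiples of $C$. Here $p(0)=C\cdot90$ and $p(1)=C\cdot30\cdot1\cdot1\cdot3=90C$. This gives $\overline S=2a_1/a_0$, which should come out to $-135/29$.

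\textbf{Step 2.} Rather than integrating $F''$ and then factoring, I will set $G(x)=90C\,x(1-x)(x^2-3x+1)^2$ and check that $G$ satisfies \eqref{eq:boundary-equation}. The initial conditions are immediate: $G(0)=0$, and $G'(0)=90C\cdot(x^2-3x+1)^2|_{0}=90C=p(0)$. The equation $G''=2q-\overline S p$ is an identity between quartic polynomials, so it suffices to compare five coefficients, or equivalently values at five points such as $x=0,1,\tfrac12,2,-1$. Uniqueness of solutions to the second-order initial value problem then gives $F=G$.

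\textbf{Step 3.} The remaining claims follow from the factored form. We get $F(1)=0$ directly. Differentiating the factor $(1-x)$ gives $F'(1)=-90C\cdot1\cdot(1-3+1)^2=-90C=-p(1)$. The roots of $x^2-3x+1$ are $(3\pm\sqrt5)/2$. Only $\lambda=(3-\sqrt5)/2\approx0.382$ lies in $(0,1)$, and it occurs with multiplicity two. The factor $x(1-x)$ has no interior zeros.

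I expect the only real obstacle to be the bookkeeping in Step 2, namely matching $2q-\overline S p$ with $G''$ given the large constants. The choice of $\overline S$ is forced by consistency: integrating $F''$ over $[0,1]$ gives $F'(1)-F'(0)=2\int q-\overline S a_0=-(p(0)+p(1))$, which is exactly the definition of $a_1$. So $F'(1)=-p(1)$ holds automatically, and I can use it as a cross-check on $\overline S$. The genuine content of the lemma is that $F(1)=0$ and that the double interior root appears. These facts come out of the coefficient comparison, which I will organize by dividing through by $C$ and clearing the small denominators $c_i$.
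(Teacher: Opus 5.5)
Your plan is correct, but it takes a different route from the paper. You first compute $a_0$ and $a_1$ as explicit integrals to get $\overline S=-135/29$. You then match $G''$ against $2q-\overline S p$ at five generic points, or by five coefficients.

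The paper never computes $a_0$ or $a_1$. It evaluates $G''$ at the four roots $\rho_i\in\{-1/29,\,6/5,\,3/2,\,5/2\}$ of the primitive affine factors. There $p(\rho_i)=0$, and $q(\rho_i)$ collapses to the single term $\beta_i\prod_{j\ne i}\ell_j(\rho_i)$. So each check is a one-line rational identity that does not involve $\overline S$ at all. The fifth condition is the $x^4$ coefficient: the leading coefficients $-2700C$ of $G''$ and $-580C$ of $p$ force $s=-135/29$ in $G''=2q-sp$, because $\deg q\le 3$.

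The identification $s=\overline S$ is then obtained from exactly the integration identity you list as a cross-check, read in the other direction. Integrate $G''=2q-sp$ over $[0,1]$ and use $G'(0)=p(0)$ and $G'(1)=-p(1)$, both read off from the factored form. This gives $s\,a_0=p(0)+p(1)+2\int_0^1q=2a_1$.

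What each approach buys: the paper trades your numerical integration of two quartics and a full coefficient comparison for four evaluations at well-chosen points. It also shows where the genera come from. Given the target $F$, the ratios $\beta_i/c_i$ are forced by the value of $F''$ at $\rho_i$, so the genera are reverse-engineered from the interpolation conditions rather than chosen to keep $\tilde q$ tidy. Your route is more mechanical but equally rigorous once the arithmetic is actually carried out.
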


\begin{proof}
We verify an explicit candidate by interpolation at the four roots of the
primitive affine factors.  We then identify its differential-equation
coefficient with $\overline S$ and determine its interior zero.

\medskip

\noindent\textbf{Step 1.} In this step, we define the candidate and compute
its boundary data.
Put
\begin{equation}\label{eq:boundary-normalizing-constants}
 (c_0,c_1,c_2,c_3)=(461999,2327,356,52),
 \qquad
 C=\prod_{i=0}^3c_i,
 \qquad
 K=90C.
\end{equation}
Write the primitive affine factors in \eqref{eq:four-affine-factors} as
$u_0=1+29x$, $u_1=6-5x$, $u_2=3-2x$, and $u_3=5-2x$.  Their products at
$x=0$ and $x=1$ both equal $90$, and hence
\begin{equation}\label{eq:p-endpoint-value}
 p(0)=p(1)=K.
\end{equation}
Define
\begin{equation}\label{eq:boundary-candidate}
 F_0(x)=Kx(1-x)(x^2-3x+1)^2.
\end{equation}
Then
\begin{equation}\label{eq:second-derivative-candidate}
 F_0''(x)=KR(x),
 \qquad
 R(x)=-30x^4+140x^3-204x^2+102x-14.
\end{equation}
The factorization in \eqref{eq:boundary-candidate} gives
\begin{equation}\label{eq:candidate-boundary-data}
 F_0(0)=F_0(1)=0,
 \qquad
 F_0'(0)=p(0),
 \qquad
 F_0'(1)=-p(1).
\end{equation}

\medskip

\noindent\textbf{Step 2.} In this step, we prove the required differential
equation by polynomial interpolation.
The roots of $u_0,u_1,u_2,u_3$ are
\begin{equation}\label{eq:primitive-factor-roots}
 \rho_0=-\frac1{29},
 \qquad
 \rho_1=\frac65,
 \qquad
 \rho_2=\frac32,
 \qquad
 \rho_3=\frac52.
\end{equation}
Substitution in \eqref{eq:second-derivative-candidate} gives
\begin{equation}\label{eq:four-interpolation-identities}
\begin{aligned}
 \frac{45R(\rho_0)}{\prod_{j\ne0}u_j(\rho_0)}
   &=-\frac{3846510}{461999},&
 \frac{45R(\rho_1)}{\prod_{j\ne1}u_j(\rho_1)}
   &=-\frac{10590}{2327},\\
 \frac{45R(\rho_2)}{\prod_{j\ne2}u_j(\rho_2)}
   &=-\frac{75}{356},&
 \frac{45R(\rho_3)}{\prod_{j\ne3}u_j(\rho_3)}
   &=-\frac{45}{52}.
\end{aligned}
\end{equation}
Since
\begin{equation}\label{eq:beta-vector}
 (\beta_0,\beta_1,\beta_2,\beta_3)
 =(-3846510,-10590,-75,-45),
\end{equation}
the identities in \eqref{eq:four-interpolation-identities} are equivalent to
\begin{equation}\label{eq:interpolation-values}
 F_0''(\rho_i)=2\beta_i\prod_{j\ne i}\ell_j(\rho_i)
 \qquad(0\leq i\leq3).
\end{equation}

Set $s=-135/29$ and
\begin{equation}\label{eq:interpolation-error}
 H=F_0''-2q+sp.
\end{equation}
By \eqref{eq:interpolation-values}, $H(\rho_i)=0$ for every $i$.  The
leading coefficients of $F_0''$ and $p$ are $-2700C$ and $-580C$, while
$\deg q\leq3$.  Thus the coefficient of degree four in $H$ equals
\begin{equation}\label{eq:degree-four-cancellation}
 -2700C+s(-580C)=0.
\end{equation}
We have $\deg H\leq3$, and the four distinct zeros force $H=0$.  Therefore
\begin{equation}\label{eq:candidate-ode}
 F_0''=2q-sp.
\end{equation}

\medskip

\noindent\textbf{Step 3.} In this step, we identify $s$ with the coefficient
$\overline S$ in Set-up~\ref{setup:boundary-data}.
Integrating \eqref{eq:candidate-ode} over $[0,1]$ and using
\eqref{eq:candidate-boundary-data}, we obtain
\begin{equation}\label{eq:average-scalar-coefficient}
 s\int_0^1p(x)\,dx
 =p(0)+p(1)+2\int_0^1q(x)\,dx
 =2a_1.
\end{equation}
Since $a_0>0$, \eqref{eq:hilbert-coefficients} implies that
$s=\overline S$.  Hence $F_0$ satisfies \eqref{eq:boundary-equation}.
Uniqueness follows because the difference of two solutions has zero second
derivative, value, and first derivative at $0$.

\medskip

\noindent\textbf{Step 4.} We determine the zero set of $F$ and conclude the
proof in this step.
The factor $x(1-x)$ is nonnegative on $[0,1]$, while the remaining factor in
\eqref{eq:boundary-polynomial} is a square.  The roots of $x^2-3x+1$ are
$(3-\sqrt5)/2$ and $(3+\sqrt5)/2$.  Only the first root belongs to $(0,1)$,
and its multiplicity in $F$ is two.
\end{proof}

\subsection{Approximation by admissible metrics}
\label{subsec:approximate-csck}

In this subsection, we use the boundary polynomial to construct smooth
K\"ahler metrics whose scalar curvatures converge in $L^2$ to the average
scalar curvature.

\begin{thm}[Admissible metric formulas,
{\cite[Theorem~1, Section~1.3, and Proposition~6,
especially eq.~(10)]{ACG+08}}]
\label{thm:admissible-formulas}
Let $\widetilde F$ be smooth on $[0,1]$, positive on $(0,1)$, and assume that
\begin{equation}\label{eq:admissible-endpoint-data}
 \widetilde F(0)=\widetilde F(1)=0,
 \qquad
 \widetilde F'(0)=p(0),
 \qquad
 \widetilde F'(1)=-p(1).
\end{equation}
Then the admissible construction with profile
$\widetilde\Theta=\widetilde F/p$ defines a K\"ahler form
$\widehat\omega_{\widetilde F}$ in $4\pi c_1(A)$.  Put
\begin{equation}\label{eq:rescaled-admissible-form}
 \omega_{\widetilde F}
 =\frac{1}{4\pi}\widehat\omega_{\widetilde F}\in c_1(A).
\end{equation}
Let $\omega_i$ be the constant-curvature form on $C_i$, and let $\theta$ be
the connection one-form in the admissible construction.  There exists a
constant $C_B>0$, independent of $\widetilde F$, such that
\begin{equation}\label{eq:volume-and-scalar-curvature}
\begin{aligned}
 d\mu(\omega_{\widetilde F})
   &=C_Bp(x)\,dx\wedge\theta\wedge\bigwedge_{i=0}^3\omega_i,\\
 \Scal(\omega_{\widetilde F})
   &=\frac{2q(x)}{p(x)}-\frac{\widetilde F''(x)}{p(x)}.
\end{aligned}
\end{equation}
Here $\Scal$ has the normalization fixed in
Notation~\ref{nota:scalar-curvature-normalization}.
\end{thm}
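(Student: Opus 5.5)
The statement is essentially a specialization of the admissible-metric calculus of \cite{ACG+08,ACG+11} to the base $B=\prod_{i=0}^3C_i$ with the split bundle $\mathcal O_B\oplus L$. I would prove it by writing the admissible ansatz explicitly and checking the Kähler class, volume form and scalar curvature against \eqref{eq:volume-and-scalar-curvature}.

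\textbf{Setting up the ansatz.} First I fix the rescaled data. Each $C_i$ carries a constant-curvature form $\omega_i$ of total area $1$, normalized via Notation~\ref{nota:scalar-curvature-normalization} so that its (rescaled) scalar curvature is $2\beta_i=2(1-g_i)$. On the $\mathbb C^*$-bundle $L\setminus 0$, with its Hermitian connection, I take the connection one-form $\theta$ with curvature $d\theta=\sum_i\delta_i\omega_i$. The fiber-scaling circle acts with moment map $x\in[0,1]$.

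Define
\begin{equation*}
\widehat\omega=4\pi\Bigl(\sum_i(\alpha_i+\delta_i x)\,\omega_i+\frac{dx\wedge dt}{\widetilde\Theta(x)}+\widetilde\Theta(x)\,\theta\wedge dt\Bigr)
\end{equation*}
with $\widetilde\Theta=\widetilde F/p$. The $dt$ term is written schematically: I would match it to the exact conventions of \cite[Section~1.3]{ACG+08}, with the normalization chosen so that $4\pi$ gives the class $4\pi c_1(A)$.

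\textbf{Positivity, smooth extension and the Kähler class.} On $(0,1)$, positivity of $\widehat\omega$ follows from $\ell_i>0$ on $[0,1]$, given by \eqref{eq:four-affine-factors}, together with $\widetilde F>0$.

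Smooth extension across the zero and infinity sections requires $\widetilde\Theta(0)=\widetilde\Theta(1)=0$ and $\widetilde\Theta'(0)=-\widetilde\Theta'(1)=1$ (up to the paper's factor convention). Since $p>0$ at the endpoints, these conditions are exactly \eqref{eq:admissible-endpoint-data}. This is the standard LeBrun/ACG boundary criterion, and I would invoke \cite[Section~1.3]{ACG+08} for it.

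The class is independent of $\widetilde\Theta$, because it depends only on the restrictions to the two sections, and these are $\sum\alpha_i\omega_i$ and $\sum(\alpha_i+\delta_i)\omega_i$. These restrictions match $c_1(A)$ on the sections $\mathbb P(\mathcal O)$ and $\mathbb P(L)$. Since $H^2(X)$ is generated by pullbacks from $B$ together with $c_1(\mathcal O_X(1))$, the restrictions to the two sections determine the class.

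\textbf{Volume form.} Expanding $\widehat\omega^5/5!$, the only surviving terms pair the four $\omega_i$ with $dx\wedge\theta$. Both $\widetilde\Theta$ factors cancel, leaving $\prod_i\ell_i(x)=p(x)$ times a universal constant $C_B$.

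\textbf{Scalar curvature.} This is the main obstacle: a curvature computation with careful normalization. I would quote ACG's formula: for $\omega_{\widetilde F}$,
\begin{equation*}
\Scal=\sum_i\frac{\mathrm{Scal}_i}{\ell_i(x)}-\frac{1}{p(x)}\bigl(p\widetilde\Theta\bigr)''.
\end{equation*}
With $\mathrm{Scal}_i=2\beta_i$, the first term is $2q/p$ by \eqref{eq:p-and-q}, and the second is $\widetilde F''/p$. The factor $4\pi$ in the definition of $\omega_{\widetilde F}$ converts ACG's conventions to Notation~\ref{nota:scalar-curvature-normalization}.

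To guard against a misplaced factor of $2$, I would cross-check with the Futaki/average computation. Integrating the claimed formula against $p\,dx$ and using \eqref{eq:admissible-endpoint-data} gives total scalar curvature $p(0)+p(1)+2\int q$. This equals $2a_1$, consistent with $\overline S=2a_1/a_0$ in \eqref{eq:hilbert-coefficients} and with the Riemann–Roch coefficients of $X$. That consistency confirms the normalization.
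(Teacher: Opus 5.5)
Your proposal is correct and matches the paper, which gives no separate proof of this statement. The paper takes it directly from \cite{ACG+08}, relying on the compactification criterion for admissible profiles and the scalar-curvature formula, which are exactly the inputs you quote. Your normalizations are consistent with the paper's: unit-area $\omega_i$ gives rescaled curvature $2\beta_i$, the conditions $\widetilde\Theta'(0)=-\widetilde\Theta'(1)=1$ translate into \eqref{eq:admissible-endpoint-data}, and the average is $2a_1/a_0=\overline S$.

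Two small fixes. Your displayed ansatz conflates the metric with the K\"ahler form: in momentum coordinates the form is $\sum_i\ell_i(x)\omega_i+dx\wedge\theta$, up to the normalization of $\theta$, and contains no $\widetilde\Theta$ at all. Also, recovering the class from the two section restrictions uses $c_1(L)\neq0$; alternatively, read the $\xi$-coefficient off the fiber integral.
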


\begin{prop}\label{prop:approximate-csck}
There exist smooth admissible K\"ahler metrics
$\omega_\epsilon\in c_1(A)$, indexed by $\epsilon>0$, such that
\begin{equation}\label{eq:calabi-error-limit}
 \bigl\lVert\Scal(\omega_\epsilon)-\overline S\bigr\rVert_{L^2}
 \to0
 \qquad\text{as }\epsilon\to0^+.
\end{equation}
In particular,
\begin{equation}\label{eq:calabi-infimum-zero}
 \inf_{\omega\in c_1(A)}
 \bigl\lVert\Scal(\omega)-\overline S\bigr\rVert_{L^2}=0.
\end{equation}
\end{prop}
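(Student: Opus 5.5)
The plan is to perturb the boundary polynomial $F$ of Lemma~\ref{lem:boundary-polynomial}, which satisfies all the endpoint conditions \eqref{eq:admissible-endpoint-data} but vanishes to order two at the interior point $\lambda$. Concretely, for $\epsilon>0$ put
\[
 F_\epsilon(x)=F(x)+\epsilon\,x(1-x)\bigl(x(1-x)+\epsilon\bigr)^{0}\cdot\psi(x),
\]
or, more simply, $F_\epsilon=F+\epsilon\,G$ with $G(x)=x^2(1-x)^2$. The function $G$ satisfies $G(0)=G(1)=G'(0)=G'(1)=0$, so $F_\epsilon$ keeps the endpoint data $F_\epsilon(0)=F_\epsilon(1)=0$, $F_\epsilon'(0)=p(0)$, $F_\epsilon'(1)=-p(1)$. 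On $(0,1)$ we have $F\geq0$ and $G>0$, so $F_\epsilon>0$ on $(0,1)$ for every $\epsilon>0$. Since $F_\epsilon$ is a polynomial, it is smooth. Theorem~\ref{thm:admissible-formulas} therefore applies and produces a smooth admissible K\"ahler form $\omega_\epsilon=\omega_{F_\epsilon}\in c_1(A)$.

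Next I compute the scalar curvature from \eqref{eq:volume-and-scalar-curvature} together with $F''=2q-\overline S p$:
\[
 \Scal(\omega_\epsilon)=\frac{2q-F''-\epsilon G''}{p}=\overline S-\epsilon\frac{G''(x)}{p(x)}.
\]
Since $p$ is bounded below by a positive constant on $[0,1]$ (all four factors in \eqref{eq:four-affine-factors} are positive there), we have the uniform bound $|\Scal(\omega_\epsilon)-\overline S|\leq\epsilon\sup|G''|/\min p$. The volume of $c_1(A)$ is fixed, and the volume form in \eqref{eq:volume-and-scalar-curvature} has total mass $C_B\int_0^1p\,dx\cdot(\text{fixed base and circle factors})$, independent of $\epsilon$. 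Hence the $L^2$ norm is $O(\epsilon)$, which gives \eqref{eq:calabi-error-limit}. The infimum statement \eqref{eq:calabi-infimum-zero} then follows immediately, since the infimum is nonnegative.

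The only real point to check is that Theorem~\ref{thm:admissible-formulas} needs nothing beyond smoothness, positivity on $(0,1)$, and the endpoint data, so the double zero of $F$ at $\lambda$ is harmless for $F_\epsilon$. The rest is the elementary observation that the correction term is a polynomial vanishing to second order at both ends. I expect no genuine obstacle. The one subtlety is to use the admissible volume form so that the $L^2$ norm reduces to a one-variable integral with weight $p$, and to keep the stated normalization of $\Scal$ consistent, which affects only constants.
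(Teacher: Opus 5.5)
Your proposal is correct and follows essentially the same route as the paper: it also takes $F_\epsilon=F+\epsilon\,x^2(1-x)^2$ and applies Theorem~\ref{thm:admissible-formulas}, obtaining $\Scal(\omega_\epsilon)-\overline S=-\epsilon h''/p$, then writes the squared $L^2$ norm as $\epsilon^2$ times a finite one-variable integral weighted by $1/p$. The first display, with the undefined $\psi$ and the zeroth power, is garbled and should simply be deleted in favor of the $F+\epsilon G$ choice you actually use.
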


\begin{proof}
We perturb $F$ by a positive polynomial whose value and first derivative
vanish at both boundary points.  The scalar-curvature formula then makes the
dependence of the $L^2$ error on the perturbation parameter explicit.
We first construct smooth admissible profiles in the fixed K\"ahler class.
Put
\begin{equation}\label{eq:profile-perturbation}
 h(x)=x^2(1-x)^2,
 \qquad
 F_\epsilon=F+\epsilon h.
\end{equation}
The function $h$ is positive on $(0,1)$ and satisfies
\begin{equation}\label{eq:profile-perturbation-boundary}
 h(0)=h(1)=h'(0)=h'(1)=0.
\end{equation}
Lemma~\ref{lem:boundary-polynomial} shows that $F\geq0$ and that its only
interior zero is $\lambda$.  Hence $F_\epsilon>0$ on $(0,1)$, and
$F_\epsilon$ satisfies \eqref{eq:admissible-endpoint-data}.  By
Theorem~\ref{thm:admissible-formulas}, it defines a smooth admissible metric
$\omega_\epsilon\in c_1(A)$.

We next compute the scalar-curvature error and its mean.
By \eqref{eq:boundary-equation} and
\eqref{eq:volume-and-scalar-curvature},
\begin{equation}\label{eq:scalar-curvature-error}
 \Scal(\omega_\epsilon)-\overline S
 =-\epsilon\frac{h''}{p}.
\end{equation}
The mean of the right-hand side is zero because
\begin{equation}\label{eq:zero-mean-profile-error}
 \int_0^1h''(x)\,dx=h'(1)-h'(0)=0.
\end{equation}

We finally estimate the $L^2$ norm.
Combining \eqref{eq:volume-and-scalar-curvature} and
\eqref{eq:scalar-curvature-error}, we obtain
\begin{equation}\label{eq:error-integral}
 \bigl\lVert\Scal(\omega_\epsilon)-\overline S\bigr\rVert_{L^2}^2
 =C_B\epsilon^2\int_0^1\frac{(h''(x))^2}{p(x)}\,dx.
\end{equation}
The positive function $p$ has a positive minimum on $[0,1]$, so the
integral in \eqref{eq:error-integral} is finite.  Letting
$\epsilon\to0^+$ proves \eqref{eq:calabi-error-limit} and
\eqref{eq:calabi-infimum-zero}.
\end{proof}

\subsection{Nonexistence of extremal metrics}
\label{subsec:no-extremal}

We prove Proposition~\ref{prop:no-extremal-metric} by
deforming a hypothetical extremal metric to nearby admissible classes and
then passing the explicit momentum-profile identity to the limit.

\begin{prop}\label{prop:no-extremal-metric}
The class $c_1(A)$ contains no extremal K\"ahler metric.  In particular, it
contains no cscK metric.
\end{prop}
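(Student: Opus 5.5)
We argue by contradiction, following the sketch in the introduction: suppose $c_1(A)$ contains an extremal metric $\omega$.

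\textbf{Step 1: reduce to an invariant metric.} By Calabi's theorem the identity component of the isometry group of $\omega$ is a maximal compact subgroup of $\Aut^0(X)=\mathbb C^*$ (Proposition~\ref{prop:basic-geometry}). After pulling back by an automorphism we may therefore assume that $\omega$ is $S^1$-invariant for the fiber-scaling circle.

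\textbf{Step 2: deform to nearby admissible classes.} Consider the classes $c_1(A_t)$, where $A_t=\mathcal O_X(1)\otimes\pi^*M_t$ and the degrees $\deg M_{i,t}$ vary near $\alpha_i$. Two inputs are used here: $\Aut^0(X)$ stays $\mathbb C^*$, and every real $(1,1)$-class on $X$ is a combination of $\xi$ and the pullbacks of the four curve classes. The reason is that $H^{1,1}(B)$ is spanned by the curve classes together with classes from $\Hom(\Pic^0(C_i),\Pic^0(C_j))$, and these Hom groups vanish by Lemma~\ref{lem:orthogonal-curves}. The $S^1$-equivariant openness theorem of LeBrun--Simanca then produces $S^1$-invariant extremal metrics $\omega_t$ in $c_1(A_t)$ for all $t$ near $0$, with $\omega_t\to\omega$ smoothly.

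For an open dense set of perturbations, the analogue $F_t$ of the boundary polynomial (same ODE, with $\alpha_i$ replaced) has the form $F_t=F+{}$(small cubic extremal correction), normalized as in \cite{ACG+08}, and is positive on $(0,1)$. Concretely, the extremal ODE $F_t''=2q_t-(\text{affine})\,p_t$ with the four boundary conditions has a unique solution. Since $F$ has a double zero, we must choose the perturbation direction so that the solution becomes strictly positive in the interior. Step 3 can use a sequence $t_k\to0$ of such classes, so this density suffices. The ACG+ existence theorem (Theorem~\ref{thm:admissible-formulas} together with \cite{ACG+08}) then gives an admissible extremal metric $\omega_{F_t}$ in $c_1(A_t)$. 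Uniqueness of extremal metrics up to $\Aut^0$ (Chen--Tian, Berman--Berndtsson), combined with $S^1$-invariance, shows that $\omega_t$ equals $\omega_{F_t}$ up to pullback by an element of $\mathbb C^*$.

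\textbf{Step 3: pass to the limit.} For the $S^1$-invariant metric $\omega_t$, write $x_t$ for the fiber-scaling momentum, normalized to have image $[0,1]$. The norm squared of the generating Killing field is $|K|^2_{\omega_t}=\Theta_t(x_t)=F_t(x_t)/p_t(x_t)$. This is an intrinsic identity (naturality under pullback by automorphisms), so it holds for $\omega_t$ itself. As $t\to0$, the smooth convergence $\omega_t\to\omega$ gives $x_t\to x_\omega$ uniformly, and $F_t\to F$ uniformly. Hence $|K|^2_\omega=F(x_\omega)/p(x_\omega)$ on $X$. The momentum $x_\omega$ is surjective onto $[0,1]$, so there is a point where $x_\omega=\lambda$. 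At that point $|K|^2_\omega=0$ by Lemma~\ref{lem:boundary-polynomial}. But $K$ vanishes exactly on the zero and infinity sections, where $x_\omega\in\{0,1\}$, while $\lambda\in(0,1)$. This is a contradiction. Since a cscK metric is extremal, no cscK metric exists either.

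\textbf{Main obstacle.} The delicate step is Step 2. We must check that perturbed classes with positive $F_t$ exist arbitrarily close to $c_1(A)$, and we must rigorously identify $\omega_t$ with the admissible metric. The latter needs equivariant uniqueness and the full description of the Kähler cone via Hom-orthogonality. The first thing to try is an explicit perturbation of $\alpha_0$ alone, computing the sign of $\partial_t F_t(\lambda)$.
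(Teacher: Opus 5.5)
Your architecture is the paper's: Calabi's theorem gives circle invariance, LeBrun--Simanca gives equivariant extremal metrics along a sequence of admissible classes, and Berman--Berndtsson uniqueness plus naturality gives $g_k(K,K)=\Theta_{t_k}(z_k)$. Uniform convergence of the normalized momenta and the intermediate value theorem on a fiber then finish the argument.

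\textbf{The gap.} The one step specific to this example is left open, and your Step~2 misstates it. Positivity of $F_t$ is not an \emph{open dense} condition. Since $F\geq0$ has a double zero at $\lambda$, to first order $F_t(\lambda)=t\,\partial_tF_t(\lambda)+O(t^2)$, with the derivative taken at $t=0$. Every perturbation direction in which this derivative is negative makes $F_t$ negative near $\lambda$, and those classes carry no admissible extremal metric at all. The usable directions therefore lie in the closed half-space $\{v:\partial_vF(\lambda)\geq0\}$. You get an open set of them only if the linear functional $v\mapsto\partial_vF(\lambda)$ is nonzero. That is not automatic: it is a computation for these particular numbers. Without it (or a second-order analysis if it vanished), you have no admissible extremal metrics in classes converging to $c_1(A)$, and Step~3 has nothing to act on.

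The paper carries out exactly the computation you propose in Lemma~\ref{lem:nearby-positive-polynomials}. It perturbs $x_t=x^*-t(1,0,0,0)$, i.e.\ increases $\alpha_0$ alone. The transverse derivative at $z_0=2-\sqrt5$ (your $\lambda$, via $z=2x-1$) is
\[
\frac{491483507744547-199580574471942\sqrt5}{17092211370619}>0.
\]
Positivity near the endpoints then follows from the uniform boundary slopes, and on the remaining compact set from continuity. Your ``$F_t\to F$'' also silently uses that the $t=0$ extremal polynomial equals the cscK polynomial $F$. This holds because $F$ already satisfies all four boundary conditions, so the moment system forces the linear coefficient to vanish (Lemma~\ref{lem:normalized-extremal-polynomial}).

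\textbf{A false side claim.} Drop the assertion that every real $(1,1)$-class on $X$ is a combination of $\xi$ and the curve classes. By K\"unneth, $H^{1,1}(B,\mathbb R)$ contains the real parts of $H^{1,0}(C_i)\otimes H^{0,1}(C_j)\oplus H^{0,1}(C_i)\otimes H^{1,0}(C_j)$, of dimension $2g_ig_j$. The vanishing of $\Hom(\Pic^0(C_i),\Pic^0(C_j))$ only shows these pieces contain no nonzero rational classes; it computes the N\'eron--Severi group, not $H^{1,1}$.

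Nothing in the argument needs this claim. LeBrun--Simanca produces extremal metrics in every class near $[\omega]$, hence along your chosen admissible sequence. Berman--Berndtsson is applied inside each fixed class. So the identification of $\omega_t$ with the admissible metric uses no description of the K\"ahler cone, contrary to your closing paragraph.

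\textbf{Minor points.} The perturbed classes must be real classes $\xi+\pi^*\sum_i\alpha_{i,t}\eta_i$, not $c_1$ of line bundles, since integral degrees cannot converge to $\alpha_i$. Also, $F_t-F$ has degree up to six, not three.
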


We first state the external analytic inputs and the local computations used
to prove Proposition~\ref{prop:no-extremal-metric}.

\begin{thm}[Constructive admissible-extremal theorem,
{\cite[Proposition~1, pp.~553--554; Lemma~5, p.~567;
Proposition~8 and the paragraph following it,
pp.~569--570]{ACG+08}}]
\label{thm:constructive-admissible-extremal}
Let $\Omega$ be an admissible K\"ahler class on a split projective-line
bundle over a local product of cscK factors, with momentum coordinate
$z\in[-1,1]$.  Let
\begin{equation}\label{eq:constructive-momentum-density}
 p_c(z)=\prod_a(1+x_az)^{d_a},
\end{equation}
where $x_a$ and $d_a$ are the admissible parameters and multiplicities.
There exists a unique polynomial $F_\Omega$ satisfying the extremal differential
equation and the conditions
\begin{equation}\label{eq:constructive-extremal-boundary-data}
\begin{aligned}
 F_\Omega(-1)&=F_\Omega(1)=0,\\
 F_\Omega'(-1)&=2p_c(-1),&
 F_\Omega'(1)&=-2p_c(1).
\end{aligned}
\end{equation}
If $F_\Omega>0$ on $(-1,1)$, then
$\Theta=F_\Omega/p_c$ defines a smooth admissible extremal metric in
$\Omega$.  Conversely, every admissible extremal metric in $\Omega$ has
this profile and requires $F_\Omega>0$ on $(-1,1)$.  Let $K$ be the real
generator of the fiber circle in the normalization of
\cite[eq.~(1)]{ACG+08}, and denote the two fixed sections by $E_-$ and
$E_+$ according to the value of the momentum coordinate.  If $g_\Theta$ and
$\omega_\Theta$ are the resulting Riemannian and K\"ahler metrics, then
\begin{equation}\label{eq:admissible-profile-identity}
 d z=-\iota_K\omega_\Theta,
 \qquad
 z\vert_{E_-}=-1,
 \qquad
 z\vert_{E_+}=1,
 \qquad
 g_\Theta(K,K)=\Theta(z).
\end{equation}
\end{thm}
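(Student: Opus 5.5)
The plan is to work entirely inside the admissible ansatz of \cite{ACG+08}, so the proof reduces to one-variable ODE analysis in the momentum coordinate $z$. First recall the shape of an admissible metric in $\Omega$. It is determined by the parameters $x_a$ and a profile function $\Theta(z)$, and has the form
\[
 g=\sum_a\frac{1+x_az}{x_a}\,g_a+\frac{dz^2}{\Theta(z)}+\Theta(z)\,\theta^2,
 \qquad
 \omega=\sum_a\frac{1+x_az}{x_a}\,\omega_a+dz\wedge\theta,
\]
where $d\theta=\sum_a\omega_a$, each $g_a$ is the cscK metric on a base factor, and $K$ is dual to $\theta$. The identities $dz=-\iota_K\omega$ and $g(K,K)=\Theta(z)$ can then be read off directly from this form. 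The values $z|_{E_\mp}=\mp1$ follow because $E_\pm$ are the zero sets of $K$, which are the endpoints of the momentum interval.

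The smoothness criterion I will quote from the ACG compactification lemma (Apostolov--Calderbank--Gauduchon, following LeBrun). Namely, $\Theta$ extends to a smooth metric on the $\mathbb P^1$-bundle precisely when:
\begin{itemize}
\item $\Theta>0$ on $(-1,1)$;
\item $\Theta(\pm1)=0$;
\item $\Theta'(\pm1)=\mp2$.
\end{itemize}
Writing $F=\Theta p_c$, these become \eqref{eq:constructive-extremal-boundary-data} together with positivity of $F$ on $(-1,1)$.

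Next I will compute the scalar curvature. The standard calculation gives
\[
 \Scal=\frac{\sum_a d_a s_a x_a(1+x_az)^{-1}p_c-F''}{p_c}.
\]
Here the first term, multiplied out, is a polynomial $Q$, and the $s_a$ are the normalized base scalar curvatures. The metric is extremal iff $J\,\mathrm{grad}\,\Scal$ is holomorphic. Since $\Scal$ depends only on $z$, this holds iff $\Scal$ is an affine function $A+Bz$, because the Killing potentials that are functions of $z$ alone are affine in $z$. So the extremal equation is
\[
 F''=Q-(A+Bz)p_c .
\]

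For existence and uniqueness of $F_\Omega$, I integrate this equation twice. This leaves four unknowns $(A,B,c_0,c_1)$, with $c_0,c_1$ the integration constants, to be fixed by the four linear conditions \eqref{eq:constructive-extremal-boundary-data}. Uniqueness follows from the homogeneous problem: there $F''=-(A+Bz)p_c$ with $F(\pm1)=F'(\pm1)=0$. Integrating against $1$ and against $z$ by parts kills every boundary term, giving
\[
 \int_{-1}^1(A+Bz)\,p_c\,dz=0,
 \qquad
 \int_{-1}^1(A+Bz)\,z\,p_c\,dz=0 .
\]
Since $p_c>0$ on $(-1,1)$, the Gram matrix of $\{1,z\}$ in $L^2(p_c\,dz)$ is positive definite, so $A=B=0$. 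Then $F''=0$ with $F(\pm1)=0$ forces $F=0$. The linear system is therefore nonsingular, and $F_\Omega$ exists and is unique.

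The forward direction now follows: if $F_\Omega>0$ on $(-1,1)$, the compactification criterion makes $\Theta=F_\Omega/p_c$ a smooth admissible metric, and it is extremal by construction. For the converse, take any admissible extremal metric. Its profile $F=\Theta p_c$ satisfies the boundary conditions by smoothness and the extremal ODE by the computation above, so uniqueness gives $F=F_\Omega$. Then $F_\Omega=p_c\,g(K,K)>0$ in the interior, since $K$ vanishes only on $E_\pm$.

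The main obstacle is carrying out the curvature formula with the correct normalizations, i.e.\ the factor $2$ in $F'(\pm1)=\pm2p_c(\pm1)$ versus the sign conventions for $K$ and $z$. Rather than recompute this, I would cite \cite[Section~1.3, Proposition~1, Lemma~5]{ACG+08} verbatim for these formulas.
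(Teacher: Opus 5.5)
Your proposal is correct and follows essentially the same route as the paper, which gives no independent proof and quotes exactly these ingredients from \cite{ACG+08}: the compactification conditions $\Theta(\pm1)=0$, $\Theta'(\pm1)=\mp2$, the scalar-curvature formula, and the reduction of extremality to $\Scal$ being affine in $z$. Your Gram-matrix uniqueness argument is the same positive-variance nondegeneracy the paper invokes for the moment system in Lemma~\ref{lem:normalized-extremal-polynomial}; the one normalization to fix is that the base term should be $2q_c$ rather than $q_c$, matching $F''=(A_cz+B_c)p_c+2q_c$ in Set-up~\ref{setup:normalized-extremal-polynomial}, which is the factor-of-two issue you already flagged.
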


\begin{thm}[Maximal compactness,
{\cite[Theorem~3]{Cal85}; see also \cite[Theorem~4.1]{CPZ15}}]
\label{thm:maximal-compactness}
Let $Y$ be a compact complex manifold with an extremal K\"ahler metric.
Then the identity component of the holomorphic isometry group is a maximal
compact connected subgroup of $\Aut^0(Y)$.
\end{thm}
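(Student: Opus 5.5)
We plan to follow Calabi's structure theorem for the Lie algebra of holomorphic vector fields, and then to use a Levi--Mostow type decomposition to compare compact subgroups. Let $\omega$ be extremal on $Y$, let $S=\Scal(\omega)$, and put $\mathfrak h=\mathfrak h(Y)$ for the Lie algebra of $\Aut^0(Y)$. The first step is to recall that $X_S:=\operatorname{grad}S$ is a real holomorphic vector field. Moreover $JX_S$ is Killing, hence lies in $\mathfrak k=\operatorname{Lie}(\operatorname{Isom}^0(Y,\omega))$. We then prove Calabi's decomposition
\[
 \mathfrak h=\mathfrak a\oplus\mathfrak h_0\oplus\bigoplus_{\lambda>0}\mathfrak h_\lambda ,
\]
where $\mathfrak h_\lambda$ is the $\lambda$-eigenspace of $\operatorname{ad}(X_S)$ and $\mathfrak a$ consists of parallel fields. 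We also prove $\mathfrak a\oplus\mathfrak h_0=\mathfrak k\otimes\mathbb C$, the centralizer of $X_S$.

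The mechanism for the decomposition is the Lichnerowicz operator on holomorphy potentials, in the spirit of Matsushima--Lichnerowicz. Because $\omega$ is extremal, $\mathcal D^*\mathcal D$ commutes with complex conjugation up to the operator $X_S$, and $\operatorname{ad}(X_S)$ acts with real eigenvalues. Potentials in the kernel of the commutator with $X_S$ split into real and imaginary parts. The real parts give Killing fields, and the imaginary parts give gradients of Killing potentials. This yields that the centralizer $\mathfrak h_0':=\mathfrak a\oplus\mathfrak h_0$ is the complexification of $\mathfrak k$. It is therefore reductive, with maximal compact subgroup $K=\operatorname{Isom}^0(Y,\omega)$ in the corresponding connected subgroup $H_0$.

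The second step treats $\mathfrak n:=\bigoplus_{\lambda>0}\mathfrak h_\lambda$. Since $[\mathfrak h_\lambda,\mathfrak h_\mu]\subset\mathfrak h_{\lambda+\mu}$ and only finitely many eigenvalues occur, $\mathfrak n$ is a nilpotent ideal. It acts by nilpotent operators in the adjoint representation, so $N=\exp\mathfrak n$ is a unipotent normal subgroup with $\Aut^0(Y)=H_0\ltimes N$. Now let $K''$ be a compact connected subgroup of $\Aut^0(Y)$ containing $K$; it suffices to prove $K''=K$.

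First, $K''\cap N$ is trivial, because a unipotent group has no nontrivial compact subgroups. Hence $K''$ maps isomorphically onto a compact subgroup of $H_0\simeq\Aut^0(Y)/N$. Next we apply the conjugacy of maximal compact subgroups (Iwasawa--Malcev) in the connected Lie group $\Aut^0(Y)$. Some conjugate $gK''g^{-1}$ lies in a maximal compact subgroup, and these are all conjugate. Combined with the semidirect product structure, where compact subgroups of $H_0\ltimes N$ are $N$-conjugate into $H_0$, we may assume $K''\subset H_0$ after conjugation. Every compact subgroup of the reductive group $H_0=K^{\mathbb C}$ is conjugate into $K$. Hence $\dim K''\le\dim K$, and since $K\subset K''$ with both connected, $K''=K$.

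The main obstacle is the first step: the eigenvalue analysis showing that the centralizer of $X_S$ is exactly $\mathfrak k\otimes\mathbb C$, and that $\operatorname{ad}(X_S)$ has real nonnegative-indexed spectrum after absorbing $\mathfrak a$. This requires the commutation identity $[\mathcal D^*\mathcal D,\overline{\mathcal D^*\mathcal D}]=$ (a multiple of) $\mathcal L_{X_S}$, valid precisely for extremal metrics. We would verify it by the standard local computation in Calabi's paper.
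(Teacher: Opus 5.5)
The paper gives no proof of this statement: it quotes Calabi's theorem. Your outline follows Calabi's route: the Lichnerowicz operator, the $\operatorname{ad}(X_S)$-decomposition of $\mathfrak h$, then a comparison of compact subgroups. As written, though, the decisive step is missing.

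You assert that $H_0=K^{\mathbb C}$ and that every compact subgroup of $H_0$ is conjugate into $K$. In the cscK case ($X_S=0$, $\mathfrak n=0$, $H_0=\Aut^0(Y)$) this is literally the statement to be proved, and it does not follow from the Lie-algebra identity.
\begin{itemize}
\item The holomorphic map from the universal complexification of $K$ onto the connected subgroup $H_0$ can have a discrete kernel not contained in $K$. For example, $\mathbb C^*/q^{\mathbb Z}$ has Lie algebra $(\sqrt{-1}\,\mathbb R)\otimes\mathbb C$, yet its circle $S^1$ is not maximal compact.
\item When $\mathfrak a\neq0$, the map $\mathfrak k\otimes\mathbb C\to\mathfrak h$ is not even injective, because the real parallel fields form a $J$-stable subspace of $\mathfrak k$. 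The correct identity is $\mathfrak a\oplus\mathfrak h_0=\mathfrak k\oplus J\mathfrak k_0$, where $\mathfrak k_0$ is the space of Killing fields with zeros.
\end{itemize}
The missing input is geometric. Every element of $J\mathfrak k_0$ is a gradient $\nabla f$ of a real Killing potential. Along its flow $\phi_t$ one has $\frac{d}{dt}(f\circ\phi_t)=\lvert\nabla f\rvert^2\circ\phi_t$, so $f$ strictly increases on every nonstationary orbit. An orbit of a one-parameter subgroup of a compact group, by contrast, returns arbitrarily close to its starting point. Hence no nonzero element of $J\mathfrak k_0$ lies in the Lie algebra of a compact subgroup, and this is what makes $K$ maximal.

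Two further repairs are needed.

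\emph{The identity you plan to verify is misstated.} For every K\"ahler metric, $\mathcal D^*\mathcal D-\overline{\mathcal D^*\mathcal D}$ is a constant multiple of differentiation along $JX_S$. Extremality makes $JX_S$ Killing, hence commuting with $\mathcal D^*\mathcal D$, so $[\mathcal D^*\mathcal D,\overline{\mathcal D^*\mathcal D}]=0$; vanishing of the commutator is what permits simultaneous diagonalization. Step~2 needs the nonzero eigenvalues to have one sign, so that $\mathfrak n$ is a nilpotent ideal. That sign comes from $\overline{\mathcal D^*\mathcal D}=\bar{\mathcal D}^*\bar{\mathcal D}\geq0$ restricted to $\ker\mathcal D^*\mathcal D$, where it induces, up to a fixed nonzero constant, the action of $\operatorname{ad}(X_S)$ on holomorphy potentials.

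\emph{Your group theory is not justified as stated.} $\Aut^0(Y)$ need not be linear, $N$ need not be closed, and $\operatorname{Ad}$-unipotence only forces compact subgroups of $N$ into the center. You can bypass all of it at the Lie-algebra level:
\begin{itemize}
\item Since $\operatorname{ad}(JX_S)=J\operatorname{ad}(X_S)$ is semisimple with kernel $\mathfrak a\oplus\mathfrak h_0$ and image $\mathfrak n$, the Lie algebra $\mathfrak k''$ of $K''$ splits as $(\mathfrak k''\cap(\mathfrak a\oplus\mathfrak h_0))\oplus(\mathfrak k''\cap\mathfrak n)$.
\item An element of $\mathfrak k''\cap\mathfrak n$ is $\operatorname{ad}$-nilpotent and, by compactness, $\operatorname{ad}$-semisimple with imaginary spectrum. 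It is therefore central in $\mathfrak h$, hence lies in $(\mathfrak a\oplus\mathfrak h_0)\cap\mathfrak n=0$.
\item Thus $\mathfrak k\subseteq\mathfrak k''\subseteq\mathfrak k\oplus J\mathfrak k_0$, and the gradient argument gives $\mathfrak k''=\mathfrak k$, hence $K''=K$.
\end{itemize}
This needs no appeal to Iwasawa--Malcev or Mostow.
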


\begin{thm}[Equivariant $C^2$ openness of extremal metrics,
{\cite[Section~5, pp.~263--269, especially Propositions~7--8]{LS93}}]
\label{thm:extremal-openness}
Let $(Y,\omega_0)$ be a compact extremal K\"ahler manifold, and let $G$ be the
identity component of the isometry group of $\omega_0$.  There exists a
neighborhood $\mathcal U$ of
$[\omega_0]$ in the space of $G$-invariant K\"ahler classes such that every
$\Omega\in\mathcal U$ contains a $G$-invariant extremal metric
$\omega_\Omega$.  After fixing the gauge, if
$\Omega_k\to[\omega_0]$, then these metrics may be chosen so that
$\omega_{\Omega_k}\to\omega_0$ in $C^2$.
\end{thm}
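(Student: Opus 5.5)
The statement is the equivariant openness theorem of LeBrun--Simanca \cite{LS93}, which the paper cites rather than proves. If I were to prove it, I would set it up as an implicit function theorem argument on $G$-invariant K\"ahler potentials, working modulo the finite-dimensional space of Killing potentials.

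\textbf{Setting up the map.} First I would fix a smooth family of $G$-invariant representatives $\omega_\Omega$ of the classes $\Omega$ near $[\omega_0]$. One way is to add $G$-averaged harmonic forms to $\omega_0$; this uses only that $G$ is compact. I would then work in H\"older spaces $C^{k+4,\alpha}_G$ of $G$-invariant real functions and consider $\omega_{\Omega,\varphi}=\omega_\Omega+i\partial\bar\partial\varphi$. Let $\mathfrak z$ be the center of $\operatorname{Lie}(G)$. For a $G$-invariant metric the extremal vector field $J\nabla\Scal$ commutes with $G$. By Theorem~\ref{thm:maximal-compactness} it must then lie in $\mathfrak z$. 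So the extremal equation becomes
\[
 \Scal(\omega_{\Omega,\varphi})\in P_{\Omega,\varphi}:=\operatorname{span}\{1\}\oplus\{\text{normalized $\omega_{\Omega,\varphi}$-Killing potentials of }\mathfrak z\}.
\]
Let $\Pi^\perp_{\Omega,\varphi}$ be the $L^2(\omega_{\Omega,\varphi})$-orthogonal projection onto the complement of $P_{\Omega,\varphi}$. Define
\[
 \mathcal S(\Omega,\varphi)=\Pi^\perp_{\Omega,\varphi}\Scal(\omega_{\Omega,\varphi}).
\]
For the gauge I would restrict $\varphi$ to the $L^2(\omega_0)$-complement of $P_{0,0}$.

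\textbf{Linearization.} At $(\Omega,\varphi)=([\omega_0],0)$, the derivative in $\varphi$ is $-\mathcal D^*\mathcal D$ followed by the projection. Here $\mathcal D=\bar\partial\nabla^{1,0}$ is the Lichnerowicz operator. The terms coming from differentiating the projection drop out because $\Scal(\omega_0)\in P_{0,0}$. The operator $\mathcal D^*\mathcal D$ is fourth-order elliptic and self-adjoint, hence Fredholm of index zero. Its kernel on $G$-invariant functions consists of holomorphy potentials whose gradient fields commute with $G$. Since $\omega_0$ is extremal, Calabi's decomposition shows these are real Killing potentials. Theorem~\ref{thm:maximal-compactness} then places the corresponding Killing fields in $\operatorname{Lie}(G)$, and hence in $\mathfrak z$. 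So the kernel is exactly $P_{0,0}$, and the linearization is an isomorphism between the gauge-fixed complements.

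\textbf{Conclusion and main obstacle.} The implicit function theorem then gives a $C^1$ family $\varphi_\Omega$ with $\mathcal S(\Omega,\varphi_\Omega)=0$ and $\varphi_\Omega\to0$ in $C^{k+4,\alpha}$. Consequently $\omega_{\Omega_k,\varphi_{\Omega_k}}\to\omega_0$ in $C^2$, indeed in $C^{k+2,\alpha}$. Each such metric is $G$-invariant and extremal.

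The main obstacle is the smooth dependence of the projection $\Pi^\perp_{\Omega,\varphi}$ on $(\Omega,\varphi)$. The normalized Killing potentials of the fixed Lie algebra $\mathfrak z$ change with the metric. I would handle this by writing each potential explicitly: for $\xi\in\mathfrak z$, take the unique solution $h$ of $\iota_\xi\omega_{\Omega,\varphi}=-dh$ with zero mean. Since $G$ fixes the class, this $h$ is an affine expression in $\varphi$ and in the harmonic part of $\omega_\Omega$. This shows $P_{\Omega,\varphi}$ is a smooth finite-rank family, and the projection onto it inherits that smoothness.
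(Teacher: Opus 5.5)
Your route is the paper's: it is the LeBrun--Simanca argument, with harmonic representatives for nearby classes, $G$-invariant potentials $L^2(\omega_0)$-orthogonal to the kernel of the Lichnerowicz operator, an implicit or inverse function theorem, and elliptic control for the $C^2$ convergence. However, the implicit function theorem step does not go through as you set it up.

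Your map $\mathcal S(\Omega,\varphi)=\Pi^\perp_{\Omega,\varphi}\Scal(\omega_{\Omega,\varphi})$ takes values in the $L^2(\omega_{\Omega,\varphi})$-orthogonal complement of $P_{\Omega,\varphi}$, and that subspace moves with $(\Omega,\varphi)$. Regarded as a map into all of $C^{k,\alpha}_G$, its $\varphi$-derivative $-\mathcal D^*\mathcal D$ has image $P_{0,0}^\perp$, of codimension $\dim P_{0,0}$, so it is not surjective. If instead you take the fixed complement $P_{0,0}^\perp$ as target, $\mathcal S$ does not map into it. So there are no fixed Banach spaces between which your linearization is the required isomorphism. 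Your ``main obstacle'' paragraph, on smooth dependence of $P_{\Omega,\varphi}$, does not address this.

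This is why the paper, following \cite{LS93}, uses the composite $(1-\Pi_0)(1-\Pi_{\alpha,\varphi})s_{\alpha,\varphi}$, which lands in the fixed space $\mathcal I_k$. It then needs the kernel identity (5.3) of \cite{LS93}: near the base point, a zero of the composite is a zero of $(1-\Pi_{\alpha,\varphi})s_{\alpha,\varphi}$, hence an extremal metric. With your explicit description of $P_{\Omega,\varphi}$ this is a short finite-dimensional check. An element of $P_{0,0}$ that is $L^2(\omega_{\Omega,\varphi})$-orthogonal to $P_{\Omega,\varphi}$ must vanish, because for $(\Omega,\varphi)$ near the base point this pairing is a small perturbation of the positive-definite $L^2(\omega_0)$ inner product on $P_{0,0}$.

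Alternatively, make the coefficients of $\Scal$ along $P_{\Omega,\varphi}$ extra unknowns. Solve $\Scal(\omega_{\Omega,\varphi})=c_0+\sum_ic_ih_i(\Omega,\varphi)$ for $(\varphi,c)$, with target all of $C^{k,\alpha}_G$. The linearization is then block-triangular for $C^{k,\alpha}_G=P_{0,0}^\perp\oplus P_{0,0}$, with diagonal blocks $-\mathcal D^*\mathcal D$ and $\dot c\mapsto-\dot c_0-\sum_i\dot c_ih_i$, hence invertible.

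The justification you give for the linearization is also wrong, although the operator you arrive at is right. At an extremal metric that is not cscK, $\delta\Scal$ equals $-\mathcal D^*\mathcal D\dot\varphi$ plus a first-order term proportional to $\langle\nabla\Scal,\nabla\dot\varphi\rangle$. The derivative of the projection does not drop out. Let $\xi=J\nabla\Scal(\omega_0)$ with Killing potential $h_\xi$, and differentiate $\Pi_{\Omega,\varphi}f=f$ along $f=c+h_\xi(\omega_{\Omega,\varphi})$. This gives $(\delta\Pi^\perp)\Scal(\omega_0)=-(1-\Pi_0)\dot h_\xi$, and $\dot h_\xi$ is that same gradient term modulo constants. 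The two cancel, leaving $-\mathcal D^*\mathcal D\dot\varphi$; this is the standard fact that $\Scal-h_\xi$ has linearization $-\mathcal D^*\mathcal D$ on $\xi$-invariant potentials, modulo constants.

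Finally, only elements of $\mathfrak z$ with zeros have Killing potentials, since parallel fields do not. So $P_{\Omega,\varphi}$ should be built from that subalgebra. For those fields a potential exists for every $G$-invariant K\"ahler form in every nearby class, which is what your affine formula for $h$ needs.
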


\begin{proof}
Put $n=\dim_{\mathbb C}Y$ and fix an integer $k>n$.  Let
$\mathcal H^{1,1}$ be the finite-dimensional space of real
$\omega_0$-harmonic $(1,1)$-forms.  The connected group $G$ acts trivially
on cohomology and by isometries, so every element of $\mathcal H^{1,1}$ is
$G$-invariant.  Let $L^2_{\ell,G}$ be the real $G$-invariant Sobolev space
of order $\ell$, and let $\mathcal I_\ell\subset L^2_{\ell,G}$ be the
$L^2$-orthogonal complement of the kernel of the Lichnerowicz operator of
$\omega_0$.  This is the gauge which removes the holomorphy-potential
directions.

Let $\Pi_0$ and $\Pi_{\alpha,\varphi}$ denote the $L^2$-projectors onto the
holomorphy potentials for $\omega_0$ and
$\omega_0+\alpha+\sqrt{-1}\,\partial\bar\partial\varphi$, respectively,
and let $s_{\alpha,\varphi}$ be the scalar curvature of the latter metric.
On a neighborhood of $(0,0)$ in
$\mathcal H^{1,1}\times\mathcal I_{k+4}$,
\cite[Section~5]{LS93}
constructs the map
\begin{equation}\label{eq:ls-equivariant-ift-map}
 \mathcal S_{\mathrm{LS}}(\alpha,\varphi)
 =\left(\alpha,
  (1-\Pi_0)(1-\Pi_{\alpha,\varphi})s_{\alpha,\varphi}\right)
 \in\mathcal H^{1,1}\times\mathcal I_k.
\end{equation}
The kernel identity in equation~(5.3) of that paper shows that the second
component in \eqref{eq:ls-equivariant-ift-map} vanishes exactly when the
corresponding metric is extremal.
Proposition~7 there proves that this map is $C^1$ and identifies its
linearization; Proposition~8 proves that the linearization is an
isomorphism and applies the Banach inverse function theorem.  Restricting
$\mathcal S_{\mathrm{LS}}^{-1}$ to $\mathcal H^{1,1}\times\{0\}$ therefore gives a
$C^1$ map $\varphi$ near the origin, with $\varphi(0)=0$, such that
\begin{equation}\label{eq:ls-equivariant-solution-map}
 \omega_\alpha
 =\omega_0+\alpha+\sqrt{-1}\,\partial\bar\partial\varphi(\alpha)
\end{equation}
is a $G$-invariant extremal metric.  This proves the equivariant existence
assertion without changing the group or the gauge.

If $\alpha_j\to0$ in $\mathcal H^{1,1}$, continuity of the inverse map gives
$\varphi(\alpha_j)\to0$ in $L^2_{k+4,G}$.  Since the real dimension is
$2n$ and $k>n$, Sobolev embedding gives
$L^2_{k+4}\hookrightarrow C^4$.  Taking two derivatives in
\eqref{eq:ls-equivariant-solution-map} yields
$\omega_{\alpha_j}\to\omega_0$ in $C^2$, as asserted.
\end{proof}

\begin{thm}[Uniqueness of extremal metrics,
{\cite[Theorem~4.15]{BB17}}]
\label{thm:extremal-uniqueness}
Let $Y$ be a compact complex manifold, and let $\omega_0$ and $\omega_1$ be
extremal K\"ahler metrics in one K\"ahler class.  Then there exists
$\Phi\in\Aut^0(Y)$ such that
\begin{equation}\label{eq:extremal-uniqueness-pullback}
 \omega_1=\Phi^*\omega_0.
\end{equation}
\end{thm}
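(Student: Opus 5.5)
This statement is an external input, \cite[Theorem~4.15]{BB17}. If I had to reprove it, I would follow the Berman--Berndtsson convexity strategy and use Theorem~\ref{thm:maximal-compactness} to put both metrics in a common symmetry gauge.

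\textbf{Step 1: common gauge.} By Theorem~\ref{thm:maximal-compactness}, the identity components $K_0$ and $K_1$ of the isometry groups of $\omega_0$ and $\omega_1$ are maximal compact connected subgroups of $\Aut^0(Y)$. Maximal compact subgroups of the connected linear-algebraic part of $\Aut^0(Y)$ are conjugate. So after replacing $\omega_1$ by $\Psi^*\omega_1$ for some $\Psi\in\Aut^0(Y)$, we may assume $K_0=K_1=:G$.

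Next I would check that the two extremal vector fields coincide. The extremal vector field of a $G$-invariant extremal metric is the dual, under the Futaki--Mabuchi bilinear form, of the Futaki character restricted to the Lie algebra of the centralizer of $G$. It therefore depends only on the class and on $G$, so both metrics share the same extremal field $V$, and $JV$ generates a subtorus of $G$.

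\textbf{Step 2: relative K-energy and geodesic.} Write $\omega_1=\omega_0+\sqrt{-1}\,\partial\bar\partial\varphi_1$ with $\varphi_1$ $G$-invariant. I would consider the relative (modified) Mabuchi functional $\mathcal M_V$ on $G$-invariant potentials, whose critical points are exactly the $G$-invariant extremal metrics with extremal field $V$. Join $0$ and $\varphi_1$ by Chen's $C^{1,1}$ weak geodesic $\varphi_t$, which is $G$-invariant by uniqueness.

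The key convexity input is the Berman--Berndtsson theorem that $\mathcal M_V$ is convex along such weak geodesics. Its entropy part is handled by positivity of direct images, or subharmonicity of the fibrewise Bergman-type volume form, and the $V$-twist is affine along geodesics. Since both endpoints are critical points, $t\mapsto\mathcal M_V(\varphi_t)$ is convex with zero one-sided derivatives at both ends, hence affine (indeed constant).

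\textbf{Step 3: equality case (main obstacle).} The hard part is concluding from this that the geodesic is induced by a holomorphic flow, that is, $\omega_t=\exp(tW)^*\omega_0$ for a real holomorphic vector field $W$. With only $C^{1,1}$ regularity one cannot differentiate the K-energy twice.

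Following \cite{BB17}, I would first perturb by a strictly convex twisting term. I would replace $\mathcal M_V$ by $\mathcal M_V+s\,\mathcal J_\eta$ for a $G$-invariant K\"ahler form $\eta$ and small $s>0$, solve the twisted equations near each endpoint with the continuity method, and use strict convexity of the twist to get uniqueness for $s>0$. I would then let $s\to0$. The subtle point is controlling the limit so that it produces, in the equality case, a holomorphic vector field. I would obtain $W$ from the positivity-of-direct-image argument: vanishing curvature of the relevant direct-image bundle forces a holomorphic flow. Finally, setting $\Phi=\Psi\circ\exp(W)$ gives $\omega_1=\Phi^*\omega_0$ with $\Phi\in\Aut^0(Y)$.
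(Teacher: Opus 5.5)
The paper gives no proof of this statement. It imports the result from \cite[Theorem~4.15]{BB17} and uses it as a black box in the proof of Proposition~\ref{prop:no-extremal-metric}, so your opening sentence matches the paper.

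Your Steps~1--2 are a sound setup. Conjugacy of maximal compact subgroups holds in the connected Lie group $\Aut^0(Y)$ itself, so you need not pass to a linear-algebraic part, which might not contain the isometry groups anyway.

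Step~3, however, has a genuine gap. The rigidity you rely on --- ``vanishing curvature of a direct-image bundle forces a holomorphic flow'' --- is Berndtsson's equality case for the Ding functional $-\log\int_Ye^{-\phi_t}$ in the Fano setting, where that $L^2$ norm is the metric on a rank-one direct image. No comparable description of the second variation of the K-energy is available. The equality case of K-energy convexity along $C^{1,1}$ geodesics is exactly what \cite{BB17} circumvent, so your attempt to extract $W$ from an equality case after perturbing leaves the decisive step unsupported.

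The missing idea is the Bando--Mabuchi gauge fixing, which makes the perturbation argument close by itself. At $s=0$, the linearization of the twisted extremal equation on invariant potentials is the Lichnerowicz operator. Its kernel and cokernel consist of the invariant holomorphy potentials. So you cannot ``solve the twisted equations near each endpoint'' starting from an arbitrary metric in the orbit.

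A branch $s\mapsto\omega_s$ bifurcates only from a point $g^*\omega_0$ at which $\operatorname{tr}_{g^*\omega_0}\eta$ is $L^2$-orthogonal to the mean-zero holomorphy potentials. Equivalently, $g$ must be a critical point of $g\mapsto\mathcal J_\eta(g^*\omega_0)$ on the orbit of the relevant reductive subgroup of $\Aut^0(Y)$. The argument then runs as follows:
\begin{enumerate}
\item Take minimizers $g_0,g_1$ of this convex, proper function on the two orbits.
\item Produce branches from $g_0^*\omega_0$ and $g_1^*\omega_1$ by the implicit function theorem.
\item Use strict convexity of $\mathcal M_V+s\mathcal J_\eta$ to see that the two branches coincide for every small $s>0$.
\item Let $s\to0$ to obtain $g_0^*\omega_0=g_1^*\omega_1$.
\end{enumerate}
The automorphism $\Phi$ comes from these two gauge choices, composed with your $\Psi$. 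No holomorphic vector field along the geodesic is ever constructed.
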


\begin{setup}\label{setup:normalized-extremal-polynomial}
Put
\begin{equation}\label{eq:normalized-class-and-parameters}
 \Omega_0=4\pi c_1(A),
 \qquad
 x^*=\left(\frac{29}{31},-\frac57,-\frac12,-\frac14\right).
\end{equation}
For $0\leq a\leq3$, define
\begin{equation}\label{eq:normalized-parameters}
 x_a=\frac{\delta_a}{2\alpha_a+\delta_a},
 \qquad
 s_a=\frac{2\beta_a}{\delta_a}.
\end{equation}
Then $(x_a)_a=x^*$.  Define
\begin{equation}\label{eq:normalized-pq}
 p_c(z)=\prod_{a=0}^3(1+x_az),
 \qquad
 q_c(z)=\sum_{a=0}^3s_ax_a\prod_{b\ne a}(1+x_bz).
\end{equation}
For $0\leq r\leq2$, put
\begin{equation}\label{eq:normalized-alpha-moments}
 \alpha_r^c=\int_{-1}^1p_c(t)t^r\,dt,
\end{equation}
and, for $0\leq r\leq1$, put
\begin{equation}\label{eq:normalized-beta-moments}
 \beta_r^c=p_c(1)+(-1)^rp_c(-1)
 +\int_{-1}^1q_c(t)t^r\,dt.
\end{equation}
Let $A_c,B_c$ be determined by
\begin{equation}\label{eq:normalized-moment-system}
 A_c\alpha_1^c+B_c\alpha_0^c=-2\beta_0^c,
 \qquad
 A_c\alpha_2^c+B_c\alpha_1^c=-2\beta_1^c,
\end{equation}
and define
\begin{equation}\label{eq:normalized-extremal-definition}
 F_{x^*}(z)
 =2(z+1)p_c(-1)
 +\int_{-1}^z
 \bigl((A_ct+B_c)p_c(t)+2q_c(t)\bigr)(z-t)\,dt.
\end{equation}
This is the extremal polynomial of $\Omega_0$ in the momentum coordinate
$z\in[-1,1]$.
\end{setup}

\begin{lem}\label{lem:normalized-extremal-polynomial}
In Set-up~\ref{setup:normalized-extremal-polynomial},
\begin{equation}\label{eq:normalized-extremal-coefficients}
 A_c=0,
 \qquad
 B_c=\frac{135}{29},
\end{equation}
and
\begin{equation}\label{eq:normalized-extremal-polynomial}
 F_{x^*}(z)=\frac{45}{3472}(1-z^2)(z^2-4z-1)^2.
\end{equation}
The unique zero of $F_{x^*}$ in $(-1,1)$ is
\begin{equation}\label{eq:normalized-irrational-zero}
 z_0=2-\sqrt5,
\end{equation}
with multiplicity two.
\end{lem}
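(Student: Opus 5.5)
The cleanest route is to avoid computing the moments directly. Instead, transport Lemma~\ref{lem:boundary-polynomial} through the affine change of variables relating $x\in[0,1]$ and $z\in[-1,1]$. Put $x=(1+z)/2$. Then
\[
\ell_a(x)=\alpha_a+\tfrac{\delta_a}{2}(1+z)=\tfrac{2\alpha_a+\delta_a}{2}(1+x_az),
\]
so $p(x)=\kappa\,p_c(z)$ with $\kappa=\prod_a(2\alpha_a+\delta_a)/16$. Checking $(x_a)_a=x^*$ is immediate from the degree data: for instance $\delta_1/(2\alpha_1+\delta_1)=-11635/16289=-5/7$, and similarly for the other three factors.

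The same substitution handles $q$. For each $a$,
\[
\beta_a\prod_{b\ne a}\ell_b(x)=\frac{\kappa\,\beta_a\,x_a}{\delta_a/2}\prod_{b\ne a}(1+x_bz),
\]
using $(2\alpha_a+\delta_a)/2=\delta_a/(2x_a)$. Since $2\beta_a/\delta_a=s_a$, this gives $q(x)=\kappa\,q_c(z)$. Next set $G(z)=\tfrac{4}{\kappa}F\bigl(\tfrac{1+z}{2}\bigr)$. From $F''=2q-\overline S p$ we get
\[
G''(z)=\tfrac{1}{\kappa}F''=2q_c(z)-\overline S\,p_c(z).
\]
The boundary conditions transfer as well: $G(\pm1)=0$, and $G'(-1)=\tfrac{2}{\kappa}F'(0)=\tfrac{2}{\kappa}p(0)=2p_c(-1)$, and likewise $G'(1)=-2p_c(1)$.

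Now compare with the definition of $F_{x^*}$. Differentiating \eqref{eq:normalized-extremal-definition} twice gives
\[
F_{x^*}''=(A_cz+B_c)p_c+2q_c,\qquad F_{x^*}(-1)=0,\qquad F_{x^*}'(-1)=2p_c(-1).
\]
Hence $G$ would coincide with $F_{x^*}$ if $(A_c,B_c)=(0,-\overline S)=(0,135/29)$. To confirm this, integrate $G''$ against $1$ and against $t$ over $[-1,1]$ and integrate by parts, using $G(\pm1)=0$ and the values of $G'(\pm1)$. This shows that $(0,-\overline S)$ satisfies the moment system \eqref{eq:normalized-moment-system}.

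The main obstacle is matching the signs and factors in $\beta_r^c$ exactly, for both $r=0$ and $r=1$. The $r=1$ computation picks up boundary terms $\pm G'(\pm1)$; the $G$-integral term drops out because $G(\pm1)=0$. Once that is checked, uniqueness follows from $\det\begin{pmatrix}\alpha_1^c&\alpha_0^c\\ \alpha_2^c&\alpha_1^c\end{pmatrix}=(\alpha_1^c)^2-\alpha_0^c\alpha_2^c<0$, which is Cauchy--Schwarz with the positive weight $p_c$ on $[-1,1]$. This forces $A_c=0$, $B_c=135/29$, and therefore $F_{x^*}=G$.

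The explicit form then comes from substituting $x=(1+z)/2$ into \eqref{eq:boundary-polynomial}. We have $x(1-x)=(1-z^2)/4$, and
\[
x^2-3x+1=\tfrac14\bigl(z^2-4z-1\bigr),
\]
so $F_{x^*}=\tfrac{4}{\kappa}\cdot\tfrac{K}{64}(1-z^2)(z^2-4z-1)^2$. The constant $K/(16\kappa)$ reduces to $45/3472$ by the following arithmetic:
\begin{itemize}
\item the values $2\alpha_a+\delta_a$ are $13397971+923998=14321969=461999\cdot31$, then $16289=2327\cdot7$, $1424=356\cdot4$ and $416=52\cdot8$;
\item hence $\kappa=C\cdot31\cdot7\cdot4\cdot8/16=C\cdot434$;
\item therefore $K/(16\kappa)=90/(16\cdot434)=45/3472$.
\end{itemize}
Finally, the roots of $z^2-4z-1$ are $2\pm\sqrt5$, and only $2-\sqrt5$ lies in $(-1,1)$. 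It is a double zero, which also corresponds to $\lambda=(1+z_0)/2$.
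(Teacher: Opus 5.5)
Your proposal is correct, and it takes a different route from the paper.

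\textbf{The paper's route.} The paper proves the lemma by direct computation. It expands $p_c$ and $q_c$ as explicit rational polynomials and evaluates the five moments $\alpha_0^c,\alpha_1^c,\alpha_2^c,\beta_0^c,\beta_1^c$. It then solves the $2\times2$ system and integrates the defining formula for $F_{x^*}$ termwise.

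\textbf{Your route.} You instead pull back Lemma~\ref{lem:boundary-polynomial} along $x=(1+z)/2$. The identities $\ell_a=\tfrac{2\alpha_a+\delta_a}{2}(1+x_az)$, $p=\kappa p_c$ and $q=\kappa q_c$, with $\kappa=434\,C$, hold. Together with the four boundary conditions on $F$, they show that $G=\tfrac4\kappa F(\tfrac{1+z}{2})$ solves the defining equation with $(A,B)=(0,-\overline S)$. Integrating $G''$ against $1$ and against $t$ then shows that $(0,135/29)$ solves the moment system. The $r=1$ case gives $G'(1)+G'(-1)=-2\bigl(p_c(1)-p_c(-1)\bigr)$, which matches the sign in $\beta_1^c$ exactly. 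The Cauchy--Schwarz determinant gives uniqueness. The arithmetic $K/(16\kappa)=45/3472$ is also right.

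\textbf{Comparison.} Your approach avoids all the fraction-heavy moment values. It also makes clear that $A_c=0$ is nothing more than the condition $F(1)=0$, the vanishing Futaki invariant, already certified in Lemma~\ref{lem:boundary-polynomial}. It shows directly that $z_0=2\lambda-1$, which links the analytic threshold to the algebraic crease used later. The paper's computation, by contrast, is a self-contained certificate that does not depend on the earlier interpolation argument, and it records the explicit moments. Your argument does rely on the full strength of Lemma~\ref{lem:boundary-polynomial}. In particular it needs $F(1)=0$, which does not follow from the ODE alone and was established there by explicit interpolation; since that lemma precedes this one, the dependence is legitimate.
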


\begin{proof}
Expanding \eqref{eq:normalized-pq} gives
\begin{equation}\label{eq:normalized-pq-expansions}
\begin{aligned}
 p_c(t)
 &=1-\frac{459}{868}t-\frac{1231}{1736}t^2
   +\frac{459}{868}t^3-\frac{145}{1736}t^4,\\
 q_c(t)
 &=-\frac{217395}{100688}+\frac{30645}{50344}t
   +\frac{64395}{100688}t^2-\frac{45}{232}t^3.
\end{aligned}
\end{equation}
Termwise integration gives
\begin{equation}\label{eq:normalized-moment-values}
\begin{aligned}
 (\alpha_0^c,\alpha_1^c,\alpha_2^c)
 &=\left(\frac{1945}{1302},-\frac{153}{1085},
          \frac{16367}{45570}\right),\\
 (\beta_0^c,\beta_1^c)
 &=\left(-\frac{87525}{25172},\frac{4131}{12586}\right).
\end{aligned}
\end{equation}
Substitution in \eqref{eq:normalized-moment-system} gives
\eqref{eq:normalized-extremal-coefficients}.  The determinant is nonzero
because $\alpha_0^c\alpha_2^c-(\alpha_1^c)^2$ is the strictly positive
variance of $t$ for the positive weight $p_c$.  Substitution in
\eqref{eq:normalized-extremal-definition} and termwise integration give
\eqref{eq:normalized-extremal-polynomial}.  The quadratic factor has roots
$2-\sqrt5$ and $2+\sqrt5$, and only the first belongs to $(-1,1)$.
\end{proof}

\begin{setup}\label{setup:extremal-polynomial-family}
Let
\begin{equation}\label{eq:admissible-parameter-domain}
 \mathcal D=(0,1)\times(-1,0)^3.
\end{equation}
For $x\in\mathcal D$, let $F_x$ be the polynomial obtained from
Set-up~\ref{setup:normalized-extremal-polynomial} by replacing $x^*$ with
$x$ and keeping the numbers $s_a$ fixed, and put
\begin{equation}\label{eq:variable-momentum-density}
 p_x(z)=\prod_{a=0}^3(1+x_az).
\end{equation}
For $0<t<29/31$, put
\begin{equation}\label{eq:transverse-parameter-family}
 x_t=x^*-t(1,0,0,0).
\end{equation}
\end{setup}

\begin{lem}[The nearby admissible classes]
\label{lem:variable-admissible-classes}
Let
\begin{equation}\label{eq:variable-admissible-generators}
 \xi=c_1\bigl(\mathcal O_X(1)\bigr),
 \qquad
 \eta_a\in H^2(B,\mathbb Z)
 \quad(0\leq a\leq3),
\end{equation}
where $\eta_a$ is the pullback to $B$ of the positive generator of
$H^2(C_a,\mathbb Z)$.  For $x\in\mathcal D$, put
\begin{equation}\label{eq:variable-admissible-class}
 \Omega(x)=4\pi\xi+2\pi\pi^*\sum_{a=0}^3
 \delta_a\left(\frac{1}{x_a}-1\right)\eta_a.
\end{equation}
Then $\Omega(x)$ is an admissible K\"ahler class with a representative
invariant under the fiber circle.  Its restrictions to the two fixed
sections and its fiber integral are
\begin{equation}\label{eq:variable-admissible-restrictions}
 \Omega(x)\vert_{E_\pm}
 =2\pi\sum_{a=0}^3
 \delta_a\left(\frac{1}{x_a}\pm1\right)\eta_a,
 \qquad
 \int_{\mathbb P^1}\Omega(x)=4\pi.
\end{equation}
Here $E_-$ and $E_+$ are the sections defined by the quotients
$\mathcal O_B\oplus L\twoheadrightarrow\mathcal O_B$ and
$\mathcal O_B\oplus L\twoheadrightarrow L$, respectively.  The extremal
polynomial of $\Omega(x)$ in the coordinate $z$ is $F_x$.
Moreover,
\begin{equation}\label{eq:variable-admissible-limit}
 \Omega(x^*)=4\pi c_1(A)=\Omega_0,
 \qquad
 \Omega(x_t)\to\Omega_0
 \quad\text{in }H^{1,1}(X,\mathbb R)\text{ as }t\downarrow0.
\end{equation}
\end{lem}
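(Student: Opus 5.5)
The plan is to fit $\Omega(x)$ into the admissible framework of \cite{ACG+08}, in the form recalled in Theorem~\ref{thm:constructive-admissible-extremal}, by reading off the restrictions to the fixed sections and matching them with the admissible normalization.

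\textbf{Cohomological computations.} We work in the quotient convention, so that $E_-$ and $E_+$ are the sections cut out by the quotients $\mathcal O_B\oplus L\twoheadrightarrow\mathcal O_B$ and $\mathcal O_B\oplus L\twoheadrightarrow L$.
\begin{itemize}
\item The restriction $\mathcal O_X(1)\vert_{E_-}$ is the quotient line bundle $\mathcal O_B$, and $\mathcal O_X(1)\vert_{E_+}=L$. Hence $\xi\vert_{E_-}=0$ and $\xi\vert_{E_+}=c_1(L)=\sum_a\delta_a\eta_a$.
\item Since $\xi$ has degree one on a fiber, $\int_{\mathbb P^1}\Omega(x)=4\pi$.
\item Substituting into \eqref{eq:variable-admissible-class}, the restriction to $E_-$ is $2\pi\sum_a\delta_a(1/x_a-1)\eta_a$. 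The restriction to $E_+$ is $4\pi\sum_a\delta_a\eta_a+2\pi\sum_a\delta_a(1/x_a-1)\eta_a=2\pi\sum_a\delta_a(1/x_a+1)\eta_a$.
\end{itemize}
This gives \eqref{eq:variable-admissible-restrictions}.

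\textbf{Admissibility and positivity.} In \cite{ACG+08} an admissible class on $\mathbb P(\mathcal O\oplus L)$ over a product of cscK curves has the form $\Xi+2\pi\sum_a[\omega_a]/x_a$ (up to normalization). Here $\Xi$ is the class of the fiber form $d z\wedge\theta$ component, and $c_1(L)$ is written as $\sum_a[\omega_a]/2\pi$ with $\omega_a=2\pi\delta_a\eta_a$ and the parameters $x_a$. I will match \eqref{eq:variable-admissible-class} with this form. The condition needed is $0<|x_a|<1$ with $x_a\delta_a>0$, which makes each $\delta_a/x_a\cdot\eta_a$ a positive class on $C_a$.
\begin{itemize}
\item For $x\in\mathcal D$, the signs work out: $x_0>0$ with $\delta_0>0$, and $x_a<0$ with $\delta_a<0$ for $a\geq1$.
\item Consequently $\delta_a(1/x_a\pm1)=|\delta_a|(1/|x_a|\pm1)>0$, so both section restrictions are Kähler.
\item The admissible construction with any profile satisfying \eqref{eq:admissible-endpoint-data}-type boundary data (say $\Theta=1-z^2$) then gives a circle-invariant Kähler representative. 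This is because admissible metrics are built from the circle action, and the boundary data ensure smooth closing on $E_\pm$.
\end{itemize}

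\textbf{Identifying the extremal polynomial.} The density is $p_x(z)=\prod(1+x_az)$. The curvature terms are the $s_a=2\beta_a/\delta_a$, the normalized scalar curvatures of the base factors relative to $\omega_a$, which do not depend on $x$. Thus the extremal ODE of Theorem~\ref{thm:constructive-admissible-extremal} for $\Omega(x)$ is exactly Set-up~\ref{setup:normalized-extremal-polynomial} with $x^*$ replaced by $x$, so its solution is $F_x$.

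\textbf{The limit statement.} We check $\Omega(x^*)=\Omega_0$ directly. Using $1/x_a-1=2\alpha_a/\delta_a$, the class becomes $4\pi\xi+4\pi\sum_a\alpha_a\eta_a=4\pi c_1(A)$. The convergence $\Omega(x_t)\to\Omega_0$ holds because the coefficient $\delta_0(1/x_{t,0}-1)$ depends continuously on $t$ near $0$.

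The main obstacle is the normalization bookkeeping: verifying that the $s_a$ and the factor $2$ in the endpoint derivatives match the conventions of \cite{ACG+08}. I would check this first at $x^*$, against Lemma~\ref{lem:normalized-extremal-polynomial} and the rescaling $z=2x-1$ from Lemma~\ref{lem:boundary-polynomial}.
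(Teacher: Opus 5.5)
Your proposal is correct and is essentially the paper's argument run in the opposite direction: the paper writes the admissible form $\sum_a\frac{1+x_az}{x_a}\widehat\omega_a+dz\wedge\theta$ with $[\widehat\omega_a]=2\pi\delta_a\eta_a$, reads off its restrictions at $z=\pm1$ and its fiber integral $4\pi$, and identifies its class with $\Omega(x)$ by the projective-bundle formula. It then uses the same sign check on $\mathcal D$, the same moment-system identification of $F_x$, and the same substitution $\delta_a(1/x_a^*-1)=2\alpha_a$.

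There are two harmless slips. First, with your normalization $[\omega_a]=2\pi\delta_a\eta_a$, the matching class is $\Xi+\sum_a[\omega_a]/x_a$, where $\Xi=2\pi\,\mathrm{PD}(E_-+E_+)=4\pi\xi-2\pi c_1(L)$; there is no extra factor $2\pi$. Second, for $a\geq1$ one has $\delta_a(1/x_a\pm1)=\lvert\delta_a\rvert(1/\lvert x_a\rvert\mp1)$, which is still positive because $\lvert x_a\rvert<1$.
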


\begin{proof}
Choose on each $C_a$ the signed constant-curvature form
$\widehat\omega_a$ with cohomology class $2\pi\delta_a\eta_a$, and suppress
pullbacks to $B$ and $X$.  Let $\theta$ be the connection in the admissible
construction, normalized by
\begin{equation}\label{eq:variable-admissible-connection}
 \theta(K)=1,
 \qquad
 d\theta=\sum_{a=0}^3\widehat\omega_a.
\end{equation}
On the complement of the two fixed sections, the admissible K\"ahler form
has the expression
\begin{equation}\label{eq:variable-admissible-form}
 \widehat\omega_x
 =\sum_{a=0}^3\frac{1+x_az}{x_a}\widehat\omega_a
   +dz\wedge\theta.
\end{equation}
For $x\in\mathcal D$, the numbers $x_a$ and $\delta_a$ have equal signs
and $\lvert x_a\rvert<1$.  Thus every
$\widehat\omega_a/x_a$ is positive and $1+x_az>0$ for
$-1\leq z\leq1$.  The function $1-z^2$ is positive on $(-1,1)$,
vanishes at the endpoints, and has derivatives $2$ and $-2$ there.
The canonical admissible metric and its compactification in
\cite[pp.~553--555, Sections~1.2--1.3]{ACG+08} therefore extend
\eqref{eq:variable-admissible-form} to a smooth K\"ahler form on $X$
invariant under the fiber circle.

The quotient convention is the line convention of
\cite[Section~1.3, pp.~554--555]{ACG+08} applied to
$L^{-1}\oplus\mathcal O_B$.  Thus $z=-1$ on $E_-$ and $z=1$ on $E_+$.
At these endpoints, the restrictions of
\eqref{eq:variable-admissible-form} are the two classes in
\eqref{eq:variable-admissible-restrictions}.  The circle has period $2\pi$,
so the integral of $dz\wedge\theta$ along a projective-line fiber is
$4\pi$.  In the quotient convention,
$\mathcal O_X(1)$ is trivial on $E_-$, restricts to $L$ on $E_+$, and has
degree one on every fiber.  The projective-bundle formula now identifies
$[\widehat\omega_x]$ with \eqref{eq:variable-admissible-class}.

For the specialization, $c_1(L_a)=\delta_a\eta_a$ and
$c_1(M_a)=\alpha_a\eta_a$.  The definition of $x^*$ gives
\begin{equation}\label{eq:variable-admissible-specialization}
 \delta_a\left(\frac{1}{x_a^*}-1\right)=2\alpha_a.
\end{equation}
Substitution in \eqref{eq:variable-admissible-class} yields
$\Omega(x^*)=4\pi\bigl(\xi+\pi^*c_1(M)\bigr)=4\pi c_1(A)$.
The convergence in \eqref{eq:variable-admissible-limit} follows directly
from \eqref{eq:variable-admissible-class} and $x_t\to x^*$.
It remains to identify the extremal polynomial.  The moment system defining $F_x$ in
Set-up~\ref{setup:extremal-polynomial-family} is the extremal moment system
for $\Omega(x)$.  Uniqueness in
Theorem~\ref{thm:constructive-admissible-extremal} identifies its extremal
polynomial with $F_x$.
\end{proof}

\begin{lem}\label{lem:nearby-positive-polynomials}
There exists $t_0>0$ such that
\begin{equation}\label{eq:nearby-positive-polynomials}
 F_{x_t}(z)>0
 \qquad
 \text{for }0<t<t_0\text{ and }-1<z<1.
\end{equation}
At the double zero $z_0=2-\sqrt5$, the transverse derivative is
\begin{equation}\label{eq:positive-transverse-derivative}
 \left.\frac{d}{dt}F_{x_t}(z_0)\right\rvert_{t=0}
 =\frac{491483507744547-199580574471942\sqrt5}
 {17092211370619}>0.
\end{equation}
Define
\begin{equation}\label{eq:nearby-extremal-profiles}
 \Theta_t=\frac{F_{x_t}}{p_{x_t}},
 \qquad
 \Theta_0=\frac{F_{x^*}}{p_c}.
\end{equation}
Then
\begin{equation}\label{eq:profile-uniform-convergence}
 \lVert\Theta_t-\Theta_0\rVert_{C^0([-1,1])}\to0
 \qquad\text{as }t\to0^+.
\end{equation}
\end{lem}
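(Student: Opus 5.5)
The plan is to treat $F_x(z)$ as a polynomial in $z$ whose coefficients are rational, hence smooth, functions of $x\in\mathcal D$. Along the segment $x_t$, these coefficients are smooth in $t$ near $t=0$. They come from solving the $2\times2$ moment system \eqref{eq:normalized-moment-system}, whose determinant is a positive variance and therefore stays nonzero near $x^*$.

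\textbf{Step 1 (transverse derivative).} I would differentiate \eqref{eq:normalized-extremal-definition} in $t$ at $t=0$. Only $x_0$ varies, so $\partial_t p_{x_t}=-z\prod_{b\ne0}(1+x_bz)$ and $\partial_t q$ are explicit. The derivatives $\dot A_c,\dot B_c$ come from differentiating the moment system and using $A_c=0$, $B_c=135/29$ from Lemma~\ref{lem:normalized-extremal-polynomial}. Substituting $z_0=2-\sqrt5$ and reducing in $\mathbb Q(\sqrt5)$ gives the number in \eqref{eq:positive-transverse-derivative}. Its positivity reduces to comparing squares: $491483507744547^2>5\cdot199580574471942^2$.

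\textbf{Step 2 (positivity for small $t$).} I would split $[-1,1]$ into three regions.
\begin{enumerate}
\item Near the endpoints. For every $x$, the boundary data give $F_x(\pm1)=0$ and $F_x'(\mp1)=\pm2p_x(\mp1)$, and these slopes are bounded away from $0$ uniformly in $t$. A uniform $C^2$ bound on the polynomials then gives $\delta>0$ with $F_{x_t}>0$ on $(-1,-1+\delta]\cup[1-\delta,1)$.
\item Away from the endpoints and from $z_0$. On the compact set $[-1+\delta,1-\delta]\setminus(z_0-\eta,z_0+\eta)$, the function $F_{x^*}$ is bounded below by a positive constant. Uniform convergence $F_{x_t}\to F_{x^*}$ preserves positivity there for small $t$.
\item Near $z_0$. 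This is the main obstacle. Write $G(t,z)=F_{x_t}(z)$. We have $G(0,z_0)=\partial_zG(0,z_0)=0$ and $\partial_z^2G(0,z_0)=c>0$, since $z_0$ is an exact double zero of $F_{x^*}$ with $(1-z_0^2)>0$. We also have $\partial_tG(0,z_0)=d>0$ from Step 1. Taylor expansion gives $G(t,z)\ge dt+\tfrac c2(z-z_0)^2-C(t\lvert z-z_0\rvert+t^2+\lvert z-z_0\rvert^3)$. The mixed term is absorbed by $t\lvert z-z_0\rvert\le \epsilon(z-z_0)^2+t^2/(4\epsilon)$. For $t$ and $\lvert z-z_0\rvert$ small, this lower bound is at least $\tfrac d2t+\tfrac c4(z-z_0)^2>0$.
\end{enumerate}
Taking $t_0$ to be the minimum of the three thresholds gives \eqref{eq:nearby-positive-polynomials}.

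\textbf{Step 3 (profile convergence).} On $[-1,1]$, $p_{x_t}$ converges uniformly to $p_c$. Each factor $1+x_az$ is at least $1-\lvert x_a\rvert$, which is bounded below near $x^*$, so $p_{x_t}$ is bounded below uniformly. Combined with $F_{x_t}\to F_{x^*}$ uniformly, since the coefficients converge, this gives \eqref{eq:profile-uniform-convergence} directly.
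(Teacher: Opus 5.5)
Your proposal is correct and follows the paper's proof essentially step for step. Both arguments use the rational (real-analytic) dependence of $F_x$ on $x$ through the invertible moment system, and both certify the transverse derivative via $491483507744547^2>5\cdot 199580574471942^2$. Both then prove positivity on three regions: near the endpoints (uniform boundary slopes plus a $C^2$ bound), on a compact middle set, and on a neighbourhood of $z_0$. Both finish with uniform convergence of the profiles because $p_{x_t}$ stays bounded below.

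The only deviation is near $z_0$, where the paper avoids your second-order Taylor estimate. Since $\partial_t G>0$ on a neighbourhood of $(z_0,0)$ and $G(\cdot,0)=F_{x^*}\geq 0$, integrating $\partial_t G$ from $0$ to $t$ already gives $G>0$ there for $t>0$. So the paper never needs $\partial_z^2 G(0,z_0)>0$, although that condition does hold and your bound is valid.
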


\begin{proof}
We first prove real-analytic dependence on $x$ and verify the positive
transverse derivative.  We then treat a neighborhood of the double zero,
the two boundary neighborhoods, and the remaining compact set separately.

\medskip

\noindent\textbf{Step 1.} In this step, we prove real-analytic dependence
and the positivity in \eqref{eq:positive-transverse-derivative}.
For $x\in\mathcal D$, the polynomial $p_x$ in
\eqref{eq:variable-momentum-density} is positive on $[-1,1]$.  The
determinant of the corresponding moment
system is the negative of the strictly positive weighted variance
\begin{equation}\label{eq:variable-moment-determinant}
 \alpha_0^c(x)\alpha_2^c(x)-\bigl(\alpha_1^c(x)\bigr)^2>0.
\end{equation}
Hence every coefficient of $F_x$ depends real-analytically on $x$.
Differentiating the defining integrals gives
\eqref{eq:positive-transverse-derivative}.  Its sign follows from
\begin{multline}\label{eq:transverse-derivative-certificate}
 491483507744547^2-5(199580574471942)^2\\
 =42394009852132257266281978389>0.
\end{multline}

\medskip

\noindent\textbf{Step 2.} In this step, we prove positivity near the
interior double zero.
Let
\begin{equation}\label{eq:two-variable-extremal-polynomial}
 G(z,t)=F_{x_t}(z).
\end{equation}
By Lemma~\ref{lem:normalized-extremal-polynomial}, $G(\,\cdot\,,0)$ is
nonnegative on $[-1,1]$ and vanishes only at $-1$, $1$, and $z_0$.
Joint real-analyticity and \eqref{eq:positive-transverse-derivative} imply
that $\partial_tG$ remains positive on a neighborhood of $z_0$ for all
sufficiently small $t\geq0$.  Integrating $\partial_tG$ from $0$ to $t$
proves that $G(z,t)>0$ on this neighborhood when $t>0$.

\medskip

\noindent\textbf{Step 3.} We prove positivity on the complement of the
double-zero neighborhood and conclude the proof in this step.
The moment system preserves the four boundary conditions.  Since $p_{x_t}$
remains uniformly positive, there exist $c,M>0$ such that, after decreasing
the upper bound on $t$,
\begin{equation}\label{eq:uniform-boundary-derivatives}
 \partial_zG(-1,t)\geq c,
 \qquad
 \partial_zG(1,t)\leq-c,
 \qquad
 \lvert\partial_z^2G(z,t)\rvert\leq M.
\end{equation}
For $0<s<c/M$, integration of the derivative bounds gives
\begin{equation}\label{eq:boundary-neighborhood-positivity}
 G(-1+s,t)\geq cs-\frac{M}{2}s^2>0,
 \qquad
 G(1-s,t)\geq cs-\frac{M}{2}s^2>0.
\end{equation}
On the remaining compact subset, $G(\,\cdot\,,0)$ has a positive minimum,
which remains positive for small $t$.  Taking the minimum of the resulting
upper bounds on $t$ proves \eqref{eq:nearby-positive-polynomials}.
Real-analytic coefficient dependence gives
$F_{x_t}\to F_{x^*}$ uniformly on $[-1,1]$, while
$p_{x_t}\to p_c$ uniformly.  The polynomial $p_c$ is strictly positive on
$[-1,1]$, so the denominators in \eqref{eq:nearby-extremal-profiles} are
uniformly bounded away from zero for small $t$.  This proves
\eqref{eq:profile-uniform-convergence}.
\end{proof}

\begin{lem}\label{lem:admissible-naturality}
Let $g_\Theta$ be an admissible K\"ahler metric on $X$, with K\"ahler form
$\omega_\Theta$, profile $\Theta$, and normalized momentum coordinate $z$
as in \eqref{eq:admissible-profile-identity}.  If
$\Psi\in\Aut^0(X)$, then $\Psi^*g_\Theta$ is admissible for the split bundle
in \eqref{eq:fivefold-and-polarization} and the original fiber circle.  Its
profile is $\Theta$, its normalized momentum coordinate is $z\circ\Psi$, and
\begin{equation}\label{eq:pullback-profile-identity}
 d(z\circ\Psi)=-\iota_K(\Psi^*\omega_\Theta),
 \qquad
 (\Psi^*g_\Theta)(K,K)=\Theta(z\circ\Psi).
\end{equation}
The values of $z\circ\Psi$ on $E_-$ and $E_+$ are $-1$ and $1$,
respectively.  If $g_\Theta$ is extremal, then $\Psi^*g_\Theta$ is extremal.
\end{lem}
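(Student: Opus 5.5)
The point is that $\Aut^0(X)=\mathbb C^*$ is abelian and consists of fiber scalings, by Proposition~\ref{prop:basic-geometry}. So every $\Psi\in\Aut^0(X)$ commutes with the fiber circle generated by $K$, covers the identity on $B$, and preserves the two fixed sections $E_\pm$, which are its fixed-point components. I will first record these facts. In particular, $\Psi_*K=K$, since $\Psi$ lies in the same abelian group as the circle.

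Next I compute the pullback structures. The identity $dz=-\iota_K\omega_\Theta$ pulls back to $\Psi^*dz=-\iota_{\Psi^*K}\Psi^*\omega_\Theta$. Because $\Psi^*K=K$, this gives the first identity in \eqref{eq:pullback-profile-identity}. Similarly, $(\Psi^*g_\Theta)(K,K)=g_\Theta(\Psi_*K,\Psi_*K)\circ\Psi$, which equals $\Theta(z)\circ\Psi=\Theta(z\circ\Psi)$. Since $\Psi$ fixes $E_-$ and $E_+$ pointwise (it acts trivially on $B$), the values of $z\circ\Psi$ on $E_\mp$ are still $\mp1$.

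For admissibility, I use the explicit local form \eqref{eq:variable-admissible-form}, namely $\omega=\sum_a\frac{1+x_az}{x_a}\widehat\omega_a+dz\wedge\theta$. The forms $\widehat\omega_a$ are pulled back from $B$, and $\Psi$ covers the identity on $B$, so $\Psi^*\widehat\omega_a=\widehat\omega_a$. For the connection form, $\Psi^*\theta$ satisfies $(\Psi^*\theta)(K)=1$ and $d\Psi^*\theta=\sum_a\widehat\omega_a$. Hence $\Psi^*\theta=\theta+\pi^*\gamma$ for a closed $1$-form $\gamma$ on $B$; it is basic because it is $K$-invariant and annihilates $K$. This is the one delicate step. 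Since $\Psi$ is holomorphic, $\Psi^*\theta$ is a connection form compatible with the complex structure, in the sense that $J\,dz$ is a multiple of $\theta$. I will argue that $\gamma=0$, or at least that $\theta+\pi^*\gamma$ is again an admissible connection. The second option is enough, since admissibility in \cite{ACG+08} allows any $\theta$ with $d\theta=\sum\widehat\omega_a$ compatible with $J$.

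Concretely, $\Psi$ is $\operatorname{diag}(1,c)$. Writing $c=e^{s+i\phi}$, the rotational part preserves $\theta$. The radial part is the flow of $JK$. Because $\mathcal L_{JK}\theta$ is computed from $\theta=-J\,dz/\Theta$-type identities, it is exact along the flow and contributes no basic form. So $\Psi^*\omega_\Theta=\sum_a\frac{1+x_a z'}{x_a}\widehat\omega_a+dz'\wedge\theta'$ with $z'=z\circ\Psi$, which is admissible with the same profile.

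Finally, extremality is preserved by biholomorphic pullback. Indeed, $\Scal(\Psi^*g)=\Scal(g)\circ\Psi$, and the gradient of a holomorphy potential pulls back under a biholomorphism to a holomorphic vector field.

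I expect the main obstacle to be the connection step, namely showing that $\Psi^*\theta$ differs from $\theta$ by nothing that breaks the admissible normal form. My first attempt will be to reduce to the real flow of $JK$ and use that it is a gradient-type flow for $z$ preserving the horizontal distribution $\ker\theta\cap\ker dz$. That distribution is the $\Psi$-invariant holomorphic horizontal lift determined by the splitting $\mathcal O_B\oplus L$.
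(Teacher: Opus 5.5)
Your proposal is correct and follows the paper's proof. By Proposition~\ref{prop:basic-geometry}, $\Psi$ is a fiber scaling, so it covers the identity on $B$, fixes $E_\pm$ pointwise and satisfies $\Psi_*K=K$; after that, \eqref{eq:admissible-profile-identity} and extremality simply pull back. The connection-form step you flag, which the paper dispatches in one sentence, closes as you suggest and in fact gives $\Psi^*\theta=\theta$. Write $\theta=\pm J\,dz/\Theta(z)$ and use $\mathcal L_{JK}J=0$; then $\mathcal L_{JK}\theta=\pm J\,d\bigl(dz(JK)/\Theta(z)\bigr)=0$, because $dz(JK)=-\omega_\Theta(K,JK)$ equals $\pm\Theta(z)$ and so the quotient is constant. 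This direct computation is needed: closedness of $\Psi^*\theta-\theta$ together with $\iota_K(\Psi^*\theta-\theta)=0$ does not by itself make the difference a pullback from $B$, since a term $h(z)\,dz$ is not excluded.
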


\begin{proof}
By Proposition~\ref{prop:basic-geometry}, $\Psi$ is a fiber scaling.  Hence
$\Psi$ covers the identity on $B$, fixes $E_-$ and $E_+$, and commutes with
the fiber circle.  In particular, $\Psi_*K=K$.  Pulling back
\eqref{eq:admissible-profile-identity} gives
\[
 d(z\circ\Psi)=\Psi^*(dz)
 =-\Psi^*(\iota_K\omega_\Theta)
 =-\iota_K(\Psi^*\omega_\Theta)
\]
and
\[
 (\Psi^*g_\Theta)(K,K)
 =\bigl(g_\Theta(K,K)\bigr)\circ\Psi
 =\Theta(z\circ\Psi).
\]
The fixed-section values and the compactification data are unchanged, so
$\Psi^*g_\Theta$ is admissible with the asserted profile.  Scalar curvature
and holomorphicity of its gradient are natural under biholomorphic pullback,
which proves the last assertion.
\end{proof}

\begin{proof}[Proof of Proposition~\ref{prop:no-extremal-metric}]
Positive rescaling preserves extremality, so it is enough to consider
$\Omega_0=4\pi c_1(A)$.  Suppose that $\Omega_0$ contains an extremal metric
$g_0$.  We construct admissible extremal metrics in nearby classes and pass
their momentum-profile identity to the limit.

\medskip

\noindent\textbf{Step 1.} In this step, we construct admissible extremal
metrics converging to $g_0$.
By Proposition~\ref{prop:basic-geometry}, the fiber circle is the unique
maximal compact connected subgroup of $\Aut^0(X)=\mathbb C^*$.
Theorem~\ref{thm:maximal-compactness} implies that this circle acts
isometrically on $g_0$.  Let $K$ be its period-$2\pi$ real holomorphic
generator, oriented as in
\eqref{eq:admissible-profile-identity}.

Choose a sequence $t_k\downarrow0$ as in
Lemma~\ref{lem:nearby-positive-polynomials}, and put
$\Omega_k=\Omega(x_{t_k})$.  Lemma~\ref{lem:variable-admissible-classes}
shows that these are circle-invariant K\"ahler classes with fiber integral
$4\pi$ and that $\Omega_k\to\Omega_0$.  By
Theorem~\ref{thm:constructive-admissible-extremal}, there exists an
admissible extremal metric $\widetilde g_k\in\Omega_k$ with profile
$\Theta_{t_k}$.  By
Theorem~\ref{thm:extremal-openness}, there also exist circle-invariant
extremal metrics $g_k\in\Omega_k$ such that
\begin{equation}\label{eq:nearby-extremal-convergence}
 g_k\to g_0
 \qquad\text{in }C^2.
\end{equation}
Theorem~\ref{thm:extremal-uniqueness} gives
$\Psi_k\in\Aut^0(X)$ such that
\begin{equation}\label{eq:nearby-extremal-pullback}
 g_k=\Psi_k^*\widetilde g_k.
\end{equation}
Let $\omega_k=g_k(J\mathord\cdot,\mathord\cdot)$ and put
$z_k=z_{\widetilde g_k}\circ\Psi_k$.  Lemma~\ref{lem:admissible-naturality}
gives
\begin{equation}\label{eq:nearby-profile-identity}
 dz_k=-\iota_K\omega_k,
 \qquad
 z_k\vert_{E_-}=-1,
 \qquad
 z_k\vert_{E_+}=1,
 \qquad
 g_k(K,K)=\Theta_{t_k}(z_k).
\end{equation}

\medskip

\noindent\textbf{Step 2.} In this step, we prove uniform convergence of the
normalized momentum coordinates.  Fix a background Riemannian metric on
$X$, let $D$ be its diameter, and choose $q_-\in E_-$.  The convergence in
\eqref{eq:nearby-extremal-convergence} implies that
$\omega_k\to\omega_0=g_0(J\mathord\cdot,\mathord\cdot)$ uniformly.  Hence
$dz_k=-\iota_K\omega_k$ is uniformly Cauchy.  Since $z_k(q_-)=-1$, for all
$k,l$ and all $p\in X$, integration along a minimizing background geodesic
from $q_-$ to $p$ gives
\begin{equation}\label{eq:momentum-cauchy-estimate}
 \lvert z_k(p)-z_l(p)\rvert
 \leq D\lVert dz_k-dz_l\rVert_{C^0}.
\end{equation}
Thus $z_k$ converges uniformly to a continuous function
$\zeta\colon X\to[-1,1]$.  Moreover,
\begin{equation}\label{eq:momentum-uniform-convergence}
 z_k\to\zeta\quad\text{uniformly},
 \qquad
 \zeta\vert_{E_-}=-1,
 \qquad
 \zeta\vert_{E_+}=1.
\end{equation}

\medskip

\noindent\textbf{Step 3.} We use the interior zero of the limiting profile
to obtain a contradiction.  Fix $b\in B$.  The fiber
$X_b\simeq\mathbb P^1$ meets $E_-$ and $E_+$, where $\zeta$ has values
$-1$ and $1$.  The intermediate value theorem therefore gives a point
$q\in X_b$ such that
\begin{equation}\label{eq:limiting-momentum-zero-point}
 \zeta(q)=z_0=2-\sqrt5.
\end{equation}
This value lies in $(-1,1)$, so $q$ is not on either fixed section.  On each
fiber the circle acts by
$[u:v]\mapsto[u:e^{\sqrt{-1}\theta}v]$ and its generator vanishes only at
$[1:0]$ and $[0:1]$.  Hence
\begin{equation}\label{eq:nonzero-generator-at-limit-point}
 K(q)\ne0.
\end{equation}

By \eqref{eq:profile-uniform-convergence} and
\eqref{eq:momentum-uniform-convergence},
\begin{equation}\label{eq:composed-profile-convergence}
 \Theta_{t_k}(z_k(q))\to\Theta_0(\zeta(q)).
\end{equation}
The absolute value of the difference in
\eqref{eq:composed-profile-convergence} is at most
$\lVert\Theta_{t_k}-\Theta_0\rVert_{C^0}$ plus
$\lvert\Theta_0(z_k(q))-\Theta_0(\zeta(q))\rvert$.
Passing to the limit in \eqref{eq:nearby-profile-identity} and using
Lemma~\ref{lem:normalized-extremal-polynomial}, we obtain
\begin{equation}\label{eq:threshold-profile-contradiction}
 g_0(K,K)(q)
 =\Theta_0(z_0)
 =\frac{F_{x^*}(z_0)}{p_c(z_0)}
 =0.
\end{equation}
This contradicts the positive definiteness of $g_0$ and
\eqref{eq:nonzero-generator-at-limit-point}.  Thus $\Omega_0$ contains no
extremal metric.  Rescaling by $1/(4\pi)$ proves that $c_1(A)$ contains no
extremal metric, and a cscK metric is extremal.
\end{proof}
 
\section{Affine transforms of the two fiber-scaling initial filtrations}
\label{sec:affine-initials}

This section defines the two initial filtrations obtained from the opposite
directions of fiber scaling and proves that either convex transform is affine
in the fiber coordinate.

\subsection{Fixed Veronese gradings and rational rays}
\label{subsec:fixed-veronese-rays}

We first fix the degree convention and identify the limiting measures on the
five-dimensional Newton--Okounkov body and on its rational slices.

\begin{setup}\label{setup:initial-filtrations}
Notation and conditions are as in Construction~\ref{cons:explicit-fivefold}.
Thus
\begin{equation}\label{eq:initial-base-data}
 B=\prod_{i=0}^3C_i,
 \qquad
 M=\boxtimes_{i=0}^3M_i,
 \qquad
 L=\boxtimes_{i=0}^3L_i.
\end{equation}
The genera and the degrees are
\begin{align}
 (g_0,g_1,g_2,g_3)&=(3846511,10591,76,46),
 \label{eq:initial-genera}\\
 (\deg M_i)_{i=0}^3&=(461999,13962,1068,260),
 \label{eq:initial-M-degrees}\\
 (\deg L_i)_{i=0}^3&=(13397971,-11635,-712,-104).
 \label{eq:initial-L-degrees}
\end{align}
The Jacobians satisfy
\begin{equation}\label{eq:initial-hom-orthogonality}
 \Hom\bigl(\Pic^0(C_i),\Pic^0(C_j)\bigr)=0
 \qquad\text{for }i\ne j.
\end{equation}
We use the quotient convention in
\begin{equation}\label{eq:initial-fivefold}
 X=\mathbb P_B(\mathcal O_B\oplus L),
 \qquad
 A=\mathcal O_X(1)\otimes\pi^*M.
\end{equation}
For an actual power $d$ of $A$, fiber scaling gives
\begin{equation}\label{eq:affine-initials-blocks}
 H^0(X,A^d)=\bigoplus_{j=0}^dV_{d,j},
 \qquad
 V_{d,j}=H^0(B,M^d\otimes L^j).
\end{equation}

Fix a positive integer $e$, and let $\mathcal T$ be a normal ample
algebraic test configuration whose generic polarized fiber is $(X,A^e)$ and
such that $\DF(\mathcal T)=0$.  Exponent-one degree $m$ for $(X,A^e)$ is
called the \emph{section degree}; it corresponds to the actual $A$-degree
\begin{equation}\label{eq:affine-initials-actual-degree}
 d=em.
\end{equation}
The supported section ring is
\begin{equation}\label{eq:initial-veronese-ring}
 R^{[e]}=\bigoplus_{m\geq0}H^0(X,A^{em}).
\end{equation}
Let $\mathcal F_{\leq a}H^0(X,A^d)$ be the subspace whose increasing entries
are at most $a$.  Define two one-parameter subgroups on
\eqref{eq:affine-initials-blocks} by
\begin{equation}\label{eq:opposite-fiber-scaling-subgroups}
 \lambda_+(z)\vert_{V_{d,j}}=z^j\operatorname{id},
 \qquad
 \lambda_-(z)\vert_{V_{d,j}}=z^{-j}\operatorname{id}.
\end{equation}
For $\epsilon\in\{+,-\}$ and every $a,d$, put
\begin{equation}\label{eq:opposite-initial-filtered-pieces}
 \mathcal F^\epsilon_{\leq a}H^0(X,A^d)
 =\lim_{z\to0}\lambda_\epsilon(z)
   \mathcal F_{\leq a}H^0(X,A^d),
\end{equation}
where the limit is taken in the Grassmannian of subspaces of the fixed
dimension.  These filtered pieces define the two fiber-scaling initial
filtrations $\chi^\epsilon$.  Neither initial filtration is recentered or
shifted by a character.  Denote the convex transform of
$\chi^\epsilon$ in actual degree by $G^\epsilon$.  If $\tau(s)$ is a
decreasing-filtration jump, we use
the increasing-entry convention
\begin{equation}\label{eq:affine-initials-sign}
 i(s)=-\tau(s),
 \qquad
 i(st)\leq i(s)+i(t).
\end{equation}

For $0\leq x\leq1$, put
\begin{align}
 h_i(x)&=\deg M_i+x\deg L_i,
 \label{eq:affine-initials-side-lengths}\\
 \Omega_x&=\prod_{i=0}^3[0,h_i(x)],
 \qquad
 \Omega=\bigcup_{0\leq x\leq1}\{x\}\times\Omega_x.
 \label{eq:affine-initials-body}
\end{align}
All four functions $h_i$ are positive on $[0,1]$.  Let $\mu_\Omega$ be
normalized five-dimensional Lebesgue measure on $\Omega$, and define
\begin{equation}\label{eq:rho-definition}
 p(x)=\prod_{i=0}^3h_i(x),
 \qquad
 \rho=\frac{p(x)\,dx}{\int_0^1p(t)\,dt}.
\end{equation}
The $x$-marginal of $\mu_\Omega$ is $\rho$.

When one sign $\epsilon$ is fixed, we suppress it from the notation.
Write $i_{d,j,\gamma}$ for the entries of $\chi^\epsilon$ on
$V_{d,j}$, counted with multiplicity, and put
\begin{equation}\label{eq:affine-initials-block-means}
 n_{d,j}=\dim V_{d,j},
 \qquad
 \overline i_{d,j}=\frac1{n_{d,j}}\sum_\gamma i_{d,j,\gamma}.
\end{equation}
\end{setup}

We record five external results in the precise forms used in this section.

\begin{thm}[Filtered Okounkov equidistribution,
{\cite[Theorem~1.11 and Remark~1.12(i)]{BC11}}]
\label{thm:filtered-okounkov-equidistribution}
Let a graded linear series containing an ample series carry a multiplicative,
linearly bounded filtration, and choose a valuation with one-dimensional
leaves.  The normalized filtration measures converge to the pushforward of
normalized Lebesgue measure on the Newton--Okounkov body by the associated
concave transform.  After reversing the sign of the filtration jumps, this
convergence statement holds for the convex transform in the increasing-entry
convention.
\end{thm}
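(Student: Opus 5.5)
We take the first sentence directly from \cite[Theorem~1.11 and Remark~1.12(i)]{BC11} and derive the increasing-entry reformulation by a reflection argument. For orientation, recall how the Boucksom--Chen argument runs, since its structure shows exactly which hypotheses are used.

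Let $R=\bigoplus_k R_k$ be the graded linear series, of Iitaka dimension $n$, with a multiplicative, linearly bounded decreasing filtration $\mathcal F^tR_k$. Let $\nu$ be the chosen valuation with one-dimensional leaves. For each real $t$, the subspaces $\mathcal F^{tk}R_k$ form a graded subseries $R^{(t)}$, and multiplicativity is exactly what makes it a graded series. Write $\Delta^t\subset\Delta(R)$ for its Newton--Okounkov body. The concave transform is
\[
G(y)=\sup\{t: y\in\Delta^t\},
\]
which is concave and bounded on $\Delta(R)$ by linear boundedness.

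\textbf{Step 1.} Because $R$ contains an ample series, we have
\[
\vol(\Delta^t)=\lim_k k^{-n}\dim\mathcal F^{tk}R_k
\]
for every $t<\sup G$. Indeed, for such $t$ the series $R^{(t)}$ still contains an ample-type subseries, so the Lazarsfeld--Musta\c{t}\u{a}/Kaveh--Khovanskii volume theorem applies. The one-dimensional-leaves hypothesis is what makes the jump counts of $\nu$ equal the dimensions of the graded pieces.

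\textbf{Step 2.} Step~1 is an equality of distribution functions. On one side is the normalized jump measure $N_k^{-1}\sum_j\delta_{\tau_{k,j}/k}$; on the other is the pushforward $G_*(\mathrm{Leb}/\vol\Delta)$. They agree at every $t$ outside the countable set of atoms of the limit. Linear boundedness keeps all these measures in a fixed compact interval, so convergence of the distribution functions off a countable set yields weak convergence. This is the statement of \cite[Theorem~1.11]{BC11}.

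\textbf{Step 3 (sign reversal).} The increasing entries are $i=-\tau$, so the normalized entry measures are the pushforwards of the jump measures under the reflection $r(x)=-x$. The map $r$ is a continuous homeomorphism of $\mathbb R$, so pushforward by $r$ is weakly continuous and commutes with the limit. Since $r\circ G=-G$ is precisely the convex transform in the increasing-entry convention, the limit of the entry measures is $(-G)_*$ of normalized Lebesgue measure. This is the second sentence of the statement.

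The inequality $i(st)\le i(s)+i(t)$ in \eqref{eq:affine-initials-sign} is just the sign-reversed form of multiplicativity, so no hypothesis is lost in the translation.

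The only delicate point, which we expect to be the main obstacle, lies in Step~1 at the boundary value $t=\sup G$, where $R^{(t)}$ may fail to contain an ample series. We handle it by noting that a single value of $t$ does not affect weak convergence. Linear boundedness ensures that no mass escapes to infinity, so the distribution functions determine the limit measure everywhere else.
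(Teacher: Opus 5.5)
Your proposal is correct and matches the paper, which also takes the first sentence directly from \cite[Theorem~1.11 and Remark~1.12(i)]{BC11}. The paper likewise treats the increasing-entry version as the pushforward of the limit under the reflection $x\mapsto -x$, which turns the concave transform $G$ of the jumps $\tau$ into the convex transform $-G$ of the entries $i=-\tau$. Your recap of the Boucksom--Chen argument is accurate but not load-bearing, since the reflection step is all the added sentence requires.
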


\begin{thm}[Normal generation on a smooth curve,
{\cite[Theorem~4.2]{FT14}}]
\label{thm:curve-normal-generation}
Let $C$ be a smooth projective curve of genus $g$, and let $N$ be a line
bundle on $C$ with $\deg N\geq2g+1$.  Then the complete section ring of
$N$ is generated in degree one.
\end{thm}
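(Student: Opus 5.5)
The plan is to reduce to the surjectivity of the multiplication maps
\[
 \mu_a\colon H^0(C,N^a)\otimes H^0(C,N)\to H^0(C,N^{a+1}),\qquad a\geq1.
\]
Put $d=\deg N\geq 2g+1$. Then every power $N^k$ with $k\geq1$ has degree $kd>2g-2$. Riemann--Roch therefore gives $h^0(N^k)=kd+1-g$ and $h^1(N^k)=0$. Moreover $N$ is globally generated, by the same length-two argument as in the proof of Lemma~\ref{lem:split-projective-bundle-ampleness}. Surjectivity of all $\mu_a$ gives generation in degree one by induction on the degree.

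For $a\geq2$ I will use Castelnuovo's base-point-free pencil trick. Since $N$ is globally generated, a general two-dimensional subspace $V\subset H^0(C,N)$ is base-point free, and the Koszul sequence
\[
 0\to N^{-1}\to V\otimes\mathcal O_C\to N\to 0
\]
is exact. Tensoring with $N^a$ and taking sections shows that the kernel of $V\otimes H^0(N^a)\to H^0(N^{a+1})$ is $H^0(N^{a-1})$. Hence the image has dimension
\[
 2(ad+1-g)-\bigl((a-1)d+1-g\bigr)=(a+1)d+1-g=h^0(N^{a+1}).
\]
So $\mu_a$ is already surjective on $V\otimes H^0(N^a)$. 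Equivalently, $H^1(N^{a-1})=0$ makes the cokernel vanish.

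The main obstacle is $a=1$. Here the pencil trick leaves a cokernel of dimension $h^1(\mathcal O_C)=g$, and the bound $2g+1$ is sharp. I will handle this case with Green's $H^0$-lemma, a generalization of the pencil trick. Let $W\subset H^0(C,N)$ be base-point free of dimension $r+1$, and let $F$ be a vector bundle with $h^1(F\otimes N^{-1})\leq r-1$; then $W\otimes H^0(F)\to H^0(F\otimes N)$ is surjective. This follows by chasing cohomology through the Koszul complex
\[
 0\to\wedge^{r+1}W\otimes N^{-r-1}\to\cdots\to\wedge^2W\otimes N^{-2}\to W\otimes \mathcal O_C\to N\to0,
\]
tensored with $F\otimes N^{-1}$ and broken into short exact sequences. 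The obstruction to surjectivity lands successively in $H^1$, $H^2$, and so on. On a curve the groups $H^i$ vanish for $i\geq2$, so the only obstruction is the $H^1(F\otimes N^{-1})$-contribution. That contribution dies once the number of Koszul steps exceeds $h^1$, and this is exactly where the dimension bound $h^1(F\otimes N^{-1})\leq r-1$ enters.

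I then apply the lemma with $W=H^0(C,N)$ and $F=N$. Here $r=d-g\geq g+1$ and $h^1(F\otimes N^{-1})=h^1(\mathcal O_C)=g\leq r-1$, so $\mu_1$ is surjective. If the Koszul bookkeeping proves delicate, my fallback is Mumford's classical argument. Embed $C$ by $|N|$, restrict to a general hyperplane section $Z$ of $d$ points in linearly general position (uniform position in characteristic zero), and compare $H^0(\mathcal O_{\mathbb P}(2))\to H^0(\mathcal O_Z(2))$ using $h^1(N)=0$. Either route completes the proof, and the remaining steps are routine dimension counts.
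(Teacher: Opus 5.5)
Your argument is correct in outline and is the classical Castelnuovo--Mumford--Green proof. You reduce to surjectivity of the maps $\mu_a$, settle $a\geq 2$ by the base-point-free pencil trick using $H^1(N^{a-1})=0$, and settle the sharp case $a=1$ by Green's $H^0$-lemma with $W=H^0(C,N)$. There $r=d-g$, and the hypothesis $h^1(\mathcal O_C)=g\leq r-1$ is exactly $d\geq 2g+1$. The paper gives no argument of its own: it imports the statement from \cite[Theorem~4.2]{FT14}, a reference working in a more general setting that allows singular and reducible curves. Your route is self-contained for smooth curves, and that is all the paper needs, since it applies the theorem only to the smooth factors $C_i$ with line bundles of degree at least $2g_i+1$.

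The part to fix is your sketched proof of the $H^0$-lemma. Splitting the Koszul complex into short exact sequences and using $H^{\geq2}=0$ on a curve only identifies the cokernel of $W\otimes H^0(F)\to H^0(F\otimes N)$ with the middle cohomology of
\[
 \wedge^3W\otimes H^1(F\otimes N^{-2})\to\wedge^2W\otimes H^1(F\otimes N^{-1})\to W\otimes H^1(F).
\]
Nothing in the chase makes this vanish. In your application $H^1(F\otimes N^{-1})=H^1(\mathcal O_C)$ is nonzero once $g\geq1$. Your sentence ``that contribution dies once the number of Koszul steps exceeds $h^1$'' is therefore the content of the lemma, not a consequence of the chase.

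The threshold $h^1(F\otimes N^{-1})\leq r-1$ actually comes from two further inputs. First, Green's duality theorem (Serre duality plus self-duality of the Koszul complex) identifies this group with the dual of $K_{r-1,1}(K_C\otimes F^\vee,W)$. Second, Green's vanishing theorem says: if $w\cdot a\neq0$ for all nonzero $w\in W$ and $a\in A$, then $\wedge^pW\otimes A\to\wedge^{p-1}W\otimes B$ is injective for $p\geq\dim A$. Apply it with $A=H^0(K_C\otimes F^\vee\otimes N)$, whose dimension is $h^1(F\otimes N^{-1})$, and $p=r-1$.

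So you have two clean options. You can cite the lemma as a black box (Green, J.~Differential Geom.~19 (1984), Theorem~4.e.1). Or you can use your fallback, which is complete once you add the standard fact that at most $2n+1$ points in linearly general position in $\mathbb P^n$ impose independent conditions on quadrics. Here $n=r-1$, and $d=r+g\leq 2r-1$ is again equivalent to $d\geq2g+1$.

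Finally, the twists in your displayed Koszul complex are off by one. Its terms are $\wedge^kW\otimes N^{-(k-1)}$, ending in $\wedge^2W\otimes N^{-1}\to W\otimes\mathcal O_C\to N\to0$, which matches your own pencil sequence.
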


Mumford's curve theorem builds on Gieseker's method; see
\cite[Theorem~4.15]{Mum77} and Gieseker's later account \cite{Gie82}.

\begin{thm}[Chow stability of embedded curves,
{\cite[Theorem~4.15]{Mum77}}]
\label{thm:high-degree-curve-chow}
Let $C$ be a smooth projective curve of genus $g\geq 1$, and let $N$ be a
line bundle of degree at least $2g+1$.  The embedding defined by the
complete linear system of $N$ is Chow stable.
\end{thm}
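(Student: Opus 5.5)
The statement to address is Theorem~\ref{thm:high-degree-curve-chow}: Chow stability of a smooth curve of genus $g\geq1$ embedded by a complete linear system of degree at least $2g+1$. This is Mumford's theorem, cited from \cite[Theorem~4.15]{Mum77}. Within the paper one simply invokes it. If I had to sketch a proof, I would follow Mumford's route through the Hilbert--Mumford criterion applied to the Chow point.

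Put $d=\deg N$ and $r+1=h^0(C,N)=d-g+1$. By Theorem~\ref{thm:curve-normal-generation} the embedding $C\subset\mathbb P^r$ is projectively normal. By the Hilbert--Mumford criterion, Chow stability is equivalent to the following: for every nontrivial one-parameter subgroup of $SL(r+1)$, diagonalized in a basis $s_0,\dots,s_r$ with weights $\rho_0\geq\dots\geq\rho_r$ summing to zero, the Chow weight is negative. Mumford expresses this weight through filtrations. The weight filtration gives ideal sheaves $\mathcal I_\rho\subset\mathcal O_C$ generated by sections of weight at least $\rho$. The Chow weight equals the leading coefficient of $k^2$ in $\sum_\rho \rho\cdot\dim\bigl(\text{image of }H^0(N^k)\text{ in filtered pieces}\bigr)$, up to sign. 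The resulting criterion is a ``curve-stability inequality'': for every weighted filtration of the sections by subspaces $W_i\subset H^0(N)$, with $W_i$ generating a subsheaf $N(-D_i)$, one needs an inequality of the form
\[
\sum_i(\rho_i-\rho_{i+1})\bigl(\deg N-\deg D_i\bigr)\cdot(\text{const})>\frac{2d}{r+1}\sum_i(\rho_i-\rho_{i+1})\dim W_i.
\]
This inequality is verified piece by piece.

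The key input is a Clifford/Riemann--Roch bound for the sublinear systems $W_i$, namely $\dim W_i\leq\deg(N(-D_i))-g+1$ when the degree is large, and Clifford's inequality $\dim W_i\leq\tfrac12\deg+1$ otherwise. When $\deg N\geq 2g+1$, these bounds make the slope $\dim W/\deg$ strictly smaller than the global slope $(r+1)/d$ for every proper nonzero $W$, with a uniform margin. I would then use Mumford's reduction, his \S4 lemma on filtered sums of ideals, which expresses the $k^2$-coefficient as a Riemann sum over the filtration. After this reduction, the slope inequalities combine linearly to give strict negativity of the weight.

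The main obstacle is the reduction from the Chow weight to the one-dimensional sheaf-theoretic inequality. This requires a lower bound on the dimensions of images of products of filtered section spaces in $H^0(N^k)$, and that bound is where Gieseker's method (\cite{Gie82}) enters. In the paper I would not reproduce this; I would cite \cite[Theorem~4.15]{Mum77} directly.
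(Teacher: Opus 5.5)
Your operative step, invoking \cite[Theorem~4.15]{Mum77}, is exactly what the paper does: it records the statement as a quoted theorem and gives no argument of its own, so your proposal agrees with the paper.

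If you keep the sketch, however, its slope claim is false as stated. A one-dimensional $W=\mathbb C s$ generates $N(-\operatorname{div}s)\cong\mathcal O_C$, which has degree $0$. On a hyperelliptic curve of genus $g\geq2$, multiplying the $g^1_2$ by a section of the residual class (effective, since it has degree $d-2\geq g$) gives $W\subset H^0(C,N)$ with $\dim W/\deg W=1>(d-g+1)/d$.

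What Clifford and Riemann--Roch actually give for $d\geq2g+1$ is Mumford's linear stability: $(\dim W-1)/\deg W<(d-g)/d$ for every proper $W$ with $\dim W\geq2$. Even these corrected slopes do not combine termwise. The Newton-polygon bound on the multiplicity that controls the Chow weight involves the sums $\deg W_i+\deg W_{i+1}$ of consecutive pieces along a suitably chosen subflag, not a single degree per step.
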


The balanced-metric criterion is due independently to Luo and Zhang
\cite{Luo98,Zha96}.  We use the modern formulation in
\cite[Theorem~2]{AH15}.

\begin{thm}[Luo--Zhang balanced-metric criterion,
{\cite[Theorem~2]{AH15}}]
\label{thm:balanced-chow-criterion}
Let $(Y,H)$ be a compact polarized projective manifold.  Then $(Y,H)$
is Chow polystable if and only if $H$ admits a balanced Hermitian metric.
\end{thm}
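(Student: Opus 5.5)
The plan is to realize both conditions as statements about one convex function on the symmetric space $SL(N+1)/SU(N+1)$ and to apply Kempf--Ness.

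\textbf{Setup.} Put $N+1=h^0(Y,H)$. After replacing $H$ by a power if necessary, I will assume $H$ is very ample, so that $H$ embeds $Y$ into $\mathbb P^N$. For $g\in SL(N+1)$ let $Y_g=g\cdot Y$ and let $\omega_{FS}$ be the Fubini--Study form. Define the matrix-valued moment
\[
 \mu(g)=\int_{Y_g}\frac{Z_i\bar Z_j}{\lvert Z\rvert^2}\,\frac{\omega_{FS}^n}{n!}
 -\frac{\vol(Y)}{N+1}\,\mathrm{Id},
\]
where $n=\dim Y$. A Hermitian metric on $H$ is \emph{balanced} exactly when, for some orthonormal basis of $H^0(Y,H)$ with respect to the induced $L^2$ product, the corresponding $g$ satisfies $\mu(g)=0$. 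Equivalently, $\mu(g)=0$ means the Fubini--Study metric pulled back through $g$ is the metric that $H^0$ induces on itself.

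\textbf{Moment map.} The next step is to identify $\mu$, up to the factor $\sqrt{-1}$ and the trace normalization, with the moment map for the $SU(N+1)$-action on the Chow variety. The Chow point $\mathrm{Ch}(Y)$ lies in $\mathbb P(\mathrm{Sym}^{d}(\mathbb C^{N+1})^{\otimes(n+1)})$, and I will equip this space with the Fubini--Study form coming from an $SU(N+1)$-invariant metric, normalized as in Zhang. The essential identity is Zhang's formula for the Chow norm:
\[
 \log\lVert g\cdot\mathrm{Ch}(Y)\rVert^2-\log\lVert\mathrm{Ch}(Y)\rVert^2
 =(n+1)\,\mathcal D_Y(g),
\]
where $\mathcal D_Y$ is the functional
\[
 g\mapsto\int_Y\log\frac{\lvert gZ\rvert^2}{\lvert Z\rvert^2}\,\omega_{FS}^n
\]
plus a Bott--Chern correction, made explicit using the Deligne pairing. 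I will differentiate this identity along the one-parameter subgroups $e^{tA}$ with $A$ Hermitian and trace-free. This shows that the first derivative of $\log\lVert e^{tA}\mathrm{Ch}(Y)\rVert$ is a positive multiple of $\operatorname{tr}(A\,\mu(e^{tA}))$, and that the function is convex in $t$.

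\textbf{Kempf--Ness.} With the moment-map identification in hand, Kempf--Ness gives the equivalence directly. The orbit $SL(N+1)\cdot\mathrm{Ch}(Y)$ is closed in the affine cone, which is Chow polystability, if and only if the norm function attains its minimum on the orbit. It attains a minimum exactly when some $g$ satisfies $\mu(g)=0$, that is, when a balanced metric exists.

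\textbf{Independence of the basis.} The identification is insensitive to the choice of basis because $SU(N+1)$ acts isometrically. If the automorphism group of $(Y,H)$ is positive-dimensional, I will restrict the convexity argument to the complement of the stabilizer directions, so that the polystable (rather than stable) case is handled.

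\textbf{Main obstacle.} I expect the difficult step to be Zhang's norm identity. Concretely, it requires showing that the Chow norm is, up to a constant, the exponential of an energy functional whose variation is the integral of $Z_i\bar Z_j/\lvert Z\rvert^2$. My first attempt will compute the variation of $\log\lVert\mathrm{Ch}\rVert$ through the resultant description of the Chow form, with $Y$ intersected against $n+1$ hyperplanes. I will then use the Poincaré--Lelong formula, integrating over the hyperplanes by induction on $n$ and differentiating under the integral. Once this identity is established, the rest is formal convexity and a properness argument for Kempf--Ness.
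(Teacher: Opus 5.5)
Your architecture (Zhang's norm identity, convexity along Hermitian one-parameter subgroups, Kempf--Ness) is the standard Zhang--Luo argument. The paper itself gives no proof here and only quotes the result via \cite[Theorem~2]{AH15}. As written, though, two steps fail.

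The minor one: you may not replace $H$ by a power. Both Chow polystability and the existence of a balanced metric change with the power, so that reduction changes the statement. Chow polystability of $(Y,H)$ already refers to the embedding by $\lvert H\rvert$, so very ampleness is part of the hypothesis; in the paper's application the curve degrees are at least $2g+1$.

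The real gap is that you treat two different norms on the Chow space $V$ as one. Zhang's identity $\log\lVert g\cdot\mathrm{Ch}(Y)\rVert^2-\log\lVert\mathrm{Ch}(Y)\rVert^2=(n+1)\mathcal D_Y(g)$ holds for Zhang's Chow norm. That norm is $\log\lVert R\rVert=\int\log\lvert R\rvert$ over a product of unit spheres (a Mahler measure), which is $SU(N+1)$-invariant but not Hermitian. A Poincar\'e--Lelong induction of the kind you describe produces exactly this logarithmic-integral norm.

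For an invariant Hermitian metric on $V$ the identity is false, and the zeros of the Fubini--Study moment map on $\mathbb P(V)$ are not the balanced positions. This already happens for $0$-cycles in $\mathbb P^1$. Take the point $[1:0]$ together with the four points $[ri^k:1]$. The balanced (center of mass) condition is $r^2=3/5$. The moment map of $\mathbb P(\mathrm{Sym}^5\mathbb C^2)$, with its unique invariant Hermitian metric, vanishes only when $r^8=3/25$.

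So the textbook Kempf--Ness theorem does not apply to the norm for which your identification of $\mu$ with a moment map holds, and you must rerun it for Zhang's norm.

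For ``polystable $\Rightarrow$ balanced'' you need comparability of Zhang's norm with a Hermitian norm. This makes it proper on a closed orbit, so it has a minimizer, which is critical, i.e.\ $\mu(g)=0$.

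For ``balanced $\Rightarrow$ polystable'' convexity alone is not enough. Suppose the orbit of $\mathrm{Ch}(Y_g)$ is not closed, and take a destabilizing one-parameter subgroup from the Hilbert--Mumford criterion. Write it as $g\lambda_0g^{-1}$ with $\lambda_0$ diagonal in a unitary basis, and $g=kp$ with $k\in SU(N+1)$ and $p\in P(\lambda_0)$. Then $k\lambda_0k^{-1}=e^{tA}$, with $A$ Hermitian, destabilizes $\mathrm{Ch}(Y_g)$ itself. The function $f(t)=\log\lVert e^{tA}\mathrm{Ch}(Y_g)\rVert^2$ is convex, bounded above as $t\to-\infty$, and has $f'(0)=0$. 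Hence it is constant on $(-\infty,0]$.

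To get a contradiction you need the second-variation formula $f''(t)=c\int_{Y_t}\lvert(\nabla h_A)^{\perp}\rvert^2\,\omega_{FS}^n$, where $h_A=\langle AZ,Z\rangle/\lvert Z\rvert^2$ and $Y_t=e^{tA}Y_g$. Its vanishing forces $e^{sA}$ to preserve $Y_g$, so $e^{tA}$ acts on the line of $\mathrm{Ch}(Y_g)$ by a character. That character is trivial because $f$ is constant, so the limit is $\mathrm{Ch}(Y_g)$ itself, which lies in the orbit.

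Your remark about restricting to the complement of the stabilizer directions does not supply this. What is needed is precisely that a Hermitian direction with vanishing second variation lies in the stabilizer.
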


\begin{thm}[Asymptotic Chow stability,
{\cite[Corollary~4]{Don01}}]
\label{thm:donaldson-asymptotic-chow}
Let $(Y,H)$ be a polarized manifold admitting a cscK metric in $c_1(H)$,
and assume that its polarized automorphism group is discrete.  Then
$(Y,H^r)$ is Chow stable for every sufficiently large $r$.
\end{thm}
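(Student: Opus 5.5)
The plan is to go through balanced metrics and then apply the Luo--Zhang criterion, Theorem~\ref{thm:balanced-chow-criterion}, to $(Y,H^r)$ for large $r$. Let $\omega\in c_1(H)$ be the cscK metric and $h$ the Hermitian metric on $H$ with curvature $\omega$. For each $r$, the metric $h^r$ induces an $L^2$ inner product on $H^0(Y,H^r)$. An orthonormal basis then gives the Bergman density function
\[
\rho_r(\omega)=\sum_\alpha\lvert s_\alpha\rvert^2_{h^r}.
\]
By the Tian--Yau--Zelditch--Catlin--Lu expansion, $\rho_r$ equals $r^n+\tfrac12\Scal(\omega)\,r^{n-1}+O(r^{n-2})$ in every $C^k$ norm, with the scalar-curvature normalization adjusted accordingly. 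Since $\Scal(\omega)$ is constant, $\rho_r$ is constant up to $O(r^{n-2})$. A metric is balanced exactly when $\rho_r$ is constant.

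The second step improves $\omega$ to higher order. We would seek $\omega_r=\omega+\sqrt{-1}\,\partial\bar\partial(\eta_1 r^{-1}+\cdots+\eta_k r^{-k})$ so that $\rho_r(\omega_r)$ is constant up to $O(r^{-k-1})$ relative to its leading term. At each order, the correction $\eta_j$ solves an equation $\mathcal D^*\mathcal D\,\eta_j=f_j$, where $\mathcal D^*\mathcal D$ is the Lichnerowicz operator of $\omega$ and $f_j$ has mean zero. Discreteness of the polarized automorphism group means the kernel of $\mathcal D^*\mathcal D$ is the constants. The operator is therefore invertible on mean-zero functions, so every $\eta_j$ exists. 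This yields almost-balanced embeddings $Y\hookrightarrow\mathbb P(H^0(Y,H^r)^*)$ whose moment map $\mu(\iota)=\int_Y \iota^*(ZZ^*/\lvert Z\rvert^2)\,d\mu$ differs from a multiple of the identity by $O(r^{-k-1})$.

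The third step, which I expect to be the main obstacle, perturbs the almost-balanced embedding to a genuinely balanced one. The moment map $\mu$ for $SU(N_r)$ on the space of embeddings should be treated as a finite-dimensional gradient flow, or handled by a quantitative inverse function argument. The key estimate is a uniform lower bound on the derivative of $\mu$ along $\sqrt{-1}\,\mathfrak{su}(N_r)$: its smallest eigenvalue should be at least $c\,r^{-2}$, with the constant controlled. We would prove this by comparing the Hermitian matrices in $\mathfrak{su}(N_r)$ to functions on $Y$ through the Bergman projection. The resulting quadratic form is then bounded below by the first eigenvalue of $\mathcal D^*\mathcal D$ on mean-zero functions, which is positive by discreteness, together with controlled error terms. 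Given this bound, choosing $k$ large enough ($k\geq 2$ suffices once the scaling is fixed) makes the error $O(r^{-k-1})$ smaller than the radius on which the flow converges. Convexity of the Kempf--Ness functional then produces a zero of $\mu$, which is a balanced metric on $H^r$.

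Finally, Theorem~\ref{thm:balanced-chow-criterion} gives Chow polystability of $(Y,H^r)$ for all large $r$. The polarized automorphism group is discrete, so the stabilizer of the Chow point in $SL(N_r)$ is finite, and polystability upgrades to Chow stability.
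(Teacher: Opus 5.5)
This is correct and is essentially Donaldson's own proof of \cite[Corollary~4]{Don01}: approximately balanced metrics from the Bergman expansion and inversion of the Lichnerowicz operator, the $r^{-2}$ lower bound on the derivative of the moment map, and then the Luo--Zhang criterion together with finiteness of the stabilizer. The paper gives no argument of its own and simply quotes that corollary as an external input.

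The only detail to correct is the parenthetical ``$k\geq2$ suffices''. Your eigenvalue bound is for the invariant Hilbert--Schmidt norm on $\mathfrak{su}(N_r)$. In that norm the almost-balanced error, relative to the leading term, is of order $\sqrt{N_r}\,r^{-k-1}\sim r^{n/2-k-1}$ rather than $r^{-k-1}$. Hence the order of approximation must grow with $n=\dim Y$; with your $r^{-2}$ bound you need roughly $k>n/2+1$.
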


\begin{lem}[Uniform Chow polystability of the base blocks]
\label{lem:uniform-block-chow}
Notation and conditions are as in Set-up~\ref{setup:initial-filtrations}.
There exists an integer $d_0$ such that, whenever $d\geq d_0$ and
$0\leq j\leq d$, the complete linear-system embedding of $B$ defined by
$M^d\otimes L^j$ is Chow polystable.
\end{lem}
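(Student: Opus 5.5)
We aim to show that each pair $(B,M^d\otimes L^j)$ carries a balanced metric, and then apply Theorem~\ref{thm:balanced-chow-criterion}. The key structural fact is that $B=\prod_i C_i$ and $M^d\otimes L^j=\boxtimes_i(M_i^d\otimes L_i^j)$, so everything reduces to the factor curves.

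The first step is a uniform degree bound on each factor. Put $N_{i}=M_i^d\otimes L_i^j$. As in the proof of Lemma~\ref{lem:split-projective-bundle-ampleness}, with $\epsilon_i=\min\{\deg M_i,\deg(M_i\otimes L_i)\}>0$ (positive by \eqref{eq:endpoint-degrees} and \eqref{eq:other-endpoint-degrees}), linearity in $j$ gives $\deg N_i\geq d\,\epsilon_i$ for $0\leq j\leq d$. So there is a $d_0$ such that $\deg N_i\geq 2g_i+1$ for every $i$, every $d\geq d_0$ and every $0\leq j\leq d$. This bound is independent of $j$, which is exactly the uniformity the lemma asks for.

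The second step handles each curve separately. Since $g_i\geq2$, Theorem~\ref{thm:high-degree-curve-chow} shows that the complete embedding of $C_i$ by $N_i$ is Chow stable. Theorem~\ref{thm:balanced-chow-criterion} then provides a balanced Hermitian metric $h_i$ on $N_i$. Concretely, for an $L^2(h_i,\omega_{h_i})$-orthonormal basis $\{s_{i,a}\}$ of $H^0(C_i,N_i)$, the Bergman density $\sum_a|s_{i,a}|^2_{h_i}$ is constant.

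The third step, and the only real point, is that balanced metrics multiply. Put $h=\boxtimes_i h_i$ on $\boxtimes_iN_i$. By Künneth, $H^0(B,\boxtimes N_i)=\bigotimes_iH^0(C_i,N_i)$. The curvature form of $h$ is $\sum_i\operatorname{pr}_i^*\omega_{h_i}$, so its top power is a constant multiple of the product volume form. The product sections $\bigotimes_i s_{i,a_i}$ are therefore orthonormal for the product $L^2$ pairing, up to one global constant. Their Bergman density is $\prod_i\bigl(\sum_a|s_{i,a}|^2_{h_i}\bigr)$, which is constant. Hence $h$ is balanced. One point needs care: the balanced condition in \cite{AH15} is stated with the normalization by $\dim H^0$ and total volume. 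We will check that these normalizations are multiplicative, since both the dimension and the volume factor as products.

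Applying Theorem~\ref{thm:balanced-chow-criterion} on the smooth projective fourfold $B$ then gives Chow polystability of $(B,M^d\otimes L^j)$ for all $d\geq d_0$ and $0\leq j\leq d$. We expect this normalization bookkeeping to be the main obstacle; beyond it the argument is formal. Note that we do not use Theorem~\ref{thm:donaldson-asymptotic-chow} here: it is asymptotic in a single line bundle and would not give uniformity in $j$. The curve-by-curve route avoids this problem because the threshold $2g_i+1$ is uniform.
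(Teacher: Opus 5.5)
Your proposal is correct and follows the paper's proof essentially step for step. The steps are the uniform bound $\deg(M_i^d\otimes L_i^j)\geq d\epsilon_i\geq 2g_i+1$, Mumford's theorem on each curve factor, the Luo--Zhang criterion to obtain balanced factor metrics, the observation that product orthonormal bases have product (hence constant) Bergman density, and a second application of the criterion on $B$. The normalization bookkeeping you flag is harmless: constancy of the Bergman density is unchanged by rescaling $h$, and integration forces the constant to be $\dim H^0$ divided by the volume.
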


\begin{proof}
Put
\begin{equation}\label{eq:uniform-block-epsilon}
 \epsilon_i=\min\{\deg M_i,\deg M_i+\deg L_i\}>0
 \qquad(0\leq i\leq3).
\end{equation}
The degree of the restriction of $M^d\otimes L^j$ to $C_i$ is
\begin{equation}\label{eq:uniform-block-factor-degree}
 d\deg M_i+j\deg L_i\geq d\epsilon_i.
\end{equation}
Choose $d_0$ so that $d_0\epsilon_i\geq2g_i+1$ for every $i$.
Theorem~\ref{thm:high-degree-curve-chow} shows that all four factor
embeddings are Chow stable.  By
Theorem~\ref{thm:balanced-chow-criterion}, each factor polarization admits
a balanced Hermitian metric $h_i$.

Let $h=\boxtimes_i h_i$ on $M^d\otimes L^j$.  The tensor-product isomorphism
identifies an external tensor product of orthonormal factor bases with an
orthonormal basis of the product section space.  The Bergman function of
this basis is the product of the four constant factor Bergman functions,
and is therefore constant.  Thus $h$ is balanced.  A second application
of Theorem~\ref{thm:balanced-chow-criterion} proves the Chow polystability
of the product embedding.
\end{proof}

The following normalization ties the Chow stability statements above to
the filtration invariants used in the rest of this section.  Let $(V,N)$
be a polarized projective variety of dimension $n$, and let $\eta$ be a
multiplicative linearly bounded filtration of the complete section ring
$S=\bigoplus_{k\geq0}H^0(V,N^k)$ with rational entries.  Fix $k\geq1$
such that the Veronese subring $\bigoplus_{\ell\geq0}S_{k\ell}$ is
generated in degree one, and generate a filtration of this subring from
the weighted space $S_k$.  Regrading $S_{k\ell}$ into degree $\ell$, the
Hilbert and increasing-entry total functions of the generated filtration
admit two-term expansions
\begin{equation}\label{eq:chow-weight-expansions}
 h(\ell)=\alpha_0(k)\ell^{n}+O(\ell^{n-1}),
 \qquad
 W(\ell)=\beta_0(k)\ell^{n+1}+O(\ell^{n}),
\end{equation}
because the total weight function of an ample test configuration is a
polynomial of degree at most $n+1$ in all sufficiently large degrees
\cite{Don02}, \cite[Theorem~3.1]{BHJ17}.  Writing $\overline i_k$ for
the mean entry of $\eta$ on $S_k$, the \emph{degree-$k$ Chow weight} and
the \emph{intrinsic asymptotic Chow invariant} of $\eta$ are
\begin{equation}\label{eq:chow-weight-normalization}
 \operatorname{Chow}_k(\eta)
 =\overline i_k-\frac{\beta_0(k)}{\alpha_0(k)},
 \qquad
 \operatorname{Chow}_\infty(\eta)
 =\liminf_{k\to\infty}\operatorname{Chow}_k(\eta).
\end{equation}
Adding a constant to all entries in degree $k$ shifts $\overline i_k$
and $\beta_0(k)/\alpha_0(k)$ by the same amount, and a positive
rescaling of the entries rescales both terms linearly, so
$\operatorname{Chow}_k(\eta)$ is computed by the centered integral flag
obtained from the degree-$k$ entries by subtracting the mean and
clearing denominators.  The one-parameter subgroup associated with this
centered flag acts on $S_k$ with weights the negatives of the centered
entries, in the convention of Definition~\ref{defn:test-configuration};
its induced degeneration of the section ring is the generated
filtration, with total central weight
$-W(\ell)+\ell\,\overline i_k\,h(\ell)$ in degree $\ell$, and by
Mumford's weight formula the Hilbert--Mumford weight of the Chow point
of the embedded cycle $V\subset\mathbb P(S_k^{\vee})$, computed on this
degeneration, is a positive multiple of its leading normalized
coefficient, that is, of $\operatorname{Chow}_k(\eta)$
\cite[Theorem~2.9]{Mum77}.  Chow
semistability of the embedding defined by $N^k$, and in particular the
Chow stability and polystability statements above, therefore implies
\begin{equation}\label{eq:chow-weight-bridge}
 \operatorname{Chow}_k(\eta)\geq0
\end{equation}
for every such flag supported in degree $k$.  This is the normalization
in which the strictness theorem of \cite{Sze15} is quoted in
Section~\ref{subsec:rational-slice-constancy}.

\medskip

\begin{lem}[Fixed-Veronese normalization]
\label{lem:affine-initials-veronese}
Notation and conditions are as in Set-up~\ref{setup:initial-filtrations}.
Fix $\epsilon\in\{+,-\}$ and write $\chi=\chi^\epsilon$.  Then
$\chi$ is block preserving with respect to
\eqref{eq:affine-initials-blocks}, multiplicative, integer valued, and
two-sided linearly bounded on $R^{[e]}$.  In every supported actual degree
$d=em$, its entry multiset agrees with that of $\mathcal T$.

Normalize valuation coordinates and filtration entries by $d$.  The
Newton--Okounkov body is $\Omega$, and $\chi$ has a finite convex transform
$G\colon\Omega\to\mathbb R$.  The entry probability laws in actual degree
of $\mathcal T$ and $\chi$ are both
\begin{equation}\label{eq:affine-initials-actual-law}
 G_*\mu_\Omega.
\end{equation}
If $\widetilde G$ is the transform in the exponent-one grading of
$(X,A^e)$, then its body is $e\Omega$ and
\begin{equation}\label{eq:affine-initials-transform-regrading}
 \widetilde G(eu)=eG(u).
\end{equation}
Consequently, the exponent-one entry law is
\begin{equation}\label{eq:affine-initials-exponent-one-law}
 (eG)_*\mu_\Omega.
\end{equation}

Let $x=P/Q\in(0,1)\cap\mathbb Q$, where $P,Q$ may be replaced by a common
positive multiple so that
\begin{equation}\label{eq:affine-initials-ray-divisibility}
 e\mid Q,
 \qquad
 Q\deg M_i+P\deg L_i\geq2g_i+1
 \quad(0\leq i\leq3).
\end{equation}
Then the exact-ray ring
\begin{equation}\label{eq:affine-initials-ray-ring}
 \bigoplus_{k\geq0}V_{Qk,Pk}
 =\bigoplus_{k\geq0}H^0(B,(M^Q\otimes L^P)^k)
\end{equation}
is graded by the integer $k$ in this display; we call $k$ its \emph{ray
degree}.  The ring is generated in ray degree one.  The restricted filtration has transform
$y'\mapsto QG(x,y'/Q)$ on $Q\Omega_x$, nonnegative asymptotic Chow
invariant, and squared norm
\begin{equation}\label{eq:affine-initials-ray-norm}
 \lVert\chi_{P,Q}\rVert_2^2
 =Q^6\vol(\Omega_x)
 \int_{\Omega_x}
 \left(G(x,y)-\frac1{\vol(\Omega_x)}
 \int_{\Omega_x}G(x,z)\,dz\right)^2d\nu_x(y),
\end{equation}
where $\nu_x$ is normalized four-dimensional Lebesgue measure on
$\Omega_x$.
\end{lem}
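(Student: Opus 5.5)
The plan is to derive everything from the fact that the limits $\lambda_\epsilon(z)\mathcal F_{\leq a}$ are taken in fixed Grassmannians, and then to invoke the external results quoted above.

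\textbf{Algebraic properties of $\chi$.} Each $\lambda_\epsilon(z)$ is diagonal on the decomposition \eqref{eq:affine-initials-blocks}. Hence the limit of a subspace $W$ as $z\to0$ is the initial subspace.
\begin{itemize}
\item For $\epsilon=+$, it is spanned by the components in the lowest block $V_{d,j}$ of each vector of $W$, after Gaussian elimination adapted to the block order.
\item For $\epsilon=-$, the same holds with the highest block.
\end{itemize}
Such initial subspaces are direct sums of their intersections with the blocks, so $\chi$ is block preserving. Dimensions are preserved, so $\dim\mathcal F^\epsilon_{\leq a}=\dim\mathcal F_{\leq a}$ for every $a$. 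Thus the entry multisets of $\chi$ and $\mathcal T$ agree in each supported degree, and integrality is inherited.

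Multiplicativity follows from two facts:
\begin{itemize}
\item The multiplication map is equivariant for $\lambda_\epsilon$, since block indices add.
\item The initial term of a product of lowest (resp.\ highest) components is the product of the initial terms, because $B$ is integral and the fiber grading is a domain grading.
\end{itemize}
Hence $\operatorname{in}(W_1)\cdot\operatorname{in}(W_2)\subset\operatorname{in}(W_1W_2)$. Combined with $\mathcal F_{\leq a}\mathcal F_{\leq b}\subset\mathcal F_{\leq a+b}$, this gives $i(st)\leq i(s)+i(t)$.

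Two-sided linear boundedness transfers from $\mathcal T$. Its entries in degree $m$ lie in an interval of length $O(m)$, because $\mathcal T$ is an ample test configuration, and the entries of $\chi$ are the same multiset.

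\textbf{Okounkov body and entry laws.} Choose a valuation on $X$ adapted to the product structure:
\begin{itemize}
\item the fiber coordinate first, recording the block index $j$;
\item then flag valuations at a point on each curve $C_i$.
\end{itemize}
The image of $V_{d,j}$ is then the product of integer intervals of lengths $\deg(M_i^d\otimes L_i^j)+1-$(gaps), so the normalized body is $\Omega$. Theorem~\ref{thm:filtered-okounkov-equidistribution} applied to $\chi$ produces a finite convex transform $G$, with entry law $G_*\mu_\Omega$. The laws of $\mathcal T$ and $\chi$ coincide in each degree and so have the same limit. Regrading by $e$ scales both body and entries by $e$, which gives \eqref{eq:affine-initials-transform-regrading} and \eqref{eq:affine-initials-exponent-one-law}.

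\textbf{Ray ring.} After scaling $P,Q$ so that \eqref{eq:affine-initials-ray-divisibility} holds, each factor degree is at least $2g_i+1$. Theorem~\ref{thm:curve-normal-generation} and the Künneth decomposition then give generation in ray degree one for the product ring.

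Block preservation means $\chi$ restricts to a multiplicative filtration on the ring \eqref{eq:affine-initials-ray-ring}. Its Okounkov body is the slice $Q\Omega_x$, since the valuation restricts to the curve flags. Its transform is the restriction $y'\mapsto QG(x,y'/Q)$. This restriction is justified because $G$ is convex, hence continuous on the interior, and because the slice transform is computed by restricted filtered pieces; this can be checked by comparing Theorem~\ref{thm:filtered-okounkov-equidistribution} for the ray ring with the fibered description of the five-dimensional body.

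The squared $L^2$ norm is the variance of the slice transform against unnormalized Lebesgue measure on $Q\Omega_x$. Changing variables produces the factor $Q^6\vol(\Omega_x)$, with $Q^4$ from volume and $Q^2$ from entries, which gives \eqref{eq:affine-initials-ray-norm}.

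For nonnegativity of $\operatorname{Chow}_\infty$:
\begin{itemize}
\item For $k$ large, $(M^Q\otimes L^P)^k$ has all factor degrees at least $2g_i+1$.
\item The same Mumford/balanced-metric product argument as in Lemma~\ref{lem:uniform-block-chow} shows its embedding is Chow polystable.
\item \eqref{eq:chow-weight-bridge} then gives $\operatorname{Chow}_k\geq0$ for each such degree, hence the liminf is nonnegative.
\end{itemize}

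\textbf{Main obstacle.} The delicate step is identifying the slice transform with the restriction of $G$, in particular near the boundary of the slice. There I would first use convexity and the fact that $x$ is interior to deduce continuity, and then argue via the weak convergence of measures on slices.
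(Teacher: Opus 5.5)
Most of your proposal is sound. In particular, these steps are fine:
\begin{itemize}
\item block preservation and equality of entry multisets, from dimension-preserving Grassmannian limits;
\item multiplicativity via initial forms in the domain $R(X,A)$, where the paper instead notes that the multiplication inclusions are closed conditions on Grassmannians;
\item the body $\Omega$, the regrading by $e$, and generation of the ray ring;
\item nonnegativity of the Chow invariant, where applying Lemma~\ref{lem:uniform-block-chow} to the blocks $(Qk,Pk)$ is a legitimate substitute for the paper's appeal to Theorem~\ref{thm:donaldson-asymptotic-chow}.
\end{itemize}
The gap is the identification of the exact-ray transform with $y'\mapsto QG(x,y'/Q)$. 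The norm formula \eqref{eq:affine-initials-ray-norm} depends on it.

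Every filtered section of the ray ring is a filtered section of $R^{[e]}$. So the ray's filtered cone lies in the $x$-slice of the global filtered cone. This gives only one inequality: the ray transform is $\geq QG(x,\cdot/Q)$ in the increasing convention. The reverse inequality requires showing that every point of the slice is a limit of normalized points coming from sections lying \emph{exactly} in blocks $V_{Qk,Pk}$.

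Neither continuity of $G$ nor the equidistribution theorem gives this. The global limit law $G_*\mu_\Omega$ is a mixture over all blocks, in which blocks with $j/d=x$ exactly have vanishing weight, so it carries no information about them. The difficulty is also already present at interior points of $\Omega_x$, not only near its boundary as you suggest.

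The missing idea is to use multiplicativity to push approximating sections onto the ray. The paper proceeds as follows:
\begin{itemize}
\item Take $s_n\in V_{d_n,j_n}$ realizing a slice point, with $j_n/d_n\to x=p/q$, and put $\delta_n=qj_n-pd_n$.
\item If $\delta_n\geq0$, replace $s_n$ by $s_n^{perH}u_n$ with a nonzero $u_n\in V_{erH\delta_n,0}$. If $\delta_n<0$, use $s_n^{(q-p)erH}u_n$ with a nonzero $u_n\in V_{erH\lvert\delta_n\rvert,erH\lvert\delta_n\rvert}$.
\item In both cases $u_n$ is recorded at a level bounded linearly by its degree.
\item The product is nonzero because the section ring is a domain, and it lies exactly in some $V_{Qk_n,Pk_n}$.
\item Its degree, valuation and level corrections are $o(d_n)$ because $\lvert\delta_n\rvert/d_n\to0$.
\end{itemize}
A Khovanskii-type density statement for the filtered semigroup would also suffice. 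Either way, some argument of this kind, using multiplicativity and nonvanishing of products, has to be supplied.
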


\begin{proof}
We prove the statement in three steps.  The first concerns the degreewise
initial filtration, the second compares the two gradings, and the third
restricts to a rational ray.

\medskip
\noindent\textbf{Step 1.} In this step, we identify the body and the
limiting law in actual degree.  In a fixed section degree, torus
specialization is a Grassmannian limit of every filtered subspace.  Such a
limit preserves dimension.  Taking successive differences of
filtered-piece dimensions shows that the complete entry multiset is
unchanged.  The limiting subspaces are invariant under fiber scaling and
hence are direct sums of the blocks $V_{d,j}$.  The multiplication
inclusions defining a multiplicative filtration are closed conditions on
the relevant Grassmannians, so they pass to the limit.  Integrality and the
common linear bounds are also unchanged.

The valuation defined by the fiber index and product points has
one-dimensional leaves.  In
actual degree $d$, its normalized value semigroup has closed convex body
$\Omega$.  Restricting the degree semigroup to $e\mathbb N$ does not change
the normalized cone or its degree-one slice.
Theorem~\ref{thm:filtered-okounkov-equidistribution} identifies the limiting entry
law with $G_*\mu_\Omega$.  Equality of the degreewise entry multisets
identifies this law with that of $\mathcal T$.

\medskip
\noindent\textbf{Step 2.} In this step, we compare actual degree with
exponent-one degree.  Exponent-one degree $m$ for $(X,A^e)$ is actual
degree $d=em$.  Dividing a valuation vector and an entry by $m$, rather
than by $d$, multiplies both by $e$.  This proves
\eqref{eq:affine-initials-transform-regrading} and
\eqref{eq:affine-initials-exponent-one-law}.

\medskip
\noindent\textbf{Step 3.} In this step, we identify the rational-ray
transform and its norm.  Choose $P,Q$ as in
\eqref{eq:affine-initials-ray-divisibility}, and put
$N=M^Q\otimes L^P$.  Every factor of $N$ has degree at least $2g_i+1$.
Theorem~\ref{thm:curve-normal-generation}, the tensor-product decomposition,
and the factorwise multiplication maps show that the complete section ring of $N$
is generated in degree one.

Write $x=p/q$ in lowest terms and $(P,Q)=(rp,rq)$.  For the body and
transform, compare the exact-ray filtered cone with the $x$-slice of the
supported global filtered cone, working temporarily with decreasing jumps.
Represent an arbitrary point of the slice by supported sections
$s_n\in V_{d_n,j_n}$, where $e\mid d_n$, recorded at levels
$\ell_n\leq\tau(s_n)$, and with $j_n/d_n\to p/q$.  Put
$\delta_n=qj_n-pd_n$.  Choose $H\geq1$ so that both endpoint blocks
$V_{erHh,0}$ and $V_{erHh,erHh}$ are nonzero for every $h\geq1$; this is
possible because $M$ and $M\otimes L$ are ample.

If $\delta_n\geq0$, choose
$0\neq u_n\in V_{erH\delta_n,0}$, using $u_n=1$ when $\delta_n=0$, and form
\begin{equation}\label{eq:affine-initials-positive-ray-correction}
 s_n^{\,perH}u_n\in
 V_{erqHj_n,\,perHj_n}=V_{Qk_n,Pk_n},
 \qquad k_n=eHj_n.
\end{equation}
If $\delta_n<0$, put $\epsilon_n=pd_n-qj_n$, choose
$0\neq u_n\in V_{erH\epsilon_n,erH\epsilon_n}$, and form
\begin{equation}\label{eq:affine-initials-negative-ray-correction}
 s_n^{\,(q-p)erH}u_n\in
 V_{erqH(d_n-j_n),\,erpH(d_n-j_n)}=V_{Qk_n,Pk_n},
 \qquad k_n=eH(d_n-j_n).
\end{equation}
These products are nonzero because the section ring is a domain.  Record
them at the levels supplied by multiplicativity, choosing for $u_n$ an
admissible level whose absolute value is bounded linearly by its degree.
Since $\lvert\delta_n\rvert/d_n\to0$, the endpoint correction has sublinear
degree, valuation, and filtration cost relative to the powered main term.
Consequently, the normalized exact-ray points converge to the original
slice point.  The opposite inclusion is tautological, so the closed filtered
cones agree.  Reflecting back to increasing entries and normalizing by ray
degree $k$, rather than actual degree $Qk$, gives the body $Q\Omega_x$ and
the transform $y'\mapsto QG(x,y'/Q)$.  The product of
constant-curvature metrics on the four curves is cscK in $c_1(N)$, and the
polarized automorphism group is finite.
Theorem~\ref{thm:donaldson-asymptotic-chow} gives Chow stability of the
embedding defined by $N^r$ for every sufficiently large $r$, so the
Hilbert--Mumford bridge \eqref{eq:chow-weight-bridge} gives
$\operatorname{Chow}_k(\chi_{P,Q})\geq0$ for every sufficiently large
$k$, and the intrinsic asymptotic Chow invariant
\eqref{eq:chow-weight-normalization} is nonnegative.
Theorem~\ref{thm:filtered-okounkov-equidistribution}, applied to the exact ray,
gives
\eqref{eq:affine-initials-ray-norm}.  All degrees used here are divisible
by $e$.
\end{proof}

\subsection{Multiplication along fixed rational pairs}
\label{subsec:fixed-pair-multiplication}

Lemmas~\ref{lem:one-curve-multiplication-projector} and~
\ref{lem:quotient-filtration-plucker}, together with
Lemma~\ref{lem:affine-initials-fixed-pair}, compare the mean entries on two fixed
rational blocks with the mean entry on their midpoint block, without
requiring the two filtration flags to split across the four curve factors.

\begin{lem}[The one-curve multiplication projector]
\label{lem:one-curve-multiplication-projector}
Let $C$ be a smooth projective curve of genus $g>1$, and let $E,H$ be
positive line bundles on $C$.  Choose constant-curvature Hermitian metrics
on $E,H$, and $E\otimes H$, and use the resulting $L^2$-products.  Put
\begin{equation}\label{eq:curve-multiplication-spaces}
 U_k=H^0(C,E^k),\qquad
 V_k=H^0(C,H^k),\qquad
 W_k=H^0(C,(E\otimes H)^k),
\end{equation}
and let $q_k\colon U_k\otimes V_k\to W_k$ be multiplication.  For all
sufficiently large $k$, the map $q_k$ is surjective.  Put
\begin{equation}\label{eq:one-curve-dimensions}
 u_k=\dim U_k,\qquad v_k=\dim V_k,\qquad w_k=\dim W_k.
\end{equation}
Let $\mathsf P_k$ be the orthogonal projector onto
$(\ker q_k)^\perp$.  Then there exist $C,\epsilon>0$ such that
\begin{equation}\label{eq:one-curve-projector-estimate}
\begin{split}
 \left\lVert\operatorname{Tr}_{V_k}\mathsf P_k
       -\frac{w_k}{u_k}I_{U_k}\right\rVert_{\mathrm{op}}
 &\leq Ce^{-\epsilon k},\\
 \left\lVert\operatorname{Tr}_{U_k}\mathsf P_k
       -\frac{w_k}{v_k}I_{V_k}\right\rVert_{\mathrm{op}}
 &\leq Ce^{-\epsilon k}.
\end{split}
\end{equation}
\end{lem}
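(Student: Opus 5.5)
Surjectivity of $q_k$ for large $k$ is standard: $\deg E^k,\deg H^k\geq 2g+1$ eventually, and then multiplication of sections of two line bundles of degree at least $2g+1$ on a curve is surjective (Green / Mumford's generalization of Castelnuovo, the two-bundle form of Theorem~\ref{thm:curve-normal-generation}). We prove only the first estimate; the second follows by exchanging the roles of $E$ and $H$.

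The plan is to identify $\mathsf P_k$ through the adjoint: $\mathsf P_k=q_k^*(q_kq_k^*)^{-1}q_k$. Fix orthonormal bases $\{\sigma_a\}$ of $U_k$ and $\{\tau_b\}$ of $V_k$. For $u\in U_k$, a direct computation gives
\begin{equation*}
 \bigl\langle(\operatorname{Tr}_{V_k}\mathsf P_k)u,u\bigr\rangle
 =\sum_b\bigl\langle (q_kq_k^*)^{-1}q_k(u\otimes\tau_b),\,q_k(u\otimes\tau_b)\bigr\rangle .
\end{equation*}
The key analytic input is an asymptotic description of $q_kq_k^*$. For $w\in W_k$, $q_k^*w$ corresponds to the kernel $(x,y)\mapsto\int_C w(z)\overline{K_{E^k}(z,x)}\,\overline{K_{H^k}(z,y)}$, and $q_kq_k^*$ is the Toeplitz-type operator $\Pi_{W_k}\circ(\text{multiplication by the diagonal product of Bergman kernels})$. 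For constant-curvature metrics on a curve of genus $g>1$ the Bergman densities $K_{E^k}(z,z)$ and $K_{H^k}(z,z)$ are constant up to errors $O(e^{-\epsilon k})$. Indeed, they are exactly constant by homogeneity of the hyperbolic plane under the uniformization: the metrics are $\mathrm{PSL}_2(\mathbb R)$-invariant on the universal cover, and the off-diagonal kernel decays like $e^{-ck\,d(x,y)^2}$, so the Poincaré-series corrections are exponentially small. Using this, I would show that $q_kq_k^*=c_k\,I_{W_k}+O(e^{-\epsilon k})$ in operator norm with $c_k>0$, and similarly that $q_k(\,\cdot\otimes\tau)$ behaves like a near-isometry up to the constant $u_k/(\text{vol})$.

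Then $\operatorname{Tr}_{V_k}\mathsf P_k=c_k^{-1}\operatorname{Tr}_{V_k}(q_k^*q_k)+O(e^{-\epsilon k})$. The operator $\operatorname{Tr}_{V_k}(q_k^*q_k)$ on $U_k$ is $u\mapsto \Pi_{U_k}\bigl(\sum_b|\tau_b|^2\cdot u\bigr)$ after identification, which is the Toeplitz operator of the Bergman density of $H^k$, hence again a constant multiple of the identity up to exponential error. So $\operatorname{Tr}_{V_k}\mathsf P_k=\kappa_kI_{U_k}+O(e^{-\epsilon k})$. Taking traces on $U_k$ identifies $\kappa_k u_k=\operatorname{Tr}\mathsf P_k+O(u_ke^{-\epsilon k})=w_k+O(u_ke^{-\epsilon k})$, because $\mathsf P_k$ has rank $w_k$ by surjectivity. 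This gives $\kappa_k=w_k/u_k$ up to an exponentially small error, and the polynomial factor $u_k$ is absorbed by shrinking $\epsilon$.

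The main obstacle is the exponential (not merely $O(1/k)$) control of $q_kq_k^*$ near a scalar, in particular the uniform invertibility. Tian–Yau–Zelditch expansions alone give only polynomial errors, so my first attempt would use the exact homogeneity of the hyperbolic disc. The Bergman kernel of the invariant line bundle on $\mathbb D$ is explicit. The kernel on $C$ is its $\Gamma$-periodization, and the deviations are sums over nontrivial $\gamma\in\Gamma$, which are bounded by $e^{-ck\,\mathrm{inj}(C)^2}$. This yields the constancy of all diagonal densities and of the product-kernel operator up to $Ce^{-\epsilon k}$. If controlling $q_kq_k^*$ directly proves awkward, I would fall back on a softer statement that suffices for the paper's purpose: an $O(k^{-N})$ bound via Bergman asymptotics, with a spectral-gap argument for $q_kq_k^*$.
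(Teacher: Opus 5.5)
Your architecture is the paper's: $\mathsf P_k=q_k^*(q_kq_k^*)^{-1}q_k$, a bound $\lVert q_kq_k^*-c_kI_{W_k}\rVert_{\mathrm{op}}\leq Ce^{-\epsilon k}$, the fact that $\operatorname{Tr}_{V_k}(q_k^*q_k)$ is the Toeplitz operator on $U_k$ with symbol the Bergman density of $H^k$, and exponential control by periodizing explicit disk kernels. Fixing $\kappa_k$ through $\operatorname{Tr}\mathsf P_k=w_k$ is a valid substitute for computing $c_k$.

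\textbf{The gap.} You never actually supply the central step, namely why $q_kq_k^*$ is exponentially close to a scalar, and the mechanism you indicate cannot give it. The operator $q_kq_k^*$ on $W_k$ is not $\Pi_{W_k}$ composed with multiplication by a function. It is the integral operator whose Schwartz kernel is the off-diagonal product $\Pi_{E^k}(x,y)\Pi_{H^k}(x,y)$. A Toeplitz operator with symbol $\rho_{E^k}\rho_{H^k}\approx u_kv_k$ would be $\approx u_kv_kI_{W_k}$. That contradicts
$$\operatorname{Tr}(q_kq_k^*)=\operatorname{Tr}(q_k^*q_k)=\int_C\rho_{E^k}\rho_{H^k}\,\eta\approx u_kv_k,$$
so the correct scalar must be $c_k=u_kv_k/w_k$. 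Constancy of the diagonal densities says nothing about the off-diagonal product kernel. Periodization only transfers a disk-level statement to $C$, so you still need to know what the disk-level product kernel is.

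\textbf{The missing idea} is the exact multiplicativity of the disk kernels. In frames with $\lvert e\rvert_h^2=(1-\lvert z\rvert^2)^{d/(g-1)}$, the degree-$d$ kernel is $(d+1-g)(1-z\overline w)^{-d/(g-1)}$. Hence
$$\widetilde\Pi_{d_1}\widetilde\Pi_{d_2}=\frac{(d_1+1-g)(d_2+1-g)}{d_1+d_2+1-g}\,\widetilde\Pi_{d_1+d_2}$$
identically. After periodization, the equal-deck terms sum exactly to $c_k\Pi_{(E\otimes H)^k}$. The cross terms with $\gamma\neq\delta$ are $O(e^{-\epsilon k})$ uniformly, because one of $d(z,\gamma w)$ and $d(z,\delta w)$ is at least the injectivity radius. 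A uniform kernel bound on the probability space $(C,\eta)$ bounds the Hilbert--Schmidt norm, and so gives $\lVert q_kq_k^*-c_kI\rVert_{\mathrm{op}}\leq Ce^{-\epsilon k}$. This also gives surjectivity directly and the identity $v_k/c_k=w_k/u_k$.

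\textbf{Smaller corrections.}
\begin{itemize}
\item The disk kernel's invariant norm is $(d+1-g)\cosh(d_{\mathbb D}/2)^{-d/(g-1)}$. This decays like $e^{-ck\,d_{\mathbb D}}$ at large distance, not like $e^{-ck\,d_{\mathbb D}^2}$. Convergence of the $\Gamma$-sums therefore uses the orbit count $\#\{\gamma\mid d(z,\gamma w)<R\}\lesssim e^R$ together with $k$ large.
\item The Bergman density on $C$ is not exactly constant. It is constant only up to the exponentially small non-identity terms.
\item Your $O(k^{-N})$ fallback would not prove the lemma as stated.
\end{itemize}
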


\begin{proof}
We prove Lemma~\ref{lem:one-curve-multiplication-projector} in five steps.
The first fixes the metrics, the next two
construct the compact Bergman kernels from the disk kernels, the fourth
compares their products, and the fifth derives the partial-trace estimates.

\medskip
\noindent\textbf{Step 1.} In this step, we choose constant-curvature
metrics and compute the dimensions.  Normalize the hyperbolic area form by
\begin{equation}\label{eq:curve-probability-area}
 \eta=\frac{\omega_{\mathrm{hyp}}}{4\pi(g-1)},
 \qquad
 \int_C\eta=1.
\end{equation}
For a positive line bundle $J$ of degree $d$, start with a Hermitian
metric $h_0$ and put
$\Theta_0=(\sqrt{-1}/2\pi)F_{h_0}$.  Since
$\int_C(\Theta_0-d\eta)=0$, the scalar Poisson equation gives a smooth real
function $\varphi$ such that
\begin{equation}\label{eq:curve-poisson-equation}
 \frac{\sqrt{-1}}{2\pi}\partial\bar\partial\varphi
 =d\eta-\Theta_0.
\end{equation}
Replacing $h_0$ by $h_0e^{-\varphi}$, with the corresponding Chern
curvature convention, gives
\begin{equation}\label{eq:curve-constant-curvature}
 \frac{\sqrt{-1}}{2\pi}F_h=d\eta.
\end{equation}
We carry out this construction for $E$ and $H$, and use the product metric
on $E\otimes H$.  If $a=\deg E$ and $b=\deg H$, then, for all sufficiently
large $k$, the three relevant degrees exceed $2g-2$ and the dimensions are
\begin{equation}\label{eq:curve-space-dimensions}
 u_k=ak+1-g,\qquad
 v_k=bk+1-g,\qquad
 w_k=(a+b)k+1-g,\qquad
 c_k=\frac{u_kv_k}{w_k}.
\end{equation}

\medskip
\noindent\textbf{Step 2.} In this step, we compute the exact disk kernel.
By uniformization \cite[Theorem~4.4.1]{Jos06}, after identifying the upper
half-plane with the disk, write $C=\Gamma\backslash\mathbb D$, where
$\Gamma<\Aut(\mathbb D)$ is torsion-free and cocompact, and normalize
\begin{equation}\label{eq:disk-metric-area}
 ds^2=\frac{4\lvert dz\rvert^2}{(1-\lvert z\rvert^2)^2},
 \qquad
 \eta=\frac{dx\,dy}{\pi(g-1)(1-\lvert z\rvert^2)^2}.
\end{equation}
For a degree-$d$ constant-curvature line bundle, the pullback to $\mathbb D$
is holomorphically trivial: after choosing a smooth frame, we solve its
scalar $\bar\partial$-equation on the disk.  Fix a holomorphic frame $e_0$.
With the convention
$F_h=-\partial\bar\partial\log\lvert e_0\rvert_h^2$, the identity
$-\partial\bar\partial\log(1-\lvert z\rvert^2)
=dz\wedge d\bar z/(1-\lvert z\rvert^2)^2$, together with
\eqref{eq:curve-constant-curvature} and \eqref{eq:disk-metric-area}, shows that
$u=\log\lvert e_0\rvert_h^2
-d\log(1-\lvert z\rvert^2)/(g-1)$ is harmonic.  Since $\mathbb D$ is simply
connected, write $u=2\operatorname{Re}\psi$ with $\psi$ holomorphic.
Replacing $e_0$ by $e^{-\psi}e_0$ gives a holomorphic frame $e$ in which
\begin{equation}\label{eq:disk-lifted-metric}
 \lvert e(z)\rvert_h^2=(1-\lvert z\rvert^2)^{\lambda_d},
 \qquad
 \lambda_d=\frac d{g-1}.
\end{equation}
Because the metric and bundle are pulled back from $C$, every deck
transformation acts holomorphically and unitarily.  In the frame $e$, these
actions are given by holomorphic factors of automorphy satisfying the cocycle
law; \textbf{Step 3} uses these deck identifications invariantly through the maps
$\rho^J_{\gamma,w}$.
The lifted norm of $fe$ is
\begin{equation}\label{eq:disk-lifted-norm}
 \lVert fe\rVert^2=\frac1{\pi(g-1)}
 \int_{\mathbb D}\lvert f(z)\rvert^2
 (1-\lvert z\rvert^2)^{\lambda_d-2}\,dx\,dy.
\end{equation}
The monomials are orthogonal and satisfy
\begin{equation}\label{eq:disk-monomial-norm}
 \lVert z^n\rVert^2
 =\frac{n!\,\Gamma(\lambda_d-1)}
 {(g-1)\Gamma(n+\lambda_d)}.
\end{equation}
Summing the binomial series gives the exact weighted disk kernel
\begin{equation}\label{eq:affine-initials-disk-kernel}
 \widetilde\Pi_d(z,w)
 =(d+1-g)(1-z\overline w)^{-d/(g-1)}.
\end{equation}
Its invariant norm is
\begin{equation}\label{eq:disk-kernel-invariant-norm}
 \lvert\widetilde\Pi_d(z,w)\rvert
 =(d+1-g)
 \cosh\left(\frac{d_{\mathbb D}(z,w)}2\right)^{-d/(g-1)}.
\end{equation}
In compatible product frames,
\begin{equation}\label{eq:affine-initials-disk-product}
 \widetilde\Pi_{d_1}(z,w)\widetilde\Pi_{d_2}(z,w)
 =\frac{(d_1+1-g)(d_2+1-g)}{d_1+d_2+1-g}
  \widetilde\Pi_{d_1+d_2}(z,w).
\end{equation}

\medskip
\noindent\textbf{Step 3.} In this step, we justify reproduction and
periodize the disk kernel.  Put
\begin{equation}\label{eq:disk-weighted-measure}
 d\mu_\lambda(w)=\frac1{\pi(g-1)}
 (1-\lvert w\rvert^2)^{\lambda-2}\,dx\,dy.
\end{equation}
If $f$ is holomorphic and belongs to $A^1(d\mu_\lambda)$, the kernel
reproduces $f$.  For polynomials this follows by expanding the kernel and
using angular integration and \eqref{eq:disk-monomial-norm}.  For a general
$f$, let $f_r(w)=f(rw)$.  Rotation invariance of the weight, strong
continuity of rotations on weighted $L^1$, and the integral representation
\[
 f(rw)=\int_0^{2\pi}P_r(\theta)f(e^{i\theta}w)
 \,\frac{d\theta}{2\pi}
\]
show that $f_r\to f$ in the weighted $A^1$-norm as $r\uparrow1$.
Each $f_r$ is holomorphic on a disk larger than the closed unit disk, so
its Taylor polynomials converge uniformly there.  Since the scalar kernel
is bounded in $w$ for fixed $z$, the polynomial identity passes first
to $f_r$ and then to $f$.  Thus
\begin{equation}\label{eq:disk-reproduction}
 f(z)=\int_{\mathbb D}
 (d+1-g)(1-z\overline w)^{-\lambda}f(w)\,d\mu_\lambda(w).
\end{equation}
If $s=fe$ is the lift of a section on $C$, its invariant pointwise norm
is bounded, and hence
\begin{equation}\label{eq:automorphic-growth}
 \lvert f(z)\rvert\leq C(1-\lvert z\rvert^2)^{-\lambda/2}.
\end{equation}
For $\lambda>2$, the right-hand side belongs to
$A^1(d\mu_\lambda)$.  Therefore \eqref{eq:disk-reproduction} applies to
all lifted compact sections in the sufficiently positive degrees used in
Steps~4 and~5 of this proof.

Let
\[
 \rho^J_{\gamma,w}\colon\widetilde J_w\to
 \widetilde J_{\gamma w}
\]
be the unitary deck identification for a lifted constant-curvature bundle
$J$, and define
\begin{equation}\label{eq:deck-kernel-term}
 K^J_\gamma(z,w)
 =\widetilde\Pi_d(z,\gamma w)\circ\rho^J_{\gamma,w}.
\end{equation}
The compact Bergman kernel is
\begin{equation}\label{eq:compact-kernel-periodization}
 \Pi_J(z,w)=\sum_{\gamma\in\Gamma}K^J_\gamma(z,w).
\end{equation}
Packing disjoint hyperbolic balls gives the orbit estimate
\begin{equation}\label{eq:hyperbolic-orbit-count}
 \#\{\gamma\mid n\leq d(z,\gamma w)<n+1\}\leq Ce^n.
\end{equation}
Together with \eqref{eq:disk-kernel-invariant-norm}, this gives normal
convergence in sufficiently positive degrees.  For a fundamental domain
$\mathcal F$ and a lifted compact section $\widetilde s$, absolute
convergence, automorphy, and \eqref{eq:disk-reproduction} give
\begin{equation}\label{eq:compact-kernel-reproduction}
\begin{aligned}
 \int_{\mathcal F}\sum_\gamma
 K^J_\gamma(z,w)\widetilde s(w)\eta(w)
 &=\sum_\gamma\int_{\gamma\mathcal F}
 \widetilde\Pi_d(z,w')\widetilde s(w')\eta(w')\\
 &=\widetilde s(z).
\end{aligned}
\end{equation}
The cocycle law gives descent, and replacing $\gamma$ by
$\gamma^{-1}$ gives Hermitian symmetry.  This proves
\eqref{eq:compact-kernel-periodization}.

\medskip
\noindent\textbf{Step 4.} In this step, we compare the product of the two
compact kernels with the kernel of the tensor product.  Use
the periodization \eqref{eq:compact-kernel-periodization} for $E^k$, $H^k$, and
$(E\otimes H)^k$.  The terms with equal deck indices satisfy
\eqref{eq:affine-initials-disk-product}, and therefore
\begin{equation}\label{eq:equal-deck-identity}
 \sum_\gamma K^{E^k}_\gamma(z,w)K^{H^k}_\gamma(z,w)
 =c_k\Pi_{(E\otimes H)^k}(z,w).
\end{equation}
Let $\iota=\operatorname{inj}(C)>0$.  If $\gamma\neq\delta$, then
$d(\gamma w,\delta w)\geq2\iota$, so at least one of
$d(z,\gamma w)$ and $d(z,\delta w)$ is at least $\iota$.  In that
factor of \eqref{eq:disk-kernel-invariant-norm}, split the exponent in
half.  To display the double summation, after decreasing two positive
constants $c_E,c_H$ if necessary, put
\[
 a_\gamma=
 \cosh\left(\frac{d(z,\gamma w)}2\right)^{-c_Ek},
 \qquad
 b_\delta=
 \cosh\left(\frac{d(z,\delta w)}2\right)^{-c_Hk}.
\]
The kernel prefactors are $O(k)$, while
\eqref{eq:hyperbolic-orbit-count} gives, uniformly in $z,w$ and large $k$,
\[
 \sum_\gamma a_\gamma^{1/2}
 +\sum_\gamma a_\gamma
 +\sum_\delta b_\delta^{1/2}
 +\sum_\delta b_\delta\leq C.
\]
Partition the unequal pairs according to which of the two distances is at
least $\iota$.  One half of the corresponding far factor is at most
$e^{-\epsilon k}$, and therefore
\begin{equation}\label{eq:unequal-deck-double-sum}
\begin{aligned}
 \sum_{\gamma\ne\delta}
 \lvert K^{E^k}_\gamma(z,w)K^{H^k}_\delta(z,w)\rvert
 &\leq Ck^2e^{-\epsilon k}
 \left(
   \sum_\gamma a_\gamma^{1/2}\sum_\delta b_\delta
  +\sum_\gamma a_\gamma\sum_\delta b_\delta^{1/2}
 \right)\\
 &\leq C'e^{-\epsilon'k}.
\end{aligned}
\end{equation}
The polynomial prefactor has been absorbed by decreasing the exponential
rate.  This estimate is uniform on $C\times C$.  Hence
\begin{equation}\label{eq:affine-initials-compact-kernel-product}
 \sup_{x,y\in C}
 \left\lvert\Pi_{E^k}(x,y)\Pi_{H^k}(x,y)
 -c_k\Pi_{(E\otimes H)^k}(x,y)\right\rvert
 \leq Ce^{-\epsilon k}.
\end{equation}

\medskip
\noindent\textbf{Step 5.} In this step, we pass from kernels to partial
traces.  The Schwartz kernel of $Q_k=q_kq_k^*$ is the product of the
first two Bergman kernels.  Since $(C,\eta)$ has total mass one, the uniform
bound in \eqref{eq:affine-initials-compact-kernel-product} also bounds the
Hilbert--Schmidt norm of the error operator and gives
\begin{equation}\label{eq:affine-initials-multiplication-scalar}
 \lVert Q_k-c_kI_{W_k}\rVert_{\mathrm{op}}\leq Ce^{-\epsilon k}.
\end{equation}
Since $c_k\asymp k$, the operator $Q_k$ is invertible for large $k$,
so $q_k$ is surjective.  The required projector is
\begin{equation}\label{eq:curve-multiplication-projector}
 \mathsf P_k=q_k^*(q_kq_k^*)^{-1}q_k.
\end{equation}

On the diagonal of \eqref{eq:compact-kernel-periodization}, the
term indexed by the identity deck transformation is the dimension, and the
other terms are exponentially
small.  Thus
\begin{equation}\label{eq:curve-bergman-densities}
 \rho_{E^k}=u_k+O(e^{-\epsilon k}),
 \qquad
 \rho_{H^k}=v_k+O(e^{-\epsilon k}).
\end{equation}
For an orthonormal basis $(t_j)$ of $V_k$, direct expansion gives
\begin{equation}\label{eq:curve-partial-trace-integral}
 \left\langle\operatorname{Tr}_{V_k}(q_k^*q_k)s,s'\right\rangle
 =\int_Ch_{E^k}(s,s')\sum_j\lvert t_j\rvert^2\eta.
\end{equation}
Consequently,
\begin{equation}\label{eq:curve-qq-partial-traces}
 \operatorname{Tr}_{V_k}(q_k^*q_k)=v_kI+O(e^{-\epsilon k}),
 \qquad
 \operatorname{Tr}_{U_k}(q_k^*q_k)=u_kI+O(e^{-\epsilon k}).
\end{equation}
Write $Q_k=c_kI+E_k$.  By
\eqref{eq:affine-initials-multiplication-scalar},
$\lVert c_k^{-1}E_k\rVert=O(k^{-1}e^{-\epsilon k})$.  Expanding
$(I+c_k^{-1}E_k)^{-1}$ as its convergent geometric series and using
$\lVert q_k\rVert^2=\lVert Q_k\rVert=O(k)$ give
\begin{equation}\label{eq:curve-projector-resolvent}
 Q_k^{-1}=c_k^{-1}I+O(k^{-2}e^{-\epsilon k}),
 \qquad
 \mathsf P_k=c_k^{-1}q_k^*q_k+O(k^{-1}e^{-\epsilon k}).
\end{equation}
A partial trace costs at most the $O(k)$ dimension of the traced factor.
Combining \eqref{eq:curve-qq-partial-traces} and
\eqref{eq:curve-projector-resolvent} with
$v_k/c_k=w_k/u_k$ and $u_k/c_k=w_k/v_k$ proves
\eqref{eq:one-curve-projector-estimate}.
\end{proof}

\begin{lem}[A Pl\"ucker estimate for quotient filtrations]
\label{lem:quotient-filtration-plucker}
Let $q\colon U\otimes V\twoheadrightarrow W$ be a surjective map of
finite-dimensional Hermitian spaces of dimensions $u,v,w$.  Give $U,V$
arbitrary increasing weighted flags with mean jumps
$\overline a,\overline b$.  If $F^U_{\leq a}U$ and $F^V_{\leq b}V$ are
their filtered pieces, give $W$ the quotient filtration
\begin{equation}\label{eq:plucker-quotient-filtration}
 F^q_{\leq c}W
 =\sum_{a+b\leq c}q\bigl(F^U_{\leq a}U\otimes F^V_{\leq b}V\bigr).
\end{equation}
Let $\overline r$ be its mean jump, and let $A,B$ be the centered
self-adjoint flag generators.  If $P_S$ is the orthogonal projector onto
$S=q^\vee W^*\subset U^*\otimes V^*$, put
\begin{equation}\label{eq:plucker-moment-errors}
 \mu_U=\operatorname{Tr}_{V^*}P_S-\frac wuI_{U^*},
 \qquad
 \mu_V=\operatorname{Tr}_{U^*}P_S-\frac wvI_{V^*}.
\end{equation}
Then
\begin{equation}\label{eq:affine-initials-plucker}
 \overline r-\overline a-\overline b
 \leq
 \frac{u\lVert\mu_U\rVert_{\mathrm{op}}\lVert A\rVert_{\mathrm{op}}
 +v\lVert\mu_V\rVert_{\mathrm{op}}\lVert B\rVert_{\mathrm{op}}}{w}.
\end{equation}
\end{lem}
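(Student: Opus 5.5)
The plan is to dualize the quotient filtration into a restricted filtration on the subspace $S=q^\vee W^*\subset U^*\otimes V^*$. On a restricted filtration, the total jump is controlled from one side by the trace of the generator against $P_S$, by a Gram--Schmidt/Ky Fan type argument. That trace then splits into the two partial traces defining $\mu_U$ and $\mu_V$.

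\textbf{Step 1 (generators).} Choose orthonormal bases of $U$ and $V$ adapted to the two flags, by Gram--Schmidt along the filtered pieces. Let $D_U=\overline a\,I+A$ and $D_V=\overline b\,I+B$ be the self-adjoint operators that are diagonal in these bases, with eigenvalues equal to the jumps. These are the flag generators, and $A,B$ are traceless because they are centered. The tensor filtration $\sum_{a+b\leq c}F^U_{\leq a}U\otimes F^V_{\leq b}V$ is then exactly the sum of the eigenspaces of
\[
 D=D_U\otimes I+I\otimes D_V
\]
with eigenvalue at most $c$, and $F^q_{\leq c}W$ is its image under $q$.

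\textbf{Step 2 (dualization).} A functional $\phi\in W^*$ annihilates $F^q_{\leq c}W$ if and only if $q^\vee\phi$ annihilates the tensor piece. Under the Hermitian identification, that annihilator is the span of the eigenspaces of the transpose $D^\vee$ with eigenvalue $>c$. The quotient filtration of $W$ is therefore recovered from the restriction to $S$ of the filtration generated by $-D^\vee$, with jumps negated. This bookkeeping is where I expect sign or off-by-one conventions to bite, so I would check it first on a one-dimensional example. The conclusion I aim for is
\[
 w\,\overline r=-\sum_k c_k,
\]
where the $c_k$ are the jumps of the filtration generated by $-D^\vee$ restricted to $S$.

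\textbf{Step 3 (trace inequality).} Let $E=-D^\vee$ and run Gram--Schmidt inside $S$ along the restricted pieces $S\cap F^E_{\leq c}$. This gives an orthonormal basis $s_k$ of $S$ with $s_k\in F^E_{\leq c_k}$. Since $F^E_{\leq c_k}$ is a sum of eigenspaces with eigenvalues at most $c_k$, we get $\langle Es_k,s_k\rangle\leq c_k$. Summing over $k$ gives
\[
 \operatorname{Tr}(P_SE)\leq\sum_k c_k,
\]
and hence
\[
 w\,\overline r\leq\operatorname{Tr}(P_SD^\vee).
\]

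\textbf{Step 4 (splitting the trace).} Expand
\[
 D^\vee=(\overline a+\overline b)I+A^\vee\otimes I+I\otimes B^\vee.
\]
Then $\operatorname{Tr}(P_S)=w$ contributes $w(\overline a+\overline b)$. For the $A$-term,
\[
 \operatorname{Tr}\bigl(P_S(A^\vee\otimes I)\bigr)
 =\operatorname{Tr}_{U^*}\bigl((\operatorname{Tr}_{V^*}P_S)A^\vee\bigr)
 =\tfrac wu\operatorname{Tr}A+\operatorname{Tr}(\mu_UA^\vee).
\]
Here $\operatorname{Tr}A=0$, and
\[
 \lvert\operatorname{Tr}(\mu_UA^\vee)\rvert\leq u\lVert\mu_U\rVert_{\mathrm{op}}\lVert A\rVert_{\mathrm{op}}.
\]
The $B$-term is handled symmetrically and is bounded by $v\lVert\mu_V\rVert_{\mathrm{op}}\lVert B\rVert_{\mathrm{op}}$. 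Dividing by $w$ gives \eqref{eq:affine-initials-plucker}.

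The main obstacle is Step 2: matching the quotient filtration with the correct restricted dual filtration and the correct direction of the inequality. Steps 3 and 4 are routine once that matching is correct.
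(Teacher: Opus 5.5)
Your proof is correct. After your Step~3 it coincides with the paper's: both reach
$w\overline r-w(\overline a+\overline b)\leq\operatorname{Tr}\bigl(P_S(A^\vee\otimes I+I\otimes B^\vee)\bigr)$,
and both then split the trace through $\operatorname{Tr}_{V^*}P_S$ and $\operatorname{Tr}_{U^*}P_S$ exactly as in your Step~4.

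The difference is in how the quotient jumps are tied to $S$.

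\textbf{The paper's route.} It stays on the column side.
\begin{enumerate}
\item A greedy scan of the vectors $q(e_i\otimes f_j)$, in order of their centered costs $c_{ij}$, identifies $\sum_\ell r_\ell-w(\overline a+\overline b)$ with the minimum of $\sum_{(i,j)\in I}c_{ij}$ over column sets $I$ giving a basis of $W$.
\item The nonvanishing maximal minors of $q$ identify this minimum with minus the largest weight of the Pl\"ucker vector of $S$ under $H_0=-A^{\mathsf T}\otimes I-I\otimes B^{\mathsf T}$.
\item The expectation of $H_0^{[w]}$ on the normalized Pl\"ucker vector equals $\operatorname{Tr}(P_SH_0)$, and it is at most that largest weight.
\end{enumerate}

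\textbf{Your route.} You dualize instead. The map $q^\vee$ sends $\operatorname{Ann}(F^q_{\leq c}W)$ isomorphically onto the intersection of $S$ with the sum of the eigenspaces of $D^\vee$ of eigenvalue $>c$. So the restricted filtration of $S$ by $E=-D^\vee$ has jumps exactly $-r_k$; passing from the strict to the non-strict threshold is harmless because the spectrum is finite. A Rayleigh-quotient bound on an orthonormal basis of $S$ adapted to this restricted flag then replaces the Pl\"ucker step.

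\textbf{Comparison.} The two arguments are the same estimate seen at different levels. Your $\sum_k\langle Es_k,s_k\rangle$ is the paper's Pl\"ucker expectation minus $w(\overline a+\overline b)$, and your $\sum_kc_k$ is the top weight of the Pl\"ucker vector under $E$.
\begin{itemize}
\item Your version is more elementary. It needs neither the greedy (matroid) description of the quotient jumps nor exterior powers, and it expresses $w\overline r$ directly through the restricted filtration of $S$ rather than as a minimum over column bases.
\item The paper's version presents the bound as the standard comparison between the moment-map value $\operatorname{Tr}(P_SH_0)$ and the Hilbert--Mumford weight of the Pl\"ucker point of $S$. That matches the GIT language used in the surrounding section.
\end{itemize}
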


\begin{proof}
Orthogonally split the two flags, choose orthonormal eigenbases
$(e_i)$ and $(f_j)$, and set
\[
 D=A\otimes I+I\otimes B.
\]
The columns $q(e_i\otimes f_j)$ have centered costs
\[
 c_{ij}=(a_i-\overline a)+(b_j-\overline b).
\]
Scanning these costs in increasing order shows that the quotient dimension
at a threshold is the rank of the columns available at that threshold.
Hence
\begin{equation}\label{eq:plucker-greedy-basis}
 J^\circ:=\sum_{\ell=1}^wr_\ell-w(\overline a+\overline b)
 =\min_I\sum_{(i,j)\in I}c_{ij},
\end{equation}
where $I$ ranges over the column sets giving a basis of $W$.

On $U^*\otimes V^*$, set
\[
 X_0=-A^{\mathsf T},\qquad
 Y_0=-B^{\mathsf T},\qquad
 H_0=X_0\otimes I+I\otimes Y_0.
\]
For a nonzero Pl\"ucker vector $s\in\bigwedge^wS$, its coordinate indexed
by $I$ is nonzero exactly when the corresponding maximal minor of $q$
is nonzero, and its $H_0$-weight is $-\sum_Ic_{ij}$.  Therefore the
largest occurring weight is
\begin{equation}\label{eq:plucker-largest-weight}
 \ell_{\max}(s)=-J^\circ.
\end{equation}
After normalizing $s$, the expectation of the additive exterior-power
generator is a convex combination of its occurring weights.  Hence
\begin{equation}\label{eq:plucker-expectation}
\begin{aligned}
 -J^\circ
 &=\ell_{\max}(s)\\
 &\geq\langle H_0^{[w]}s,s\rangle\\
 &=\operatorname{Tr}(P_SH_0)
 =\operatorname{Tr}(\mu_UX_0)+\operatorname{Tr}(\mu_VY_0).
\end{aligned}
\end{equation}
In particular,
\begin{equation}\label{eq:plucker-cost-sign-consequence}
 J^\circ\leq-\operatorname{Tr}(P_SH_0)
 \leq\left\lvert\operatorname{Tr}(P_SH_0)\right\rvert.
\end{equation}
The scalar parts vanish because $X_0,Y_0$ are trace free.  Estimating the
two remaining traces by operator norms and dividing by $w$ gives
\eqref{eq:affine-initials-plucker}.
\end{proof}

\begin{lem}[The fixed-pair mean inequality]
\label{lem:affine-initials-fixed-pair}
Notation and conditions are as in Set-up~\ref{setup:initial-filtrations}.
Fix $x,y,z\in(0,1)\cap\mathbb Q$ with $y+z=2x$, and choose a positive
integer $D$ divisible by $e$ and by the reduced denominators of
$x,y,z$.  There exist constants $C,c>0$ such that, for all sufficiently
large $k$, with $d=Dk$,
\begin{equation}\label{eq:affine-initials-fixed-pair}
 \overline i_{2d,2dx}
 \leq\overline i_{d,dy}+\overline i_{d,dz}+Cke^{-ck}.
\end{equation}
The constants may depend on the fixed test configuration, initial
filtration, denominator, rational pair, and Hermitian data, but not on
$k$.  No uniformity for a moving pair is asserted.
\end{lem}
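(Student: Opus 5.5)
The plan is to apply Lemma~\ref{lem:quotient-filtration-plucker} to the multiplication map
\[
 q\colon V_{d,dy}\otimes V_{d,dz}\to V_{2d,2dx},
\]
using that the filtration $\chi=\chi^\epsilon$ is multiplicative and block preserving (Lemma~\ref{lem:affine-initials-veronese}). Multiplicativity says that the product of pieces $\mathcal F_{\leq a}V_{d,dy}\cdot\mathcal F_{\leq b}V_{d,dz}$ lies in $\mathcal F_{\leq a+b}V_{2d,2dx}$. Hence the quotient filtration $F^q$ of \eqref{eq:plucker-quotient-filtration} is contained, piece by piece, in the restriction of $\chi$ to $V_{2d,2dx}$. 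Containment of filtered pieces forces every jump of $\chi$ to be at most the corresponding jump of $F^q$, so $\overline i_{2d,2dx}\leq\overline r$. It therefore suffices to bound $\overline r-\overline i_{d,dy}-\overline i_{d,dz}$ by $Cke^{-ck}$, and this is exactly the quantity controlled by \eqref{eq:affine-initials-plucker}, once we know that $q$ is surjective and have the moment errors $\mu_U,\mu_V$.

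\textbf{Factorisation.} Write $N_y=M^D\otimes L^{Dy}$ and $N_z=M^D\otimes L^{Dz}$, which are genuine line bundles because $D$ clears all denominators. Then $V_{d,dy}=H^0(B,N_y^k)$ and $V_{d,dz}=H^0(B,N_z^k)$, and each is the tensor product over $i$ of the curve spaces $H^0(C_i,(N_{y,i})^k)$. Since $y,z\in(0,1)$, every factor degree is positive by \eqref{eq:uniform-block-epsilon}, and $g_i>1$. Fix the constant-curvature metrics of Lemma~\ref{lem:one-curve-multiplication-projector} on each factor and take the product $L^2$ structures. Then $q=\bigotimes_iq^{(i)}_k$, so $q$ is surjective for large $k$. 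Moreover $(\ker q)^\perp=\bigotimes_i(\ker q^{(i)}_k)^\perp$, so $\mathsf P_k=\bigotimes_i\mathsf P^{(i)}_k$. The projector $P_S$ onto $q^\vee W^*$ is the transpose or conjugate of $\mathsf P_k$ under the Hermitian identification, so the same partial trace estimates hold for it.

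\textbf{Partial traces.} Partial traces of a tensor product of operators factor, giving
\[
 \operatorname{Tr}_{V}\mathsf P_k=\bigotimes_i\operatorname{Tr}_{V^{(i)}}\mathsf P^{(i)}_k .
\]
Each factor equals $(w^{(i)}/u^{(i)})I+O(e^{-\epsilon k})$ by \eqref{eq:one-curve-projector-estimate}. Expanding the product and using $w^{(i)}/u^{(i)}=O(1)$ gives
\[
 \operatorname{Tr}_V\mathsf P_k=\frac wu I+O(e^{-\epsilon k}),
\]
since $w/u=\prod_i w^{(i)}/u^{(i)}$. Likewise $\operatorname{Tr}_U\mathsf P_k=(w/v)I+O(e^{-\epsilon k})$. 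Thus $\lVert\mu_U\rVert_{\mathrm{op}}$ and $\lVert\mu_V\rVert_{\mathrm{op}}$ are $O(e^{-\epsilon k})$.

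\textbf{Applying the estimate.} By the two-sided linear bound on $\chi$ from Lemma~\ref{lem:affine-initials-veronese}, every entry in degree $d$ is $O(d)=O(k)$, so the centered generators satisfy $\lVert A\rVert_{\mathrm{op}},\lVert B\rVert_{\mathrm{op}}=O(k)$. The dimensions $u,v,w$ are all of order $k^4$ with bounded ratios $u/w$ and $v/w$. Substituting into \eqref{eq:affine-initials-plucker} gives the error term $Cke^{-ck}$, which proves \eqref{eq:affine-initials-fixed-pair}.

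\textbf{Main obstacle.} I expect the main difficulty to be bookkeeping rather than new ideas. Lemma~\ref{lem:quotient-filtration-plucker} works on the duals $U^*\otimes V^*$ with $P_S$, whereas the curve lemma controls $\mathsf P_k$ on $U\otimes V$; one must check that the antilinear Riesz identification carries $(\ker q)^\perp$ to $S$ and preserves operator norms of partial traces. One must also check that the weighted flags of $\chi$ on each block can be orthogonally split with respect to these product metrics. This holds because any flag admits an orthogonal splitting and the Pl\"ucker lemma allows arbitrary flags. Finally, the constants must not depend on $k$; this follows because the pair $(y,z)$, the integer $D$, and the metrics are all fixed.
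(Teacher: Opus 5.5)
Your proposal is correct and is essentially the paper's own proof. You tensorize the one-curve projector estimates of Lemma~\ref{lem:one-curve-multiplication-projector} over the four curve factors, transport them to $U^*\otimes V^*$ through the Riesz maps, and apply Lemma~\ref{lem:quotient-filtration-plucker} with $O(k)$ generator norms and dimensions of order $k^4$; you then close with the comparison $F^q_{\leq c}W\subseteq F^{\mathrm{target}}_{\leq c}W$ from multiplicativity, which gives $\overline i_{2d,2dx}\leq\overline r$, exactly as the paper does.
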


\begin{proof}
We prove the inequality in three steps.  The first identifies the three
section spaces, the second tensorizes the one-curve projector estimates,
and the third applies Lemma~\ref{lem:quotient-filtration-plucker} to the
actual filtration flags.

\medskip
\noindent\textbf{Step 1.} In this step, we fix the rational-slope bundles.
Put
\begin{equation}\label{eq:fixed-pair-bundles}
 E=M^D\otimes L^{Dy}=\boxtimes_{i=0}^3E_i,
 \qquad
 H=M^D\otimes L^{Dz}=\boxtimes_{i=0}^3H_i.
\end{equation}
Every $E_i,H_i$ has positive degree.  With $d=Dk$, we have
\begin{equation}\label{eq:fixed-pair-section-spaces}
\begin{aligned}
 V_{d,dy}&=H^0(B,E^k),\\
 V_{d,dz}&=H^0(B,H^k),\\
 V_{2d,2dx}&=H^0(B,(E\otimes H)^k).
\end{aligned}
\end{equation}
The relevant multiplication is the ordinary multiplication map among
these three complete section spaces.

\medskip
\noindent\textbf{Step 2.} In this step, we tensorize the multiplication
projectors.  For each $i$,
Lemma~\ref{lem:one-curve-multiplication-projector}, applied to
$C_i,E_i,H_i$, gives the factorwise estimate.
The tensor-product decomposition and canonical regrouping identify the global
spaces and maps as
\begin{equation}\label{eq:fourfold-multiplication-tensorization}
 U_k=\bigotimes_{i=0}^3H^0(C_i,E_i^k),\quad
 V_k=\bigotimes_{i=0}^3H^0(C_i,H_i^k),\quad
 W_k=\bigotimes_{i=0}^3H^0(C_i,(E_i\otimes H_i)^k),
\end{equation}
\begin{equation}\label{eq:fourfold-projector-tensorization}
 q_k=\bigotimes_{i=0}^3q_{i,k},
 \qquad
 \mathsf P_k=\bigotimes_{i=0}^3\mathsf P_{i,k}.
\end{equation}
Partial trace commutes with tensor products.  The scalar ratios in
\eqref{eq:one-curve-projector-estimate} are bounded, and only four factors
occur.  Expanding the tensor products gives
\begin{equation}\label{eq:fourfold-partial-traces}
\begin{split}
 \left\lVert\operatorname{Tr}_{V_k}\mathsf P_k
       -\frac{w_k}{u_k}I_{U_k}\right\rVert_{\mathrm{op}}
 &\leq Ce^{-\epsilon k},\\
 \left\lVert\operatorname{Tr}_{U_k}\mathsf P_k
       -\frac{w_k}{v_k}I_{V_k}\right\rVert_{\mathrm{op}}
 &\leq Ce^{-\epsilon k}.
\end{split}
\end{equation}

The finite-dimensional flag estimate is expressed in the dual space.  Let
$R_U,R_V,R_W$ be the antiunitary Riesz maps.  Then
\begin{equation}\label{eq:fixed-pair-riesz-bridge}
 q_k^\vee R_W=(R_U\otimes R_V)q_k^*.
\end{equation}
Thus the orthogonal projector onto
$q_k^\vee W_k^*\subset U_k^*\otimes V_k^*$ is the antiunitary conjugate
of $\mathsf P_k$, and its partial traces have the errors in
\eqref{eq:fourfold-partial-traces}.  This conclusion concerns the full
groups $\operatorname{SL}(U_k)\times\operatorname{SL}(V_k)$, so it
applies to arbitrary flags on the complete section spaces.

\medskip
\noindent\textbf{Step 3.} In this step, we use the Pl\"ucker estimate for
the initial-filtration flags.  Give the two source spaces their actual
 increasing weighted flags and the target the quotient filtration defined in
\eqref{eq:plucker-quotient-filtration}.  Let $\overline r_k$ be its mean
entry.  Linear boundedness gives
\[
 \lVert A_k\rVert_{\mathrm{op}}+\lVert B_k\rVert_{\mathrm{op}}=O(k),
\]
where $A_k,B_k$ are the centered splitting generators.  The exact linear
dimension formula on each curve factor and the tensor-product decomposition
give
\begin{equation}\label{eq:fixed-pair-dimension-growth}
 u_k\asymp k^4,\qquad
 v_k\asymp k^4,\qquad
 w_k\asymp k^4.
\end{equation}
The combination of \eqref{eq:fourfold-partial-traces} and
\eqref{eq:fixed-pair-riesz-bridge}, followed by
Lemma~\ref{lem:quotient-filtration-plucker}, yields
\begin{equation}\label{eq:fixed-pair-quotient-mean}
 \overline r_k
 \leq\overline i_{d,dy}+\overline i_{d,dz}
 +O(ke^{-ck}).
\end{equation}
Finally, multiplicativity in the convention
\eqref{eq:affine-initials-sign} gives
\begin{equation}\label{eq:fixed-pair-target-quotient-comparison}
 F^q_{\leq c}W_k\subseteq F^{\mathrm{target}}_{\leq c}W_k.
\end{equation}
Thus every jump of the target filtration is at most the corresponding jump
of the quotient filtration.  Hence
$\overline i_{2d,2dx}\leq\overline r_k$, and
\eqref{eq:affine-initials-fixed-pair} follows.
\end{proof}

\subsection{The scalar profile and a summability estimate}
\label{subsec:scalar-profile}

We next use the zero Donaldson--Futaki equality to locate the curvature of
the slice-average profile and to bound the accumulated blockwise error.

\begin{lem}[Endpoint trapezoidal summation]
\label{lem:endpoint-trapezoidal-summation}
Let $\psi$ be a finite continuous convex function on $[0,1]$, and let
$P\in C^2[0,1]$.  Then
\begin{equation}\label{eq:affine-initials-trapezoid}
 \sum_{j=0}^dP(j/d)\psi(j/d)
 =d\int_0^1P\psi
 +\frac{P(0)\psi(0)+P(1)\psi(1)}2+o(1).
\end{equation}
\end{lem}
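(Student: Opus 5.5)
The plan is to reduce the statement to the classical trapezoidal rule for the continuous function $f=P\psi$. The difficulty is that $\psi$ is only convex and continuous, not $C^1$. Convexity, however, gives one-sided derivatives and bounded variation of $\psi'$ on compact subintervals, which is the correct substitute.

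\textbf{Step 1 (interval decomposition).} Write the sum as the composite trapezoidal sum plus the endpoint correction:
\[
 \sum_{j=0}^d f(j/d)=d\sum_{j=0}^{d-1}\frac1d\cdot\frac{f(j/d)+f((j+1)/d)}2+\frac{f(0)+f(1)}2 .
\]
It then suffices to prove that the trapezoidal error
\[
 E_d=\sum_{j=0}^{d-1}\left(\frac{f(x_j)+f(x_{j+1})}{2d}-\int_{x_j}^{x_{j+1}}f\right),
 \qquad x_j=j/d,
\]
satisfies $dE_d\to0$.

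\textbf{Step 2 (local error).} On each interval $I_j=[x_j,x_{j+1}]$ the single-interval error equals $\int_{I_j}(f-\ell_j)$, where $\ell_j$ is the chord of $f$ on $I_j$. I split $f=P\psi$ as
\[
 f-\ell_j=P(x_j)(\psi-\ell^\psi_j)+\bigl[(P-P(x_j))\psi-\text{chord}\bigr].
\]
The second bracket is a function whose derivative is bounded on the interior and whose oscillation is controlled through $\psi$. For the convex part, $\lvert\psi-\ell^\psi_j\rvert$ on $I_j$ is at most $\tfrac1d\bigl(\psi'_-(x_{j+1})-\psi'_+(x_j)\bigr)$. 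Integrating gives an error of order $d^{-2}\,\Delta_j\psi'$, where $\Delta_j\psi'$ is the increment of the one-sided derivative across $I_j$. Summing over $j$ then bounds $d\lvert E_d\rvert$ by $d^{-1}$ times the total increase of $\psi'$, plus $O(d^{-1})$ coming from $P\in C^2$.

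\textbf{Step 3 (endpoint derivatives, the main obstacle).} The argument in Step 2 works only if $\psi'$ is bounded, which fails when the one-sided derivatives at $0$ or $1$ are infinite. A finite continuous convex function can have $\psi'_+(0)=-\infty$, for example $-\sqrt{x}$. I would handle this by treating the boundary intervals separately. Fix $\delta>0$. On $[\delta,1-\delta]$, Step 2 gives an error $O_\delta(1/d)$ after multiplying by $d$. On the intervals inside $[0,\delta]$, use the estimate $\lvert\psi-\ell^\psi_j\rvert\le\operatorname{osc}_{I_j}\psi$ together with monotonicity: $\psi$ is monotone near $0$, so these oscillations telescope. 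Each interval contributes at most $\tfrac1d\operatorname{osc}_{I_j}\psi$, so $d$ times the sum is bounded by the total variation of $\psi$ on $[0,\delta]$. That variation tends to $0$ as $\delta\to0$ by continuity at $0$ and monotonicity near $0$. Choosing $\delta$ first and then $d$ large yields $o(1)$. The $P$-part is harmless because $P$ is Lipschitz and $\psi$ is bounded, giving an error $O(d^{-2})$ per interval. The same estimate applies at $1$. This endpoint bookkeeping is the only delicate point; the rest is routine.
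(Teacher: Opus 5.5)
Your proof is correct, but it takes a different route from the paper. The paper writes the whole scaled error at once as $\int_0^1K_d\,d(P\psi)''$, with the exact Peano kernel $K_d(x)=\frac d2(x-j/d)((j+1)/d-x)$ on $I_j$, and uses the two bounds $0\le K_d\le 1/(8d)$ and $K_d(x)\le Cx(1-x)$. It expands $(P\psi)''=P\psi''+2P'\psi'+P''\psi$ distributionally and invokes $\psi'\in L^1$ and $\int_0^1x(1-x)\,d\psi''<\infty$. Dominated convergence then disposes of the $P\psi''$ term, since the weight $x(1-x)$ absorbs a possible blow-up of $\psi'$ at $0$ or $1$. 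The $L^1$ terms are $O(1/d)$.

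You instead estimate chords interval by interval and cut at $\delta$. You treat the end intervals by the crude bound $\lvert\psi-\ell_j^\psi\rvert\le\operatorname{osc}_{I_j}\psi$ together with continuity of $\psi$ at the endpoints, and this replaces the weighted integrability of $\psi''$. Your route is more elementary: it needs no distributional product rule and no weighted finiteness of $\psi''$. It also avoids checking the kernel identity on the end intervals, where $(P\psi)'$ may be unbounded. The price is a two-parameter limit.

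The paper's kernel formulation is more compact, and it is reused in Step~4 of Lemma~\ref{lem:affine-initials-row-budget} to get the sharper $O(d^{-1})$ remainder for $\psi=(x-\lambda)_+$. Your Step~2 also delivers that whenever $\psi'$ is bounded.

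One point you should make precise. On $[\delta,1-\delta]$, the $(P-P(x_j))\psi$ part is $O_\delta(1/d)$ after multiplication by $d$ only because the \emph{oscillation} of its derivative on $I_j$ is $O_\delta(1/d)$. This follows from $\lvert P-P(x_j)\rvert\le C/d$, $P\in C^2$, and $\psi$ being Lipschitz there. Mere boundedness of that derivative, as your wording in Step~2 suggests, would only give $O(1)$. Near the endpoints, your $O(d^{-2})$-per-interval bound for the $P$-part contributes $O(\delta)$ after multiplication by $d$, so it is indeed harmless once $\delta$ is chosen first.
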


\begin{proof}
For $I_j=[j/d,(j+1)/d]$, the exact error kernel after multiplying the
trapezoidal error by $d$ is
\begin{equation}\label{eq:affine-initials-error-kernel}
 K_d(x)=\frac d2(x-j/d)((j+1)/d-x),
 \qquad x\in I_j.
\end{equation}
It satisfies $0\leq K_d\leq1/(8d)$ and
$K_d(x)\leq Cx(1-x)$.  Distributionally,
\begin{equation}\label{eq:affine-initials-product-second-derivative}
 (P\psi)''=P\psi''+2P'\psi'+P''\psi.
\end{equation}
Finite convexity gives $\psi'\in L^1$ and
$\int_0^1x(1-x)\,d\psi''<\infty$.  Dominated convergence applies to the
$\lvert P\rvert\psi''$ term, while the uniform bound $1/(8d)$ applies to the remaining
finite measure.  This proves \eqref{eq:affine-initials-trapezoid}.
\end{proof}

\begin{lem}[One-crease profile and a summability estimate]
\label{lem:affine-initials-row-budget}
Notation and conditions are as in Set-up~\ref{setup:initial-filtrations}.
Fix one of the two filtrations $\chi^\epsilon$, and let $G$ be its
convex transform after division by the actual degree.  For $i\in\{0,1\}$,
restrict $\chi^\epsilon$ to the exact endpoint ring
\begin{equation}\label{eq:affine-initials-endpoint-rings}
 R_i=\bigoplus_{m\geq0}V_{em,iem},
\end{equation}
and let $G_i$ be the transform of this restricted filtration on $\Omega_i$,
with valuation coordinates and entries divided by the actual degree $em$.
For $0<x<1$, define
\begin{equation}\label{eq:affine-initials-slice-average}
 \phi(x)=\frac1{\vol(\Omega_x)}
 \int_{\Omega_x}G(x,y)\,dy.
\end{equation}
At the endpoints, define
\begin{equation}\label{eq:affine-initials-endpoint-averages}
 \phi(i)=\frac1{\vol(\Omega_i)}
 \int_{\Omega_i}G_i(y)\,dy
 \qquad(i=0,1).
\end{equation}
Then $\phi$ is finite and convex on $[0,1]$, satisfies
\begin{equation}\label{eq:affine-initials-endpoint-continuity}
 \phi(0)=\lim_{x\downarrow0}\phi(x),
 \qquad
 \phi(1)=\lim_{x\uparrow1}\phi(x),
\end{equation}
and has the form
\begin{equation}\label{eq:affine-initials-one-crease}
 \phi(x)=u_0+u_1x+\kappa(x-\lambda)_+,
 \qquad
 \kappa\geq0,
 \qquad
 \lambda=\frac{3-\sqrt5}{2}.
\end{equation}

For all sufficiently large supported degrees $d$, the complete section ring
of $M^d\otimes L^j$ is generated by its degree-one piece $V_{d,j}$ for every
$0\leq j\leq d$.  Generate a filtration of this ring from the weighted space
$V_{d,j}$, and write
\begin{align}
 h^{\mathrm{gen}}_{d,j}(k)&=\alpha_{0,d,j}k^4+O(k^3),
 \label{eq:affine-initials-generated-Hilbert}\\
 W^{\mathrm{gen}}_{d,j}(k)&=\beta^{\mathrm{gen}}_{0,d,j}k^5+O(k^4).
 \label{eq:affine-initials-generated-weight}
\end{align}
The two-term expansions hold as in \eqref{eq:chow-weight-expansions}.
Define
\begin{align}
 c_{d,j}&=\overline i_{d,j}
 -\frac{\beta^{\mathrm{gen}}_{0,d,j}}{\alpha_{0,d,j}},
 \label{eq:affine-initials-chow-gap}\\
 a_{d,j}&=\frac{\beta^{\mathrm{gen}}_{0,d,j}}{\alpha_{0,d,j}}
 -d\phi(j/d),
 \label{eq:affine-initials-generation-gap}\\
 g_{d,j}&=\overline i_{d,j}-d\phi(j/d)
 =c_{d,j}+a_{d,j}.
 \label{eq:affine-initials-row-gap}
\end{align}
There exist $d_0$ and $C_{\mathcal T}>0$ such that, for every supported
actual degree $d=em\geq d_0$,
\begin{equation}\label{eq:affine-initials-row-budget}
 c_{d,j}\geq0,
 \qquad
 a_{d,j}\geq0,
 \qquad
 \sum_{j=0}^dg_{d,j}\leq C_{\mathcal T}.
\end{equation}
\end{lem}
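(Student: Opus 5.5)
The plan is to read off all three assertions from an exact expansion of the total weight of $\mathcal T$ in supported actual degree $d=em$, organized block by block:
\[
W(m)=\sum_{j=0}^d n_{d,j}\,\overline i_{d,j}
=\sum_{j=0}^d n_{d,j}\bigl(d\phi(j/d)+g_{d,j}\bigr).
\]
By Lemma~\ref{lem:affine-initials-veronese}, the entry multisets of $\chi^\epsilon$ and $\mathcal T$ agree degreewise, so this $W(m)$ is the total weight of $\mathcal T$. It is therefore a polynomial in $m$ of degree at most $6$ for large $m$. Likewise $h(m)=\sum_j n_{d,j}$ is its Hilbert polynomial.

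\textbf{Step 1: convexity and finiteness of $\phi$.} Finiteness and convexity of $G$ on $\Omega$ (Lemma~\ref{lem:affine-initials-veronese}) make $\phi$ finite on $(0,1)$. For convexity I would use Lemma~\ref{lem:affine-initials-fixed-pair}. For fixed rationals with $y+z=2x$, divide \eqref{eq:affine-initials-fixed-pair} by $d=Dk$ and let $k\to\infty$. Theorem~\ref{thm:filtered-okounkov-equidistribution}, applied on the exact rational rays of Lemma~\ref{lem:affine-initials-veronese}, identifies $\overline i_{d,dx}/d\to\phi(x)$. This gives midpoint convexity $2\phi(x)\le\phi(y)+\phi(z)$ on $\mathbb Q\cap(0,1)$. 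Together with the local boundedness coming from linear boundedness, midpoint convexity extends to convexity on $(0,1)$ by continuity of $G$ in the interior. At the endpoints, the exact endpoint rings $R_0,R_1$ have transforms $G_i$. Multiplying endpoint sections by sublinear correction factors, as in Step~3 of Lemma~\ref{lem:affine-initials-veronese}, should show that the filtered cones of the slices converge to those of the endpoints. This gives \eqref{eq:affine-initials-endpoint-continuity} and convexity on the closed interval.

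\textbf{Step 2: signs of the gaps.} The bound $c_{d,j}\ge0$ is the Chow bridge \eqref{eq:chow-weight-bridge}. It applies because Lemma~\ref{lem:uniform-block-chow} gives Chow polystability of every block embedding for $d\ge d_0$, and the required degree-one generation holds factorwise by Theorem~\ref{thm:curve-normal-generation}. For $a_{d,j}\ge0$, I would compare the filtration generated from $V_{d,j}$ with the restriction of $\chi^\epsilon$ to the ray ring $\bigoplus_k V_{kd,kj}$. Multiplicativity makes each generated piece contained in the corresponding restricted piece, so the generated entries dominate the restricted ones. Their asymptotic means therefore satisfy $\beta^{\mathrm{gen}}_{0,d,j}/\alpha_{0,d,j}\ge$ the ray mean, which equals $d\phi(j/d)$ by the slice identification of Lemma~\ref{lem:affine-initials-veronese}. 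For $j/d\in\{0,1\}$ the same comparison uses $G_i$.

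\textbf{Step 3: $\mathrm{DF}=0$ forces a single crease and a bounded sum.} Write $n_{d,j}=d^4P_0(j/d)+d^3P_1(j/d)+O(d^2)$ by Riemann--Roch on the four curves, with $P_0=p$ and $P_1$ expressed through $q$. Applying Lemma~\ref{lem:endpoint-trapezoidal-summation} to $\sum_j P_\ell(j/d)\,d\phi(j/d)$ gives the $m^6$ and $m^5$ coefficients of the main part of $W(m)$. Integrating by parts twice with the ODE \eqref{eq:boundary-equation} and the boundary data of Lemma~\ref{lem:boundary-polynomial}, I expect
\[
\DF(\mathcal T)=c_1\int_{[0,1]}F\,d\phi''+c_2\lim_{d}\frac1{d}\sum_j p(j/d)\,g_{d,j},
\qquad c_1,c_2>0.
\]
This integration by parts, and the identification of the endpoint terms with the endpoint averages of Step~1, is the main obstacle. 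The sign conventions must be checked carefully. Both terms are nonnegative, since $F\ge0$, $\phi''\ge0$ as a measure, and $g_{d,j}\ge0$ by Step~2. Hence $\DF=0$ makes both vanish. The first forces $\phi''$ to be supported on the interior zero set of $F$, which is $\{\lambda\}$ by Lemma~\ref{lem:boundary-polynomial}; this gives \eqref{eq:affine-initials-one-crease}. For the summability, I would use that $W(m)$ is exactly polynomial. The main part equals a polynomial plus $o(d^4)$ by the trapezoid lemma. Then $\sum_j n_{d,j}g_{d,j}$ equals a polynomial of degree at most $5$ plus $o(d^4)$, with vanishing $d^5$ coefficient, so it is $O(d^4)$. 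Since $n_{d,j}\ge c\,d^4$ uniformly in $j$ (because $p$ has a positive minimum) and $g_{d,j}\ge0$, this gives $\sum_j g_{d,j}\le C_{\mathcal T}$.
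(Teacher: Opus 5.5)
Your architecture is the paper's. The signs of $c_{d,j}$ and $a_{d,j}$ are argued exactly as there, and $\DF=0$ is split into the scalar term $\frac1{a_0}\int F\,d\phi''$ plus a nonnegative defect coming from $D_d=\sum_jn_{d,j}g_{d,j}$. Your midpoint-convexity argument through Lemma~\ref{lem:affine-initials-fixed-pair} is valid but unnecessary: $\phi(x)=\int_{[0,1]^4}G(x,h_0(x)t_0,\dots,h_3(x)t_3)\,dt$ is convex because $G$ is convex along these affine paths. However, two steps fail as written.

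The first gap is the endpoint continuity \eqref{eq:affine-initials-endpoint-continuity}, which cannot be obtained a priori from correction factors. A product of sections lies in an exact endpoint block $V_{d,0}$ (resp.\ $V_{d,d}$) only if every factor does. So the two-sided correction of Step~3 of Lemma~\ref{lem:affine-initials-veronese}, which needs an interior ray, yields only the one-sided bounds $\epsilon_0:=\phi(0)-\lim_{x\downarrow0}\phi(x)\ge0$ and $\epsilon_1:=\phi(1)-\lim_{x\uparrow1}\phi(x)\ge0$. Nothing formal gives equality. For a positive integer $c$, entries $cd$ on $V_{d,0}$ and $0$ on all other blocks define a multiplicative, linearly bounded, block-preserving filtration with $\epsilon_0=c$. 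Your DF identity is therefore missing terms. Let $\overline\phi$ be the continuous extension of $\phi\vert_{(0,1)}$. Then $\sum_jp(j/d)\phi(j/d)=\sum_jp(j/d)\overline\phi(j/d)+p(0)\epsilon_0+p(1)\epsilon_1$, and $\DF(\mathcal T)$ is a positive multiple of
\[
\frac1{a_0}\int_{(0,1)}F\,d\overline\phi''+\frac2{a_0}\bigl(p(0)\epsilon_0+p(1)\epsilon_1\bigr)+\frac2{a_0}\lim_{d}\frac{D_d}{d^5}.
\]
The limit exists only because the $d^6$ coefficient of the total weight equals $\int_0^1p\overline\phi$, by the equidistribution in Lemma~\ref{lem:affine-initials-veronese}; you use this step without stating it. It is $\DF=0$ that forces $\epsilon_0=\epsilon_1=0$, and this is how the paper proves \eqref{eq:affine-initials-endpoint-continuity}.

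The second gap is that the bound $\sum_jg_{d,j}\le C_{\mathcal T}$ does not follow from Lemma~\ref{lem:endpoint-trapezoidal-summation}. That lemma's error is $o(1)$, which after the factor $d\cdot d^4$ gives only $D_d=o(d^5)$, hence only $\sum_jg_{d,j}=o(d)$. Your ``polynomial plus $o(d^4)$'' is in fact false. For $P\equiv1$ and $\psi=(x-\lambda)_+$ the trapezoidal error equals $\theta(1-\theta)/(2d)$ with $\theta=\{d\lambda\}$, which is not $o(d^{-1})$ because $\lambda$ is irrational.

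The repair is to use the one-crease form you have just obtained. Then $(p\psi)''$ is a finite signed measure, and the kernel bound $\lVert K_d\rVert_\infty\le1/(8d)$ gives an $O(d^{-1})$ error uniformly in $\{d\lambda\}$. Hence $d\sum_jn_{d,j}\phi(j/d)=b_0d^6+b_1d^5+O(d^4)$ for constants $b_0,b_1$. The $d^6$ and $d^5$ coefficients of $D_d$ vanish, by equidistribution and by $\DF=0$, so $0\le D_d=O(d^4)$. Your division by $n_{d,j}\ge c\,d^4$ then concludes, as in the paper's Step~4.
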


\begin{proof}
We prove Lemma~\ref{lem:affine-initials-row-budget} in four steps.  The first
proves the pointwise signs, the
second compares the exact endpoint rays with the global filtered cone, the
third identifies the equality profile, and the fourth proves the uniform
summability in \eqref{eq:affine-initials-row-budget}.

\medskip
\noindent\textbf{Step 1.} In this step, we prove the nonnegativity of
$c_{d,j}$ and $a_{d,j}$.  Center the entries on $V_{d,j}$ by
$\overline i_{d,j}$.  After clearing their common scalar denominator, we
obtain an integral one-parameter subgroup.  In the algebraic weight convention, the
weights are the negatives of the increasing entries.
After increasing $d_0$, \eqref{eq:uniform-block-factor-degree} and
Theorem~\ref{thm:curve-normal-generation}, together with the tensor-product
decomposition, show that every block ring in the statement is generated by
$V_{d,j}$.
Lemma~\ref{lem:uniform-block-chow} and the Hilbert--Mumford bridge
\eqref{eq:chow-weight-bridge} therefore give
\begin{equation}\label{eq:affine-initials-chow-gap-positive}
 -\frac{\beta^{\mathrm{gen}}_{0,d,j}
 -\overline i_{d,j}\alpha_{0,d,j}}{\alpha_{0,d,j}}
 =c_{d,j}\geq0.
\end{equation}
The filtration generated by $V_{d,j}$ uses only products of sections from
that block.  The exact-ray filtration may also use sections from other
supported degrees.  Subadditivity in \eqref{eq:affine-initials-sign}
therefore implies that the leading mean of the generated filtration is at
least the leading mean of the exact-ray filtration.  For $0<j<d$,
Lemma~\ref{lem:affine-initials-veronese} identifies the leading exact-ray
mean with $d\phi(j/d)$.  At $j=0,d$, the ring generated from $V_{d,0}$ or
$V_{d,d}$ is the $d$-th Veronese of the corresponding exact endpoint ray,
so both valuation coordinates and filtration entries are multiplied by $d$.
The leading exact-ray mean is therefore $d\phi(j/d)$ at the endpoints as
well.  Thus
\begin{equation}\label{eq:affine-initials-generation-gap-positive}
 \frac{\beta^{\mathrm{gen}}_{0,d,j}}{\alpha_{0,d,j}}
 \geq d\phi(j/d).
\end{equation}
This proves $a_{d,j}\geq0$ and $g_{d,j}\geq0$.

\medskip
\noindent\textbf{Step 2.} In this step, we compare the endpoint values.
For $0<x<1$, the change of variables
$y_i=h_i(x)t_i$ in \eqref{eq:affine-initials-slice-average} gives
\begin{equation}\label{eq:affine-initials-cube-average}
 \phi(x)=\int_{[0,1]^4}
 G\bigl(x,h_0(x)t_0,\ldots,h_3(x)t_3\bigr)\,dt.
\end{equation}
For fixed $t$, the argument of $G$ is affine in $x$, because every $h_i$ is
affine.  Convexity of $G$, followed by integration over the cube, proves
that $\phi$ is convex on $(0,1)$.
Work temporarily with decreasing jumps.  Let $\widehat{\mathcal C}$ be the
closed downward-saturated cone generated by the filtered tuples
\begin{equation}\label{eq:affine-initials-filtered-tuples}
 (d,j,\nu(s),\ell),
 \qquad
 0\neq s\in V_{d,j},
 \qquad
 \ell\leq\tau(s).
\end{equation}
The cone generated by the exact tuples with $j=0$ lies in
$\widehat{\mathcal C}\cap\{x=0\}$.  Its upper boundary is therefore at
most the upper boundary of the global face.  For $t\in(0,1)^4$, consider
the affine radial path
\begin{equation}\label{eq:affine-initials-radial-path}
 x\mapsto
 \bigl(x,h_0(x)t_0,\ldots,h_3(x)t_3\bigr).
\end{equation}
The upper boundary restricted to this path is finite and concave.
Closedness shows that its face value is at least its interior limit, while
convexity of the hypograph with a fixed interior point gives the reverse
inequality.  Thus the face value is the interior radial limit.  Reversing
the sign from decreasing jumps to increasing entries shows that the exact
endpoint transform at $x=0$ is at least the interior radial limit.  For the
radial path oriented from $x=1$ toward the interior, closedness makes the
face value at least its interior limit, while convexity of the hypograph makes
it at most that limit.  After reversing the filtration sign, the exact endpoint
transform at $x=1$ is at least its interior radial limit.  Averaging
over the fixed unit cube and using
the common linear bound gives
\begin{equation}\label{eq:affine-initials-endpoint-excesses}
 \phi(0)\geq\lim_{x\downarrow0}\phi(x),
 \qquad
 \phi(1)\geq\lim_{x\uparrow1}\phi(x).
\end{equation}
Raising the endpoint values of a convex function preserves convexity.
Hence the endpoint extension of $\phi$ is finite and convex.

\medskip
\noindent\textbf{Step 3.} In this step, we determine the support of
$\phi''$.  Set
\begin{align}
 q(x)&=\sum_{i=0}^3(1-g_i)\prod_{r\neq i}h_r(x),
 \label{eq:affine-initials-q}\\
 a_0&=\int_0^1p(x)\,dx,
 \qquad
 a_1=\frac{p(0)+p(1)}2+\int_0^1q(x)\,dx.
 \label{eq:affine-initials-a0-a1}
\end{align}
Write
\begin{equation}\label{eq:affine-initials-ci}
 c_0=461999,
 \quad c_1=2327,
 \quad c_2=356,
 \quad c_3=52,
 \quad C_{\mathrm{bd}}=\prod_{i=0}^3c_i.
\end{equation}
The boundary polynomial from Lemma~\ref{lem:boundary-polynomial} is
\begin{equation}\label{eq:affine-initials-F}
 F(x)=90C_{\mathrm{bd}}x(1-x)(x^2-3x+1)^2,
\end{equation}
and satisfies
\begin{equation}\label{eq:affine-initials-F-boundary}
 F''=2q-\frac{2a_1}{a_0}p,
 \quad F(0)=F(1)=0,
 \quad F'(0)=p(0),
 \quad F'(1)=-p(1).
\end{equation}
It is nonnegative on $[0,1]$, and its unique interior zero is the double
zero $\lambda=(3-\sqrt5)/2$.

Let $L_0,L_1$ be the interior endpoint limits of $\phi$, and put
\[
 \epsilon_i=\phi(i)-L_i.
\]
Let $\overline\phi$ denote the continuous convex extension of
$\phi\vert_{(0,1)}$ to $[0,1]$, so that
$\overline\phi(i)=L_i$.  Distributional integration by parts gives
\begin{equation}\label{eq:affine-initials-scalar-DF}
 \DF_{\mathrm{sc}}(\phi)
 =\frac1{a_0}\int_{(0,1)}F\,d\overline\phi''
 +\frac2{a_0}\bigl(p(0)\epsilon_0+p(1)\epsilon_1\bigr).
\end{equation}
Both terms on the right are nonnegative.

For a sufficiently large supported actual degree $d=em$, define the
increasing-entry total of the algebraic source by
\[
 W_{\mathcal T}(d)=\sum_{j=0}^d\sum_\gamma i_{d,j,\gamma}.
\]
Put
\begin{equation}\label{eq:affine-initials-Dd}
 D_d=\sum_{j=0}^dn_{d,j}g_{d,j}
 =W_{\mathcal T}(d)-d\sum_{j=0}^dn_{d,j}\phi(j/d).
\end{equation}
The first equality is the definition of $D_d$, and the second follows from
degreewise preservation of the entry multiset.  In particular, $D_d\geq0$.
Uniformly in $j$, every curve-factor line bundle has degree greater than
$2g_i-2$ for sufficiently large $d$, and hence has dimension equal to its
degree plus $1-g_i$.  The tensor-product decomposition gives
\begin{equation}\label{eq:affine-initials-block-RR}
 n_{d,j}
 =\prod_{i=0}^3(d\deg M_i+j\deg L_i+1-g_i)
 =d^4p(j/d)+d^3q(j/d)+O(d^2).
\end{equation}
Define
\begin{align*}
 b_0^{\mathrm{sc}}
 &=\int_0^1p(x)\overline\phi(x)\,dx,\\
 b_1^{\mathrm{sc}}
 &=\int_0^1q(x)\overline\phi(x)\,dx
   +\frac{p(0)L_0+p(1)L_1}{2}
   +p(0)\epsilon_0+p(1)\epsilon_1.
\end{align*}
Lemma~\ref{lem:endpoint-trapezoidal-summation}, first applied to
$p\overline\phi$ and then to $q\overline\phi$, together with
\eqref{eq:affine-initials-block-RR}, gives
\begin{equation}\label{eq:affine-initials-scalar-weight-expansion}
 d\sum_{j=0}^dn_{d,j}\phi(j/d)
 =b_0^{\mathrm{sc}}d^6+b_1^{\mathrm{sc}}d^5+o(d^5).
\end{equation}
The last two summands in $b_1^{\mathrm{sc}}$ are
$p(0)\epsilon_0$ and $p(1)\epsilon_1$.

Write the increasing-entry expansion of the algebraic source as
\begin{equation}\label{eq:affine-initials-source-weight-expansion}
 W_{\mathcal T}(d)
 =\widetilde b_0d^6+\widetilde b_1d^5+O(d^4).
\end{equation}
Equality of the limiting entry laws, equivalently equality of their
Duistermaat--Heckman pushforwards, gives
\begin{equation}\label{eq:affine-initials-leading-coefficient}
 \widetilde b_0=b_0^{\mathrm{sc}}.
\end{equation}
Substitution in \eqref{eq:affine-initials-Dd}, followed by $D_d\geq0$ along
all sufficiently large supported degrees, gives
\begin{equation}\label{eq:affine-initials-subleading-inequality}
 \widetilde b_1\geq b_1^{\mathrm{sc}}.
\end{equation}

In the increasing-entry convention, put
\begin{equation}\label{eq:affine-initials-source-DF-coefficients}
 \widetilde{\DF}(\mathcal T)
 =\frac{2(\widetilde b_1a_0-a_1\widetilde b_0)}{a_0^2}.
\end{equation}
In the exponent-one grading of $(X,A^e)$, the Hilbert coefficients are
$e^5a_0,e^4a_1$ and the increasing-entry coefficients are
$e^6\widetilde b_0,e^5\widetilde b_1$.  Up to the fixed nonzero
normalization and the common weight sign, the Donaldson--Futaki numerator is
\[
 e^{10}(\widetilde b_1a_0-a_1\widetilde b_0).
\]
Therefore the hypothesis $\DF(\mathcal T)=0$ forces
$\widetilde b_1a_0-a_1\widetilde b_0=0$, and
$\widetilde{\DF}(\mathcal T)=0$.  The scalar coefficient formula uses the
identical positive factor:
\begin{equation}\label{eq:affine-initials-scalar-DF-coefficients}
 \DF_{\mathrm{sc}}(\phi)
 =\frac{2(b_1^{\mathrm{sc}}a_0-a_1b_0^{\mathrm{sc}})}{a_0^2}.
\end{equation}
By \eqref{eq:affine-initials-leading-coefficient},
\eqref{eq:affine-initials-subleading-inequality}, and
\eqref{eq:affine-initials-scalar-DF}, we obtain
\begin{equation}\label{eq:affine-initials-scalar-equality}
 0=\widetilde{\DF}(\mathcal T)
 \geq\DF_{\mathrm{sc}}(\phi)\geq0.
\end{equation}
Both inequalities are equalities.  Since the leading coefficients already
agree, equality of the two coefficient expressions gives
\begin{equation}\label{eq:affine-initials-subleading-equality}
 \widetilde b_1=b_1^{\mathrm{sc}}.
\end{equation}
We use \eqref{eq:affine-initials-subleading-equality} in \textbf{Step 4} to prove the
uniform bound in \eqref{eq:affine-initials-row-budget}.
Equality in \eqref{eq:affine-initials-scalar-DF} yields
\begin{equation}\label{eq:affine-initials-profile-support}
 \epsilon_0=\epsilon_1=0,
 \qquad
 \operatorname{supp}(\phi'')\subseteq\{\lambda\}.
\end{equation}
Since $\phi''$ is a nonnegative measure, it equals
$\kappa\delta_\lambda$ for some $\kappa\geq0$.  This proves
\eqref{eq:affine-initials-one-crease}.

\medskip
\noindent\textbf{Step 4.} In this step, we prove the uniform bound for
$\sum_j g_{d,j}$.  The exact expression for $n_{d,j}$ has the form
\begin{equation}\label{eq:affine-initials-full-block-polynomial}
 n_{d,j}=d^4p(j/d)+d^3q(j/d)+d^2r(j/d)+ds(j/d)+u.
\end{equation}
For the affine part of $\phi$, expansion into powers of $j$ and the exact
power-sum formulas give a polynomial in $d$.  Apply the kernel identity
\eqref{eq:affine-initials-error-kernel} with $P=p$ and
$\psi(x)=(x-\lambda)_+$.  Since $(P\psi)''$ is a finite signed measure and
$\lVert K_d\rVert_\infty\leq1/(8d)$, we obtain
\begin{equation}\label{eq:affine-initials-crease-p-sum}
 \sum_{j=0}^dp(j/d)(j/d-\lambda)_+
 =d\int_0^1p(x)(x-\lambda)_+\,dx
 +\frac{p(1)(1-\lambda)}2+O(d^{-1}).
\end{equation}
The function $q(x)(x-\lambda)_+$ is Lipschitz, and its corresponding sum
is the integral term times $d$ with an $O(1)$ remainder.  After including
the factors in \eqref{eq:affine-initials-full-block-polynomial}, all
remaining terms are $O(d^4)$, uniformly in the fractional part of
$d\lambda$.  Hence
\begin{equation}\label{eq:affine-initials-sharp-scalar-weight-expansion}
 d\sum_{j=0}^dn_{d,j}\phi(j/d)
 =b_0^{\mathrm{sc}}d^6+b_1^{\mathrm{sc}}d^5+O(d^4).
\end{equation}
Combining \eqref{eq:affine-initials-source-weight-expansion},
\eqref{eq:affine-initials-leading-coefficient},
\eqref{eq:affine-initials-subleading-equality}, and
\eqref{eq:affine-initials-sharp-scalar-weight-expansion} in
\eqref{eq:affine-initials-Dd} gives
\begin{equation}\label{eq:affine-initials-Dd-order}
 0\leq D_d=O(d^4).
\end{equation}
Since all four $h_i$ are strictly positive on $[0,1]$, put
$m_i=\min_{[0,1]}h_i>0$.  After increasing the lower bound on $d$, every
factor in \eqref{eq:affine-initials-block-RR} is at least $dm_i/2$,
uniformly for $0\leq j\leq d$.  Thus
\begin{equation}\label{eq:affine-initials-explicit-block-lower-bound}
 n_{d,j}\geq c_{\mathrm{blk}}d^4,
 \qquad
 c_{\mathrm{blk}}=2^{-4}\prod_{i=0}^3m_i>0.
\end{equation}
Every $g_{d,j}$ is nonnegative, and hence
\begin{equation}\label{eq:affine-initials-defect-sum}
 \sum_{j=0}^dg_{d,j}
 \leq\frac{D_d}{c_{\mathrm{blk}}d^4}
 \leq C_{\mathcal T}.
\end{equation}
This proves \eqref{eq:affine-initials-row-budget}.
\end{proof}

\subsection{Constancy on rational slices}
\label{subsec:rational-slice-constancy}

The summability estimate \eqref{eq:affine-initials-row-budget} and the
fixed-pair inequality \eqref{eq:affine-initials-fixed-pair} force the
asymptotic Chow invariant to vanish on every rational ray, after which a
strictness theorem removes all variation inside each rational slice.

\begin{thm}[Strictness for filtrations,
{\cite[Proposition~11]{Sze15}}]
\label{thm:szekelyhidi-filtration-strictness}
Let $(Y,H)$ be a polarized manifold admitting a cscK metric in $c_1(H)$,
and assume that its polarized automorphism group is finite and that the
complete section ring of $H$ is generated in degree one.  A multiplicative
linearly bounded filtration of this ring with positive
$L^2$-norm has strictly positive asymptotic Chow invariant.
\end{thm}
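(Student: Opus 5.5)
The plan is to follow Sz\'ekelyhidi's argument in \cite{Sze15}: turn the asymptotic Chow invariant into slopes of the finite-dimensional Kempf--Ness (balancing) functionals, and then use a uniform \emph{quantitative} Chow stability coming from the cscK metric. The unconstrained quantity is $\operatorname{Chow}_\infty(\eta)$ from \eqref{eq:chow-weight-normalization}. Fix $k$ large, so that the degree-$k$ Veronese is generated in degree one and Theorem~\ref{thm:donaldson-asymptotic-chow} applies. By Donaldson \cite{Don01}, the embedding $Y\hookrightarrow\mathbb P(S_k^\vee)$ then has a balanced basis. Moreover, the balanced metrics converge to the cscK metric after rescaling.

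\textbf{Step 1: slope identity.} Let $A_k$ be the centered Hermitian generator of the degree-$k$ flag of $\eta$, taken in a balanced orthonormal basis. Consider the Kempf--Ness functional $\mathcal Z_k$ along the geodesic ray $t\mapsto e^{tA_k}$. Its asymptotic slope is a positive multiple of the Hilbert--Mumford weight of the Chow point. By the bridge \eqref{eq:chow-weight-bridge}, that weight is $\operatorname{Chow}_k(\eta)$ up to a normalization depending only on $k$ and $n$. The functional is convex along the ray and critical at $t=0$, because the basis is balanced. Hence
\[
\text{slope}\ \geq\ \frac{\mathcal Z_k(e^{A_k})-\mathcal Z_k(1)}{1}\ \geq\ \int_0^1(1-t)\,\frac{d^2}{dt^2}\mathcal Z_k(e^{tA_k})\,dt.
\]

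\textbf{Step 2: uniform Hessian bound.} This is the step I expect to be the main obstacle. We need a lower bound on the second variation of $\mathcal Z_k$ near the balanced point, of the form
\[
\frac{d^2}{dt^2}\mathcal Z_k(e^{tA})\ \geq\ c\,k^{-?}\,\lVert A\rVert_{HS}^2
\]
for $\lVert A\rVert_{\mathrm{op}}\lesssim k$. The second variation equals the $L^2$ norm of the normal part of the vector field induced by $A$ on the embedded manifold. Its lower bound is Phong--Sturm's / Donaldson's estimate: the lowest eigenvalue of $Q^*Q$ is bounded below by a multiple of $k^{-1}$ times the first eigenvalue of the Lichnerowicz operator of the cscK metric. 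That eigenvalue is positive exactly because the polarized automorphism group is finite. The perturbative regime must be handled: along the whole segment $t\in[0,1]$ the metrics stay in a controlled neighbourhood only if $A_k$ is rescaled. I would therefore first rescale to $A_k/k$, prove the Hessian bound on a fixed-size ball, and then use convexity to extend the slope estimate to the ray.

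\textbf{Step 3: passage to the limit.} After the normalizations of \eqref{eq:chow-weight-normalization}, Steps 1--2 give
\[
\operatorname{Chow}_k(\eta)\ \geq\ c\,\frac{\lVert A_k\rVert_{HS}^2}{k^{2}\dim S_k}\ -\ o(1).
\]
Theorem~\ref{thm:filtered-okounkov-equidistribution} says that the normalized entry laws converge. Hence the right-hand side converges to $c\lVert\eta\rVert_2^2$, normalized as in \eqref{eq:affine-initials-ray-norm}. Taking the $\liminf$ gives $\operatorname{Chow}_\infty(\eta)\geq c\lVert\eta\rVert_2^2>0$ whenever the $L^2$-norm is positive.

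A secondary technical point is comparing the flag entries on $S_k$ with the filtration generated from $S_k$. Multiplicativity makes the generated filtration dominate $\eta$, and this is exactly what the definition of $\operatorname{Chow}_k$ already uses. So no loss of sign occurs in this step.
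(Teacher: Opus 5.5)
The paper does not prove this statement. It quotes \cite[Proposition~11]{Sze15} and only adds that a degree-linear shift of the entries changes neither the centered $L^2$-norm nor the finite Chow weights. Your attempt to rederive it has a genuine gap, and it sits in the power of $k$ that you left open in Step~2.

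Normalize the Kempf--Ness functional so that its asymptotic slope along $e^{tA}$ is $\vol(X_k)\cdot\operatorname{Chow}_k$. Here $\vol(X_k)\asymp\dim S_k$, and the second variation is $\lVert(\nabla h_A)^\perp\rVert_{L^2}^2$.

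\textbf{The exponent is wrong.} Donaldson's lower bound for this Hessian at the balanced embedding is of order $k^{-2}\lVert A\rVert_{HS}^2$, not $k^{-1}$. It is governed by the Lichnerowicz operator of the rescaled metric $k\omega$, whose spectrum scales like $k^{-2}$. For example, on $\mathbb P^1$ with the Veronese embedding, for the zonal spin-two direction $A=\operatorname{diag}(m^2-\overline{m^2})$ the ratio of the Hessian to $\operatorname{Tr}(A^2)$ is $\asymp k^{-2}$.

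\textbf{The region of validity is too small.} The bound holds only while the moved embedding keeps bounded geometry, that is, for $\lVert tA\rVert_{\mathrm{op}}\le\delta$ with $\delta$ independent of $k$. Since $\lVert A_k\rVert_{\mathrm{op}}\le Mk$, the ray leaves this region at $t\approx\delta/(Mk)$. Convexity only propagates the slope reached at that time, so
\[
\vol(X_k)\operatorname{Chow}_k(\eta)\ \ge\ c\,k^{-2}\lVert A_k\rVert_{HS}^2\cdot\frac{\delta}{Mk},
\qquad\text{hence}\qquad
\operatorname{Chow}_k(\eta)\ \gtrsim\ \frac{\lVert\eta\rVert_2^2}{M\,k}.
\]
Rescaling to $A_k/k$ gives the same bound, because the slope is linear in $A$.

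\textbf{Consequence for Step~3.} Your Step~3 inequality is therefore off by a full factor of $k$. It is exactly what you would get if the convexity constant were $k^{-1}$ on a fixed operator-norm ball. Equivalently, it would need strong convexity of order $k^{-2}$ along $e^{tA_k}$ for all $t\in[0,1]$. That range lies far outside bounded geometry: for algebraic filtrations such as vanishing order at a point, the induced Bergman metrics degenerate there.

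As it stands, your argument only yields $\liminf_k\operatorname{Chow}_k(\eta)\ge0$, which \eqref{eq:chow-weight-bridge} already provides. A $k$-independent positive lower bound needs a global input that second-order information at the balanced point cannot supply. This is why the paper treats the statement as a black box from \cite{Sze15}.
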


A multiplicative linearly bounded filtration is brought to the
normalization of \cite{Sze15}, in which the filtration is increasing and
begins at level zero, by an entrywise shift linear in the degree; the
shift changes neither the centered $L^2$-norm nor any finite Chow weight
\eqref{eq:chow-weight-normalization}.

\begin{lem}[Vanishing on every rational ray]
\label{lem:affine-initials-ray-vanishing}
Notation and conditions are as in Set-up~\ref{setup:initial-filtrations}.
Let $x=P/Q\in(0,1)\cap\mathbb Q$, with $P,Q$ chosen as in
\eqref{eq:affine-initials-ray-divisibility}.  Write
$\operatorname{Chow}_\infty(\chi_{P,Q})$ for the intrinsic asymptotic Chow
invariant.  Then
\begin{equation}\label{eq:affine-initials-ray-Chow-zero}
 \operatorname{Chow}_\infty(\chi_{P,Q})=0.
\end{equation}
\end{lem}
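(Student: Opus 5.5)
The lower bound $\operatorname{Chow}_\infty(\chi_{P,Q})\geq0$ is already part of Lemma~\ref{lem:affine-initials-veronese}. So the task is the upper bound $\operatorname{Chow}_\infty(\chi_{P,Q})\leq0$, and I would obtain it from the row-budget estimate \eqref{eq:affine-initials-row-budget}. The plan is to connect the degree-$k$ Chow weight of the exact-ray filtration with the block gaps $c_{Qk,Pk}$. Then I would show, using the fixed-pair inequality \eqref{eq:affine-initials-fixed-pair}, that these gaps cannot stay bounded below by a positive constant along the ray without violating the summability of the $g_{d,j}$.

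\textbf{Step 1: identify the ray Chow weight with a block gap.} In ray degree $k$, the space is $V_{Qk,Pk}$, and the generated filtration is exactly the one appearing in \eqref{eq:affine-initials-chow-gap}. Hence
\[
\operatorname{Chow}_k(\chi_{P,Q})=c_{Qk,Pk}\leq g_{Qk,Pk}.
\]
It therefore suffices to prove $\liminf_k g_{Qk,Pk}\leq0$. Equivalently, since $a_{d,j}\geq0$ and $c_{d,j}\geq0$, it suffices to show that $g_{Qk,Pk}$ does not remain above some $\eta>0$ for all large $k$.

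\textbf{Step 2: propagate a positive gap to many rows.} Assume for contradiction that $g_{Qk,Pk}\geq\eta>0$ for all large $k$. The aim is to produce, in a single large degree $d$, a number of indices $j$ that grows unboundedly, each with $g_{d,j}\geq\eta'$. This would contradict $\sum_jg_{d,j}\leq C_{\mathcal T}$.

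The mechanism is the fixed-pair inequality, used in reverse. Choose rational $y,z$ with $y+z=2x$. By \eqref{eq:affine-initials-fixed-pair}, the mean on the midpoint block is at most the sum of the means on the two outer blocks, up to $O(ke^{-ck})$. Since $\phi$ has the one-crease form \eqref{eq:affine-initials-one-crease}, it is midpoint-convex. So whenever $y,z$ lie on the same side of $\lambda$, or the crease contribution is accounted for, one has $d\phi$-midpoint equality. Subtracting then gives
\[
g_{2d,2dx}\leq g_{d,dy}+g_{d,dz}+o(1).
\]
A positive gap on the ray $x$ therefore forces $g$ to be at least about $\eta/2$ on at least one of the two rays $y,z$. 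Iterating with many distinct pairs $(y,z)$ symmetric about $x$ yields many distinct rational slopes carrying gap at least $\eta/2$. All of these slopes are realized simultaneously in a common degree $d$ divisible by all the denominators, which is possible because only finitely many pairs are used at each stage.

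\textbf{Step 3: close the contradiction.} For $N$ pairs, choose $D$ divisible by every denominator. Then $\sum_jg_{Dk,j}\geq N\eta/2-o(1)$ for large $k$. Taking $N>2C_{\mathcal T}/\eta$ contradicts \eqref{eq:affine-initials-row-budget}.

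The main obstacle is that each pair only guarantees the gap on \emph{one} of $y$ or $z$, and the constants in \eqref{eq:affine-initials-fixed-pair} are not uniform in the pair. I would handle this with disjoint pairs $y_n=x-\delta_n$, $z_n=x+\delta_n$ for distinct $\delta_n$, each pair contributing at least one slope with a large gap. Each pair is used with its own $k$-threshold, and the threshold is taken to be the maximum over finitely many pairs. I would also keep all chosen slopes on one side of $\lambda$, or else directly at $\lambda$-avoiding positions, so that the affine part of $\phi$ makes the $d\phi$-terms cancel exactly. Finally, rounding of $dx$ is avoided because every degree is divisible by the denominators.
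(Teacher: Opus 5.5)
Your proposal is correct and is essentially the paper's argument in contrapositive form. The paper fixes $N$ symmetric rational pairs inside the affine interval of $\phi$ containing $x$, uses the row budget and pigeonhole to find a pair with $g_{d,dy_\ell}+g_{d,dz_\ell}\leq C_{\mathcal T}/N$, and pushes this through the fixed-pair inequality to get $\operatorname{Chow}_\infty\leq C_{\mathcal T}/N$; you instead sum over the disjoint pairs to contradict a uniform lower bound $\eta$. Note that the cancellation of the $d\phi$-terms comes from affinity of $\phi$ on an interval containing $[y,z]$, not from convexity: convexity gives the wrong sign, and a crease between $y$ and $z$ would cost order $d$. Your choice of pairs on one side of $\lambda$ secures this, and the common degree $D$ should also be divisible by $e$.
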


\begin{proof}
We prove the assertion in three steps.  First we identify the ray invariant
with a liminf of finite Chow defects.  We then choose finitely many fixed
symmetric pairs in one affine interval of $\phi$, and finally use the
summability bound to make the total defect of one pair arbitrarily small.

\medskip
\noindent\textbf{Step 1.} In this step, we identify the finite Chow
normalization.  Write
\[
 h_x(s)=\alpha_xs^4+O(s^3)
\]
for the Hilbert function in ray degree $s$.  Let $\chi_{P,Q}^{(k)}$ be the
finitely generated approximation generated by the weighted ray-degree-$k$
space $V_{Qk,Pk}$, let $w_x(k)$ be the total central algebraic weight of
$\chi_{P,Q}$ in ray degree $k$, and let $b_{0,x}^{(k)}$ be the leading central
weight coefficient of $\chi_{P,Q}^{(k)}$ in the original ray grading.  In the ray variables, the finite Chow normalization
\eqref{eq:chow-weight-normalization} reads
\begin{equation}\label{eq:affine-initials-finite-Chow}
 \operatorname{Chow}_k\bigl(\chi_{P,Q}^{(k)}\bigr)
 =\frac{k b_{0,x}^{(k)}}{\alpha_x}-\frac{w_x(k)}{h_x(k)}.
\end{equation}
The exact-ray ring is generated in ray degree one.  Dilation from ray degree
one to ray degree $k$, together with the sign change from increasing entries
to central algebraic weights, gives
\begin{equation}\label{eq:affine-initials-Chow-scaling}
 \alpha_{0,Qk,Pk}=k^4\alpha_x,
 \qquad
 b_{0,x}^{(k)}=-k^{-5}\beta^{\mathrm{gen}}_{0,Qk,Pk},
 \qquad
 \frac{w_x(k)}{h_x(k)}=-\overline i_{Qk,Pk}.
\end{equation}
It follows from \eqref{eq:affine-initials-finite-Chow} and
\eqref{eq:affine-initials-Chow-scaling} that
\[
 \operatorname{Chow}_k\bigl(\chi_{P,Q}^{(k)}\bigr)
 =\overline i_{Qk,Pk}
 -\frac{\beta^{\mathrm{gen}}_{0,Qk,Pk}}{\alpha_{0,Qk,Pk}}
 =c_{Qk,Pk}.
\]
By \eqref{eq:chow-weight-normalization}, the intrinsic asymptotic Chow
invariant is the full-sequence lower limit of these finite invariants.  Consequently,
\begin{equation}\label{eq:affine-initials-Chow-liminf}
 \operatorname{Chow}_\infty(\chi_{P,Q})
 =\liminf_{k\to\infty}c_{Qk,Pk}\geq0.
\end{equation}

\medskip
\noindent\textbf{Step 2.} In this step, we choose the rational symmetric
pairs.  Since $\lambda$ is irrational, the rational number $x$ lies in
one of the two open intervals on which $\phi$ is affine.  Denote that
interval by $I$.  Fix a positive integer $N$, choose pairwise distinct
positive rational numbers
\[
 0<t_1<\cdots<t_N<\operatorname{dist}(x,\partial I),
\]
and put
\begin{equation}\label{eq:rational-symmetric-pairs}
 y_\ell=x-t_\ell,\qquad z_\ell=x+t_\ell.
\end{equation}
For each $\ell$, choose a denominator $D_\ell$ divisible by $e,Q$ and
by the reduced denominators of $x,y_\ell,z_\ell$.
Lemma~\ref{lem:affine-initials-fixed-pair} applies to this fixed pair on the
progression $d=D_\ell k$.  Passing to the least common multiple of the
finitely many $D_\ell$ gives one progression on which all $N$
inequalities hold, each with an error tending to zero.

\medskip
\noindent\textbf{Step 3.} In this step, we use the summable defects.
The $2N$ input block indices are distinct.  By
\eqref{eq:affine-initials-row-budget}, for every sufficiently large $d$
on the common progression, one pair satisfies
\begin{equation}\label{eq:cheap-symmetric-pair}
 g_{d,dy_\ell}+g_{d,dz_\ell}\leq\frac{C_{\mathcal T}}N.
\end{equation}
Pass to a subsequence on which the chosen index $\ell$ is fixed.  Since
$\phi$ is affine on $I$,
\[
 2\phi(x)=\phi(y_\ell)+\phi(z_\ell).
\]
Subtracting $2d\phi(x)$ from
\eqref{eq:affine-initials-fixed-pair} gives
\begin{equation}\label{eq:midpoint-defect-bound}
 0\leq c_{2d,2dx}
 \leq g_{2d,2dx}
 \leq g_{d,dy_\ell}+g_{d,dz_\ell}+o(1)
 \leq\frac{C_{\mathcal T}}N+o(1).
\end{equation}
Since $Q\mid d$, the target block lies on the chosen exact ray.
Together, \eqref{eq:affine-initials-Chow-liminf} and
\eqref{eq:midpoint-defect-bound} give
\[
 0\leq\operatorname{Chow}_\infty(\chi_{P,Q})
 \leq\frac{C_{\mathcal T}}N.
\]
Letting $N\to\infty$ proves
\eqref{eq:affine-initials-ray-Chow-zero}.
\end{proof}

\begin{lem}[Constancy on rational slices]
\label{lem:affine-initials-ray-detection}
Notation and conditions are as in Set-up~\ref{setup:initial-filtrations}.
For every $x\in(0,1)\cap\mathbb Q$, the restriction of $G$ to
$\Omega_x$ is constant almost everywhere.
\end{lem}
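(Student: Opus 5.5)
The plan is to combine Lemma~\ref{lem:affine-initials-ray-vanishing} with the strictness theorem of Sz\'ekelyhidi (Theorem~\ref{thm:szekelyhidi-filtration-strictness}), applied to the exact-ray ring. Then we read off the $L^2$-norm through the formula \eqref{eq:affine-initials-ray-norm} of Lemma~\ref{lem:affine-initials-veronese}.

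First, fix $x\in(0,1)\cap\mathbb Q$ and choose $P,Q$ as in \eqref{eq:affine-initials-ray-divisibility}. Put $N=M^Q\otimes L^P$, a box product of line bundles of degree at least $2g_i+1$ on the four curves. We check the hypotheses of Theorem~\ref{thm:szekelyhidi-filtration-strictness} for $(B,N)$:
\begin{itemize}
\item[(a)] The product of constant-curvature metrics on the curves is a cscK metric in $c_1(N)$.
\item[(b)] The polarized automorphism group is finite. This uses $g_i\ge2$, so $H^0(B,T_B)=0$ as in Step~2 of Proposition~\ref{prop:basic-geometry}, and $\Aut(B)$ preserving $N$ is finite.
\item[(c)] The section ring is generated in degree one, by Theorem~\ref{thm:curve-normal-generation} and the tensor-product decomposition, as already noted in Lemma~\ref{lem:affine-initials-veronese}.
\end{itemize}
The restricted filtration $\chi_{P,Q}$ is multiplicative and linearly bounded by Lemma~\ref{lem:affine-initials-veronese}. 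After the entrywise shift described after Theorem~\ref{thm:szekelyhidi-filtration-strictness}, it is in Sz\'ekelyhidi's normalization, and neither its Chow weights nor its centered norm change.

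Second, we argue by contradiction. Suppose $G$ is not a.e.\ constant on $\Omega_x$. Then the variance of $G(x,\cdot)$ with respect to $\nu_x$ is positive. By \eqref{eq:affine-initials-ray-norm}, this gives $\lVert\chi_{P,Q}\rVert_2>0$. Theorem~\ref{thm:szekelyhidi-filtration-strictness} then yields $\operatorname{Chow}_\infty(\chi_{P,Q})>0$, which contradicts Lemma~\ref{lem:affine-initials-ray-vanishing}. Hence $G(x,\cdot)$ is a.e.\ equal to its slice mean $\phi(x)$.

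The main obstacle is making sure the invariant called ``asymptotic Chow invariant'' in Theorem~\ref{thm:szekelyhidi-filtration-strictness} is the one computed in \eqref{eq:chow-weight-normalization}, namely the liminf of the finite Chow weights of the degree-$k$ generated approximations. It must also have the same sign convention, since Lemma~\ref{lem:affine-initials-ray-vanishing} identifies exactly this liminf with $\liminf c_{Qk,Pk}$. I would verify this by matching the generated filtrations $\chi_{P,Q}^{(k)}$ with the finitely generated approximations of \cite{Sze15}, and by checking that the increasing/central-weight sign flip in \eqref{eq:affine-initials-Chow-scaling} agrees with the positivity direction there. A second point to check is that Sz\'ekelyhidi's $L^2$-norm agrees, up to a positive constant, with the norm \eqref{eq:affine-initials-ray-norm} coming from Theorem~\ref{thm:filtered-okounkov-equidistribution}. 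Only positivity versus vanishing matters, so a positive multiplicative constant is harmless.

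Finally, note that ``constant a.e.'' is the strongest conclusion available, since $G$ is only convex and finite in the interior. Convexity then upgrades a.e.\ constancy on the interior of the slice to genuine constancy there, if needed later.
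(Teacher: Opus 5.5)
Your proposal is correct and is essentially the paper's proof. The paper also checks that $(B,M^Q\otimes L^P)$ has a product cscK metric, a finite polarized automorphism group and a section ring generated in degree one; it then uses Theorem~\ref{thm:szekelyhidi-filtration-strictness} together with Lemma~\ref{lem:affine-initials-ray-vanishing} to force $\lVert\chi_{P,Q}\rVert_2=0$, reads off vanishing slice variance from \eqref{eq:affine-initials-ray-norm}, and handles the convention-matching you flag by fixing that normalization in the Chow-weight bridge \eqref{eq:chow-weight-bridge} and in the remark on the entrywise shift after Theorem~\ref{thm:szekelyhidi-filtration-strictness}.
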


\begin{proof}
Choose $P,Q$ as in
\eqref{eq:affine-initials-ray-divisibility}, and put
$N=M^Q\otimes L^P$.  The product of constant-curvature metrics on the
curve factors is cscK in $c_1(N)$.  Every curve has genus greater than
one, so the polarized automorphism group of $(B,N)$ is finite, and the
section ring of $N$ is generated in degree one by
Lemma~\ref{lem:affine-initials-veronese}.  By
Theorem~\ref{thm:szekelyhidi-filtration-strictness}, positive ray norm would
imply strictly positive asymptotic Chow invariant.
Lemma~\ref{lem:affine-initials-ray-vanishing} therefore gives
$\lVert\chi_{P,Q}\rVert_2=0$.  Formula \eqref{eq:affine-initials-ray-norm} shows
that the slice variance of $G$ vanishes.  Thus $G\vert_{\Omega_x}$ is
constant almost everywhere.
\end{proof}

\subsection{Propagation from rational slices}
\label{subsec:slice-propagation}

Lemma~\ref{lem:affine-initials-slice-propagation} extends constancy on
rational slices to every interior slice of $\Omega$.

\begin{lem}[Propagation from rational slices]
\label{lem:affine-initials-slice-propagation}
Let $H$ be a finite convex function on $\Omega$.  If
$H\vert_{\Omega_x}$ is constant almost everywhere for every
$x\in(0,1)\cap\mathbb Q$, then there exists a finite convex function
$\psi\colon(0,1)\to\mathbb R$ such that
\begin{equation}\label{eq:affine-initials-H-scalar}
 H(x,y)=\psi(x)
\end{equation}
at every interior point of $\Omega$.  The value $\psi(x)$ is the
normalized slice average of $H$.
\end{lem}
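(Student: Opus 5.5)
The plan is to use the fact that a finite convex function on an open convex set is continuous (indeed locally Lipschitz) at every interior point of $\Omega$, so that constancy almost everywhere on each rational slice can be upgraded to genuine constancy, and then to use density of rational slices together with continuity to pass to irrational slices.

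\textbf{Step 1.} Fix a rational $x\in(0,1)$. The interior of $\Omega$ meets the slice $\{x\}\times\Omega_x$ in $\{x\}\times\operatorname{int}\Omega_x$, because every $h_i$ is positive and affine, so $\Omega$ is a product-like convex body whose interior slices are the open boxes. The restriction of $H$ to $\{x\}\times\operatorname{int}\Omega_x$ is convex and finite, hence continuous on this open box. A continuous function that equals a constant $c_x$ almost everywhere on an open set equals $c_x$ everywhere there, since the set where it differs from $c_x$ is open and null, hence empty. Put $\psi(x)=c_x$. This value is the normalized slice average of $H$ over $\Omega_x$, because the boundary of the box has measure zero.

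\textbf{Step 2.} Now let $x\in(0,1)$ be arbitrary and $y\in\operatorname{int}\Omega_x$. Then $(x,y)$ is an interior point of $\Omega$. I will choose rationals $x_n\to x$. For large $n$ we have $y\in\operatorname{int}\Omega_{x_n}$, since the side lengths $h_i(x_n)\to h_i(x)$ and $y$ has positive distance to the boundary of the box $\Omega_x$. Continuity of $H$ at the interior point $(x,y)$ gives $H(x,y)=\lim_n H(x_n,y)=\lim_n\psi(x_n)$. So the limit exists and is independent of $y$. Comparing two points $y,y'$ in the same open slice shows that $H$ is constant on $\{x\}\times\operatorname{int}\Omega_x$. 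Define $\psi(x)$ to be this common value. It agrees with the rational definition and is again the slice average.

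\textbf{Step 3.} Convexity of $\psi$ follows by restricting $H$ to a horizontal segment. Given $x_1,x_2\in(0,1)$ and $s\in[0,1]$, I will pick a fixed $y$ lying in $\operatorname{int}\Omega_{x}$ for all $x$ in $[x_1,x_2]$. This is possible: take each coordinate $y_i$ in $(0,\min_{[0,1]}h_i)$. Then $\psi(x)=H(x,y)$ along the segment, and convexity of $H$ on this segment gives convexity of $\psi$. Finiteness is inherited from $H$.

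The only delicate point is Step 2, namely ensuring that the limiting slices remain interior and that continuity of convex functions applies at points of $\operatorname{int}\Omega$. This reduces to the elementary description of $\operatorname{int}\Omega$ as $\{(x,y):0<x<1,\ 0<y_i<h_i(x)\}$, which I will verify directly from the positivity and continuity of the $h_i$.
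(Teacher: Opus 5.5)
Your proposal is correct and follows essentially the same argument as the paper: continuity of the finite convex $H$ on the interior upgrades almost-everywhere constancy on rational slices to genuine constancy, approximation by rational slices plus interior continuity extends this to every slice, and convexity of $\psi$ comes from restricting $H$ to an affine path. The only difference is inessential: for convexity you use a horizontal segment with fixed $y\in\prod_i(0,\min_{[0,1]}h_i)$, whereas the paper uses the radial paths $x\mapsto(x,h_0(x)t_0,\ldots,h_3(x)t_3)$.
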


\begin{proof}
A finite convex function is continuous on the interior of its domain.  On a
rational interior slice, almost-everywhere constancy and continuity imply
constancy on the relative interior of that slice.  Fix $x\in(0,1)$ and
two points $y,y'$ in the relative interior of $\Omega_x$.  For a
rational sequence $x_k\to x$, the affine side lengths ensure that
\[
 y,y'\in\operatorname{relint}(\Omega_{x_k})
\]
for all sufficiently large $k$.  Hence
$H(x_k,y)=H(x_k,y')$, and interior continuity gives
$H(x,y)=H(x,y')$.

Let $\psi(x)$ be this common value.  For fixed $t\in(0,1)^4$, the map
\[
 x\mapsto
 \bigl(x,h_0(x)t_0,\ldots,h_3(x)t_3\bigr)
\]
is affine.  Restricting $H$ to this path shows that $\psi$ is convex.
The boundary of each rectangular slice has four-dimensional measure zero,
so $\psi(x)$ is also the normalized slice average.
\end{proof}

\subsection{Exclusion of the irrational crease}
\label{subsec:irrational-crease}

Algebraicity now excludes the remaining possible crease at
$\lambda=(3-\sqrt5)/2$ through the rational spectral structure of an
algebraic Duistermaat--Heckman measure.

\begin{thm}[Volume from normalized blow-ups,
{\cite[Lemma~5.1]{BHJ17}}]
\label{thm:normalized-blow-up-volume}
Let $Y$ be a normal projective variety of dimension $n$, let $H$ be ample,
and let $S\subseteq\bigoplus_{m\geq0}H^0(Y,mH)$ be a graded subalgebra
containing an ample series.  For every sufficiently large positive integer $m$, let
$\mathfrak a_m$ be the base ideal of $S_m$, let
$\mu_m\colon Y_m\to Y$ be its normalized blow-up, and write
$\mathfrak a_m\mathcal O_{Y_m}=\mathcal O_{Y_m}(-F_m)$.  Then
\begin{equation}\label{eq:normalized-blow-up-volume}
 \vol(S)
 =\lim_{m\to\infty}
 \left(\mu_m^*H-\frac1mF_m\right)^n.
\end{equation}
\end{thm}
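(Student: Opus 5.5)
The plan is to reduce the statement to Fujita approximation for graded linear series, in the form of Lazarsfeld--Musta\c{t}\u{a} and Kaveh--Khovanskii, together with the standard identification of the volume of a base-point-free linear system with the self-intersection of its moving part on a resolution.

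\textbf{Step 1: reduction to finitely generated subalgebras.} Fix $m$ large. Let $S^{(m)}\subseteq S$ be the graded subalgebra generated by $S_m$, regraded in degree $m$ multiples. Since $S$ contains an ample series, the Newton--Okounkov approximation theorem gives
\[
\lim_{m\to\infty}\frac{\vol(S^{(m)})}{m^n}=\vol(S).
\]
In the usual formulation, the Okounkov body of $S$ is the closure of the union of the rescaled bodies of the subsemigroups generated in degree $m$.

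\textbf{Step 2: compute $\vol(S^{(m)})$.} On the normalized blow-up $\mu_m\colon Y_m\to Y$, the pulled-back linear system $\mu_m^*S_m$ has fixed part $F_m$. Its moving part lies in the line bundle $\mu_m^*(mH)-F_m$ and is base point free, because $\mathfrak a_m\mathcal O_{Y_m}$ is invertible. The algebra generated by a base-point-free linear system $W\subseteq H^0(Y_m,N)$ has volume $N^n$. Indeed, $W$ defines a morphism $\phi\colon Y_m\to\mathbb P(W^\vee)$ with $N=\phi^*\mathcal O(1)$. The generated algebra is the homogeneous coordinate ring of the image, which is birational up to the degree of $\phi$. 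Its Hilbert polynomial has leading term $\deg(\phi)\deg(\phi(Y_m))\,k^n/n!$, and this equals $N^n k^n/n!$ by the projection formula. If $\phi$ is not generically finite, both sides vanish.

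Normality of $Y_m$ and birationality of $\mu_m$ do not change the dimensions of the relevant section spaces. The sections of $S^{(m)}$ in degree $km$ are exactly the products of the $k$-fold pulled-back linear system, divided by $kF_m$. It follows that
\[
\vol(S^{(m)})=\bigl(\mu_m^*(mH)-F_m\bigr)^n=m^n\Bigl(\mu_m^*H-\frac1mF_m\Bigr)^n.
\]

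\textbf{Step 3: combine.} Dividing by $m^n$ and applying Step 1 yields \eqref{eq:normalized-blow-up-volume}.

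The main obstacle is the normalization convention in Step 1, where the volume of $S$ is measured in degree $m$ multiples and not in all degrees. I would first check the convention $\vol(S)=\limsup_k \dim S_k/(k^n/n!)$ and the rescaling by $m^n$. I would then verify that the ample-series hypothesis ensures the Okounkov bodies of $S^{(m)}/m$ exhaust the body of $S$. This follows because the hypothesis gives the semigroup full rank, so Khovanskii's theorem on the asymptotic saturation of semigroups applies. Since this lemma is quoted from \cite[Lemma~5.1]{BHJ17}, the cleanest route is to cite it directly and present the argument above as explanation.
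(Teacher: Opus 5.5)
Your outline, Fujita approximation $\vol(S)=\lim_m m^{-n}\vol(S^{(m)})$ followed by identifying $\vol(S^{(m)})$ with the self-intersection of the moving part on the normalized blow-up, is the standard argument behind \cite[Lemma~5.1]{BHJ17}. The paper gives no argument of its own beyond that citation. However, the lemma you use in Step~2 is false as stated. Let $W\subseteq H^0(Y_m,N)$ be base-point free with associated morphism $\phi$. The algebra generated by $W$ is the homogeneous coordinate ring of the image $\phi(Y_m)$, so its Hilbert polynomial has leading term $\deg(\phi(Y_m))\,k^n/n!$; there is no factor $\deg(\phi)$. Hence, when $\phi$ is generically finite, its volume is $N^n/\deg(\phi)$, not $N^n$. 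For instance, $W=\langle x^2,y^2\rangle\subseteq H^0(\mathbb P^1,\mathcal O(2))$ generates $\mathbb C[x^2,y^2]$, whose volume is $1$, while $\deg\mathcal O(2)=2$. As written, Step~2 only yields $\vol(S^{(m)})\leq m^n(\mu_m^*H-\frac1mF_m)^n$. Combined with Step~1 this gives only $\liminf_m(\mu_m^*H-\frac1mF_m)^n\geq\vol(S)$, and the reverse inequality is not proved.

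The missing idea is a second use of the ample-series hypothesis: it forces $\phi_m\colon Y_m\to\mathbb P(S_m^\vee)$ to be birational onto its image for all $m\gg0$. By that hypothesis, $S_q\neq0$ for $q\gg0$. There is also a decomposition $H=A+E$, with $A$ an ample $\mathbb Q$-line bundle and $E$ an effective $\mathbb Q$-divisor, such that $s_E^{p}H^0(Y,pA)\subseteq S_p$ for all sufficiently divisible $p$. For large $m$, write $m=p+q$ with $p$ sufficiently divisible, $pA$ very ample, and $q$ in a fixed bounded range, and pick $0\neq t\in S_q$. Then $S_m\supseteq t\,s_E^{p}H^0(Y,pA)$. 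On the dense open set where $t\,s_E^{p}\neq0$, a linear projection of $\phi_m$ is therefore the embedding defined by $|pA|$. Thus $\phi_m$ is generically injective, hence of degree one in characteristic zero. Only then does $\vol(S^{(m)})=\deg\phi_m(Y_m)=(\mu_m^*(mH)-F_m)^n$ hold, and with this inserted Steps~1 and~3 go through unchanged.
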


\begin{proof}
This is \cite[Lemma~5.1]{BHJ17}.
\end{proof}

\begin{thm}[Piecewise polynomial density of Duistermaat--Heckman measures,
{\cite[Theorem~5.10]{BHJ17}}]
\label{thm:bhj-piecewise-polynomial-density}
Let $Y$ be a normal projective variety of dimension $n$, let $H$ be ample,
and let $F^\bullet R(Y,H)$ be a finitely generated integer filtration.  Then
$F^\bullet R(Y,H)$ has linear growth, and the density of the absolutely
continuous part of its limit measure is piecewise polynomial of degree at
most $n-1$.
\end{thm}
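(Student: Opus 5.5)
The plan is to read the statement through the Rees algebra and use the multigraded Hilbert function theory of finitely generated bigraded algebras. This avoids working geometrically on a test configuration. Write the filtration as decreasing jumps $F^\lambda R_m$, $\lambda\in\mathbb Z$, with $R=R(Y,H)$. Finite generation means that the bigraded Rees algebra
\[
 \mathcal R=\bigoplus_{m\geq0}\bigoplus_{\lambda\in\mathbb Z}F^\lambda R_m\,t^{-\lambda}
\]
is a finitely generated $\mathbb C[t]$-algebra. Choose finitely many homogeneous generators $s_1,\dots,s_N$ of bidegrees $(m_a,\lambda_a)$.

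\textbf{Step 1 (linear growth).} Every nonzero element of $F^\lambda R_m$ is a sum of monomials in the $s_a$ times powers of $t$, and each $t$ only lowers $\lambda$. Hence $F^\lambda R_m\neq0$ forces $\lambda\leq m\max_a(\lambda_a/m_a)$. For a lower bound, the generators in each degree can be taken to be $\lambda$-homogeneous, so $F^\lambda R_m=R_m$ once $\lambda\leq m\min_a(\lambda_a/m_a)-C$. Here the constant $C$ accounts for the generators of degree $m_a=0$, which are powers of $t$ only, because $R_0=\mathbb C$. This gives the two-sided linear bound.

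\textbf{Step 2 (piecewise polynomial volume).} Consider the function $(m,\lambda)\mapsto\dim F^\lambda R_m/F^{\lambda+1}R_m$. This is the Hilbert function of the associated bigraded algebra $\operatorname{gr}\mathcal R=\mathcal R/t\mathcal R$, which is finitely generated by the images of the $s_a$. By the theory of multigraded Hilbert functions (Brion--Vergne, or a Hilbert--Serre/Ehrhart argument applied after a Noether normalization by a bigraded polynomial subring), this function agrees with a quasi-polynomial on each of finitely many rational polyhedral chambers. The chambers are cut out by the rays $(m_a,\lambda_a)$. Summing over $\lambda'\geq\lambda$ shows that $\dim F^{\lambda}R_m$ is likewise piecewise quasi-polynomial. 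Setting $\lambda=\lceil tm\rceil$ and letting $m\to\infty$ yields
\[
 \vol\bigl(F^{(t)}\bigr)=\lim_m\frac{n!}{m^n}\dim F^{\lceil tm\rceil}R_m .
\]
The leading homogeneous term of degree $n$ is a genuine polynomial on each chamber, because periodic parts only affect lower order terms. So $t\mapsto\vol(F^{(t)})$ is continuous and piecewise polynomial of degree at most $n$, with breakpoints at the finitely many rational slopes $\lambda_a/m_a$.

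\textbf{Step 3 (density).} The limit measure $\mu$ of the normalized jumps satisfies $\mu([t,\infty))=\vol(F^{(t)})/\vol(H)$ at every continuity point, by the standard equidistribution of \cite{BC11} (Theorem~\ref{thm:filtered-okounkov-equidistribution}). Hence $\mu$ is $-\vol(H)^{-1}\,\tfrac{d}{dt}\vol(F^{(t)})$ in the sense of distributions. The distributional derivative of a continuous piecewise polynomial function of degree at most $n$ has no Dirac part coming from interior breakpoints. A point mass can only sit at the maximal slope $t_{\max}$, where the volume drops to zero and possibly jumps; this is the singular part. The absolutely continuous density is therefore piecewise polynomial of degree at most $n-1$.

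The main obstacle is Step 2. Specifically, I need to check that the leading coefficient of the chamberwise quasi-polynomial is honestly polynomial, i.e.\ has no periodic dependence on $\lambda\bmod$ a period, and that the volume is continuous across chamber walls. I would handle the first point by passing to a Veronese of $\mathcal R$ in both gradings, after which all generators have a common degree and the Hilbert function is polynomial on each chamber. I would handle the second by using the concavity of $t\mapsto\vol(F^{(t)})^{1/n}$ (Brunn--Minkowski on the Okounkov bodies), which forces continuity on the interior of the support.
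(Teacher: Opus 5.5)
Your argument is correct, and it takes a different route from the one the paper relies on.

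The paper quotes this statement from \cite{BHJ17}. Where it needs the chamber structure explicitly, in the proof of Theorem~\ref{thm:rational-dh-spectrum}, it works geometrically:
\begin{itemize}
\item the evaluation ideals of the filtration form a finitely generated bigraded system;
\item Theorem~\ref{thm:elmnp-base-ideal-chambers} gives a fan on whose cones the integral closures are products of powers of the ray ideals;
\item on a normal model dominating the normalized blow-ups of those ray ideals, Theorem~\ref{thm:normalized-blow-up-volume} expresses the filtered volume on a chamber as the self-intersection $(\mu^*H-c_i(x)E_i-c_{i+1}(x)E_{i+1})^n$;
\item a squeeze and the tail formula of \cite{BHJ17} turn this into the density $-V'(x)/H^n$.
\end{itemize}

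You replace base ideals, birational models and intersection numbers by the bigraded Hilbert function of the Rees algebra, via vector partition functions. This buys several things:
\begin{itemize}
\item It is more elementary, and needs neither normality of $Y$ nor the base-ideal chamber theorem.
\item The Hilbert series of $\mathcal R$ is an integral Laurent polynomial divided by $(1-w^{-1})\prod_a(1-z^{m_a}w^{\lambda_a})$. So your chamber polynomials automatically have rational coefficients and rational walls $\lambda_a/m_a$.
\item Consequently the same argument proves Theorem~\ref{thm:rational-dh-spectrum} at no extra cost, in the spirit of Lemma~\ref{lem:dh-rational-moments}.
\end{itemize}
The geometric route buys instead an interpretation of each chamber polynomial as an intersection number on an explicit model. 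Note also that $\dim F^\lambda R_m$ is itself the Hilbert function of $\mathcal R$ in bidegree $(m,\lambda)$. You can therefore skip the passage through $\mathcal R/t\mathcal R$ and the summation over $\lambda'$.

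Two details need tightening.

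\textbf{The leading-coefficient fix.} As stated, it does not work: with a rank-two grading no Veronese makes all generators share a bidegree, since generators of different slopes stay on different rays. What you need instead is the following.
\begin{itemize}
\item Restrict to $d\mathbb Z^2$, with $d$ a common period of the chamber quasi-polynomials. There $(k,l)\mapsto\dim F^{dl}R_{dk}$ is an honest polynomial on each chamber.
\item The inclusions $F^{d\lceil xk\rceil}R_{dk}\subseteq F^{\lceil xdk\rceil}R_{dk}\subseteq F^{d\lfloor xk\rfloor}R_{dk}$ squeeze the middle term between two quantities with the same top-degree asymptotics, because $\lceil xk\rceil$ and $\lfloor xk\rfloor$ differ from $xk$ by $O(1)$.
\item The filtered volume exists as a limit along all $m$ for $x<\lambda_{\max}$ \cite{BC11,BHJ17}. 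This identifies the squeezed limit with $\vol(F^{(x)})$.
\end{itemize}
Equivalently, existence of that limit forces the top-degree parts of the residue-class components to agree on a dense set of slopes.

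\textbf{The lower bound in Step~1.} Derive it from $\mathcal R/(t-1)\mathcal R\simeq R$. The specializations of the $s_a$ then generate $R$. Hence every degree-$m$ monomial in them lies in $F^\lambda R_m$ for $\lambda\leq m\min_a(\lambda_a/m_a)$, with the minimum taken over generators with $m_a>0$, and no constant $C$ is needed. The phrase ``the generators in each degree can be taken to be $\lambda$-homogeneous'' does not by itself give $F^\lambda R_m=R_m$.
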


\begin{lem}[The evaluation-ideal system]
\label{lem:filtered-evaluation-ideal-system}
Let $Y$ be an integral projective variety, let $H$ be ample, and let
$F^\bullet R(Y,H)$ be a finitely generated integer filtration.  After an
integral character shift and passage to one fixed Veronese, put
$N=\mathbb Z^2$.  There exist a pointed rational polyhedral cone
$C\subset N_{\mathbb R}$ and a finitely generated saturated semigroup
\begin{equation}\label{eq:evaluation-ideal-saturated-semigroup}
 S=C\cap N
\end{equation}
with the following properties.  For $\tau=(m,\ell)\in S$, let
\begin{equation}\label{eq:evaluation-ideal-definition}
 \mathfrak a_\tau=\operatorname{im}\left(
 F^\ell H^0(Y,mH)\otimes\mathcal O_Y(-mH)\to\mathcal O_Y
 \right),
\end{equation}
where $\mathfrak a_\tau=0$ when the filtered piece is zero and
$\mathfrak a_0=\mathcal O_Y$.  Then
$\mathfrak a_\bullet$ is a finitely generated $S$-graded system of coherent
ideals:
\begin{equation}\label{eq:evaluation-ideal-multiplicativity}
 \mathfrak a_\tau\mathfrak a_{\tau'}
 \subseteq\mathfrak a_{\tau+\tau'}.
\end{equation}
\end{lem}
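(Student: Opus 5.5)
The plan is to deduce everything from the finite generation of the Rees algebra of the filtration, $\mathcal R=\bigoplus_{m,\ell}F^\ell H^0(Y,mH)\,t^{-\ell}$. This is a finitely generated $\mathbb Z^2$-graded $\mathbb C$-algebra, where the bigrading is section degree $m$ and filtration level $\ell$.

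\textbf{Step 1: choosing the cone and semigroup.} Fix finitely many bihomogeneous generators of $\mathcal R$ with bidegrees $(m_k,\ell_k)$. Linear boundedness of a finitely generated filtration (Theorem~\ref{thm:bhj-piecewise-polynomial-density}) gives $\ell\le Cm$ on the support. After an integral character shift $\ell\mapsto\ell+cm$ we may assume every level occurring lies in $0\le\ell\le C'm$. Passing to one fixed Veronese lets us assume the generators sit in degrees where $H$ is very ample, which makes the evaluation maps behave well.

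Let $C$ be the cone spanned by the generator bidegrees together with the ray $(0,-1)$. This ray encodes the fact that an increasing filtration makes $F^\ell$ monotone in $\ell$, so lowering $\ell$ keeps us inside the system. We must check that $C$ is pointed. This holds because all generators lie in $\{m>0,\ 0\le\ell\le C'm\}$ (plus possibly degree-zero constants, which we discard) and $(0,-1)$ lies in the half-plane $m\ge0$ without being opposite to any of them. Hence $C$ is a rational polyhedral cone contained in $\{m\ge0\}$ and containing no line. Gordan's lemma then says $S=C\cap N$ is a finitely generated saturated semigroup.

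We still have to decide what happens for $\tau\in S$ whose filtered piece is zero, for instance levels below the minimal jump. The statement already handles this by setting $\mathfrak a_\tau=0$. The convention $\mathfrak a_0=\mathcal O_Y$ is consistent with $F^0H^0(Y,\mathcal O)=\mathbb C$ after the shift.

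\textbf{Step 2: multiplicativity.} Coherence is clear, since each $\mathfrak a_\tau$ is the image of a map from a coherent sheaf. The inclusion \eqref{eq:evaluation-ideal-multiplicativity} follows because the multiplication $F^\ell H^0(mH)\otimes F^{\ell'}H^0(m'H)\to F^{\ell+\ell'}H^0((m+m')H)$ is compatible with evaluation. Locally, write $s=f e^m$ and $s'=f'e^{m'}$ in a trivialization; then $ss'=ff'e^{m+m'}$, so $\mathfrak a_\tau\mathfrak a_{\tau'}\subseteq\mathfrak a_{\tau+\tau'}$.

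\textbf{Step 3: finite generation of the ideal system.} We must show that $\bigoplus_{\tau\in S}\mathfrak a_\tau$ is a finitely generated $\mathcal O_Y$-algebra, i.e.\ generated by $\mathfrak a_\tau$ for $\tau$ in a finite set. We argue on an affine open $U$ where $H$ is trivialized by a section $e$. There, $\mathfrak a_\tau(U)$ is generated by the functions $s/e^m$ for $s\in F^\ell H^0(mH)$. This holds after a Veronese in which all degrees are globally generated, using Serre vanishing to identify the image sheaf with the ideal generated by global sections.

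The map $\mathcal R\to\bigoplus_\tau\mathfrak a_\tau(U)$, $s\,t^{-\ell}\mapsto s/e^m$, is then surjective onto the part indexed by points of $S$ in the support. The remaining points of $S$ are reached from the support by moving along the direction $(0,-1)$. Monotonicity gives $\mathfrak a_{(m,\ell-1)}\supseteq\mathfrak a_{(m,\ell)}$, and the degree-$(0,-1)$ element $1\in F^{-1}H^0(\mathcal O)$ generates those shifts. Thus the images of the generators of $\mathcal R$, together with this unit element of degree $(0,-1)$, generate the ideal system over $\mathcal O_U$. Covering $Y$ by finitely many such opens finishes the argument.

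\textbf{Expected obstacle.} The main difficulty is Step 3, specifically the identification of the sheafified image with what global generators produce, and the bookkeeping of the $(0,-1)$ direction. Because of saturation, lattice points of $C$ that lie inside $C$ but are not of the form (generator bidegree) $+\,k(0,-1)$ must still be shown to be generated. My first attempt will be to take $S$ to be the saturation and prove that $\bigoplus_{\tau\in S}\mathfrak a_\tau$ is integral over the subalgebra generated by the finite part. Finite generation then follows from finiteness of integral closure in a finitely generated algebra over the excellent ring $\mathcal O_Y$.
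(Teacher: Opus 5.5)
Your Steps~1 and~2 follow the paper's proof: the cone spanned by the generator bidegrees together with $(0,-1)$, pointedness because everything lies in $\{m\geq0\}$ and meets $\{m=0\}$ only along the ray of $(0,-1)$, Gordan's lemma for $S=C\cap N$, and multiplicativity by local evaluation. The gap is in Step~3 and in your ``expected obstacle'', which misidentify where the saturated points lie.

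\begin{itemize}
\item Your sentence ``the remaining points of $S$ are reached from the support by moving along the direction $(0,-1)$'' is false. Let $T$ be the semigroup generated by the generator bidegrees. The support $\{(m,\ell)\mid F^\ell H^0(Y,mH)\neq0\}$ is already stable under adding $(0,-1)$. A point of $S\setminus T$ therefore lies strictly \emph{above} the support in its column and is never reached that way. For example, $(1,1)$ lies in the cone spanned by $(0,-1),(1,0),(2,3)$ but not in the semigroup they generate.
\item The observation you are missing is that the support is contained in $T$. Every nonzero bihomogeneous element of the Rees algebra is a linear combination of monomials in the chosen generators, so its bidegree is a sum of generator bidegrees. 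Integrality of $Y$ gives the reverse inclusion, since monomials are nonzero.
\item Consequently, for $\tau\in S\setminus T$ the filtered piece is zero and $\mathfrak a_\tau=0$ by definition. Saturation only adds zero slots, which need no generators. Your proposed detour through integrality and finiteness of integral closure over $\mathcal O_Y$ is aimed at a non-problem.
\end{itemize}

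With that observation the argument closes exactly as in the paper. For $\tau\in T$, $\mathfrak a_\tau$ is the sum of the ideals generated by the monomials $g_1^{a_1}\cdots g_k^{a_k}$ of bidegree $\tau$. Each such ideal lies in $\prod_i\mathfrak a_{\tau_i}^{a_i}$, and the reverse inclusion is multiplicativity. Here the Rees parameter evaluates to $1$, with $\mathfrak a_{(0,-1)}=\mathcal O_Y$. Hence $\bigoplus_{\tau\in S}\mathfrak a_\tau$ is generated over $\mathcal O_Y$ by the finitely many $\mathfrak a_{\tau_i}$ at the generator bidegrees.

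You can also drop the appeal to very ampleness, global generation, and Serre vanishing. On an affine open where $e$ trivializes $H$, the image sheaf in \eqref{eq:evaluation-ideal-definition} is by definition the ideal generated by the functions $s/e^m$, and no cohomological input is needed.
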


\begin{proof}
Choose finitely many bihomogeneous generators of the filtered Rees algebra,
including its parameter of section degree zero, and let
$T\subset\mathbb Z^2$ be the semigroup generated by their bidegrees.  We use
the convention in which a section in $F^\ell H^0(Y,mH)$ has bidegree
$(m,\ell)$ and the Rees parameter has bidegree $(0,-1)$.  An integral
character shift makes the second coordinate of every remaining generator
nonnegative.  Those generators have positive first coordinate, so their
degrees together with $(0,-1)$ generate a pointed cone.  A vector in
the intersection of this cone with its negative has first coordinate zero,
so it lies on the nonnegative parameter ray; its negative lies on that ray
only when the vector is zero.  Since $Y$ is
integral, a monomial in nonzero Rees generators is nonzero.  Thus $T$ is
exactly the support of the filtered Rees algebra.  We may choose the
Veronese so that a nonzero degree-one piece exists.  Its bidegree
$(1,\ell_0)$ together with $(0,-1)$ generates $N=\mathbb Z^2$.  Let $C$ be
the rational polyhedral cone generated by $T$.  The saturation
of $T$ in $N$ is $S=C\cap N$.  To see that it is finitely generated, choose
integral generators $v_1,\ldots,v_q$ of $C$.  If
$u=\sum_i a_iv_i$ belongs to $C\cap N$, write
\[
 u=\sum_i\lfloor a_i\rfloor v_i+h,
 \qquad
 h\in N\cap
 \left\{\sum_i b_iv_i\mathrel{\big|}0\leq b_i<1\right\}.
\]
The set on the right is bounded and contains only finitely many lattice
points.  Those points together with the $v_i$ generate $S$.

The convention in \eqref{eq:evaluation-ideal-definition} extends the system
from its supported degrees $T$ to $S$ by zero ideals.  Multiplicativity of
the filtration and of the evaluation maps gives
\eqref{eq:evaluation-ideal-multiplicativity}.  Every filtered section is a
sum of monomials in the chosen Rees generators.  Its evaluation ideal is
therefore a sum of products of the finitely many evaluation ideals of those
generators.  Consequently,
\begin{equation}\label{eq:evaluation-ideal-rees-algebra}
 \bigoplus_{\tau\in S}\mathfrak a_\tau
\end{equation}
is a finitely generated $S$-graded $\mathcal O_Y$-algebra.  The zero slots
add no generators.  This verifies the lattice, saturation, zero-piece,
multiplicativity, and finite-generation hypotheses used below.
\end{proof}

\begin{thm}[Rational chamber decomposition for base ideals,
{\cite[Proposition~4.7]{ELM+06}, with the corrected proof in
\cite[Proposition~1.1]{ELM+23}}]
\label{thm:elmnp-base-ideal-chambers}
Let $F^\bullet R(Y,H)$ be as in
Theorem~\ref{thm:bhj-piecewise-polynomial-density}, and construct its
saturated evaluation-ideal system as in
Lemma~\ref{lem:filtered-evaluation-ideal-system}.  Then there
exist a positive integer $d$ and a finite smooth rational fan with support
$C$ and primitive integral ray generators $e_i=(m_i,\ell_i)$.  Thus every two adjacent ray
generators span a unimodular cone.  Their finite slopes are strictly
increasing; a boundary ray of the full cone may be vertical.  The
integer $d$ may be enlarged so that $de_i$ belongs to the supported
semigroup for every ray: since $e_i$ belongs to the saturation of the support
semigroup, some positive multiple of $e_i$ belongs to that support, and the
conclusion below remains valid after replacing $d$ by a positive multiple.
Every integral vector
$\tau=(m,\ell)$ in a chamber spanned by two adjacent generators has a unique
expression
\begin{equation}\label{eq:elmnp-chamber-decomposition}
 \tau=p_i e_i+p_{i+1}e_{i+1},
 \qquad
 p_i,p_{i+1}\in\mathbb Z_{\geq0},
\end{equation}
and
\begin{equation}\label{eq:elmnp-chamber-integral-closure}
 \overline{\mathfrak a_{d\tau}}
 =\overline{
   \mathfrak a_{de_i}^{\,p_i}
   \mathfrak a_{de_{i+1}}^{\,p_{i+1}}}.
\end{equation}
\end{thm}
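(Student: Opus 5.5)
The plan is to reduce Theorem~\ref{thm:elmnp-base-ideal-chambers} to the two-dimensional toric structure of the finitely generated $S$-graded algebra $\bigoplus_{\tau\in S}\mathfrak a_\tau$ supplied by Lemma~\ref{lem:filtered-evaluation-ideal-system}. Following \cite[Proposition~4.7]{ELM+06} as corrected in \cite[Proposition~1.1]{ELM+23}, I will pass to the normalized blow-up $\mu\colon Y'\to Y$ of a suitable product of the ideals attached to the generators. Choose finitely many generating degrees $\tau_1,\ldots,\tau_r$ of the algebra, and take $Y'$ normal so that every $\mathfrak a_{\tau_k}\mathcal O_{Y'}=\mathcal O_{Y'}(-D_k)$ is invertible. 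Then, after replacing the system by a Veronese $\tau\mapsto d\tau$, the integral closures $\overline{\mathfrak a_{d\tau}}$ are governed by the divisorial function
\[
 \tau\mapsto D(\tau)=\min_{\text{monomials}}\sum_k c_kD_k,
 \qquad \sum_k c_k\tau_k=\tau,\ c_k\in\mathbb Q_{\geq0}.
\]
Working prime divisor by prime divisor on $Y'$, and over the finitely many Rees valuations, this is a finite minimum of linear functions on $C$. Hence it is piecewise linear and convex on $C$, with finitely many rational break rays.

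The next step is to construct the fan. I take the rays of $C$ together with all break rays of the coefficient functions $\operatorname{ord}_E D(\cdot)$, for the finitely many relevant divisors $E$. Refining this rational fan in the plane by the standard continued-fraction subdivision gives a smooth fan: adjacent primitive generators $e_i=(m_i,\ell_i)$ have determinant $\pm1$. The ordering by slope is automatic, because in $\mathbb Z^2$ with $m_i\geq0$ the rays can be listed counterclockwise. Unimodularity then yields the unique expression $\tau=p_ie_i+p_{i+1}e_{i+1}$ with $p_i,p_{i+1}\in\mathbb Z_{\geq0}$. Enlarging $d$ so that each $de_i$ lies in the support semigroup is the divisibility remark already made in the statement.

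On a chamber, every valuation order $v(\mathfrak a_{d\tau})$ is linear in $\tau$; I would verify this along the Rees valuations of the finitely generated algebra. Consequently
\[
 v(\mathfrak a_{d\tau})=p_i\,v(\mathfrak a_{de_i})+p_{i+1}\,v(\mathfrak a_{de_{i+1}})
\]
for every divisorial valuation $v$ in the finite test set. Since integral closure of ideals is detected by the Rees valuations of the product ideal $\mathfrak a_{de_i}\mathfrak a_{de_{i+1}}\mathfrak a_{d\tau}$, equality of orders along all of them gives equality of integral closures. This is \eqref{eq:elmnp-chamber-integral-closure}.

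The main obstacle is the linearity on chambers for the \emph{actual} ideals $\mathfrak a_{d\tau}$, as opposed to their asymptotic versions. A finitely generated system need not be exactly multiplicative, so one only has $\mathfrak a_{de_i}^{p_i}\mathfrak a_{de_{i+1}}^{p_{i+1}}\subseteq\mathfrak a_{d\tau}$. For the reverse inequality of orders, I will use that some Veronese of a finitely generated graded algebra is generated in degrees lying on the chosen rays; this is the point corrected in \cite{ELM+23}. First I try to choose $d$ so that the $d$-th Veronese subalgebra over each chamber cone is generated by its two ray pieces up to integral closure, via Noether normalization of the chamber subalgebra over $\mathcal O_Y[\mathfrak a_{de_i}t^{e_i},\mathfrak a_{de_{i+1}}t^{e_{i+1}}]$. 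Integrality of this extension is exactly the statement that $\mathfrak a_{d\tau}$ is integral over the product ideal.
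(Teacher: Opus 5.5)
The paper does not reprove this statement; it quotes \cite[Proposition~4.7]{ELM+06} together with \cite[Proposition~1.1]{ELM+23}. Your argument therefore has to stand on its own, and it has a genuine gap in the choice of the fan.

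You take the rays from the break rays of $\tau\mapsto\operatorname{ord}_E D(\tau)$, for finitely many divisors $E$ of $Y'$ (equivalently, Rees valuations of the generator ideals) fixed in advance. These valuations do not detect $\overline{\mathfrak a_{d\tau}}$. Away from the generator degrees, $\mathfrak a_{d\tau}\mathcal O_{Y'}$ is merely a sum of invertible ideals. Its integral closure is therefore controlled by valuations exceptional over $Y'$, for instance monomial valuations at crossings of the $E$'s. Their weight vectors $(v(D_k))_k$ are positive combinations of the vectors $(\operatorname{ord}_E D_k)_k$, and such a combination can break along a ray where no summand breaks.

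Here is an explicit instance. On $\mathbb A^2$, let the system be generated by $\mathfrak a_{(1,0)}=\mathcal O$, $\mathfrak a_{(3,1)}=(x^2y^2)$, $\mathfrak a_{(1,1)}=(x^{10}y)$ and $\mathfrak a_{(0,1)}=(xy^{10})$. Then $Y'=Y$, the function $\operatorname{ord}_xD(\cdot)$ is linear, and $\operatorname{ord}_yD(\cdot)$ breaks only along $(1,1)$. So your fan has rays $(1,0),(1,1),(0,1)$ and is already smooth. Now take the valuation with $v(x)=v(y)=1$. One computes $v(\mathfrak a_{d(3,1)})\leq 4d$, while $v(\mathfrak a_{d(1,0)}^2\mathfrak a_{d(1,1)})=v(\mathfrak a_{d(1,1)})\geq 26d/3$. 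Hence \eqref{eq:elmnp-chamber-integral-closure} fails for $(3,1)=2(1,0)+(1,1)$, for every $d$. Your argument uses nothing beyond finite generation, so this shows the fan construction is insufficient.

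The asserted linearity of $v(\mathfrak a_{d\tau})$ on chambers is exactly the missing step, and the Noether-normalization paragraph does not supply it. Integrality of the chamber algebra over $\mathcal O_Y[\mathfrak a_{de_i}t^{e_i},\mathfrak a_{de_{i+1}}t^{e_{i+1}}]$ is equivalent to the conclusion. Noether normalization only gives integrality over \emph{some} polynomial subalgebra, not over this one. A smaller point: a finite minimum of linear functions is concave; the convexity of $\operatorname{ord}_ED(\cdot)$ comes from subadditivity.

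The repair needs no blow-up. Suppose the algebra is generated in degrees $\tau_1,\ldots,\tau_J$. Then for every valuation $v$ we have $v(\mathfrak a_\tau)=\min\{\sum_k c_k v(\mathfrak a_{\tau_k}) : c\in\mathbb Z_{\geq0}^J,\ \sum_k c_k\tau_k=\tau\}$. Let $\Phi_v$ be its real (linear-programming) relaxation.
\begin{itemize}
\item $\Phi_v$ is sublinear, hence convex.
\item On a cone between two consecutive rays $\mathbb R_{\geq0}\tau_k$, every basic solution is supported on two generators whose cone contains the whole chamber. So there $\Phi_v$ is also a minimum of finitely many linear functions.
\item Hence $\Phi_v$ is linear on every such chamber, for all $v$ simultaneously.
\end{itemize}
So put every generator ray into the fan before the unimodular refinement. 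Then choose $d$ so that each polytope $d\{c\geq0:\sum_k c_k\tau_k=e_i\}$ has integral vertices. This gives $v(\mathfrak a_{de_i})=\Phi_v(de_i)$ for every $v$. On a chamber it follows that $v(\mathfrak a_{d\tau})\geq\Phi_v(d\tau)=p_iv(\mathfrak a_{de_i})+p_{i+1}v(\mathfrak a_{de_{i+1}})$. The valuative criterion then yields $\mathfrak a_{d\tau}\subseteq\overline{\mathfrak a_{de_i}^{p_i}\mathfrak a_{de_{i+1}}^{p_{i+1}}}$, which is the inclusion your proposal leaves open.
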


\begin{thm}[Rational spectral structure]
\label{thm:rational-dh-spectrum}
Let $Y$ be a normal projective variety, let $H$ be ample, and let a
finitely generated integer filtration of the full section ring $R(Y,H)$ be
given.  The density of the absolutely continuous part of its limit weight
measure is polynomial with rational coefficients on each interval of a
finite partition whose breakpoints are rational.
\end{thm}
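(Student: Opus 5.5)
We would prove Theorem~\ref{thm:rational-dh-spectrum} by computing the limit measure from volumes of the normalized blow-ups attached to the evaluation-ideal system. After an integral character shift and passage to a fixed Veronese, which only translate and rescale the measure by rational amounts, we may assume the setting of Lemma~\ref{lem:filtered-evaluation-ideal-system}. For a rational level $t$, the measure of $[t,\infty)$ in the decreasing convention equals $\vol(S^{(t)})/\vol(H)$, where $S^{(t)}=\bigoplus_m F^{\lceil tm\rceil}H^0(Y,mH)$. (We pass to increasing entries at the end by the reflection $x\mapsto -x$, which preserves rationality.) So it suffices to show that $t\mapsto\vol(S^{(t)})$ is piecewise polynomial with rational coefficients and rational breakpoints on the interior of the support. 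By Theorem~\ref{thm:bhj-piecewise-polynomial-density} this function is already known to be piecewise polynomial. What remains is the rationality of the breakpoints and of the coefficients.

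\textbf{Step 1: chambers.} Apply Theorem~\ref{thm:elmnp-base-ideal-chambers} to obtain the fan with primitive ray generators $e_i=(m_i,\ell_i)$ and the integer $d$. The slopes $\ell_i/m_i$ are rational and partition the range of $t$ into finitely many intervals with rational endpoints. Fix a chamber spanned by $e_i,e_{i+1}$ and a rational slope $t$ inside it. Every lattice point $(m,tm)$ on that ray has the form $p_ie_i+p_{i+1}e_{i+1}$, with $p_i,p_{i+1}$ nonnegative and linear in $m$ with rational coefficients depending affinely on $t$. Because of \eqref{eq:elmnp-chamber-integral-closure}, the integral closure of $\mathfrak a_{d\tau}$ agrees with that of $\mathfrak a_{de_i}^{p_i}\mathfrak a_{de_{i+1}}^{p_{i+1}}$.

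\textbf{Step 2: common resolution and volume.} Let $\mu\colon Y'\to Y$ be a normalized blow-up on which both $\mathfrak a_{de_i}$ and $\mathfrak a_{de_{i+1}}$ become invertible, say $\mathcal O(-E_i)$ and $\mathcal O(-E_{i+1})$. Integral closure does not change the normalized blow-up or the divisor $F_m$. Theorem~\ref{thm:normalized-blow-up-volume}, applied to the subalgebra $S^{(t)}$ along the degrees $dm$ on the ray, then gives
\[
 \vol(S^{(t)})=\Bigl(\mu^*H-\tfrac{1}{d}\bigl(a_i(t)E_i+a_{i+1}(t)E_{i+1}\bigr)\Bigr)^n ,
\]
where $a_i(t)=p_i/m$ and $a_{i+1}(t)=p_{i+1}/m$ are affine in $t$ with rational coefficients. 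The intersection numbers of $\mu^*H$, $E_i$ and $E_{i+1}$ are integers. Hence on the rational points of each chamber the volume is a polynomial in $t$ with rational coefficients. Continuity of the volume, which is concave-type and continuous on the interior of the support, extends this identity to the whole chamber. Differentiating in $t$ gives the density of the absolutely continuous part as a polynomial with rational coefficients on each chamber interval. The atoms, and the endpoints of the support, lie at the rational slopes of the boundary rays.

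\textbf{Main obstacle.} The delicate step is justifying that the limit in Theorem~\ref{thm:normalized-blow-up-volume} is computed by one fixed blow-up along the whole chamber. This requires that $F_{m}$ scales exactly linearly, which the integral-closure identity \eqref{eq:elmnp-chamber-integral-closure} provides. It also requires that the filtered subalgebra $S^{(t)}$ contain an ample series, which holds for $t$ in the interior of the support. One further point needs checking: that the evaluation-ideal base loci determine $S^{(t)}$ up to integral closure, so that replacing sections by base ideals loses no volume. We would verify this through the finite generation established in Lemma~\ref{lem:filtered-evaluation-ideal-system}.
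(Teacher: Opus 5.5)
Your proposal is correct and follows essentially the same route as the paper: ELMNP chambers for the evaluation-ideal system, one common normal model on which the two ray ideals become invertible, Theorem~\ref{thm:normalized-blow-up-volume} giving the filtered volume as an intersection polynomial with rational-affine coefficients, and differentiation of the tail. The one difference is minor. You prove the chamber identity at rational slopes and extend it by continuity of $t\mapsto\vol(S^{(t)})$, which follows from concavity of its $n$-th root below $\lambda_{\max}$; the paper instead works at real $s$ with the rounded series $F^{d\lceil ms\rceil}H^0(Y,dmH)$ and a monotonicity squeeze. Your final ``further point'' needs no separate check, because Theorem~\ref{thm:normalized-blow-up-volume} already computes $\vol(S^{(t)})$ from the base ideals.
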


\begin{proof}
We prove the rationality assertion in three steps.  First, we place all base
ideals on one birational model over a finite rational chamber decomposition.
We then obtain a rational polynomial formula for the filtered volume on each
chamber.  Finally, we differentiate the tail distribution and identify its
breakpoints.

\medskip
\noindent\textbf{Step 1.} In this step, we construct the chamber
decomposition and a common model for its ray ideals.
Put $n=\dim Y$, and let $\mathfrak a_{m,\ell}$ be the base ideal of the
filtered piece of $H^0(Y,mH)$ at weight $\ell$.  Piecewise polynomiality
follows from Theorem~\ref{thm:bhj-piecewise-polynomial-density}.  We retain
the multigraded base-ideal data in order to prove the rationality assertion.
Lemma~\ref{lem:filtered-evaluation-ideal-system} supplies the lattice $N$,
the pointed rational cone $C$, the saturated semigroup $S=C\cap N$, and the
finitely generated evaluation-ideal system, with zero ideals at unsupported
indices.  The preliminary changes in that lemma preserve the assertion that
we are proving.  An integral character shift translates normalized weights
by an integer.  If the fixed Veronese has index $r$, its degree-$m$ spectrum
is the degree-$rm$ spectrum of the original filtration, divided by $m$
rather than by $rm$.  Its limit measure is therefore the pushforward of the
original one under $x\mapsto rx$; this is also
\cite[Remark~5.6]{BHJ17}.  Integer translation and positive integral
dilation preserve rational breakpoints and rational polynomial
coefficients, in both directions.  We may consequently prove the assertion
after these changes, and retain the notation $H$ and $F^\bullet$ for the
resulting polarization and filtration.

By Theorem~\ref{thm:elmnp-base-ideal-chambers}, there exist a positive integer
$d$ and finitely many integral chamber generators
\[
 e_i=(m_i,\ell_i)
\]
whose finite slopes are strictly increasing, with the following property.
Every integral vector $\tau=(m,\ell)$ in the chamber spanned by adjacent
vectors $e_i,e_{i+1}$ has a unique expression
\[
 \tau=p_i e_i+p_{i+1}e_{i+1},
 \qquad
 p_i,p_{i+1}\in\mathbb Z_{\geq0},
\]
and
\begin{equation}\label{eq:rational-spectrum-integral-closure}
 \overline{\mathfrak a_{d\tau}}
 =\overline{
   \mathfrak a_{de_i}^{\,p_i}
   \mathfrak a_{de_{i+1}}^{\,p_{i+1}}}.
\end{equation}
Choose one normal projective birational model $\mu\colon Y'\to Y$ that
dominates the normalized blow-ups of the finitely many ray ideals.  Write
\begin{equation}\label{eq:rational-spectrum-ray-divisors}
 \mathfrak a_{de_i}\mathcal O_{Y'}=\mathcal O_{Y'}(-E_i),
\end{equation}
where every $E_i$ is an integral Cartier divisor.  On $Y'$, the product of
the two ray ideals is invertible.  The universal property therefore factors
$\mu$ first through the ordinary blow-up.  Since $Y'$ is normal, this
factorization lifts through its normalization.  Integral closure does not change a
normalized blow-up, so
\eqref{eq:rational-spectrum-integral-closure} identifies that blow-up with
the normalized blow-up of $\mathfrak a_{d\tau}$.  Pulling back its exceptional
Cartier divisor gives the explicit chain
\begin{equation}\label{eq:rational-spectrum-pulled-back-ideal}
\begin{aligned}
 \mathfrak a_{d\tau}\mathcal O_{Y'}
 &=\overline{\mathfrak a_{d\tau}}\mathcal O_{Y'}\\
 &=\overline{
   \mathfrak a_{de_i}^{\,p_i}
   \mathfrak a_{de_{i+1}}^{\,p_{i+1}}}\mathcal O_{Y'}\\
 &=\mathfrak a_{de_i}^{\,p_i}
   \mathfrak a_{de_{i+1}}^{\,p_{i+1}}\mathcal O_{Y'}
 =\mathcal O_{Y'}(-p_iE_i-p_{i+1}E_{i+1}).
\end{aligned}
\end{equation}

\medskip
\noindent\textbf{Step 2.} We compute the filtered volume on one chamber
and prove that it is a polynomial with rational coefficients in this step.
Let $\lambda_{\max}$ be the maximal asymptotic weight of the modified
filtration, and use the convention
\[
 R(u)=\bigoplus_{q\geq0}
 F^{\lceil qu\rceil}H^0(Y,qH).
\]
For every $s<\lambda_{\max}$, choose
$s'$ with $s<s'<\lambda_{\max}$.  By
\cite[Theorem~5.3(i)]{BHJ17}, the threshold series $R(s')$ contains an
ample series.  For all sufficiently large $m$,
\[
 d\lceil ms\rceil\leq \lceil dm s'\rceil.
\]
The decreasing property of the filtration shows that, in every sufficiently
large degree $m$, the degree-$m$ piece of the graded series
\[
 \bigoplus_{m\geq0}F^{d\lceil ms\rceil}H^0(Y,dmH)
\]
contains the degree-$m$ piece of the $d$-th Veronese of $R(s')$.  Since
containing an ample series is an asymptotic condition, the rounded series
itself contains an ample series.  Above
$\lambda_{\max}$ its volume is zero; outside the compact support supplied
by \cite[Theorem~5.3(iii)]{BHJ17}, the tail is constant and its density
vanishes.  It therefore suffices to work on a finite-slope chamber with
$s<\lambda_{\max}$.  Put $t=\ell/m$.  Dividing the two
coordinates of the ray decomposition by $m$ shows that
\[
 \frac{p_i}{dm}
 \quad\text{and}\quad
 \frac{p_{i+1}}{dm}
\]
are affine functions of $t$ with rational coefficients: they are obtained by
inverting the integral two-by-two matrix with columns $e_i,e_{i+1}$.
For each integral vector $\tau=(m,\ell)$ in this chamber, the Cartier divisor
\[
 dm\mu^*H-p_iE_i-p_{i+1}E_{i+1}
\]
is the pullback of the moving divisor on the normalized blow-up of
$\mathfrak a_{d\tau}$.  It is globally generated and hence nef.  Thus the
intersection number in \eqref{eq:rational-spectrum-finite-volume} is the
moving self-intersection supplied by the
base-ideal construction, not the volume of an arbitrary non-nef divisor.
For this integral pair, set
\begin{equation}\label{eq:rational-spectrum-finite-volume}
 V_{d\tau}:=
 \left(
   \mu^*H-\frac{p_i}{dm}E_i-\frac{p_{i+1}}{dm}E_{i+1}
 \right)^n.
\end{equation}
For a real number $s$ in the interior of this chamber, put
$\tau_m(s)=(m,\lceil ms\rceil)$ and define
\begin{equation}\label{eq:rational-spectrum-chamber-polynomial}
 P_i(s)=
 \left(\mu^*H-c_i(s)E_i-c_{i+1}(s)E_{i+1}\right)^n,
\end{equation}
where $c_i$ and $c_{i+1}$ are the rational-affine functions obtained from
the ray decomposition.  The spaces
\begin{equation}\label{eq:rational-spectrum-rounded-ray-series}
 S_m^{(s)}=F^{d\lceil ms\rceil}H^0(Y,dmH)
\end{equation}
form a graded linear series: multiplicativity of the filtration and
$\lceil ms\rceil+\lceil m's\rceil\geq\lceil(m+m')s\rceil$ give the
required product inclusion.  Its degree-$m$ base ideal is
$\mathfrak a_{d\tau_m(s)}$.  Therefore
Theorem~\ref{thm:normalized-blow-up-volume}, applied to the $d$-Veronese and
divided by $d^n$, together with
\eqref{eq:rational-spectrum-pulled-back-ideal}, gives
\begin{equation}\label{eq:rational-spectrum-rounded-ray-limit}
 \lim_{m\to\infty}
 \frac{n!}{(dm)^n}\dim S_m^{(s)}=P_i(s).
\end{equation}

We now compare this rounded-ray series with the actual threshold at $s=t$.
Choose $\epsilon>0$ so that $t-\epsilon$ and $t+\epsilon$ remain in the
interior of the chamber.  For all sufficiently large $m$,
\[
 d\lceil m(t-\epsilon)\rceil
 \leq\lceil dmt\rceil
 \leq d\lceil m(t+\epsilon)\rceil.
\]
Since the filtration is decreasing, the corresponding inclusions are
\[
 F^{d\lceil m(t-\epsilon)\rceil}H^0(Y,dmH)
 \supseteq F^{\lceil dmt\rceil}H^0(Y,dmH)
 \supseteq F^{d\lceil m(t+\epsilon)\rceil}H^0(Y,dmH).
\]
Hence
\begin{equation}\label{eq:rational-spectrum-threshold-squeeze}
 \begin{aligned}
 P_i(t+\epsilon)
 &\leq\liminf_{m\to\infty}
 \frac{n!}{(dm)^n}\dim F^{\lceil dmt\rceil}H^0(Y,dmH)\\
 &\leq\limsup_{m\to\infty}
 \frac{n!}{(dm)^n}\dim F^{\lceil dmt\rceil}H^0(Y,dmH)\\
 &\leq P_i(t-\epsilon).
 \end{aligned}
\end{equation}
The function $P_i$ is a polynomial and is therefore continuous.  Letting
$\epsilon\downarrow0$ in
\eqref{eq:rational-spectrum-threshold-squeeze} proves both the existence of
the actual filtered-volume limit and its equality with $P_i(t)$.  Thus
\begin{equation}\label{eq:rational-spectrum-chamber-volume}
 V(t)=P_i(t)
 =\left(\mu^*H-c_i(t)E_i-c_{i+1}(t)E_{i+1}\right)^n,
\end{equation}
which is a polynomial in $t$ with rational coefficients, because the
coefficients $c_i(t)$ and $c_{i+1}(t)$ are rational-affine and the
intersections of integral Cartier divisors are integers.

\medskip
\noindent\textbf{Step 3.} We pass from the chamber volume to the density
and determine the breakpoints in this step.
Let $\nu$ be the limit measure in the present grading.  Restrict the
filtration to the $d$-th Veronese $R(Y,dH)$.  Its limit measure is
$(x\mapsto dx)_*\nu$ by \cite[Remark~5.6]{BHJ17}.  Applying the tail formula
\cite[Theorem~5.3(iii), equation~(5.4)]{BHJ17} at the threshold $dt$, and
using the limit proved in
\eqref{eq:rational-spectrum-threshold-squeeze}, gives
\[
 \vol\!\left(
   \bigoplus_{m\geq0}
   F^{\lceil dmt\rceil}H^0(Y,dmH)
 \right)=d^nV(t).
\]
Integration of equation~(5.4) from $dt$ to $+\infty$ then gives, away from
an endpoint atom,
\begin{equation}\label{eq:rational-spectrum-tail-identity}
 \nu([t,+\infty))
 =\frac{1}{(dH)^n}
   \vol\!\left(
     \bigoplus_{m\geq0}
     F^{\lceil dmt\rceil}H^0(Y,dmH)
   \right)
 =\frac{V(t)}{H^n}.
\end{equation}
The first equality in \eqref{eq:rational-spectrum-tail-identity} is the
probability normalization in \cite[equation~(5.4)]{BHJ17}; the second uses
$(dH)^n=d^nH^n$ and the preceding volume identity.
Thus passage to degrees divisible by $d$ computes the pushforward measure,
not a different subsequential limit.  The absolutely continuous density is
therefore $-V'(t)/H^n$ on the interior of each chamber and has rational
polynomial coefficients there.  Each finite chamber endpoint is a slope
$\ell_i/m_i$ and is rational.  This proves
Theorem~\ref{thm:rational-dh-spectrum}.  The argument does not exclude a
finite atomic part at the chamber boundaries.
\end{proof}

\begin{rem}[Use for a fiber-scaling initial]
\label{rem:rational-spectrum-initial}
In the application in Lemma~\ref{lem:affine-initials-crease-exclusion},
finite generation is asserted only for the algebraic source test
configuration after a sufficiently divisible
Veronese.  Either fiber-scaling initial filtration $\chi^\epsilon$ of
Set-up~\ref{setup:initial-filtrations} has weight multiplicities equal to
those of the source in every degree, so its limit measure equals the source
limit measure.  It inherits
Theorem~\ref{thm:rational-dh-spectrum} without any assertion that the initial
filtration is finitely generated.  Returning from the Veronese coordinate to
actual degree divides the normalized weight coordinate by that positive
integer and preserves rational breakpoints and rational coefficients.
\end{rem}

\begin{lem}[Exclusion of the irrational crease]
\label{lem:affine-initials-crease-exclusion}
Notation and conditions are as in Set-up~\ref{setup:initial-filtrations}.
Assume that $G$ is the convex transform in actual degree of a
fiber-scaling initial filtration $\chi^\epsilon$ of an algebraic test
configuration.  Let $\phi$
be its normalized slice average, and assume
\begin{equation}\label{eq:affine-initials-crease-hypotheses}
 \operatorname{supp}(\phi'')\subseteq\{\lambda\},
 \qquad
 G_*\mu_\Omega=\phi_*\rho,
 \qquad
 \lambda=\frac{3-\sqrt5}{2}.
\end{equation}
Then there exist $a,b\in\mathbb R$ such that
\begin{equation}\label{eq:affine-initials-crease-conclusion}
 G(x,y)=a+bx
\end{equation}
at every interior point of $\Omega$.
\end{lem}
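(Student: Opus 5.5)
The plan has three steps: reduce $G$ to a one-variable function, use the hypotheses to make that function affine with at most one crease at $\lambda$, and then show the crease cannot occur because the entry law has rational breakpoints.

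\textbf{Reduction to one variable.} In the intended application, Lemma~\ref{lem:affine-initials-ray-detection} applies to $G$, so $G$ is almost everywhere constant on every rational slice. Lemma~\ref{lem:affine-initials-slice-propagation} then gives a finite convex $\psi$ on $(0,1)$ with $G(x,y)=\psi(x)$ at interior points. Since $\psi(x)$ is the normalized slice average, $\psi=\phi$ on $(0,1)$. The hypothesis $\operatorname{supp}(\phi'')\subseteq\{\lambda\}$, together with convexity, gives
\[
 \phi(x)=u_0+u_1x+\kappa(x-\lambda)_+,
 \qquad \kappa\geq0 .
\]
Consequently
\[
 G_*\mu_\Omega=\phi_*\rho,
 \qquad
 \rho=\frac{p(x)\,dx}{a_0},
\]
and it remains to prove $\kappa=0$. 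Setting $a=u_0$ and $b=u_1$ then yields \eqref{eq:affine-initials-crease-conclusion}.

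\textbf{Computing the law when $\kappa>0$.} Suppose $\kappa>0$. On $(0,\lambda)$ the map $\phi$ is affine with slope $u_1$, and on $(\lambda,1)$ it is affine with slope $u_1+\kappa$. There are two cases.

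\emph{Both slopes nonzero.} Each piece is a diffeomorphism, so $\phi_*\rho$ is absolutely continuous. Its density is a sum of terms $p(\phi_\pm^{-1}(w))/\lvert\phi_\pm'\rvert$ over the branches $\phi_\pm^{-1}$ present at $w$. The support $[\phi(0),\phi(1)]$, or the union of the branch images if $\phi$ is nonmonotone, has breakpoints among $\phi(0)$, $\phi(\lambda)$, $\phi(1)$. At $w_\lambda=\phi(\lambda)$ one of the following happens:
\begin{enumerate}
 \item if $\phi$ is monotone, the density jumps from $p(\lambda)/\lvert u_1\rvert$ to $p(\lambda)/\lvert u_1+\kappa\rvert$;
 \item if $\phi$ has its minimum at $\lambda$, the density starts there with value $p(\lambda)\bigl(1/\lvert u_1\rvert+1/\lvert u_1+\kappa\rvert\bigr)>0$, while it is zero to the left.
\end{enumerate}
In either case $w_\lambda$ is a genuine breakpoint of the density, because $p(\lambda)>0$ and $\lvert u_1\rvert\neq\lvert u_1+\kappa\rvert$ unless $u_1=-\kappa/2$. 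In that exceptional symmetric case, the density has the support endpoint $w_\lambda$ as a breakpoint.

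\emph{One slope zero.} If $u_1=0$ or $u_1+\kappa=0$, then $\phi_*\rho$ has an atom at $w_\lambda$. The absolutely continuous density is still supported on one side of $w_\lambda$, so $w_\lambda$ is a support endpoint and hence again a breakpoint of the density, unless the absolutely continuous part is zero. That cannot happen, because $\kappa>0$ means the two slopes are not both zero.

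\textbf{Rationality of the breakpoints, and the main obstacle.} By Remark~\ref{rem:rational-spectrum-initial} and Theorem~\ref{thm:rational-dh-spectrum}, the entry law is the reflection of a finitely generated algebraic Duistermaat--Heckman measure, rescaled by $e$. Hence its absolutely continuous density is piecewise polynomial with rational coefficients on a partition with rational breakpoints. The main obstacle is to extract a contradiction from this, since $w_\lambda=u_0+u_1\lambda$ could be rational for suitable real $u_0,u_1$.

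My first attempt is to prove that the slopes are rational and then use the polynomial pieces. On an interval where only one branch contributes, the density is $w\mapsto p((w-u_0)/u_1)/\lvert u_1\rvert$, which must have rational coefficients. Since $p$ has degree four with rational coefficients and nonzero leading coefficient, comparing the top two coefficients forces $u_1^{-4}$ and $u_0/u_1$ to be rational, up to the usual bookkeeping. I would then argue that $u_1$ itself is rational, using the further fact that the leading moment $\int\phi\,d\rho$ is rational as an algebraic weight coefficient. Similarly $u_0$ and $u_1+\kappa$ are rational, by looking at the other side or at the endpoints $\phi(0)$, $\phi(1)$, which are rational as endpoint breakpoints. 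Once the slopes and $u_0$ are rational, the breakpoint $w_\lambda=u_0+u_1\lambda$, which the density computation shows is a genuine breakpoint, is irrational because $\lambda\notin\mathbb Q$ and $u_1\neq0$ on at least one side. This contradicts Theorem~\ref{thm:rational-dh-spectrum}. The delicate cases are the symmetric one and the atomic one. For those I would fall back on the polynomial piece adjacent to $w_\lambda$: its coefficients determine $w_\lambda$ via the rational function $p$ evaluated at $(w-u_0)/u_1$, and this forces the irrational point $\lambda$ to be rational.
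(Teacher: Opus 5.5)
Your reduction to $\phi(x)=u_0+u_1x+\kappa(x-\lambda)_+$ is fine. Lemma~\ref{lem:affine-initials-ray-detection} is available because the Set-up includes $\DF(\mathcal T)=0$. Note, though, that the hypothesis $G_*\mu_\Omega=\phi_*\rho$ already gives $G=\phi$ directly: since $\phi$ is the slice average, $\int(G-\phi)^2\,d\mu_\Omega=\int G^2\,d\mu_\Omega-\int\phi^2\,d\rho=0$.

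Your single-branch strategy works whenever some interval of the support is covered by only one branch, provided you first translate to the rational outer support endpoint. With $z=w-u_0$, the left-branch density is $p(z/u_1)/(a_0\lvert u_1\rvert)$, and the ratio of its $z^3$ and $z^4$ coefficients is $(p_3/p_4)u_1$, so $u_1\in\mathbb Q$. In the untranslated coordinate the top two coefficients only give $\lvert u_1\rvert^5$ and $(p_3/p_4)u_1-4u_0$, not $u_1^{-4}$ and $u_0/u_1$; the moment $\int\phi\,d\rho$ is not needed. This settles the monotone cases, the atomic cases, and the nonmonotone cases with $\phi(0)\neq\phi(1)$. That includes your ``symmetric'' case $u_1=-\kappa/2$, where the two branch heights $u\lambda$ and $v(1-\lambda)$ differ because $\lambda\neq\frac12$.

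The genuine gap is the nonmonotone case with equal heights, $\phi(0)=\phi(1)$. Write $u=-u_1$, $v=u_1+\kappa$, and $u\lambda=v(1-\lambda)=:H$.

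\begin{itemize}
\item Both branches then cover the whole support $[w_\lambda,w_\lambda+H]$, so there is no single-branch interval from which to extract a slope.
\item Breakpoint rationality gives nothing beyond $w_\lambda,H\in\mathbb Q$. For instance, $u_0=1$, $u_1=-1/\lambda$, $\kappa=2+\sqrt5$ gives support exactly $[0,1]$.
\item Your fallback formula $p((w-u_0)/u_1)$ is a one-branch expression and does not describe the density there.
\end{itemize}

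The missing step is to examine the two-branch density at the minimum. Its constant coefficient is
\[
 \frac{p(\lambda)}{a_0}\Bigl(\frac1u+\frac1v\Bigr)=\frac{p(\lambda)}{a_0H},
\]
with $H$ rational as a difference of support endpoints. So rationality forces $p(\lambda)\in\mathbb Q$. One must then check that this fails for the specific $p$: reducing the four factors modulo $x^2-3x+1$ gives $p(\lambda)=C_{\mathrm{bd}}(597-338\lambda)\notin\mathbb Q$.

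This is a genuine arithmetic input about the chosen data, not a formality. If $p$ were constant, the equal-height pushforward would be uniform with density $1/H$, which is rational, and no contradiction would arise. The paper's proof treats exactly this equal-height case separately and closes it with this computation of $p(\lambda)$.
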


\begin{proof}
We prove Lemma~\ref{lem:affine-initials-crease-exclusion} in three steps.  The
first removes transverse variation,
the second records the rational spectral constraint inherited from the
algebraic source, and the third treats all possible signs of the two slopes.

\medskip
\noindent\textbf{Step 1.} In this step, we prove that $G$ agrees with
$\phi$.  By Fubini,
\[
 \phi(x)=\int_{\Omega_x}G(x,y)\,d\nu_x(y)
\]
for $\rho$-almost every $x$.  Equality of the pushforward measures in
\eqref{eq:affine-initials-crease-hypotheses} gives equality of their second
moments.  Therefore
\begin{equation}\label{eq:conditional-variance-identity}
\begin{aligned}
 \int_\Omega(G-\phi)^2\,d\mu_\Omega
 &=\int_\Omega G^2\,d\mu_\Omega
 -2\int_\Omega\phi G\,d\mu_\Omega
 +\int_\Omega\phi^2\,d\mu_\Omega\\
 &=0.
\end{aligned}
\end{equation}
Thus $G(x,y)=\phi(x)$ almost everywhere.  Convexity and interior
continuity upgrade this equality to every interior point.  Since
$\phi''$ is a nonnegative measure supported at one point,
\begin{equation}\label{eq:affine-initials-spectral-crease}
 \phi(x)=a+bx+\kappa(x-\lambda)_+
\end{equation}
for some $a,b\in\mathbb R$ and $\kappa\geq0$.

\medskip
\noindent\textbf{Step 2.} In this step, we record the spectral restriction
coming from the algebraic source.  After passing to a sufficiently
divisible Veronese, the filtration of the source test configuration is a
finitely generated integer filtration.
Theorem~\ref{thm:rational-dh-spectrum} shows that its absolutely continuous
Duistermaat--Heckman density is polynomial with rational coefficients on a
finite partition with rational breakpoints.  By
Lemma~\ref{lem:affine-initials-veronese}, the initial filtration has an
entry multiset identical to that of the source in every degree, so the two
filtrations have equal measures.
Passing between exponent-one degree and actual degree only rescales the
weight coordinate by the positive integer $e$, and therefore preserves
the rational spectral structure.

Suppose for a contradiction that $\kappa>0$, and put
\[
 b_R=b+\kappa.
\]
The two slopes of $\phi$ are $b<b_R$.  We shall use the expansion
\begin{equation}\label{eq:affine-initials-p-expanded}
\begin{aligned}
 p(x)={}&1791155412951840+48540311690994864x
 -96622883665346480x^2\\
 &+59625573524485696x^3-11543001550134080x^4.
\end{aligned}
\end{equation}
Its two highest coefficients are nonzero, and $p>0$ on $[0,1]$.

\medskip
\noindent\textbf{Step 3.} In this step, we exclude the crease by considering
the possible signs of $b$ and $b_R$.

We first record the breakpoints used in the argument.  A point at which the
two one-sided polynomial density germs differ, or at which one branch appears
with a nonzero one-sided jump, is an endpoint of the rational partition in
Theorem~\ref{thm:rational-dh-spectrum} and is therefore rational.  Since $p$
is positive on $[0,1]$, every outer support endpoint is rational.  A branch
endpoint not shared by a second branch is also a rational breakpoint: its
jump has size $p(0)/(Iu)$ or $p(1)/(Iv)$, where
$I=\int_0^1p(x)\,dx$ and $u,v>0$ are the absolute branch slopes.
If an explicit density formula agrees on a nonempty open interval with
a polynomial from the rational partition, polynomial identity forces all of
its coefficients to be rational.  Redundant partition points therefore do
not affect the argument.

\smallskip
\noindent\textbf{Case 3.1.} Assume that the two slopes share a weak sign.
Then $\phi$ is monotone.  On every nonconstant branch, its
pushforward density is obtained from $p/I$ by an affine change of variable
and division by the absolute value of the branch slope.  If both slopes are
nonzero, the leading coefficients of the two density germs at the image of
$\lambda$ are
\[
 \frac{p_4}{I\lvert b\rvert^5}
 \quad\text{and}\quad
 \frac{p_4}{I\lvert b_R\rvert^5},
\]
where $p_4\neq0$ is the coefficient of $x^4$ in $p$.  Since
$\lvert b\rvert\neq\lvert b_R\rvert$ in the monotone cases, this image is a
genuine rational breakpoint.  In a rational coordinate translated from an
outer support endpoint of the left branch, its density is
$p(z/\lvert b\rvert)/(I\lvert b\rvert)$.  The ratio of its coefficients of
$z^3$ and $z^4$ is
\[
 \frac{p_3}{p_4}\lvert b\rvert,
\]
where $p_3p_4\neq0$.  Rationality of these coefficients forces
$b\in\mathbb Q$.  The distance from the image of $\lambda$ to this endpoint
is $\lvert b\rvert\lambda$, which forces $\lambda\in\mathbb Q$.

If $b=0<b_R$, the left branch gives an atom.  On the right branch, in the
coordinate translated from its outer support endpoint, the constant
coefficient is $p(1)/(Ib_R)$.  If $b<0=b_R$, the corresponding coefficient
on the left branch is $p(0)/(I(-b))$.  In either case the nonzero slope is
rational.  The distance between the two rational support endpoints then
forces the relevant distance to $\lambda$ to be rational.
Every monotone case contradicts the irrationality of $\lambda$.

\smallskip
\noindent\textbf{Case 3.2.} Assume $b<0<b_R$.  Put
\begin{equation}\label{eq:crease-heights}
 u=-b,\qquad v=b_R,\qquad
 H_L=u\lambda,\qquad H_R=v(1-\lambda).
\end{equation}
Suppose first that $H_L\neq H_R$.  At the end of the shorter branch, that
branch disappears with the nonzero jump $p(0)/(Iu)$ or $p(1)/(Iv)$.  The
minimum and the longer outer endpoint are support endpoints, while the end of
the shorter branch is a rational breakpoint.  Hence all three points, and
therefore both heights, are rational.  If $H_L>H_R$, the unmatched left tail has
density $p(z/u)/(Iu)$ in a rational coordinate $z$ translated from the
outer endpoint of the left branch.  Its two highest coefficients force
$u\in\mathbb Q$, and then $H_L=u\lambda$ forces
$\lambda\in\mathbb Q$.  If $H_R>H_L$, the unmatched right tail has
density $p(1-z/v)/(Iv)$.  Its constant coefficient $p(1)/(Iv)$ forces
$v\in\mathbb Q$, after which $H_R=v(1-\lambda)$ also gives a
contradiction.

It remains to consider $H_L=H_R=:H$.  In this case the minimum and the common
outer endpoint are rational support endpoints.  Their difference
$H$ is therefore a nonzero rational number.  The constant coefficient of the two-branch density at
the minimum is
\begin{equation}\label{eq:equal-height-density}
 \frac{p(\lambda)}I\left(\frac1u+\frac1v\right)
 =\frac{p(\lambda)}{IH}.
\end{equation}
Reduction of \eqref{eq:affine-initials-p-expanded} modulo
$\lambda^2-3\lambda+1$ gives
\begin{equation}\label{eq:p-at-lambda}
 p(\lambda)
 =11881330905913872-6726783661974688\lambda.
\end{equation}
This number is irrational, contradicting the rationality of the density
coefficient in \eqref{eq:equal-height-density}.

Both cases are impossible.  Therefore $\kappa=0$, and
\eqref{eq:affine-initials-crease-conclusion} follows.
\end{proof}

\subsection{The affine-transform theorem}
\label{subsec:affine-transform-theorem}

We now combine rational-slice constancy, convex propagation, and the
spectral exclusion to obtain the form used in the equality classification.

\begin{prop}[Affine opposite initials]
\label{prop:affine-initials}
Notation and conditions are as in Set-up~\ref{setup:initial-filtrations}.
For $\epsilon\in\{+,-\}$, let $G^\epsilon$ be the convex
transform of $\chi^\epsilon$ in actual $A$-degree normalization.
There exist $a_\epsilon,b_\epsilon\in\mathbb R$ such that
\begin{equation}\label{eq:affine-initials-final-transform}
 G^\epsilon(x,y)=a_\epsilon+b_\epsilon x
\end{equation}
at every interior point $(x,y)$ of $\Omega$.

The entry probability laws in actual degree of $\mathcal T$ and
$\chi^\epsilon$ are both
\begin{equation}\label{eq:affine-initials-final-actual-law}
 (x\mapsto a_\epsilon+b_\epsilon x)_*\rho.
\end{equation}
In the exponent-one grading of $(X,A^e)$, their common law is
\begin{equation}\label{eq:affine-initials-final-regraded-law}
 (x\mapsto e(a_\epsilon+b_\epsilon x))_*\rho.
\end{equation}
No equality between the two affine functions is asserted.
\end{prop}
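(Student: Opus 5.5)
The proof will assemble the lemmas of this section in order, for one fixed sign $\epsilon$ and with $G=G^\epsilon$; the other sign is identical.

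First, Lemma~\ref{lem:affine-initials-veronese} gives the two basic facts about $\chi^\epsilon$. It is a multiplicative, block-preserving, linearly bounded filtration with a finite convex transform $G$ on $\Omega$. Its entry law in actual degree is $G_*\mu_\Omega$, which coincides with that of $\mathcal T$, and the exponent-one law is $(eG)_*\mu_\Omega$.

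Next, Lemma~\ref{lem:affine-initials-ray-detection} applies to every rational $x\in(0,1)$, because the hypothesis $\DF(\mathcal T)=0$ is built into Set-up~\ref{setup:initial-filtrations}. It shows that $G\vert_{\Omega_x}$ is constant almost everywhere. Lemma~\ref{lem:affine-initials-slice-propagation}, applied with $H=G$, then produces a finite convex $\psi$ with $G(x,y)=\psi(x)$ at every interior point. Since $\psi(x)$ is the normalized slice average, $\psi=\phi$ on $(0,1)$, where $\phi$ is the profile of Lemma~\ref{lem:affine-initials-row-budget}.

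That lemma gives $\operatorname{supp}(\phi'')\subseteq\{\lambda\}$. Since $G=\phi(x)$ on the interior and the $x$-marginal of $\mu_\Omega$ is $\rho$, we also get $G_*\mu_\Omega=\phi_*\rho$. These are exactly the hypotheses \eqref{eq:affine-initials-crease-hypotheses}. Lemma~\ref{lem:affine-initials-crease-exclusion} therefore applies to the transform of an initial filtration of the algebraic $\mathcal T$, and gives $G(x,y)=a_\epsilon+b_\epsilon x$ at every interior point. This is \eqref{eq:affine-initials-final-transform}.

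The two laws then follow by pushing forward. Because $G$ depends only on $x$ and the boundary of $\Omega$ is $\mu_\Omega$-null, $G_*\mu_\Omega=(x\mapsto a_\epsilon+b_\epsilon x)_*\rho$; Lemma~\ref{lem:affine-initials-veronese} makes this the law of both $\mathcal T$ and $\chi^\epsilon$, which is \eqref{eq:affine-initials-final-actual-law}. The exponent-one version \eqref{eq:affine-initials-final-regraded-law} follows from \eqref{eq:affine-initials-exponent-one-law}.

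The only point needing care is the identification $\psi=\phi$, including at the endpoints. The endpoint values of $\phi$ are defined separately, but the one-crease form together with $\epsilon_0=\epsilon_1=0$ from Lemma~\ref{lem:affine-initials-row-budget} makes them the interior limits. Since only interior points of $\Omega$ are asserted, no boundary issue arises.
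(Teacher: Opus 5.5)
Your proposal is correct and follows the paper's proof essentially step by step. Like the paper, you use the one-crease profile from Lemma~\ref{lem:affine-initials-row-budget}, rational-slice constancy and propagation to get $G(x,y)=\phi(x)$, the pushforward identity $G_*\mu_\Omega=\phi_*\rho$, and Lemma~\ref{lem:affine-initials-crease-exclusion} to remove the crease, then read off both entry laws by pushing forward (your citing Lemma~\ref{lem:affine-initials-ray-detection} alone is fine, since it already relies on Lemma~\ref{lem:affine-initials-ray-vanishing}).
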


\begin{proof}
Fix $\epsilon$, write $\chi=\chi^\epsilon$ and
$G=G^\epsilon$, and let $\phi$ be the function defined by
\eqref{eq:affine-initials-slice-average} and
\eqref{eq:affine-initials-endpoint-averages}.  We first remove the transverse
variables.
Lemma~\ref{lem:affine-initials-row-budget} gives
\[
 \phi(x)=u_0+u_1x+\kappa(x-\lambda)_+.
\]
Lemmas~\ref{lem:affine-initials-ray-vanishing} and~\ref{lem:affine-initials-ray-detection}
show that $G$ is constant almost
everywhere on every rational interior slice.
Lemma~\ref{lem:affine-initials-slice-propagation} then gives
\begin{equation}\label{eq:affine-initials-profile-equality}
 G(x,y)=\phi(x)
\end{equation}
at every interior point of $\Omega$.

We next exclude the crease.
Lemma~\ref{lem:affine-initials-veronese} identifies the laws in actual degree
of the source and the initial with $G_*\mu_\Omega$.  The identity
\eqref{eq:affine-initials-profile-equality} and the fact that the boundary
has Lebesgue measure zero give
\begin{equation}\label{eq:affine-initials-profile-law}
 G_*\mu_\Omega=\phi_*\rho.
\end{equation}
Lemma~\ref{lem:affine-initials-crease-exclusion} gives $\kappa=0$.
Setting
\[
 a_\epsilon=u_0,\qquad b_\epsilon=u_1
\]
proves \eqref{eq:affine-initials-final-transform}.

It remains to identify the two degree normalizations.  Since the
$x$-marginal of $\mu_\Omega$ is $\rho$,
\eqref{eq:affine-initials-final-actual-law} follows from
\eqref{eq:affine-initials-final-transform}.  Finally,
\eqref{eq:affine-initials-transform-regrading} multiplies every normalized
entry by $e$ in exponent-one degree.  This proves
\eqref{eq:affine-initials-final-regraded-law}.  Since
$\epsilon\in\{+,-\}$ was arbitrary, the conclusion holds for both initial
filtrations.
\end{proof}
 
\section{Semistability and the integral affine comparator}\label{sec:initials}

This section proves semistability at every exponent and converts the affine
initial data of Section~\ref{sec:affine-initials} into an integral product
comparator after a ramified base change and normalization.

\subsection{Semistability at every exponent}

We first combine the approximate cscK metrics of
Proposition~\ref{prop:approximate-csck} with two analytic results on the
centered $L^2$ norm of a test configuration.

\begin{thm}[Donaldson's scalar-curvature lower bound,
{\cite[Theorem~2]{Don05b}}]
\label{thm:donaldson-lower-bound}
Let $(Y,H)$ be a polarized smooth projective variety, and let
$\mathcal Y$ be an exponent-one normal ample test configuration with
positive Donaldson centered norm $N_{2,\mathrm{Don}}(\mathcal Y)$.  Every
K\"ahler metric $\omega\in c_1(H)$ satisfies
\begin{equation}\label{eq:donaldson-calabi-bound}
 \left\lVert\Scal(\omega)-\widehat S_H\right\rVert_{L^2}
 \geq -\frac{F_{\mathrm{Don}}(\mathcal Y)}
 {N_{2,\mathrm{Don}}(\mathcal Y)},
\end{equation}
where $\widehat S_H$ is the average scalar curvature in $c_1(H)$.
The inequality \eqref{eq:donaldson-calabi-bound} follows from the cited theorem.  We
now translate the two algebraic quantities in that inequality into the
conventions of this paper.  If
\begin{equation}\label{eq:donaldson-hilbert-expansion}
 h(m)=A_0m^n+A_1m^{n-1}+O(m^{n-2})
\end{equation}
and the increasing-entry total weight is
\begin{equation}\label{eq:donaldson-entry-weight-expansion}
 W_{\mathrm{entry}}(m)=B_0m^{n+1}+B_1m^n+O(m^{n-1}),
\end{equation}
then
\begin{equation}\label{eq:donaldson-bhj-normalization}
 F_{\mathrm{Don}}
 =B_1-\frac{A_1B_0}{A_0}
 =\frac{A_0}{2}\DF,
 \qquad
 N_{2,\mathrm{Don}}^2
 =A_0\Var\bigl(\operatorname{DH}(\mathcal Y)\bigr)
 =A_0\lVert\mathcal Y\rVert_{2,\mathrm{BHJ}}^2.
\end{equation}
Thus \eqref{eq:donaldson-calabi-bound} has the displayed sign, and the
Donaldson and Boucksom--Hisamoto--Jonsson centered norms have identical zero
loci.
\end{thm}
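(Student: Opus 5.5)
The statement consists of Donaldson's inequality \eqref{eq:donaldson-calabi-bound} together with a dictionary between his invariants and the ones used in this paper. I will quote the inequality from \cite[Theorem~2]{Don05b} and spend the work on the translation, checking each normalization against the conventions of Definition~\ref{defn:test-configuration} and Notation~\ref{nota:scalar-curvature-normalization}.

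\textbf{Step 1: Donaldson's setup and the factor $4\pi$.} First I will restate Donaldson's setting. He takes an exponent-one test configuration with central $\mathbb C^*$-action, total weight $w(m)$ on $H^0$ of the central fiber in degree $m$, and dimension $h(m)$. He expands
\[
 \frac{w(m)}{m\,h(m)}=F_0+F_1m^{-1}+O(m^{-2})
\]
and defines the Futaki invariant from $F_1$. His norm is the leading coefficient of the trace of the square of the centered generator. His inequality is stated for the Riemannian scalar curvature with a specific factor of $2\pi$ or $4\pi$ in front of $F$. I will check that dividing the scalar curvature by $4\pi$, as in Notation~\ref{nota:scalar-curvature-normalization}, absorbs this constant, so that the inequality reads exactly as \eqref{eq:donaldson-calabi-bound} with $F_{\mathrm{Don}}$ normalized as in the statement.

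\textbf{Step 2: expanding the Futaki coefficient.} Next I will pass to increasing entries. By Definition~\ref{defn:test-configuration}, entries are the negatives of central weights, so $w=-W_{\mathrm{entry}}$. Expanding $W_{\mathrm{entry}}(m)/(m\,h(m))$ with \eqref{eq:donaldson-hilbert-expansion} and \eqref{eq:donaldson-entry-weight-expansion} gives the subleading coefficient
\[
 \frac{B_1}{A_0}-\frac{A_1B_0}{A_0^2}.
\]
Multiplying by $A_0$ gives $B_1-A_1B_0/A_0$. Comparing with the formula $\DF=2(B_1A_0-A_1B_0)/A_0^2$, already used in \eqref{eq:affine-initials-source-DF-coefficients}, yields $F_{\mathrm{Don}}=\tfrac{A_0}{2}\DF$.

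\textbf{Step 3: the norms.} For the norm, the centered squared trace in degree $m$ is
\[
 \sum_\gamma (i_\gamma-\overline i)^2=h(m)\,m^2\Var(\text{normalized entries}).
\]
This behaves like $A_0\,m^{n+2}\Var(\operatorname{DH})$, by weak convergence of the normalized entry law together with uniform linear boundedness, which makes second moments converge. Reversing the sign of the entries does not change the variance. The identification of $\Var(\operatorname{DH})$ with $\lVert\mathcal Y\rVert_{2,\mathrm{BHJ}}^2$ is the definition in \cite{BHJ17} for normal test configurations. Since $A_0>0$, the two norms have the same zero locus.

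\textbf{Main obstacle.} The delicate point is the sign bookkeeping. Donaldson's paper phrases the destabilizing sign in terms of central weights with his own orientation of the $\mathbb C^*$-action. I must verify that, after the entry-versus-weight reversal, the right-hand side is $-F_{\mathrm{Don}}/N_{2,\mathrm{Don}}$ and not its negative. To settle this I will test a product configuration induced by a holomorphic vector field. There, Donaldson's $F$ is the classical Futaki invariant, and both sides can be compared directly using the convention that the scalar character contributes $cm$ to the entries; this fixes the orientation.
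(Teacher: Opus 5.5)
Your proposal takes the same route as the paper: the inequality is quoted from \cite[Theorem~2]{Don05b}, and the real content is the dictionary $F_{\mathrm{Don}}=B_1-A_1B_0/A_0=\tfrac{A_0}{2}\DF$ and $N_{2,\mathrm{Don}}^2=A_0\Var(\operatorname{DH}(\mathcal Y))$, which you obtain by the same comparison of Hilbert and weight coefficients that the paper records. Two points need care. First, the factor $A_0$ in your Step~2 must be read off from Donaldson's own normalizations, not attributed to the $4\pi$ rescaling of $\Scal$: it is a volume, and the scaling $H\mapsto H^r$, under which both sides of \eqref{eq:donaldson-calabi-bound} scale like $r^{(n-2)/2}$, confirms it. Second, scalar twists and product configurations can only calibrate the weight-versus-entry sign, not the orientation of the inequality itself, because the bound holds in both orientations for products.
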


\begin{thm}[Vanishing of the centered norm,
{\cite[Corollary~B and Lemma~2.10]{BHJ17}}]
\label{thm:zero-norm}
Let $(Y,H)$ be a polarized variety.  An ample test configuration for $(Y,H)$
has zero centered $L^2$ norm if and only if it is almost trivial.
Equivalently, after twisting the linearization by a scalar character, its
normalization is the trivial test configuration.  The vertical term
$c\mathcal Y_0$ in \cite[Lemma~2.10]{BHJ17} is precisely this scalar-character
twist after the exponent has been incorporated into $H$.
Thus a normal ample test configuration of zero norm is the trivial product
up to such a scalar twist and has zero Donaldson--Futaki invariant.
\end{thm}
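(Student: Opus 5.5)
The statement is Theorem~\ref{thm:zero-norm}: an ample test configuration has zero centered $L^2$ norm exactly when it is almost trivial. The plan is to reduce it to \cite[Corollary~B and Lemma~2.10]{BHJ17} and then check that the conventions match.

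The reduction has three steps.

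\textbf{Step 1: identify the norms.} By \eqref{eq:donaldson-bhj-normalization}, the Donaldson norm squared is $A_0$ times the variance of the Duistermaat--Heckman measure. That variance is the squared $L^2$ norm $\lVert\mathcal Y\rVert_{2,\mathrm{BHJ}}^2$ of Boucksom--Hisamoto--Jonsson. Since $A_0>0$, the two norms vanish simultaneously. Nothing here depends on the choice between increasing entries and central weights. Reflecting a measure under $x\mapsto-x$ leaves its variance unchanged, and so does translating it by a scalar character.

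\textbf{Step 2: apply the cited results.} I apply \cite[Corollary~B]{BHJ17}: $\lVert\mathcal Y\rVert_2=0$ holds if and only if the normalization $\widetilde{\mathcal Y}$ is trivial. Here ``trivial'' is taken up to the vertical divisor $c\mathcal Y_0$ of \cite[Lemma~2.10]{BHJ17}.

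The one point that needs care is unwinding that vertical term. On a normal test configuration whose normalization is $Y\times\mathbb A^1$, the central fibre is irreducible and reduced. Twisting the polarization by $\mathcal O(c\mathcal Y_0)$ is then the same as composing the $\mathbb G_m$-linearization with the character $t^c$. In the paper's entry convention, this shifts every degree-$m$ entry by $cm$ (after the exponent has been absorbed into $H$). So ``trivial up to $c\mathcal Y_0$'' means exactly ``product with a scalar character twist.'' This identification is the main point to check, but it is bookkeeping rather than a real obstacle.

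\textbf{Step 3: the normal case.} If the test configuration is already normal, it coincides with its normalization, so it is the trivial product up to a scalar twist. For that configuration the Duistermaat--Heckman measure is the Dirac mass at $c$. The total weight is therefore $c\,m\,h(m)$, which gives $B_0=cA_0$ and $B_1=cA_1$. Hence $F_{\mathrm{Don}}=B_1-A_1B_0/A_0=0$, and by \eqref{eq:donaldson-bhj-normalization} $\DF=0$.
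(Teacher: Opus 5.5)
Your proposal is correct and takes the same route as the paper: the statement is a direct citation of \cite[Corollary~B and Lemma~2.10]{BHJ17}. The only extra content is the convention check you carry out, namely that the vertical term $c\mathcal Y_0$ is a scalar-character twist and that the variance is unchanged by reflection and translation. Your explicit computation $B_0=cA_0$, $B_1=cA_1$ for the clause $\DF=0$ is a harmless supplement that makes explicit what the paper leaves implicit.
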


The conclusion of Theorem~\ref{thm:zero-norm} is narrower than the
automorphism-induced polarized products allowed in
Definition~\ref{defn:ordinary-k-polystability}.

\begin{prop}\label{prop:semistability-all-exponents}
Let $e$ be a positive integer.  Every normal ample algebraic test
configuration whose generic polarized fiber is $(X,A^e)$ has nonnegative
Donaldson--Futaki invariant.
\end{prop}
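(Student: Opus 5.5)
The plan is to combine Proposition~\ref{prop:approximate-csck} with Donaldson's lower bound, Theorem~\ref{thm:donaldson-lower-bound}, applied to the polarized manifold $(X,A^e)$. Fix $e\geq1$ and a normal ample test configuration $\mathcal T$ with generic polarized fiber $(X,A^e)$. Regard $\mathcal T$ as an exponent-one test configuration for $(Y,H)=(X,A^e)$; this is legitimate because Definition~\ref{defn:test-configuration} only asks for the generic polarized fiber to be $(X,A^e)$. We then split into two cases according to whether the centered norm $N_{2,\mathrm{Don}}(\mathcal T)$ vanishes.

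\textbf{Case 1: $N_{2,\mathrm{Don}}(\mathcal T)=0$.} By the identification of zero loci in \eqref{eq:donaldson-bhj-normalization}, the Boucksom--Hisamoto--Jonsson norm also vanishes. Theorem~\ref{thm:zero-norm} then shows that the normal ample configuration $\mathcal T$ is the trivial product up to a scalar twist. It follows that $\DF(\mathcal T)=0$, which is in particular nonnegative.

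\textbf{Case 2: $N_{2,\mathrm{Don}}(\mathcal T)>0$.} We transport the metrics of Proposition~\ref{prop:approximate-csck} to the class $c_1(A^e)=e\,c_1(A)$ by taking $e\,\omega_\epsilon$. Under this rescaling the scalar curvature becomes $e^{-1}\Scal(\omega_\epsilon)$ and the volume form is multiplied by $e^5$. The average scalar curvature $\widehat S_{A^e}$ equals $e^{-1}\overline S$, because $\overline S$ is the average of $\Scal(\omega_\epsilon)$: this follows from the zero-mean identity \eqref{eq:zero-mean-profile-error}, or more generally from the topological invariance of the average. Hence
\[
\lVert\Scal(e\omega_\epsilon)-\widehat S_{A^e}\rVert_{L^2}=e^{3/2}\lVert\Scal(\omega_\epsilon)-\overline S\rVert_{L^2}\to0 .
\]
Inequality \eqref{eq:donaldson-calabi-bound} gives $-F_{\mathrm{Don}}(\mathcal T)/N_{2,\mathrm{Don}}(\mathcal T)\leq$ this quantity for every $\epsilon$, and letting $\epsilon\to0^+$ yields $F_{\mathrm{Don}}(\mathcal T)\geq0$. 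Since $F_{\mathrm{Don}}=\tfrac{A_0}{2}\DF$ with $A_0=e^5A^5>0$ by \eqref{eq:donaldson-bhj-normalization}, we conclude $\DF(\mathcal T)\geq0$.

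\textbf{Main obstacle.} The step that needs care is checking that the normalizations match. First, Donaldson's theorem is stated for exponent-one test configurations, so it must apply to $(X,A^e)$ with metrics in $c_1(A^e)$ rather than $c_1(A)$. Second, the sign convention of \eqref{eq:donaldson-bhj-normalization}, expressed in increasing entries, must agree with the paper's $\DF$. Both are already recorded in Theorem~\ref{thm:donaldson-lower-bound}, so the remaining work is to track the scaling factors $e$. These are positive and harmless, since only the conclusion that the $L^2$ error tends to zero is used.
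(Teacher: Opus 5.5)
Your proposal is correct and follows essentially the same route as the paper. Both rescale the approximate metrics of Proposition~\ref{prop:approximate-csck} to $e\,\omega_\epsilon\in c_1(A^e)$, so that the $L^2$ Calabi error is multiplied by $e^{3/2}$ and still tends to zero; both then apply Theorem~\ref{thm:donaldson-lower-bound} when the centered norm is positive and Theorem~\ref{thm:zero-norm} when it vanishes. (One cosmetic slip: the leading Hilbert coefficient is $e^5(A^5)/5!$ rather than $e^5A^5$, which is irrelevant because only its positivity is used.)
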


\begin{proof}
We first rescale the approximate metrics from $c_1(A)$ to $c_1(A^e)$.
We then use Theorems~\ref{thm:donaldson-lower-bound} and~\ref{thm:zero-norm}
according to whether the centered norm is positive or zero.  We begin by
proving that the infimum of the centered $L^2$ scalar-curvature norm in
$c_1(A^e)$ is zero.
Proposition~\ref{prop:approximate-csck} gives K\"ahler metrics
$\omega_q\in c_1(A)$ such that
\[
 \left\lVert\Scal(\omega_q)-\overline S\right\rVert_{L^2}\to0.
\]
The metrics $e\omega_q$ belong to $c_1(A^e)$.  Since $X$ has complex
dimension five, we have
\[
 \Scal(e\omega_q)=e^{-1}\Scal(\omega_q),
 \qquad
 (e\omega_q)^5=e^5\omega_q^5,
 \qquad
 \widehat S_{A^e}=e^{-1}\overline S.
\]
Consequently,
\begin{equation}\label{eq:scaled-calabi-error}
 \left\lVert\Scal(e\omega_q)-\widehat S_{A^e}\right\rVert_{L^2}^2
 =e^3\left\lVert\Scal(\omega_q)-\overline S\right\rVert_{L^2}^2
 \to0.
\end{equation}

We now exclude a negative Donaldson--Futaki invariant for an arbitrary
normal ample test configuration of $(X,A^e)$.
Let $\mathcal T$ be such a test configuration, regarded as an exponent-one
test configuration of the polarized pair $(X,A^e)$.  If
$\lVert\mathcal T\rVert_{2,\mathrm{BHJ}}>0$, then
\eqref{eq:donaldson-bhj-normalization},
\eqref{eq:donaldson-calabi-bound}, and
\eqref{eq:scaled-calabi-error} exclude $\DF(\mathcal T)<0$.  If
$\lVert\mathcal T\rVert_{2,\mathrm{BHJ}}=0$, then
Theorem~\ref{thm:zero-norm} gives
$\DF(\mathcal T)=0$.  Thus $\DF(\mathcal T)\geq0$ in both cases.
\end{proof}

\subsection{Common rational affine data}

We next identify the two affine profiles from
Proposition~\ref{prop:affine-initials} and use algebraicity of the source
test configuration to prove that their common coefficients are rational.

\begin{lem}[Moment rigidity]\label{lem:affine-pushforward-rigidity}
Notation and conditions are as in Set-up~\ref{setup:initial-filtrations}.  If
$Z$ is distributed according to $\rho$, then
\begin{equation}\label{eq:rho-moment-data}
 \mathbb E[Z]=\frac{8807}{19450},
 \qquad
 \Var(Z)=\frac{153270007}{2648117500},
 \qquad
 \mathbb E\!\left[(Z-\mathbb E[Z])^3\right]
 =\frac{41764918113}{12876471343750}.
\end{equation}
The last two numbers in \eqref{eq:rho-moment-data} are positive.  If
$a,b,a',b'\in\mathbb R$ satisfy
\begin{equation}\label{eq:equal-affine-pushforwards}
 (a+bx)_*\rho=(a'+b'x)_*\rho,
\end{equation}
then $a=a'$ and $b=b'$.
\end{lem}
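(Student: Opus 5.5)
The plan has two parts: first compute the three moments of $\rho$, then use them to compare the two pushforward laws.

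\textbf{The moments.} Put $I=a_0=\int_0^1p(x)\,dx$ and compute $\int_0^1x^kp(x)\,dx$ for $k=1,2,3$ from the expansion \eqref{eq:affine-initials-p-expanded}. Since $p$ has degree four, each moment is a finite sum $\sum_rp_r/(r+k+1)$ of rational numbers. We then form
\[
 \mathbb E[Z]=\frac{\int xp}{I},
 \qquad
 \Var(Z)=\frac{\int x^2p}{I}-\mathbb E[Z]^2,
\]
and the third central moment by the usual binomial expansion. After reduction these should give the fractions in \eqref{eq:rho-moment-data}. Positivity of the variance is automatic, since $\rho$ is not a point mass. Positivity of the third central moment is read off the explicit fraction. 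This step is routine exact arithmetic, and it is the only place where care is needed; I expect no conceptual obstacle.

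\textbf{Rigidity.} Suppose $(a+bZ)$ and $(a'+b'Z)$ have the same law. Equal means give $a+b\,\mathbb E[Z]=a'+b'\,\mathbb E[Z]$. Equal variances give $b^2\Var(Z)=b'^2\Var(Z)$, and since $\Var(Z)>0$ this yields $\lvert b\rvert=\lvert b'\rvert$. Equal third central moments give $b^3\mu_3=b'^3\mu_3$ with $\mu_3\neq0$, hence $b^3=b'^3$, so $b=b'$ over the reals. Substituting back into the equality of means gives $a=a'$.

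The sign ambiguity $b'=-b$ is exactly what the nonzero skewness excludes. This is the only substantive point, and it is why the lemma records the third moment.
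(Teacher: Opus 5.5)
Your proposal is correct and follows the paper's proof. The moments come from termwise integration of the expanded quartic $p$, and rigidity rests on the nonzero third central moment excluding the sign flip $b'=-b$. Your step $b^3=b'^3\Rightarrow b=b'$ is only a slight streamlining of the paper's case split on $b'=\pm b$, since it makes the variance comparison unnecessary for the rigidity.
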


\begin{proof}
The polynomial $p$ in \eqref{eq:rho-definition} expands as
\begin{align*}
 p(x)={}&1791155412951840+48540311690994864x
 -96622883665346480x^2\\
 &+59625573524485696x^3-11543001550134080x^4.
\end{align*}
Termwise integration gives
\begin{align*}
 \int_0^1p(x)\,dx&=\frac{19354429323285160}{3},
 &\int_0^1xp(x)\,dx&=\frac{43818627005185708}{15},\\
 \int_0^1x^2p(x)\,dx&=\frac{35619115559228396}{21},
 &\int_0^1x^3p(x)\,dx&=\frac{16906516925584312}{15}.
\end{align*}
Dividing by $\int_0^1p(x)\,dx$ and taking the corresponding central
moments gives \eqref{eq:rho-moment-data}.

Assume \eqref{eq:equal-affine-pushforwards}.  Equality of variances gives
$b^2=(b')^2$.  If $b'=b$, equality of means gives $a'=a$.  Suppose
that $b'=-b$.  Equality of means gives
$a'=a+2b\mathbb E[Z]$, whereas equality of the third central moments
gives
\[
 b^3\mathbb E\!\left[(Z-\mathbb E[Z])^3\right]
 =-b^3\mathbb E\!\left[(Z-\mathbb E[Z])^3\right].
\]
The last number in \eqref{eq:rho-moment-data} is nonzero, so $b=0$.
Thus $b'=0$, and equality of means gives $a'=a$.
\end{proof}

\begin{cor}\label{cor:common-affine-data}
Notation and conditions are as in Set-up~\ref{setup:initial-filtrations}.  There
exist unique real numbers $a,b$ such that, after division by the actual
degree, both opposite initial filtrations have convex transform
\begin{equation}\label{eq:common-affine-transform}
 G^+(x,y)=G^-(x,y)=a+bx
\end{equation}
at every interior point of $\Omega$.  The entry probability laws of
$\mathcal T$, $\chi^+$, and
$\chi^-$ are all
\begin{equation}\label{eq:common-affine-law}
 (a+bx)_*\rho.
\end{equation}
In the exponent-one grading of $(X,A^e)$, the common law is
\begin{equation}\label{eq:common-affine-law-regraded}
 (e(a+bx))_*\rho.
\end{equation}
\end{cor}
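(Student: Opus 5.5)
The corollary should follow by combining Proposition~\ref{prop:affine-initials} with the moment rigidity of Lemma~\ref{lem:affine-pushforward-rigidity}, using that both initial filtrations have the same entry law as $\mathcal T$.

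\textbf{Step 1 (equal laws).} For each sign $\epsilon\in\{+,-\}$, Proposition~\ref{prop:affine-initials} gives real numbers $a_\epsilon,b_\epsilon$ with $G^\epsilon(x,y)=a_\epsilon+b_\epsilon x$ at every interior point of $\Omega$. By \eqref{eq:affine-initials-final-actual-law}, the entry law of $\mathcal T$ in actual degree is $(a_\epsilon+b_\epsilon x)_*\rho$. This law does not depend on $\epsilon$, since it is the law of the fixed source $\mathcal T$; Lemma~\ref{lem:affine-initials-veronese} shows that each initial filtration preserves the degreewise entry multiset. Hence
\[
 (a_++b_+x)_*\rho=(a_-+b_-x)_*\rho .
\]

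\textbf{Step 2 (rigidity).} Lemma~\ref{lem:affine-pushforward-rigidity} applies to this identity. Equal variances give $b_+^2=b_-^2$. The nonzero third central moment of $\rho$ excludes the case $b_-=-b_+\neq0$. Equal means then give $a_+=a_-$. Set $a=a_\pm$ and $b=b_\pm$; this proves \eqref{eq:common-affine-transform}.

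\textbf{Step 3 (uniqueness).} If another pair $(a',b')$ also works, then $a+bx=a'+b'x$ for all $x\in(0,1)$, so $a=a'$ and $b=b'$. Alternatively, uniqueness follows from Lemma~\ref{lem:affine-pushforward-rigidity} applied once more.

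\textbf{Step 4 (the stated laws).} The law \eqref{eq:common-affine-law} is \eqref{eq:affine-initials-final-actual-law} with the common coefficients. The exponent-one version \eqref{eq:common-affine-law-regraded} follows from \eqref{eq:affine-initials-final-regraded-law}, equivalently from the regrading identity \eqref{eq:affine-initials-transform-regrading}.

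The main point needing care is Step~1: the two affine laws must be compared as pushforwards of the same source law, and not merely as two separately computed transforms. This is guaranteed because the Grassmannian limit defining each initial filtration preserves dimensions, so both entry laws coincide with the law of $\mathcal T$. Beyond that, the argument is bookkeeping.

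The rationality of $a,b$ promised in the subsection title is not part of this corollary's statement, so I would defer it to the next results.
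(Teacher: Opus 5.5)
Your proposal is correct and is essentially the paper's proof. Proposition~\ref{prop:affine-initials} supplies the two affine transforms and identifies both pushforwards of $\rho$ with the entry law of $\mathcal T$, Lemma~\ref{lem:affine-pushforward-rigidity} then forces $a_+=a_-$ and $b_+=b_-$, and the stated laws are read off from the proposition. Your direct uniqueness argument, that two affine functions agreeing for all $x\in(0,1)$ coincide, is slightly more elementary than the paper's appeal to the rigidity lemma, but both work.
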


\begin{proof}
Proposition~\ref{prop:affine-initials} gives affine transforms
$a_++b_+x$ and $a_-+b_-x$.  It also identifies both pushforwards of
$\rho$ with the entry law of $\mathcal T$ after division by the actual degree.
Lemma~\ref{lem:affine-pushforward-rigidity} gives
\[
 a_+=a_-=:a,
 \qquad
 b_+=b_-=:b.
\]
The formulas after division by the actual degree and in exponent-one degree
are the two conclusions of
Proposition~\ref{prop:affine-initials}.  Uniqueness follows from
Lemma~\ref{lem:affine-pushforward-rigidity}.
\end{proof}

\begin{lem}[Rational moments of algebraic Duistermaat--Heckman measures]
\label{lem:dh-rational-moments}
Let $(Y,H)$ be an $n$-dimensional polarized projective scheme, and let
$(\mathcal Y,\mathcal H)$ be an ample exponent-one algebraic test
configuration.  For all sufficiently large $k$, write
\begin{equation}\label{eq:dh-weight-decomposition}
 H^0(\mathcal Y_0,\mathcal H_0^k)
 =\bigoplus_{\lambda\in\mathbb Z}E_{k,\lambda}
\end{equation}
for the decomposition into weight spaces on the central fiber, and set
\begin{equation}\label{eq:unnormalized-dh-measures}
 \mu_k=k^{-n}\sum_{\lambda\in\mathbb Z}
 \dim E_{k,\lambda}\,\delta_{\lambda/k}.
\end{equation}
If $\mu_k$ converges weakly to $\mu$, then
\begin{equation}\label{eq:rational-dh-moments}
 \int_{\mathbb R}x^q\,d\mu(x)\in\mathbb Q
\end{equation}
for every nonnegative integer $q$.
\end{lem}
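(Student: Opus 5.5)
The idea is to write each moment of $\mu$ as a limit of normalized weight sums, and to show that these sums are eventually polynomials in $k$ with rational coefficients; the limit then simply picks out a rational leading coefficient. Concretely, for fixed $q\geq0$ put
\[
 W_q(k)=\sum_{\lambda\in\mathbb Z}\lambda^q\dim E_{k,\lambda},
 \qquad
 \int_{\mathbb R}x^q\,d\mu_k(x)=k^{-n-q}W_q(k).
\]
The plan has three steps:
\begin{enumerate}
\item Prove that $W_q(k)$ agrees, for all sufficiently large $k$, with a polynomial $P_q\in\mathbb Q[k]$ of degree at most $n+q$.
\item Pass to the limit in the displayed identity.
\item Check that the weak limit may be applied to the unbounded test function $x^q$.
\end{enumerate}

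\textbf{Polynomiality.} The central fiber $\mathcal Y_0$ is a projective scheme of dimension $n$, carrying a $\mathbb G_m$-action and the ample linearized line bundle $\mathcal H_0$. The standard route is equivariant Riemann--Roch, or the equivariant Hilbert function. First, since the $\mathbb G_m$-action on the graded ring $\bigoplus_kH^0(\mathcal Y_0,\mathcal H_0^k)$ is algebraic, the equivariant Hilbert series
\[
 \sum_k\sum_\lambda\dim E_{k,\lambda}\,t^\lambda s^k
\]
is a rational function. Its denominator is a product of factors $(1-t^{w_i}s)$, with $w_i$ the weights of finitely many homogeneous generators in degree one after passing to a Veronese, or degree $r$ in general. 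Second, apply $(t\,\partial_t)^q$ and set $t=1$. This yields a rational function of $s$ with poles only at roots of unity, of order at most $n+1+q$. Hence $W_q(k)$ is eventually a quasi-polynomial in $k$ with rational coefficients.

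To get an honest polynomial rather than a quasi-polynomial, replace the denominator-generator argument by cohomology. For $k\gg0$, Serre vanishing gives
\[
 \dim E_{k,\lambda}=\chi\text{ of the }\lambda\text{-isotypic part of }\mathcal H_0^k .
\]
Then $\sum_\lambda\lambda^q\dim E_{k,\lambda}$ is the $q$-th Taylor coefficient in the equivariant parameter of the equivariant Euler characteristic. This is a polynomial in $k$ with rational coefficients, by the Donaldson/Mumford argument: the total weight is a polynomial of degree at most $n+1$, and similarly for higher power sums, for instance via filtering by the Hilbert polynomial of the $\mathbb G_m$-graded module. This is the same fact already quoted after \eqref{eq:chow-weight-expansions} from \cite{Don02} and \cite[Theorem~3.1]{BHJ17}.

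The part requiring care is the degree bound $\deg P_q\leq n+q$. I would obtain it from the linear bound $|\lambda|\leq Ck$, valid because the weights of the finitely many generators are bounded. Together with $\dim H^0=O(k^n)$ this gives $|W_q(k)|\leq C^qk^{n+q}O(1)$, so a polynomial agreeing with $W_q$ for large $k$ has degree at most $n+q$.

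\textbf{Passage to the limit.} Since the measures $\mu_k$ are all supported in the fixed compact interval $[-C,C]$, weak convergence implies convergence of the moments $\int x^q\,d\mu_k$. This disposes of the third step. Therefore
\[
 \int x^q\,d\mu=\lim_{k\to\infty}k^{-n-q}P_q(k),
\]
which is the coefficient of $k^{n+q}$ in $P_q$, hence rational. The main obstacle is the first step: ensuring eventual honest polynomiality with rational coefficients, not merely a quasi-polynomial, for higher weight power sums. If only quasi-polynomiality is available, I would restrict to $k$ divisible by the period. The leading coefficient of a quasi-polynomial of degree at most $n+q$ along such a residue class is still rational, and the limit along a subsequence equals the full limit. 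This circumvents the issue entirely.
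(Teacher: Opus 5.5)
Your argument is correct and, in its fallback form, is the paper's own proof. The paper likewise obtains a rational bigraded Hilbert series from a finite multigraded free resolution, applies $(z\,d/dz)^q$ at $z=1$ to get eventually rational quasipolynomials, uses the uniform linear weight bound to pass from weak convergence to convergence of moments, and reads each moment off as the rational coefficient of $k^{n+q}$ along a residue class. Your detour to honest polynomiality is unnecessary and can simply be dropped: for $q\ge2$ it would need equivariant Riemann--Roch, and the total-weight results you cite from Donaldson and Boucksom--Hisamoto--Jonsson only cover $q\le1$.
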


\begin{proof}
We first express all weight moments by rational generating functions.  We
then recover the limiting moments from the rational leading coefficients of
the resulting quasipolynomials.

\medskip

\noindent\textbf{Step 1.} We construct a rational generating function for
each unnormalized weight moment.
Put
\[
 R=\bigoplus_{k\geq0}H^0(\mathcal Y_0,\mathcal H_0^k),
 \qquad
 D=R_0=H^0(\mathcal Y_0,\mathcal O_{\mathcal Y_0}).
\]
Properness of $\mathcal Y_0$ makes $D$ finite-dimensional over
$\mathbb C$.  Choose a homogeneous basis of the finite-dimensional
$\mathbb C^*$-module $D$, and choose homogeneous generators $x_i$ of
$R$ as a $D$-algebra.  Write the bidegree of $x_i$ as
$(d_i,w_i)$, where $d_i>0$.  Give the variables of
$S=\mathbb C[X_i]$ the corresponding bidegrees.  The chosen basis of
$D$ makes $R$ a finite multigraded
$S$-module.  A finite multigraded free resolution over $S$ gives
\begin{equation}\label{eq:bigraded-hilbert-series}
 \sum_{k,\lambda}\dim E_{k,\lambda}t^kz^\lambda
 =\frac{P(t,z)}{\prod_i(1-t^{d_i}z^{w_i})},
\end{equation}
where $P(t,z)$ is a Laurent polynomial with integer coefficients.

Fix $q\geq0$.  Acting by $(z\,d/dz)^q$ on
\eqref{eq:bigraded-hilbert-series} and setting $z=1$, we obtain a rational
one-variable series
\[
 \sum_{k\geq0}s_q(k)t^k,
 \qquad
 s_q(k)=\sum_\lambda\lambda^q\dim E_{k,\lambda},
\]
whose denominator is a product of powers of $1-t^{d_i}$.  Let $r$ be
a common multiple of the integers $d_i$.  Partial fractions after
splitting the denominator over the $r$-th roots of unity show that
$s_q(k)$ is, for all sufficiently large $k$, a quasipolynomial with
rational coefficients and period dividing $r$.

\medskip

\noindent\textbf{Step 2.} We identify the limiting moment with a rational
leading coefficient.
Algebraicity gives a uniform linear bound on the central weights.  Hence the
supports of the measures in \eqref{eq:unnormalized-dh-measures} lie in one
compact interval.  Weak convergence implies convergence of every fixed
moment, and
\[
 \int_{\mathbb R}x^q\,d\mu_k(x)=k^{-(n+q)}s_q(k).
\]
On each residue class modulo $r$, the limit is the coefficient of
$k^{n+q}$ in a polynomial with rational coefficients, with value zero if
the degree is smaller.  The full sequence has one weak limit, so the
residue-class limits agree.  Their common value is the moment in
\eqref{eq:rational-dh-moments}.
\end{proof}

\begin{cor}\label{cor:rational-affine-data}
The coefficients $a$ and $b$ in
Corollary~\ref{cor:common-affine-data} belong to $\mathbb Q$.
\end{cor}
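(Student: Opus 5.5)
The plan is to combine Lemma~\ref{lem:dh-rational-moments} with the explicit moments of $\rho$ in Lemma~\ref{lem:affine-pushforward-rigidity}. By Corollary~\ref{cor:common-affine-data}, the exponent-one entry law of $\mathcal T$ is $(e(a+bx))_*\rho$. The entries are the negatives of the central algebraic weights (Definition~\ref{defn:test-configuration}), so the central-weight Duistermaat--Heckman limit measure of $\mathcal T$, viewed as an exponent-one test configuration of $(X,A^e)$, is the reflection $(-e(a+bx))_*\rho$, up to the positive total mass $e^5a_0\cdot 5!$ or whatever normalization $\mu_k$ in \eqref{eq:unnormalized-dh-measures} uses. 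The only care needed here is that the weak limit of $\mu_k$ exists and equals this measure. The existence of the limit is Theorem~\ref{thm:filtered-okounkov-equidistribution}, applied as in Lemma~\ref{lem:affine-initials-veronese}. The total mass is $\lim_k k^{-5}\dim H^0(X,A^{ek})=e^5a_0$, which is rational because $a_0$ is an integral of a rational polynomial.

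The first step is to apply Lemma~\ref{lem:dh-rational-moments} with $q=0,1,2,3$. This shows that the moments $m_q=\int x^q\,d\mu$ are rational. Dividing by the rational mass $m_0$, the normalized moments of the random variable $Y=-e(a+bZ)$, with $Z\sim\rho$, are rational. Since $e\in\mathbb Z_{>0}$, the same holds for $W=a+bZ$, so $\mathbb E[W]$, $\Var(W)$ and $\mathbb E[(W-\mathbb E W)^3]$ are all rational.

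The second step extracts $b$. Write $v=\Var(Z)$ and $\kappa_3=\mathbb E[(Z-\mathbb E Z)^3]$. Both are nonzero rationals by \eqref{eq:rho-moment-data}. We have $\Var(W)=b^2v$ and $\mathbb E[(W-\mathbb EW)^3]=b^3\kappa_3$. If $b=0$ there is nothing to prove. Otherwise $b=\bigl(b^3\kappa_3/\kappa_3\bigr)\big/\bigl(b^2v/v\bigr)$ is a quotient of rationals, so $b\in\mathbb Q$. Then $a=\mathbb E[W]-b\,\mathbb E[Z]\in\mathbb Q$, since $\mathbb E[Z]=8807/19450$.

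The main obstacle is the bookkeeping in the first step: checking that the measure $\mu$ of Lemma~\ref{lem:dh-rational-moments} is exactly a positive rational multiple of the reflected law $(-e(a+bx))_*\rho$. This requires matching the degree normalization, the sign convention, and the mass, and confirming that the full sequence $\mu_k$ converges rather than only a subsequence. I would settle the convergence by citing the equidistribution statement already used in Lemma~\ref{lem:affine-initials-veronese}, where the entry multiset of $\mathcal T$ in each degree is the one whose limit law was identified. The remaining scalings by $e$, by $-1$, and by $m_0$ all preserve rationality of the moments.
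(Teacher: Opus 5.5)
Your proposal is correct and follows essentially the paper's argument. You apply Lemma~\ref{lem:dh-rational-moments} to $\mathcal T$ as an exponent-one configuration of $(X,A^e)$ and normalize by the rational mass. You then recover $b=b^3/b^2$ from the rational second and third central moments of $\pm e(a+bZ)$, using the nonzero rational values of $\Var(Z)$ and $\mathbb E[(Z-\mathbb E Z)^3]$ in \eqref{eq:rho-moment-data}, and obtain $a$ from the mean. Your extra bookkeeping, namely fixing the sign $\sigma=-1$ and justifying full-sequence weak convergence via the equidistribution used in Lemma~\ref{lem:affine-initials-veronese}, is harmless; the paper leaves both implicit.
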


\begin{proof}
Lemma~\ref{lem:dh-rational-moments}, applied to $\mathcal T$ regarded as an
exponent-one test configuration of $(X,A^e)$, shows that the zeroth moment is the
positive rational leading Hilbert coefficient.  Dividing by this mass shows
that every moment of the associated central-weight probability law is
rational.  The
convention for increasing entries in Set-up~\ref{setup:initial-filtrations}
provides $\sigma\in\{1,-1\}$ such that this law is the law of
\begin{equation}\label{eq:algebraic-affine-random-variable}
 Y=\sigma e(a+bZ),
 \qquad
 Z\sim\rho.
\end{equation}

Suppose that $b\neq0$.  Rationality of the second and third central
moments of $Y$, together with \eqref{eq:rho-moment-data}, gives
\begin{equation}\label{eq:rational-affine-powers}
 e^2b^2\Var(Z)\in\mathbb Q,
 \qquad
 \sigma e^3b^3\mathbb E\!\left[(Z-\mathbb E[Z])^3\right]\in\mathbb Q.
\end{equation}
Both coefficients multiplying $b^2$ and $b^3$ in
\eqref{eq:rational-affine-powers} are nonzero rational numbers.  Thus
$b^2,b^3\in\mathbb Q$, and
\[
 b=\frac{b^3}{b^2}\in\mathbb Q.
\]
The mean of $Y$ is rational.  Together, \eqref{eq:rho-moment-data} and
\eqref{eq:algebraic-affine-random-variable} then give $a\in\mathbb Q$.
If $b=0$, rationality of the mean gives $a\in\mathbb Q$ directly.
\end{proof}

\subsection{Normalization and the Donaldson--Futaki invariant}

We record the normalization formula, including the codimension and
Duistermaat--Heckman conclusions that will be used after base change.

\begin{lem}[Normalization comparison]
\label{lem:normalization-comparison}
Let $(Y,H)$ be a polarized normal projective complex variety of dimension
$n$.  Let $(\mathcal Y,\mathcal H)$ be an integral ample algebraic test
configuration whose generic polarized fiber is $(Y,H^e)$, and let
\begin{equation}\label{eq:normalization-morphism}
 \nu\colon(\mathcal Y^\nu,\mathcal H^\nu)
 \to(\mathcal Y,\mathcal H)
\end{equation}
be its normalization, where $\mathcal H^\nu=\nu^*\mathcal H$.  Write
\begin{equation}\label{eq:normalization-structure-morphisms}
 p\colon\mathcal Y\to\mathbb A^1,
 \qquad
 p^\nu=p\circ\nu\colon\mathcal Y^\nu\to\mathbb A^1,
 \qquad
 R=\mathbb C[t].
\end{equation}
Write
\begin{align}
 h(m)&=A_0m^n+A_1m^{n-1}+O(m^{n-2}),
 \label{eq:normalization-hilbert-expansion}\\
 w_{\mathcal Y}(m)&=B_0m^{n+1}+B_1m^n+O(m^{n-1}),
 \label{eq:normalization-source-weight-expansion}\\
 w_{\mathcal Y^\nu}(m)&=B_0m^{n+1}+B_1^\nu m^n+O(m^{n-1}),
 \label{eq:normalization-target-weight-expansion}
\end{align}
using increasing-entry totals and $A_0>0$.  Put
\begin{equation}\label{eq:normalization-quotient}
 \mathcal Q=\nu_*\mathcal O_{\mathcal Y^\nu}/\mathcal O_{\mathcal Y}.
\end{equation}
For all sufficiently large $m$, put
\begin{equation}\label{eq:normalization-relative-pushforwards}
 \mathcal E_m=p_*\mathcal H^m,
 \qquad
 \mathcal E_m^\nu=p^\nu_*(\mathcal H^\nu)^m,
 \qquad
 \mathcal T_m=p_*(\mathcal Q\otimes\mathcal H^m),
\end{equation}
and let $\ell(m)$ be the $R$-length of
$\Gamma(\mathbb A^1,\mathcal T_m)$.  Then there exists
$c\geq0$ such that
\begin{equation}\label{eq:normalization-length}
 \ell(m)=cm^n+O(m^{n-1}),
\end{equation}
and
\begin{equation}\label{eq:normalization-df-comparison}
 \DF(\mathcal Y,\mathcal H)
 -\DF(\mathcal Y^\nu,\mathcal H^\nu)=\frac{2c}{A_0}.
\end{equation}
If the two Donaldson--Futaki invariants are equal, then
\begin{equation}\label{eq:normalization-codimension}
 \operatorname{codim}_{\mathcal Y}\operatorname{Supp}(\mathcal Q)\geq2.
\end{equation}
If the two Donaldson--Futaki invariants are equal, the total-variation
distance between the normalized probability measures of the increasing
entries in degree $m$ is $O(m^{-1})$; equivalently,
the reflected central-weight measures have this property.  The two test
configurations have equal Duistermaat--Heckman probability measures.
\end{lem}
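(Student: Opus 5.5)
We compare the graded section modules of $\mathcal Y$ and $\mathcal Y^\nu$ degree by degree through the short exact sequence
\[
 0\to\mathcal O_{\mathcal Y}\to\nu_*\mathcal O_{\mathcal Y^\nu}\to\mathcal Q\to0 .
\]
Tensor it with $\mathcal H^m$ and push forward by $p$. Since $\mathcal H$ is $p$-ample, Serre vanishing gives, for $m\gg0$, a short exact sequence of $\mathbb C^*$-equivariant $R$-modules
\[
 0\to\mathcal E_m\to\mathcal E_m^\nu\to\mathcal T_m\to0 .
\]
The normalization is an isomorphism over $\mathbb G_m$, because the generic fiber $Y$ is normal and $\mathcal Y\setminus\mathcal Y_0\simeq Y\times\mathbb G_m$. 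Hence $\mathcal Q$ is supported in $\mathcal Y_0$ and $\mathcal T_m$ is $t$-torsion of finite length $\ell(m)$. The two modules $\mathcal E_m\subseteq\mathcal E_m^\nu$ are lattices in the same $\mathbb C[t,t^{-1}]$-module $H^0(Y,H^{em})\otimes\mathbb C[t^{\pm1}]$, so $h(m)$ and $B_0$ agree.

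In the increasing-entry convention, an element lies in a larger lattice when $t^\lambda s$ extends for smaller $\lambda$. The standard identity for lattices therefore gives
\[
 w_{\mathcal Y}(m)-w_{\mathcal Y^\nu}(m)=\ell(m),
\]
the total drop of entries being the colength. This is the Ross--Thomas / Boucksom--Hisamoto--Jonsson computation; I would verify the sign carefully against Definition~\ref{defn:test-configuration}. Next, $\ell(m)=\chi(\mathcal Q\otimes\mathcal H^m)$ for $m\gg0$, where $\mathcal Q$ is a coherent sheaf on a projective scheme of dimension $\le n$ (a subscheme of a thickening of $\mathcal Y_0$). So $\ell(m)$ is a polynomial of degree at most $n$ with leading coefficient $c\ge0$, and $c>0$ exactly when $\operatorname{Supp}\mathcal Q$ has dimension $n$, i.e.\ codimension one in $\mathcal Y$. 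This proves \eqref{eq:normalization-length}, and therefore $B_1=B_1^\nu+c$. Substituting into the Donaldson--Futaki formula $\DF=2(B_1A_0-A_1B_0)/A_0^2$ (the paper's increasing-entry normalization, as in \eqref{eq:affine-initials-source-DF-coefficients}) yields \eqref{eq:normalization-df-comparison}. Equality of invariants forces $c=0$, which gives \eqref{eq:normalization-codimension}.

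For the measure statement, assume $c=0$, so $\ell(m)=O(m^{n-1})$. The key step, and the main obstacle, is converting a small colength into a small total-variation distance. I would use the elementary divisors of $\mathcal E_m\subseteq\mathcal E_m^\nu$. With compatible bases (Smith form over the PID $R$, compatible with the torus grading), the entries of $\mathcal E_m$ are obtained from those of $\mathcal E_m^\nu$ by nonnegative integer shifts $k_i$ with $\sum k_i=\ell(m)$. I expect care is needed to check that the filtrations, and hence the entry multisets, are computed from such adapted bases; one could instead argue via the successive-minima interpretation.

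Given adapted bases, the number of entries that actually change is at most $\sum k_i=\ell(m)=O(m^{n-1})$, while the total dimension is $h(m)\sim A_0m^n$. The total-variation distance between the two normalized entry measures is therefore at most $\ell(m)/h(m)=O(m^{-1})$. Reflecting under $x\mapsto -x$ gives the same bound for the central-weight measures. Both sequences have normalized entries supported in a fixed compact interval (by linear boundedness), and their total-variation distance tends to zero, so their weak limits coincide. Hence the two Duistermaat--Heckman probability measures are equal.
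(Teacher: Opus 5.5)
Your proposal is correct. For the length polynomial, the identity $w_{\mathcal Y}(m)=w_{\mathcal Y^\nu}(m)+\ell(m)$ (the sign you flag is right, since the larger lattice has smaller increasing entries), the relation $B_1=B_1^\nu+c$, and the codimension statement, you follow the paper's route.

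The measure statement is where you diverge. The paper deliberately avoids any weight-compatible Smith basis. It tensors the lattice sequence with $k_0=R/(t)$ to obtain the equivariant sequence $0\to\operatorname{Tor}^R_1(Q_m,k_0)\to M_m/tM_m\to M_m^\nu/tM_m^\nu\to Q_m/tQ_m\to0$. Both end terms have dimension at most $\ell(m)$, and semisimplicity of $\mathbb C^*$-representations then matches all but $\ell(m)$ weights on each side, giving $d_{\mathrm{TV}}\leq 2\ell(m)/h(m)$.

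Your route instead needs a homogeneous basis adapted to both lattices at once. The step you hedge on is true and standard. Write $\Gamma(\mathbb A^1,\mathcal E_m)=\sum_\lambda t^\lambda E_\lambda$ and $\Gamma(\mathbb A^1,\mathcal E_m^\nu)=\sum_\lambda t^\lambda E^\nu_\lambda$ for increasing filtrations $E_\lambda\subseteq E^\nu_\lambda$ of the same space $H^0(Y,H^{em})$. Any two filtrations admit a common adapted basis $v_i$: choose complements of $(E_{\lambda-1}\cap E^\nu_\mu)+(E_\lambda\cap E^\nu_{\mu-1})$ in $E_\lambda\cap E^\nu_\mu$, exactly as in Step~1 of Lemma~\ref{lem:oriented-relative-smith}. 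The two lattices are then $\bigoplus_i R\,t^{a_i}v_i$ and $\bigoplus_i R\,t^{a_i^\nu}v_i$. Containment forces $k_i=a_i-a_i^\nu\geq0$, and the quotient is $\bigoplus_i R/(t^{k_i})$.

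With that lemma written out, your argument gets the weight identity and the total-variation bound in one stroke. It works directly with the filtration entries, so the measure statement needs no cohomology and base change, and it gives the slightly sharper bound $\ell(m)/h(m)$. The paper's route trades this for never having to construct a common splitting.
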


\begin{proof}
We compare the two extension lattices, read their determinant-weight
difference from Smith normal form, and then interpret the equality case by
the Hilbert polynomial of the normalization quotient.

\medskip

\noindent\textbf{Step 1.} We express the weight difference as the length of
the normalization quotient.
The normalization is an isomorphism away from the central fiber, so
$\mathcal Q$ is a coherent sheaf supported on that fiber.  Tensor the
defining sequence of $\mathcal Q$ by $\mathcal H^m$.  Relative Serre
vanishing \cite[Tag~02O1]{Sta26} and the projection formula give, for all sufficiently large $m$,
the exact sequence of coherent $\mathcal O_{\mathbb A^1}$-modules
\begin{equation}\label{eq:normalization-relative-lattice-sequence}
 0\to\mathcal E_m\to\mathcal E_m^\nu\to\mathcal T_m\to0.
\end{equation}
The morphism $p$ is flat.  The normal integral total space
$\mathcal Y^\nu$ has no $R$-torsion, and torsion-free modules over the
principal ideal domain $R$ are flat; hence $p^\nu$ is flat as well.
Cohomology and base change therefore make $\mathcal E_m$ and
$\mathcal E_m^\nu$ locally free of equal rank $h(m)$ for large $m$.
The module $\mathcal T_m$ is finite and supported at $t=0$, so its global
sections have finite $R$-length.  Since $\mathbb A^1$ is affine, taking
global sections in \eqref{eq:normalization-relative-lattice-sequence} gives
\begin{equation}\label{eq:normalization-lattice-sequence}
 0\to\Gamma(\mathbb A^1,\mathcal E_m)
 \to\Gamma(\mathbb A^1,\mathcal E_m^\nu)
 \to\Gamma(\mathbb A^1,\mathcal T_m)
 \to0.
\end{equation}
The first two terms are free $R$-modules of rank $h(m)$, and
the last term is $t$-torsion.  Put the first map in Smith normal form,
with diagonal factors
\[
 t^{r_{m,1}},\ldots,t^{r_{m,h(m)}},
 \qquad
 r_{m,k}\geq0.
\]
The increasing-entry convention gives
\begin{equation}\label{eq:normalization-smith-length}
 \ell(m)=\sum_{k=1}^{h(m)}r_{m,k},
 \qquad
 w_{\mathcal Y}(m)=w_{\mathcal Y^\nu}(m)+\ell(m).
\end{equation}
The determinant of the lattice inclusion is a unit times
$t^{\ell(m)}$.  The inclusion is equivariant, so comparison of the two
equivariant determinant lines, with $t$ of increasing-entry weight one,
gives the second equality in \eqref{eq:normalization-smith-length}.  This
argument uses Smith normal form only for the underlying lattices and does not
require a Smith basis compatible with the weight decompositions.

\medskip

\noindent\textbf{Step 2.} We obtain the normalization formula from the
leading coefficient of the length polynomial.
Relative Serre vanishing identifies $\ell(m)$ with the Euler
characteristic, and hence with the Hilbert polynomial, of $\mathcal Q$ with
respect to $\mathcal H$.  Its degree is at most $n$, because $\mathcal Q$ is supported on the
$n$-dimensional central fiber.  Its coefficient at $m^n$ is
nonnegative and is positive exactly when $\mathcal Q$ has an
$n$-dimensional support component.  This proves
\eqref{eq:normalization-length}.  Comparing coefficients in
\eqref{eq:normalization-smith-length} gives $B_1=B_1^\nu+c$.  Substitution
into
\[
 \DF=\frac{2(B_1A_0-A_1B_0)}{A_0^2}
\]
gives \eqref{eq:normalization-df-comparison}.

\medskip

\noindent\textbf{Step 3.} We prove the codimension and measure statements
when the two Donaldson--Futaki invariants are equal.
By \eqref{eq:normalization-df-comparison}, we have $c=0$.  Thus
$\mathcal Q$ has support dimension at most $n-1$, which proves
\eqref{eq:normalization-codimension}, and $\ell(m)=O(m^{n-1})$.

Put $k_0=R/(t)$ and, in degree $m$,
\[
 M_m=\Gamma(\mathbb A^1,\mathcal E_m),\qquad
 M_m^\nu=\Gamma(\mathbb A^1,\mathcal E_m^\nu),\qquad
 Q_m=\Gamma(\mathbb A^1,\mathcal T_m).
\]
Tensoring \eqref{eq:normalization-lattice-sequence} with $k_0$ gives the
exact sequence
\begin{equation}\label{eq:normalization-central-tor-sequence}
 0\to \operatorname{Tor}^{R}_{1}(Q_m,k_0)
 \to M_m/tM_m
 \to M_m^\nu/tM_m^\nu
 \to Q_m/tQ_m\to0.
\end{equation}
The sequence is $\mathbb C^*$-equivariant, because
\eqref{eq:normalization-lattice-sequence} is equivariant and the ideal $(t)$
is invariant.  Cohomology and base change identify the two middle terms with
the degree-$m$ central-fiber section representations.  Both have dimension
$h(m)$.  Since $Q_m$ is supported at $t=0$, its $R$-length is its complex
vector-space dimension.  Hence
\[
 \dim_{\mathbb C}(Q_m/tQ_m)\leq\ell(m).
\]
The equality of the dimensions of the two middle terms in
\eqref{eq:normalization-central-tor-sequence} then gives
\[
 \dim_{\mathbb C}\operatorname{Tor}^{R}_{1}(Q_m,k_0)
 =\dim_{\mathbb C}(Q_m/tQ_m)\leq\ell(m).
\]
Let $I_m$ be the image of the middle arrow in
\eqref{eq:normalization-central-tor-sequence}.  It is a common
$\mathbb C^*$-representation: it is a quotient of $M_m/tM_m$ and a
subrepresentation of $M_m^\nu/tM_m^\nu$.  Finite-dimensional
$\mathbb C^*$-representations are semisimple, so the two weight multisets
have the weights of $I_m$ in common and have at most $\ell(m)$ unmatched
weights on either side.  Let $d_{\mathrm{TV}}$ denote total-variation
distance.  If $\mu_m$ and $\mu_m^\nu$ denote the empirical
probability measures of the central weights divided by $m$, then
\[
 d_{\mathrm{TV}}(\mu_m,\mu_m^\nu)
 \leq
 \frac{\dim\operatorname{Tor}^{R}_{1}(Q_m,k_0)
       +\dim(Q_m/tQ_m)}{h(m)}
 \leq\frac{2\ell(m)}{h(m)}=O(m^{-1}).
\]
Reflection under $x\mapsto-x$ preserves total-variation distance.  Thus the
increasing-entry measures also have distance $O(m^{-1})$, and the two
Duistermaat--Heckman probability measures coincide.
\end{proof}

\subsection{The integral product comparator}

We finally clear the rational affine coefficients by base change and use
the normalization comparison to retain both the zero invariant and the
Duistermaat--Heckman law.

\begin{prop}[Integral product comparator]
\label{prop:integral-product-comparator}
Notation and conditions are as in Set-up~\ref{setup:initial-filtrations}, and
let $a,b\in\mathbb Q$ be the coefficients in
Corollary~\ref{cor:common-affine-data}.  There exists a positive integer
$N$ with the following properties.  Let $\mathcal T'_N$ be the ordinary
base change by $t=(t')^N$, and let $\mathcal T_N$ be its normalization.
Then $\mathcal T'_N$ is integral, while $\mathcal T_N$ is a normal ample
test configuration for $(X,A^e)$ with
\begin{equation}\label{eq:normalized-base-change-df}
 \DF(\mathcal T_N)=0.
\end{equation}
Its central-weight Duistermaat--Heckman probability law is the $N$-dilation
of the central-weight law of $\mathcal T$.

The integers
\begin{equation}\label{eq:integral-affine-coefficients}
 \alpha_N=Nea,
 \qquad
 \beta_N=Nb
\end{equation}
define a polarized product test configuration $\mathcal P_N$.  Its
increasing jump on $V_{em,j}$ is
\begin{equation}\label{eq:product-comparator-jump}
 w_N(m,j)=\alpha_Nm+\beta_Nj=N(eam+bj).
\end{equation}
After division by the actual degree, both opposite initial filtrations of
$\mathcal T_N$, as well as the product filtration of $\mathcal P_N$, have
convex transform
\begin{equation}\label{eq:base-changed-affine-transform}
 Na+Nbx.
\end{equation}
\end{prop}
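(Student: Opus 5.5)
The plan is to choose $N$ first, then check properties one at a time, with the transform statement last. By Corollary~\ref{cor:rational-affine-data}, $a,b\in\mathbb Q$. Choose $N$ to be a positive multiple of the denominators of $ea$ and $b$, so that $\alpha_N=Nea$ and $\beta_N=Nb$ are integers. Integrality of $\mathcal T'_N$ holds because the base change $t=(t')^N$ of a flat family with integral total space and irreducible generic fiber $X$ is flat over $\mathbb A^1$. Its restriction over $\mathbb G_m$ is $X\times\mathbb G_m$, which is integral. A flat family has no $t'$-torsion, so $\mathcal O_{\mathcal T'_N}$ injects into its localization at $t'$. Hence $\mathcal T'_N$ is integral.

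The normalization $\mathcal T_N$ is then normal. It is ample because finite pullback preserves ampleness, by Theorem~\ref{thm:finite-descent-ampleness}, and it is equivariant and a test configuration for $(X,A^e)$.

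For the Donaldson--Futaki invariant, I will use the standard base-change formulas. The filtration of $\mathcal T'_N$ is the $N$-dilation of that of $\mathcal T$: $t^\lambda s$ extends over $\mathcal T$ iff $(t')^{N\lambda}s$ extends over $\mathcal T'_N$. Consequently the weight totals scale by $N$, so $\DF(\mathcal T'_N)=N\DF(\mathcal T)=0$, and the Duistermaat--Heckman law of $\mathcal T'_N$ is the $N$-dilation of that of $\mathcal T$. Lemma~\ref{lem:normalization-comparison} applied to $\mathcal T'_N$ then gives $\DF(\mathcal T_N)\leq 0$. Proposition~\ref{prop:semistability-all-exponents} gives $\DF(\mathcal T_N)\geq0$, so $\DF(\mathcal T_N)=0$. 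The equality clause of Lemma~\ref{lem:normalization-comparison} then shows that $\mathcal T_N$ and $\mathcal T'_N$ have the same Duistermaat--Heckman law, namely the $N$-dilation of the law of $\mathcal T$.

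The product $\mathcal P_N$ comes from the integral one-parameter subgroup $z\mapsto\lambda_+(z)^{\beta_N}$ of fiber scaling, twisted by the scalar character of weight $\alpha_N$ in section degree $m$. With the sign convention of Definition~\ref{defn:test-configuration}, its increasing jump on $V_{em,j}$ is $\alpha_N m+\beta_N j$. In actual degree $d=em$ this equals $d(Na+Nb\,j/d)$, so the transform is $Na+Nbx$. Since $\mathcal P_N$ is a product, the filtration is block-diagonal and invariant under both $\lambda_\pm$. Its initial filtrations are therefore itself.

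The hard step is the transform of the opposite initial filtrations of $\mathcal T_N$. I would avoid recomputing it directly. Instead, I would observe that $\mathcal T_N$ is itself a normal ample test configuration of $(X,A^e)$ with $\DF=0$, so Set-up~\ref{setup:initial-filtrations} applies to it. Corollary~\ref{cor:common-affine-data} then gives a common affine transform $a'+b'x$ for both initials of $\mathcal T_N$, with entry law $(a'+b'x)_*\rho$. That law equals the $N$-dilation $(Na+Nbx)_*\rho$, since reflection and dilation commute. Lemma~\ref{lem:affine-pushforward-rigidity} forces $a'=Na$ and $b'=Nb$.

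The subtle point is the sign. The entry law must be dilated by $+N$, not reflected. This holds because the reflection between entries and central weights is applied identically on both sides.
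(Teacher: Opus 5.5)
Your proposal is correct and follows the paper's proof essentially step for step. You choose $N$ to clear denominators, deduce integrality of $\mathcal T'_N$ from $t'$-torsion-freeness and the integral generic part, squeeze $\DF(\mathcal T_N)$ between $\DF(\mathcal T'_N)=N\DF(\mathcal T)=0$ (via Lemma~\ref{lem:normalization-comparison}) and Proposition~\ref{prop:semistability-all-exponents}, and then apply Corollary~\ref{cor:common-affine-data} to $\mathcal T_N$ together with Lemma~\ref{lem:affine-pushforward-rigidity} for the dilated law.

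The remaining differences are cosmetic. Relative ampleness of $\mathcal T_N$ is the standard fact that finite pullback preserves relative ampleness; Theorem~\ref{thm:finite-descent-ampleness} concerns absolute ampleness on proper schemes. You should also record that $\mathcal T_N$ is flat because its normal integral total space is torsion-free over $\mathbb C[t']$. Finally, increasing entries are the negatives of central weights (Definition~\ref{defn:test-configuration}). With the natural identification of the central action with the generating subgroup, $\lambda_+^{\beta_N}$ has central weight $\beta_N j$ and so produces entry $-\beta_N j$. You should therefore use the inverse subgroup and character; this is harmless, and the paper sidesteps it by defining $\mathcal P_N$ through its labels.
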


\begin{proof}
We clear the denominators of the affine coefficients, prove integrality of
the ordinary base change, compare it with its normalization, and then
construct the product configuration from the resulting integral affine
profile.

\medskip

\noindent\textbf{Step 1.} We choose the base-change order and prove that the
ordinary base change is integral.
Choose $N$ such that the two numbers in
\eqref{eq:integral-affine-coefficients} are integers, and form
\[
 \mathcal T'_N
 =\mathcal T\times_{\mathbb A^1_t}\mathbb A^1_{t'},
 \qquad
 t=(t')^N.
\]
The base-changed family is flat over $\mathbb C[t']$, so multiplication
by $t'$ is injective on every affine coordinate ring.  After inverting
$t'$, the marked family is the integral product
$X\times\mathbb G_m$.  Suppose that a product of two elements in an
affine coordinate ring vanishes.  One factor vanishes after localization,
so a power of $t'$ annihilates that factor.  Torsion-freeness over
$\mathbb C[t']$ makes the factor zero.  Thus every affine coordinate ring
is a domain, and $\mathcal T'_N$ is integral.

Let
\[
 \nu_N\colon\mathcal T_N\to\mathcal T'_N
\]
be the normalization.  Normalization is finite for a finite-type complex
scheme.  Finite pullback preserves relative ampleness and projectivity, and
the action and marking lift to $\mathcal T_N$.  Its normal integral total
space is torsion-free, hence flat, over the principal ideal domain
$\mathbb C[t']$.  Therefore $\mathcal T_N$ is a normal ample test
configuration for $(X,A^e)$.

\medskip

\noindent\textbf{Step 2.} We prove that normalization preserves the zero
invariant and the dilated Duistermaat--Heckman law in this situation.
Ordinary $N$-fold base change multiplies every central weight by $N$,
and hence
\begin{equation}\label{eq:base-change-df}
 \DF(\mathcal T'_N)=N\DF(\mathcal T)=0.
\end{equation}
Lemma~\ref{lem:normalization-comparison}, applied to $\nu_N$, gives the
following comparison.  If $c$ is
the coefficient in \eqref{eq:normalization-length} and $A_0>0$ is the
leading Hilbert coefficient, then
\[
 0-\DF(\mathcal T_N)=\frac{2c}{A_0}\geq0.
\]
Proposition~\ref{prop:semistability-all-exponents} gives
$\DF(\mathcal T_N)\geq0$.  Thus
\[
 \DF(\mathcal T_N)=0,
 \qquad
 c=0.
\]
The equality clause of Lemma~\ref{lem:normalization-comparison} identifies
the Duistermaat--Heckman laws of $\mathcal T'_N$ and $\mathcal T_N$.
Ordinary base change dilates every weight by $N$, so this common law is
the $N$-dilation of the law of $\mathcal T$.

\medskip

\noindent\textbf{Step 3.} We identify the common affine profile after base
change.
Corollary~\ref{cor:common-affine-data} applied to $\mathcal T_N$ gives a
common affine profile for its two opposite initial filtrations.  Reflecting
the dilated central-weight law to the increasing-entry coordinate, their
entry law after division by the actual degree is
\[
 (Na+Nbx)_*\rho.
\]
Lemma~\ref{lem:affine-pushforward-rigidity} identifies their common profile
with \eqref{eq:base-changed-affine-transform}.

\medskip

\noindent\textbf{Step 4.} We construct the integral product comparator.
In the convention for increasing entries in
Definition~\ref{defn:test-configuration}, the integer $\alpha_N$ is the
label of the scalar character and $\beta_N$ is the label of fiber scaling.
Let $\mathcal P_N$ be the resulting polarized product test
configuration.  Its increasing jump on $V_{em,j}$ is
$\alpha_Nm+\beta_Nj$, which is \eqref{eq:product-comparator-jump}.
Dividing by the actual degree $em$ gives
\[
 \frac{w_N(m,j)}{em}=Na+Nb\frac{j}{em}.
\]
Thus the product filtration has the transform in
\eqref{eq:base-changed-affine-transform}.
No degreewise equality between $\mathcal T_N$ and $\mathcal P_N$ is asserted
at this stage.  That equality is the conclusion of the marked Smith argument
in Section~\ref{sec:classification}.
\end{proof}
 
\section{Classification of zero-invariant test configurations}
\label{sec:classification}

In this section, we prove Theorem~\ref{thm:equality-classification}.

We use the increasing filtration convention fixed in
Set-up~\ref{setup:initial-filtrations}.  In particular, the section ring of
$A^e$ is the Veronese ring in \eqref{eq:initial-veronese-ring}, and its
blocks for fiber scaling are those in \eqref{eq:affine-initials-blocks}.

\subsection{Normal section algebras and marked rigidity}
\label{subsec:normal-section-algebras}

In this subsection, we prove that a sufficiently small marked relative Smith
spectrum determines a normal ample test configuration.

\begin{deflem}[Relative Smith spectrum]
\label{deflem:relative-smith-spectrum}
Let $\mathcal O$ be a discrete valuation ring with uniformizer $t$ and
fraction field $K$, and let $\Lambda_1,\Lambda_2$ be full $\mathcal O$-lattices
in an $N$-dimensional $K$-vector space.  There exist a $K$-basis
$e_1,\ldots,e_N$ and uniquely determined integers $r_1,\ldots,r_N$, up to
permutation, such that
\begin{equation}\label{eq:relative-smith-spectrum-definition}
 \Lambda_1=\bigoplus_{i=1}^N\mathcal Oe_i,
 \qquad
 \Lambda_2=\bigoplus_{i=1}^N\mathcal O t^{r_i}e_i.
\end{equation}
We call the multiset $\{r_1,\ldots,r_N\}$ the relative Smith spectrum of
$\Lambda_1$ and $\Lambda_2$.
\end{deflem}

\begin{proof}
We choose $\mathcal O$-bases of the two lattices and let
$A\in\operatorname{GL}_N(K)$
be the matrix whose columns are the coordinates of the second basis in the
first.  Choose an integer $M$ such that $t^M A$ has entries in $\mathcal O$.
For a nonzero matrix over $\mathcal O$, move an entry of minimum valuation to
the upper-left corner.  This entry divides every other entry because
$\mathcal O$ is a discrete valuation ring.  Elementary row and column
operations over $\mathcal O$ therefore clear its column and row.  Induction
on $N$, followed by absorbing units into the bases, gives
\begin{equation}\label{eq:relative-smith-diagonalization}
 U(t^M A)V=\operatorname{diag}(t^{d_1},\ldots,t^{d_N}),
 \qquad
 U,V\in\operatorname{GL}_N(\mathcal O),
 \qquad
 0\leq d_1\leq\cdots\leq d_N.
\end{equation}
If the original bases are written as row vectors $x$ and $y=xA$, then
$xU^{-1}$ is a basis of $\Lambda_1$ and $yV$ is a basis of $\Lambda_2$.
Thus \eqref{eq:relative-smith-spectrum-definition} holds with
$r_i=d_i-M$.  For uniqueness, let $I_q(A)$ be the fractional ideal generated
by the $q$-by-$q$ minors of $A$.  Left or right multiplication by an element
of $\operatorname{GL}_N(\mathcal O)$ does not change this ideal.  If the
integers are ordered increasingly, the diagonal form gives
\begin{equation}\label{eq:relative-smith-minor-valuations}
 \operatorname{ord}_t I_q(A)=r_1+\cdots+r_q,
 \qquad 1\leq q\leq N.
\end{equation}
The successive differences of these intrinsic valuations determine the
ordered list $r_1,\ldots,r_N$, and hence determine the multiset.
\end{proof}

\begin{lem}[Normal Veronese section algebras]
\label{lem:normal-veronese-section-algebra}
Let
\begin{equation}
 \pi\colon \mathcal Y\to \mathbb A^1
 \label{eq:normal-section-family}
\end{equation}
be a flat projective morphism such that $\mathcal Y$ is normal and integral
and $\pi_*\mathcal O_{\mathcal Y}=\mathcal O_{\mathbb A^1}$.  Let
$\mathcal H$ be a $\pi$-ample $\mathbb C^*$-linearized line bundle, where the
$\mathbb C^*$-action on $\mathcal Y$ covers the standard scaling action on
$\mathbb A^1$, and put
\begin{equation}
 S_m=H^0(\mathcal Y,\mathcal H^m),
 \qquad
 S=\bigoplus_{m\geq0}S_m.
 \label{eq:relative-section-algebra}
\end{equation}
Then $S$ is a normal finitely generated domain over $\mathbb C[t]$.  Moreover,
there exists a positive integer $r$ such that
\begin{equation}
 S^{(r)}=\bigoplus_{k\geq0}S_{rk}
 \label{eq:normal-section-veronese}
\end{equation}
is a normal finitely generated domain generated over $\mathbb C[t]$ by
$S_r$, and the same holds for every positive multiple of $r$.  In
particular, $\mathcal H^r$ defines a relatively projectively
normal equivariant embedding.

Let $(Y,H)$ be the generic polarized fiber, and fix an equivariant product
identification over $\mathbb G_m$.  Define the decreasing extension-order filtration
\begin{equation}
 F^\lambda H^0(Y,H^{rk})
 =\left\{s\mid t^{-\lambda}s\text{ extends to a section of }
 \mathcal H^{rk}\text{ on }\mathcal Y\right\}.
 \label{eq:extension-order-filtration}
\end{equation}
Then this filtration has Rees algebra
\begin{equation}
 S^{(r)}
 =\bigoplus_{k\geq0}\bigoplus_{\lambda\in\mathbb Z}
 F^\lambda H^0(Y,H^{rk})t^{-\lambda},
 \label{eq:extension-order-rees-algebra}
\end{equation}
with its section degree and $\mathbb C^*$-weight retained.
If
\begin{equation}
 E_\ell H^0(Y,H^{rk})=F^{-\ell}H^0(Y,H^{rk})
 \label{eq:increasing-decreasing-filtration-reindexing}
\end{equation}
denotes the corresponding increasing filtration, then the marked extension
lattice in \eqref{eq:extension-order-rees-algebra} can be written as
\begin{equation}
 S^{(r)}
 =\bigoplus_{k\geq0}\sum_{\ell\in\mathbb Z}
 t^\ell E_\ell H^0(Y,H^{rk}).
 \label{eq:increasing-extension-lattice}
\end{equation}
\end{lem}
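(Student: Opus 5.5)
The plan has two parts. First, normality and finite generation of the section algebra $S$. Second, identification of a Veronese of $S$ with the Rees algebra of the extension-order filtration.

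\textbf{Normality and finite generation.} Since $\pi$ is projective and $\mathcal H$ is $\pi$-ample over the affine base $\mathbb A^1$, $\mathcal H$ is ample on $\mathcal Y$. Hence $S=\bigoplus_m H^0(\mathcal Y,\mathcal H^m)$ is the section ring of an ample line bundle on a projective $\mathbb C[t]$-scheme, and it is finitely generated over $S_0=H^0(\mathcal Y,\mathcal O_{\mathcal Y})=\mathbb C[t]$ by the standard Serre-type argument.

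$S$ is a domain because $\mathcal Y$ is integral.

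For normality, let $f$ be homogeneous of degree $m$ in $\operatorname{Frac}(S)$ and integral over $S$. On each affine open $\{\sigma\neq0\}$, with $\sigma\in S_k$, the quotient $f/\sigma^{m/k}$ (after passing to a power) is a rational function integral over $\mathcal O_{\mathcal Y}$. Normality of $\mathcal Y$ then shows that $f$ is a regular section of $\mathcal H^m$. The non-homogeneous case reduces to the homogeneous one because the integral closure of a graded domain is graded.

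Next, choose $r$ so that $S$ is generated in degrees $\le r$ and Serre vanishing $R^1\pi_*\mathcal H^{rk}=0$ holds for $k\ge1$. After enlarging $r$, Mumford's Castelnuovo regularity (relative version over $\mathbb C[t]$) gives surjectivity of $S_{rk}\otimes S_r\to S_{r(k+1)}$. This makes $S^{(r)}$ generated by $S_r$, and the same argument applies to any multiple of $r$. A Veronese subring of a normal graded domain is normal: it is the invariant ring of $\mu_r$ acting by degree, and invariants of a normal domain under a finite group are normal. Relative projective normality of the embedding by $\mathcal H^r$ is then a restatement of this generation.

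\textbf{The Rees identification.} The product identification gives $\mathcal Y\setminus\mathcal Y_0\cong Y\times\mathbb G_m$ equivariantly. Restriction therefore embeds $S_{rk}$ into $H^0(Y,H^{rk})\otimes\mathbb C[t,t^{-1}]$; this is injective since $\mathcal Y$ is integral and flat over $\mathbb C[t]$, hence $t$-torsion-free. Each element decomposes into $\mathbb C^*$-weight components, and the weight-$\lambda$ part has the form $t^{-\lambda}s$ with $s$ a $\mathbb C^*$-invariant extension of a section of $Y$. Because the action covers scaling and $S_{rk}$ is $\mathbb C^*$-stable, $S_{rk}$ is the direct sum of its weight pieces. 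The weight-$\lambda$ piece is exactly $F^\lambda H^0(Y,H^{rk})\,t^{-\lambda}$, by the definition \eqref{eq:extension-order-filtration}. This proves \eqref{eq:extension-order-rees-algebra}, and reindexing by $\ell=-\lambda$ gives \eqref{eq:increasing-extension-lattice}.

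The main obstacle is verifying that the weight decomposition is compatible with the extension condition. Concretely, one must show that a section $\sum_\lambda t^{-\lambda}s_\lambda$ extends over $\mathcal Y_0$ if and only if each homogeneous piece does. The plan is to argue via the $\mathbb C^*$-action on the finitely generated $\mathbb C[t]$-module $S_{rk}$: a rational, locally finite action on a module that is a sum of finite-dimensional stable subspaces lets each isotypic projector preserve $S_{rk}$.
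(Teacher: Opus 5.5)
Your proposal is correct, and its overall structure matches the paper's proof: Serre-type finite generation, normality of the Veronese as the ring of $\mu_r$-invariants, and the Rees identification by splitting $S_{rk}$ into $\mathbb C^*$-weight spaces inside $H^0(Y,H^{rk})\otimes\mathbb C[t,t^{-1}]$. The point you single out as the main obstacle is exactly what the paper settles in one sentence (``taking weight components preserves the module of global sections''). Your justification, via the comodule structure that the algebraic action induces on $S_{rk}$, is the right one.

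Two sub-steps take a different route.

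\textbf{Normality of $S$.} The paper fixes a rational section of $\mathcal H$ with divisor $D$ and writes $S=K[U]\cap\bigcap_E R_E$. This is an intersection of integrally closed rings cut out by the divisorial valuations of $\mathcal Y$, so no grading argument is needed. You instead work on the affine charts $\mathcal Y_\sigma$. This uses three inputs: normality of $\mathcal O(\mathcal Y_\sigma)=S_{(\sigma)}$; passage to $f^k/\sigma^m$ when $k$ does not divide $m$; and the fact that the integral closure of a graded domain is graded. The paper's version is shorter and global, while yours uses only local normality on an affine cover.

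\textbf{Degree-one generation of a Veronese.} The paper's argument is purely algebraic. It observes that $S^{(d)}$ is a finite module over the standard graded subalgebra $\mathbb C[t][x_i^{d/d_i}]$, so $(S^{(d)})^{(n)}$ is generated in degree one once $n$ exceeds the degrees of the module generators. This needs nothing beyond finite generation, and the same argument reappears later as the lemma on sufficiently divisible Veronese gradings. Your route through relative Castelnuovo--Mumford regularity over $\mathbb C[t]$ also works. It needs either a relative form of Mumford's theorem or a fiberwise reduction using flatness of $\mathcal Y$, cohomology and base change, and Nakayama, which is heavier input than necessary. In exchange it gives an explicit bound on $r$ and very ampleness of $\mathcal H^r$ at the same time.
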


\begin{proof}
We first prove finite generation of the full section algebra, then choose a
Veronese generated in degree one.  We next prove normality by divisorial
valuations.  Finally, we identify the equivariant section algebra with the
Rees algebra of the extension-order filtration.

\medskip
\noindent\textbf{Step 1.} In this step, we prove that $S$ is finitely
generated over $\mathbb C[t]$.

Choose $q>0$ such that $\mathcal H^q$ is relatively very ample and gives a
closed immersion
\[
 \mathcal Y\hookrightarrow\mathbb P^N_{\mathbb C[t]}.
\]
For $0\leq a<q$, the module
\[
 M_a=\bigoplus_{k\geq0}H^0(\mathcal Y,\mathcal H^{qk+a})
\]
is the section module of the coherent sheaf $\mathcal H^a$ for this
embedding.  For a finite graded presentation of its pushforward to
$\mathbb P^N_{\mathbb C[t]}$, the higher cohomology of the finitely many
coherent relation sheaves vanishes after sufficiently large twists.
Consequently, finitely many homogeneous sections generate all sufficiently
large components over $\mathbb C[t][x_0,\ldots,x_N]$.  After adjoining the finitely
many lower components, each $M_a$ is finite over this polynomial ring.  The
finite direct sum of the modules $M_a$ is $S$, so $S$ is a finitely generated
$\mathbb C[t]$-algebra.

\medskip
\noindent\textbf{Step 2.} In this step, we choose a Veronese of $S$ generated
in degree one.

Choose homogeneous algebra generators $x_1,\ldots,x_h$ of positive degrees
$d_1,\ldots,d_h$, and let $d$ be a common multiple of the integers $d_i$.
Inside $S^{(d)}$, with its rescaled grading, put
\[
 B=\mathbb C[t]
 [x_1^{d/d_1},\ldots,x_h^{d/d_h}].
\]
Each $x_i$ is integral over $B$.  Hence $S$, and therefore
$C:=S^{(d)}$, is finite over $B$.  Choose homogeneous $B$-module generators
$z_1,\ldots,z_s$ of $C$, of rescaled degrees $e_1,\ldots,e_s$, and choose
$n\geq\max_i e_i$.  We prove that $C^{(n)}$ is generated by $C_n$.

Let $k\geq2$ and $c\in C_{kn}$.  Write
\[
 c=\sum_i b_i z_i,
 \qquad
 b_i\in B_{kn-e_i}.
\]
Since $B$ is standard graded, the multiplication map
\[
 B_{n-e_i}\otimes B_{(k-1)n}\to B_{kn-e_i}
\]
is surjective.  Every term $b_i z_i$ is consequently a sum of products of
an element of $C_n$ and an element of $C_{(k-1)n}$.  Induction on $k$ proves
that $C^{(n)}$ is generated by $C_n$.  Replace $dn$ by a positive multiple
$r$ for which $\mathcal H^r$ is relatively very ample.  The $r$-th Veronese
remains generated in degree one, which proves the generation assertion in
\eqref{eq:normal-section-veronese}.

\medskip
\noindent\textbf{Step 3.} In this step, we prove that $S$ and $S^{(r)}$ are
normal domains.

Choose a nonzero rational section $u$ of $\mathcal H$, and let $D$ be its
Cartier divisor.  If $K$ is the function field of $\mathcal Y$, then $S$
identifies with the subring of $K[U]$ formed by the terms $fU^m$ such that
\[
 m\geq0,
 \qquad
 \operatorname{div}(f)+mD\geq0.
\]
For every prime divisor $E$ of $\mathcal Y$, this condition is
\[
 \operatorname{ord}_E(f)+m\operatorname{coeff}_E(D)\geq0.
\]
For every prime divisor $E$, let $R_E\subset K[U]$ be the homogeneous
valuation subring defined by this inequality.  Thus
$S=K[U]\cap\bigcap_E R_E$.  Each ring in this intersection is integrally closed, and
therefore $S$ is a normal domain.

The Veronese $S^{(r)}$ is the invariant subring for the finite cyclic action
that multiplies $S_m$ by the $m$-th power of a primitive $r$-th root of
unity.  If an element of $\operatorname{Frac}\left(S^{(r)}\right)$ is
integral over $S^{(r)}$, then it is integral over $S$, belongs to $S$, and is
fixed by the cyclic action.  It belongs to $S^{(r)}$, which proves the
normality of the Veronese.

\medskip
\noindent\textbf{Step 4.} In this step, we identify the Rees algebra after
fixing the product marking over $\mathbb G_m$.

The filtration in \eqref{eq:extension-order-filtration} is multiplicative
because products of extensions extend.  The $\mathbb C^*$-action decomposes
every $S_{rk}$ into weight spaces.  A section homogeneous for this weight restricts to
a Laurent monomial $t^{-\lambda}s$, which extends precisely when
$s\in F^\lambda H^0(Y,H^{rk})$.  Taking weight components preserves the
module of global sections.  Hence extension holds coefficient by coefficient,
and \eqref{eq:extension-order-rees-algebra} follows.
\end{proof}

\begin{lem}[Marked affine module gluing]
\label{lem:marked-affine-module-gluing}
Let $A=\mathbb C[t]$, let $K=\mathbb C(t)$, and let $W$ be a
finite-dimensional $K$-vector space.  For $i=1,2$, let
$M_i\subset W$ be a finite torsion-free $A$-module spanning $W$.  Suppose
that $f\in A\setminus(t)$ and that
\begin{equation}\label{eq:marked-affine-local-equalities}
 (M_1)_f=(M_2)_f,
 \qquad
 (M_1)_t=(M_2)_t
\end{equation}
as submodules of $W$.  Then $M_1=M_2$ inside $W$.
\end{lem}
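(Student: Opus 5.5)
The plan is to prove the equality locally at each maximal ideal of $A$, since a finitely generated torsion-free module over a Dedekind domain is the intersection of its localizations inside $W$. More precisely, for a finite torsion-free $A$-module $M\subset W$ spanning $W$ we have
\[
 M=\bigcap_{\mathfrak m}M_{\mathfrak m}\subset W,
\]
where $\mathfrak m$ ranges over the maximal ideals of $A$. This is the standard fact that an element of $W$ lying in every localization has conductor ideal $\{a\in A\mid aw\in M\}$ contained in no maximal ideal, hence equal to $A$. So it suffices to show $(M_1)_{\mathfrak m}=(M_2)_{\mathfrak m}$ for every maximal ideal $\mathfrak m=(t-c)$, $c\in\mathbb C$.

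We split into two cases according to whether $f\in\mathfrak m$.

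If $f\notin\mathfrak m$, then $A_{\mathfrak m}$ is a further localization of $A_f$. The first equality in \eqref{eq:marked-affine-local-equalities} therefore gives $(M_1)_{\mathfrak m}=((M_1)_f)_{\mathfrak m}=((M_2)_f)_{\mathfrak m}=(M_2)_{\mathfrak m}$.

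If $f\in\mathfrak m$, then $\mathfrak m\neq(t)$, because $f\notin(t)$. Hence $c\neq0$ and $t\notin\mathfrak m$. Now $A_{\mathfrak m}$ is a localization of $A_t$, and the second equality in \eqref{eq:marked-affine-local-equalities} gives the equality at $\mathfrak m$ in the same way.

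Every maximal ideal falls into one of these cases, because $(f,t)=A$. Indeed, $f(0)\neq0$, so $f$ and $t$ have no common zero. Intersecting the localizations then gives $M_1=M_2$ inside $W$.

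No step is a serious obstacle. The only point needing care is the intersection identity, which uses that each $M_i$ is torsion-free, so that it embeds in $W$ and in all of its localizations compatibly. Finiteness is not even essential here.
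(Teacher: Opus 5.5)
Your proof is correct and is essentially the paper's argument. The paper uses the two-set cover $D(f)\cup D(t)=\Spec A$ (from $f(0)\neq0$) and exactness of the affine \v{C}ech sequence to write $M_i=(M_i)_f\cap(M_i)_t$ inside $W$, while you use the finer identity $M_i=\bigcap_{\mathfrak m}(M_i)_{\mathfrak m}$ and factor each $A_{\mathfrak m}$ through $A_f$ or $A_t$; this is the same local-global principle, and your remark that finiteness (indeed also spanning) is not needed is right.
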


\begin{proof}
The condition $f(0)\ne0$ gives $(f,t)=A$, so $D(f)$ and $D(t)$ cover
$\operatorname{Spec}A$.  For either module, the affine \v{C}ech sequence for
this cover is exact.  Torsion freeness embeds all of its terms in $W$, and
therefore
\begin{equation}\label{eq:marked-affine-intersection}
 M_i=(M_i)_f\cap(M_i)_t
 \quad\text{inside }W.
\end{equation}
The overlap maps are the restrictions of the identity of $W$.  Substitution
of \eqref{eq:marked-affine-local-equalities} in
\eqref{eq:marked-affine-intersection} proves the assertion.
\end{proof}

\begin{prop}[Rigidity from the marked Smith spectrum]
\label{prop:marked-smith-rigidity}
Let $(Y,H)$ be a connected normal $n$-dimensional polarized complex
projective variety.  For $i=1,2$, let
\begin{equation}
 (\mathcal Y_i,\mathcal H_i)\to\mathbb A^1
 \label{eq:marked-test-families}
\end{equation}
be a normal ample algebraic test configuration for $(Y,H)$.  Equip the two
configurations with a common $\mathbb C^*$-equivariant product trivialization
over $\mathbb G_m$, including a common marking of the generic line bundle and
a common linearization convention.  Their marked relative section algebras
\begin{equation}
 S_i=\bigoplus_{d\geq0}H^0(\mathcal Y_i,\mathcal H_i^d)
 \label{eq:marked-section-algebras}
\end{equation}
lie in the common generic graded algebra
\begin{equation}
 \bigoplus_{d\geq0}H^0(Y,H^d)
 \otimes_{\mathbb C}\mathbb C[t,t^{-1}].
 \label{eq:marked-generic-algebra}
\end{equation}

Choose a common positive integer $r$ for which the $r$-th Veronese of each
$S_i$ is normal and generated in rescaled degree one, as in
Lemma~\ref{lem:normal-veronese-section-algebra}.  Put
\begin{align}
 \mathcal O&=\mathbb C[t]_{(t)},
 &K&=\mathbb C(t),
 &V_m&=H^0(Y,H^{rm})\otimes_{\mathbb C}K,
 \notag\\
 \Lambda_m^i
 &=H^0(\mathcal Y_i,\mathcal H_i^{rm})
 \otimes_{\mathbb C[t]}\mathcal O
 \subset V_m.
 \label{eq:marked-extension-lattices}
\end{align}
Put $N_m=\dim_KV_m$.
Let $r'_{m,1},\ldots,r'_{m,N_m}$ be the relative Smith elementary divisors
of $\Lambda_m^1$ and $\Lambda_m^2$ in the convention of
Definition-Lemma~\ref{deflem:relative-smith-spectrum}.  Assume that
\begin{equation}
 \sum_{j=1}^{N_m}(r'_{m,j})^2=o(m^{n+2}).
 \label{eq:smith-rigidity-hypothesis}
\end{equation}
Then $S_1=S_2$ inside \eqref{eq:marked-generic-algebra}.  Consequently, the
two test configurations are isomorphic as marked polarized
$\mathbb C^*$-equivariant test configurations.
\end{prop}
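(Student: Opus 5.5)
The plan is to show that $\Lambda^1_m=\Lambda^2_m$ for all large $m$. Equality of the localized lattices at $t$ and away from $t$ will then give $S_1^{(r)}=S_2^{(r)}$ by Lemma~\ref{lem:marked-affine-module-gluing}. Away from $t=0$ both algebras agree, since both equal the product over $\mathbb G_m$ given by the common marking. Normality then recovers the full algebras: $S_i$ is the integral closure of $S_i^{(r)}$ in the common generic algebra, intersected with it degreewise. Indeed, an element $s\in H^0(Y,H^d)\otimes\mathbb C[t^{\pm1}]$ lies in $S_i$ iff $s^r\in S_i^{(r)}$, by Lemma~\ref{lem:normal-veronese-section-algebra}, Step~3. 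Taking $\operatorname{Proj}$ over $\mathbb C[t]$, with weights retained, then gives the marked isomorphism of test configurations.

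The key mechanism is to convert the $L^2$ smallness \eqref{eq:smith-rigidity-hypothesis} into vanishing through multiplicativity. Write $\Lambda^i=\bigoplus_m\Lambda^i_m$; these are finitely generated normal $\mathcal O$-algebras, generated in degree one by Lemma~\ref{lem:normal-veronese-section-algebra}. Consider the relative position $\max_j |r'_{m,j}|$. If $\Lambda^1_1\neq\Lambda^2_1$ after rescaling, some $s\in\Lambda^1_1$ has $t^{-c}s\in\Lambda^2$-saturation with $c\neq0$. Equivalently, the filtration of $\Lambda^2_m\otimes\mathbb C$ induced by $\Lambda^1$ is a nontrivial filtration of the central fibre section ring of $\mathcal Y_2$.

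Concretely, the plan is to consider the filtration $\mathcal G$ on $R(\mathcal Y_{2,0},\mathcal H_{2,0})$ given by $t$-order of $\Lambda^1$ relative to $\Lambda^2$. Its jumps in degree $m$ are exactly the Smith integers $r'_{m,j}$. It is multiplicative, since both lattices are algebras, and it is linearly bounded, since both are finitely generated. By Theorem~\ref{thm:filtered-okounkov-equidistribution}, its normalized measures $m^{-n}\sum_j\delta_{r'_{m,j}/m}$ converge to a compactly supported limit $\nu$. The hypothesis then says $\int x^2\,d\nu=\lim m^{-n-2}\sum_j (r'_{m,j})^2=0$, so $\nu=\delta_0$. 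In the language of Theorem~\ref{thm:zero-norm}, the filtration has zero $L^2$ norm and zero mean, so it is almost trivial. By the argument of \cite[Corollary~B]{BHJ17}, zero norm for the filtration comparing the two normal models forces the associated valuations on the central fibre components to agree. Both central fibres are reduced in codimension one by normality, and they determine the same divisorial valuations $\operatorname{ord}_{E}$ on $K(Y\times\mathbb A^1)$.

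I therefore intend to argue directly with divisorial valuations. By Step~3 of Lemma~\ref{lem:normal-veronese-section-algebra}, $\Lambda^i_m=\{f: v_E(f)+m\,c_E^i\ge 0\ \forall E\subset\mathcal Y_{i,0}\}$ inside $V_m$, with the horizontal conditions common to both. The main obstacle is to show that the finitely many central valuations $\{v_E\}$ together with their twists $c_E$ coincide for $i=1,2$. I will handle this by contradiction. If a valuation $v_E$ of $\mathcal Y_1$ is absent from $\mathcal Y_2$, or appears with a different twist, then by the BHJ description of the non-Archimedean metric the filtration $\mathcal G$ has a nonzero convex transform on a set of positive Okounkov measure. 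The idea is to compare $\phi_{\mathcal Y_1}-\phi_{\mathcal Y_2}$, which is nonzero at the Gauss-type point $v_E$ and is continuous, so it is nonzero on an open set. This contradicts $\nu=\delta_0$. Hence the two valuation descriptions agree, $\Lambda^1_m=\Lambda^2_m$ for all $m$, and the gluing step above concludes. I expect the delicate point to be making the step from "nonzero potential difference at a divisorial point" to "positive-mass deviation of $\nu$" rigorous. If needed, I would replace it with the orthogonality argument of \cite[Theorem~7.9]{BHJ17}.
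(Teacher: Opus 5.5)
Your outer frame is sound and matches the last step of the paper's proof. Once the degree-$m$ lattices agree, clearing finitely many denominators and Lemma~\ref{lem:marked-affine-module-gluing} give $S_1^{(r)}=S_2^{(r)}$; integrality plus normality removes the Veronese; relative $\operatorname{Proj}$ gives the marked isomorphism.

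The gap is the core implication $\sum_j(r'_{m,j})^2=o(m^{n+2})\Rightarrow\Lambda^1_m=\Lambda^2_m$, which is the whole content of the proposition.
\begin{itemize}
\item Your measure step only restates the hypothesis. Since $N_m\asymp m^n$, Chebyshev already shows that the normalized Smith laws tend to $\delta_0$. Theorem~\ref{thm:filtered-okounkov-equidistribution} is not needed, and it would not apply on $\mathcal Y_{2,0}$ anyway: that central fibre can be reducible, and, contrary to your remark, nonreduced even though $\mathcal Y_2$ is normal.
\item The appeal to Theorem~\ref{thm:zero-norm} is misplaced. That theorem compares a single test configuration of $(Y,H)$ with the trivial one, and almost triviality only sees normalizations.
\item Normality must enter at exactly this step. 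Take the nonnormal $\mathcal Z$ from the proof of Corollary~\ref{cor:literal-donaldson} and the product $\mathbb P^1\times\mathbb A^1$. In every positive degree exactly one Smith integer equals $\pm1$ and all others vanish, yet the two configurations differ. So a correct argument must use the normality of \emph{both} configurations here, and your filtration $\mathcal G$ on the central fibre ring of $\mathcal Y_2$ does not.
\end{itemize}

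The valuative replacement does not close the gap either. The description $\Lambda^i_m=\{f: v_E(f)+mc^i_E\ge0\}$ is correct for normal $\mathcal Y_i$. But nothing converts ``$\phi_{\mathcal Y_1}-\phi_{\mathcal Y_2}$ is nonzero near $v_E$'' into Smith integers. The Smith law is never exhibited as the distribution of $\phi_{\mathcal Y_1}-\phi_{\mathcal Y_2}$ under a measure charging open sets, so continuity on an open set gives no lower bound. You flag this yourself as the delicate point, and \cite[Theorem~7.9]{BHJ17} is again an absolute statement against the trivial configuration, so it would itself need relativizing.

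What you must produce is twofold: a subspace of dimension $\gtrsim m^n$ in degree $m$ on which the two lattice orders differ by $\gtrsim m$, and a min--max lemma on a Smith basis turning this into $\gtrsim m^n$ Smith integers of absolute value $\gtrsim m$. The paper does exactly this:
\begin{itemize}
\item Take $x\in(R_1)_1\setminus(R_2)_1$, after swapping the two algebras if necessary.
\item Normality of $R_2$ gives a homogeneous height-one prime $P\ni t$ with $v_P(x)=-a<0$.
\item Catenarity gives $\dim R_2/P=n+1$, so $D_l=\dim(R_2/P)_l\asymp l^n$.
\item Choose $C$ with $t^C(R_2)_1\subset(R_1)_1$, and let $s_{l,j}$ lift a basis of $(R_2/P)_l$. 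On the span of the $x^kt^{Cl}s_{l,j}$ one has $\ell_{\Lambda^1}-\ell_{\Lambda^2}\ge(a/b)k-Cl$.
\item For a small fixed ratio $l/k$ this is $\ge\gamma m$, which forces $\sum_j(r'_{m,j})^2\gtrsim m^{n+2}$, a contradiction.
\end{itemize}
Your $v_E$ plays the role of $P$. The positive mass, however, comes from the full dimensionality of the component $R_2/P$ combined with powers of $x$, not from continuity.
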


\begin{proof}
We prove Proposition~\ref{prop:marked-smith-rigidity} in three steps.  First,
we record a min--max
criterion for the relative Smith integers.  Second, a divisorial valuation
turns any difference between the two localized Veronese algebras into a
quadratic lower bound that contradicts
\eqref{eq:smith-rigidity-hypothesis}.  Third, normality removes the common
Veronese and relative $\operatorname{Proj}$ recovers the marked test
configuration.

Put
\[
 \widetilde R_i=S_i^{(r)},
 \qquad
 R_i=(\widetilde R_i)_{(t)}
     =\bigoplus_{m\geq0}\Lambda_m^i.
\]
By Lemma~\ref{lem:normal-veronese-section-algebra}, each $\widetilde R_i$ is
a normal domain generated in rescaled degree one.  Each localization $R_i$
is also a normal domain generated in rescaled degree one, and
\[
 R_1[t^{-1}]=R_2[t^{-1}]
\]
inside the common marked generic algebra.

\medskip
\noindent\textbf{Step 1.} In this step, we relate lattice-order separation
on a subspace to the relative Smith spectrum.

For an $\mathcal O$-lattice $\Lambda$ in a finite-dimensional $K$-vector
space and a nonzero element $z$, put
\[
 \ell_\Lambda(z)=\max\{q\in\mathbb Z\mid z\in t^q\Lambda\}.
\]
Order the Smith integers decreasingly.  Let $\delta>0$, and let
$W\subset V_m$ be a
$D$-dimensional $K$-subspace such that
\[
 \ell_{\Lambda_m^1}(z)-\ell_{\Lambda_m^2}(z)\geq\delta
\]
for every nonzero $z\in W$.  Choose a Smith basis $e_1,\ldots,e_{N_m}$ with
\[
 \Lambda_m^1=\bigoplus_j\mathcal O\,e_j,
 \qquad
 \Lambda_m^2=\bigoplus_j\mathcal O t^{r'_{m,j}}e_j.
\]
For $z=\sum_j c_j e_j$, we have
\[
 \ell_{\Lambda_m^1}(z)=\min_j\operatorname{ord}_t(c_j),
 \qquad
 \ell_{\Lambda_m^2}(z)
 =\min_j\left(\operatorname{ord}_t(c_j)-r'_{m,j}\right).
\]
If fewer than $D$ Smith integers were at least $\delta$, then $W$ would
meet the span of the remaining basis vectors nontrivially, contradicting the
order separation.  Thus at least $D$ Smith integers are at least $\delta$.
If instead
$\ell_{\Lambda_m^1}(z)-\ell_{\Lambda_m^2}(z)\leq-\delta$ on a
$D$-dimensional subspace, the argument with the two lattices interchanged
shows that at least $D$ Smith integers are at most $-\delta$.

\medskip
\noindent\textbf{Step 2.} In this step, we prove the quantitative
contrapositive to \eqref{eq:smith-rigidity-hypothesis}.

Assume that $R_1\neq R_2$.  Since both algebras are generated in degree one,
their degree-one lattices differ.  After interchanging the algebras if
necessary, choose
\[
 x\in(R_1)_1\setminus(R_2)_1.
\]
Since $R_2$ is a normal Noetherian domain, it is the intersection of its
localizations at the height-one prime ideals inside its fraction field by
\cite[Tag~031T]{Sta26}.
The element $x\notin R_2$ therefore has negative order along a height-one
prime $P$; write its normalized valuation as
\[
 v_P(x)=-a<0.
\]
We may choose $P$ homogeneous.  The finite set of negative prime components
of the divisor of the homogeneous element $x$ is preserved by the connected
section-grading torus, so each component is fixed.  Moreover, $t\in P$.  If
$t\notin P$, then $t$ is a unit in $(R_2)_P$ and
$x\in R_2[t^{-1}]\subset(R_2)_P$, contradicting $v_P(x)<0$.  Put
$b=v_P(t)>0$.

Since $t\in P$, we have $P\cap\mathcal O=(t)$.  The discrete valuation ring
$\mathcal O$ is Cohen--Macaulay and hence universally catenary by
\cite[Tag~00NM]{Sta26}.  Its finite-type algebra $R_2$ is therefore
catenary by the definition of universal catenarity.  Let
\[
 \mathfrak m=(t)+(R_2)_{>0}.
\]
The generic fiber of $R_2$ is the section ring of the $n$-fold
$(Y,H^r)$, so its fraction field has transcendence degree $n+1$ over
$K$.  The dimension formula
\cite[Tag~02IJ]{Sta26} therefore gives
$\operatorname{ht}(\mathfrak m)=1+(n+1)=n+2$.  Catenarity of
the chain $(0)\subset P\subset\mathfrak m$ gives
$\operatorname{ht}(\mathfrak m/P)=n+1$.  Since $R_2/P$ is a standard
graded complex domain and $\mathfrak m/P$ is its irrelevant ideal,
\cite[Tag~00P6]{Sta26} gives
\[
 \dim(R_2/P)=n+1.
\]
The ideal $P$ is homogeneous and the degree-zero part of the quotient is
$\mathcal O/(t)=\mathbb C$.  Thus $R_2/P$ is a standard graded complex
domain of dimension $n+1$.  Its Hilbert function agrees in large degree
with a polynomial of degree $n$ and positive leading coefficient.  Hence
\[
 D_l=\dim_{\mathbb C}(R_2/P)_l
 =c_P l^n+O(l^{n-1})
\]
for a positive constant $c_P$.  Choose lifts
\[
 s_{l,1},\ldots,s_{l,D_l}\in(R_2)_l
\]
of a complex basis modulo $P$.  If $c_j(t)\in K$ are not all zero and
$h=\min_j\operatorname{ord}_t(c_j(t))$, put
$d_j=t^{-h}c_j\in\mathcal O$.  At least one $d_j$ has nonzero image in
$\mathcal O/(t)$.  The images of the $s_{l,j}$ form a complex basis modulo
$P$, so $\sum_jd_js_{l,j}$ has nonzero image modulo $P$.  Factoring out
$t^h$ therefore gives
\[
 v_P\left(\sum_j c_j(t)s_{l,j}\right)=bh.
\]
In particular, the chosen lifts are $K$-linearly independent.

The degree-one lattices are commensurable.  Choose $C\geq0$ such that
\[
 t^C(R_2)_1\subset(R_1)_1.
\]
Generation in degree one gives
\[
 t^{Cl}(R_2)_l\subset(R_1)_l
 \qquad(l\geq1).
\]
For positive integers $k,l$, put $m=k+l$, and let $W_{k,l}$ be the
$K$-subspace spanned by
\[
 x^k t^{Cl}s_{l,j},
 \qquad
 1\leq j\leq D_l.
\]
It has dimension $D_l$.  For a nonzero element
\[
 z=x^k t^{Cl}\sum_j c_j(t)s_{l,j},
\]
with $h=\min_j\operatorname{ord}_t(c_j(t))$, the generators
$x^k t^{Cl}s_{l,j}$ lie in $\Lambda_m^1$, and hence
\[
 \ell_{\Lambda_m^1}(z)\geq h.
\]
On the other hand,
\[
 v_P(z)=-ak+Clb+bh.
\]
If $z\in t^q\Lambda_m^2$, then $v_P(z)\geq qb$.  Therefore
\[
 \ell_{\Lambda_m^2}(z)
 \leq h+Cl-\frac{a}{b}k,
\]
and
\begin{equation}
 \ell_{\Lambda_m^1}(z)-\ell_{\Lambda_m^2}(z)
 \geq\frac{a}{b}k-Cl.
 \label{eq:lattice-order-separation}
\end{equation}

Choose a positive rational number $\epsilon$ such that
$C\epsilon<a/(2b)$, with any positive $\epsilon$ allowed when $C=0$.
Take infinitely many positive pairs $(k,l)$ with
$l/k\to\epsilon$.  There exists $\gamma>0$ such that the right-hand side
of \eqref{eq:lattice-order-separation} is at least $\gamma m$.  The integer
$l$ is a fixed positive proportion of $m$, so $D_l\geq c_1m^n$ for a
positive constant $c_1$.  The $D_l$-dimensional subspace used to obtain
\eqref{eq:lattice-order-separation} therefore forces at least $D_l$ relative
Smith integers to have absolute value at least $\gamma m$.  Hence
\[
 \sum_j(r'_{m,j})^2\geq c_1\gamma^2m^{n+2}
\]
along an infinite sequence.  This contradicts
\eqref{eq:smith-rigidity-hypothesis}, and we conclude that $R_1=R_2$.

\medskip
\noindent\textbf{Step 3.} In this step, we remove the common Veronese and
recover the marked test configurations.

We first recover the two Veronese algebras over $\mathbb C[t]$.  Equality
$R_1=R_2$ and finite generation allow us to clear the finitely many
denominators in homogeneous generating sets.  Hence there exists
$f\in\mathbb C[t]\setminus(t)$ such that
\[
 (\widetilde R_1)_f=(\widetilde R_2)_f.
\]
Over $D(t)$, the fixed punctured marking gives
$(\widetilde R_1)_t=(\widetilde R_2)_t$.  Since $f(0)\neq0$, the open sets
$D(f)$ and $D(t)$ cover $\Spec\mathbb C[t]$.  The two
identifications are restrictions of the identity inside
\eqref{eq:marked-generic-algebra}.  If
$M_{i,d}=(S_i^{(r)})_d$ and
\begin{equation}\label{eq:smith-common-degree-ambient}
 W_d=H^0(Y,H^{rd})\otimes_{\mathbb C}\mathbb C(t),
\end{equation}
then $M_{1,d}$ and $M_{2,d}$ are finite torsion-free
$\mathbb C[t]$-submodules of the common space $W_d$.  Applying
Lemma~\ref{lem:marked-affine-module-gluing} degree by degree gives
\begin{equation}\label{eq:smith-degreewise-gluing}
 M_{i,d}=(M_{i,d})_f\cap(M_{i,d})_t
 \quad\text{inside }W_d
\end{equation}
and shows that the two sides agree for $i=1,2$.  Hence
\[
 S_1^{(r)}=S_2^{(r)}.
\]

Lemma~\ref{lem:normal-veronese-section-algebra} states that each full section
algebra $S_i$ is normal.  If $y$ is homogeneous in $S_1$, then
$y^r\in S_2^{(r)}$.  Thus $y$ is integral over $S_2$.  It also belongs to
$S_2[t^{-1}]$, and hence to $\operatorname{Frac}(S_2)$.  Normality gives
$y\in S_2$.  Symmetry gives $S_1=S_2$ inside the marked generic algebra.
For each $i$, the canonical morphism
\begin{equation}\label{eq:smith-relative-proj-reconstruction}
 \mathcal Y_i\to
 \operatorname{Proj}_{\mathbb A^1}S_i
\end{equation}
is an isomorphism by \cite[Tag~0C6J]{Sta26}.  Under this isomorphism,
the canonical evaluation map identifies
$\widetilde{S_i(1)}$ with $\mathcal H_i$ by
\cite[Tag~01QI]{Sta26}.  Equality of the full graded algebras therefore
recovers the original polarization, not only its $r$th power.  The grading,
common generic marking, and linearization are preserved, so the resulting
isomorphism is an isomorphism of marked polarized equivariant test
configurations.
\end{proof}

\subsection{Residual filtrations and the oriented Smith spectrum}
\label{subsec:residual-smith-spectrum}

In this subsection, we identify the relative Smith spectrum with the jumps of
the oriented initial and prove quadratic decay after subtracting the integral
product profile.

\begin{thm}[Filtered jump measures, {\cite[Theorem~1.11 and
Remark~1.12(i)]{BC11}}]
\label{thm:filtered-jump-measures}
Let $Y$ be a projective variety, let $H$ be an ample line bundle on $Y$, and let
$R(H)=\bigoplus_{m\geq0}H^0(Y,H^m)$ carry a multiplicative filtration.
Assume that, in the decreasing convention, the filtration is pointwise left
bounded and linearly right bounded.  Fix a full-rank valuation with
one-dimensional leaves, and let $G$ be the associated concave transform on
the Newton--Okounkov body.  If $N_m=h^0(Y,H^m)$ and
$\lambda_{m,1},\ldots,\lambda_{m,N_m}$ are the filtration jumps in degree
$m$, counted with multiplicity, then
\begin{equation}
 \frac{1}{N_m}\sum_{k=1}^{N_m}
 \delta_{\lambda_{m,k}/m}
 \to
 G_*\left(\frac{\operatorname{Leb}}
 {\vol(\Delta(H))}\right)
 \label{eq:filtered-jump-measure-limit}
\end{equation}
weakly, where $\Delta(H)$ is the Newton--Okounkov body of $H$.
\end{thm}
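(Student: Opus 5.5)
This statement is quoted from \cite[Theorem~1.11 and Remark~1.12(i)]{BC11}, so the plan is to reduce to the Boucksom--Chen framework rather than to reprove it from scratch. We pass to the increasing or decreasing convention as needed, since reversing signs of jumps reflects the measure, as noted for Theorem~\ref{thm:filtered-okounkov-equidistribution}. Fix the valuation $\nu$ with one-dimensional leaves. For each real $t$, consider the graded linear series
\[
 R^t=\bigoplus_{m\geq0}F^{mt}H^0(Y,H^m).
\]
Multiplicativity makes $R^t$ a graded subalgebra. The valuation image of $R^t$ generates a semigroup whose Okounkov body is $\Delta(R^t)$.

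First, we show that the hypograph structure of $G$ is encoded by these bodies. Define
\[
 G(y)=\sup\{t\mid y\in\Delta(R^t)\}.
\]
Superadditivity of the bodies in $t$ makes $G$ concave. Pointwise left boundedness and linear right boundedness make $G$ finite on the interior and bounded above.

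Second, because the leaves are one-dimensional, the number of jumps at least $mt$ in degree $m$ equals $\dim F^{mt}H^0(Y,H^m)$, which equals the number of valuation values of that space. We apply the Okounkov/Lazarsfeld--Musta\c{t}\u{a} volume theorem to $R^t$ for every $t<\sup G$. This requires that $R^t$ contain an ample series, which follows from linear boundedness together with the concavity argument of \cite{BC11}. The theorem gives
\[
 \lim_{m\to\infty}\frac{\dim F^{mt}H^0(Y,H^m)}{N_m}
 =\frac{\vol\Delta(R^t)}{\vol\Delta(H)}
 =\frac{\operatorname{Leb}\{G\geq t\}}{\vol\Delta(H)}.
\]

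Third, this gives pointwise convergence of the tail distribution functions of the measures in \eqref{eq:filtered-jump-measure-limit} to those of $G_*\operatorname{Leb}$, except possibly at countably many $t$. The supports are uniformly bounded by linear boundedness, and left boundedness controls the lower tail through a cutoff argument. Hence the measures converge weakly.

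The main obstacle is the second step near $t=\sup G$ and on the boundary of $\Delta(H)$. There the series $R^t$ may fail to contain an ample series, and the volume identity must be obtained by a limiting argument. This is where the right-boundedness hypothesis is used: the set $\{G=t\}$ has measure zero except possibly at the maximum, and that contribution is absorbed by monotonicity of the distribution functions.
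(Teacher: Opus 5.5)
Your proposal is correct and takes the same route as the paper. The paper does not reprove this statement but imports it from \cite[Theorem~1.11 and Remark~1.12(i)]{BC11}, and your outline is precisely the Boucksom--Chen argument: the series $R^t$, $G$ as a supremum over the bodies $\Delta(R^t)$, the Lazarsfeld--Musta\c{t}\u{a} volume identity for $t<\sup G$, and convergence of the tail distribution functions with tightness.

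Your last paragraph misplaces the difficulty. For $t>\sup G$ no limiting argument is needed, because a nonzero $s\in F^{mt}H^0(Y,H^m)$ would force $G(\nu(s)/m)\geq t$. A possible atom at $\sup G$ sits at a discontinuity point of the limit distribution function, where weak convergence imposes no condition. The genuine input is instead the lemma that $R^t$ contains an ample series for every $t<\sup G$, which rests on pointwise left boundedness rather than on right boundedness.
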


\begin{lem}[Quadratic decay of the residual jumps]
\label{lem:residual-quadratic-decay}
Notation and conditions are as in
Proposition~\ref{prop:integral-product-comparator}.  Let $\chi$ be an
integral increasing filtration of $R^{[e]}$ that preserves every block
$V_{em,j}$.  Assume that $\chi$ is multiplicative and two-sided linearly
bounded, and that its convex transform after division by the actual
$A$-degree $d=em$ is
\eqref{eq:base-changed-affine-transform}.  If
$i^\chi_{em,j,\gamma}$ are its jumps, where
$1\leq\gamma\leq\dim V_{em,j}$, define the residual filtration $\eta$ by
\begin{equation}\label{eq:residual-filtered-pieces}
 \eta_{\leq\ell}\cap V_{em,j}
 =\chi_{\leq\ell+w_N(m,j)}\cap V_{em,j}.
\end{equation}
Its jumps satisfy
\begin{equation}
 i^\eta_{em,j,\gamma}
 =i^\chi_{em,j,\gamma}-w_N(m,j),
 \label{eq:residual-jumps}
\end{equation}
where $w_N(m,j)$ is the integral product jump in
\eqref{eq:product-comparator-jump}.  Then $\eta$ is an integral,
multiplicative, two-sided linearly bounded filtration of the full ring
$R^{[e]}$, and its convex transform is zero.  If
\begin{equation}
 h_m=h^0(X,A^{em}),
 \label{eq:residual-hilbert-function}
\end{equation}
then
\begin{equation}
 \frac{1}{h_m}\sum_{j,\gamma}
 \delta_{i^\eta_{em,j,\gamma}/m}
 \to\delta_0
 \label{eq:residual-measure-delta}
\end{equation}
weakly, and
\begin{equation}
 \sum_{j,\gamma}\left(i^\eta_{em,j,\gamma}\right)^2=o(m^7).
 \label{eq:residual-square-little-o}
\end{equation}
\end{lem}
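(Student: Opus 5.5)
The plan is to treat $\eta$ as $\chi$ twisted by the additive character $w_N(m,j)=N(eam+bj)$, which is integral by Proposition~\ref{prop:integral-product-comparator}. Each property in the statement transfers from $\chi$ to $\eta$ through this twist. After that, the measure convergence and the $o(m^7)$ bound follow from Theorem~\ref{thm:filtered-jump-measures} together with a uniform support bound.

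\textbf{Formal properties of $\eta$.} Formula \eqref{eq:residual-jumps} is immediate from \eqref{eq:residual-filtered-pieces}, because $\chi$ preserves each block and the shift is constant on $V_{em,j}$. Integrality follows since $w_N(m,j)\in\mathbb Z$. For multiplicativity we use that $w_N$ is additive on block indices: $w_N(m,j)+w_N(m',j')=w_N(m+m',j+j')$. Take $s\in V_{em,j}$ and $s'\in V_{em',j'}$. Multiplicativity of $\chi$ gives $i^\chi(ss')\leq i^\chi(s)+i^\chi(s')$, and subtracting $w_N(m+m',j+j')$ yields $i^\eta(ss')\leq i^\eta(s)+i^\eta(s')$. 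General sections are sums over blocks, and since both filtrations are block preserving, the filtered pieces are direct sums of their block components. Two-sided linear boundedness holds because $|w_N(m,j)|\leq N(e|a|+|b|)m$ for $0\leq j\leq em$.

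\textbf{The transform of $\eta$ is zero.} On the block $V_{em,j}$, with valuation point $(j/em,\cdot)$, the normalized entries of $\eta$ equal those of $\chi$ minus $w_N(m,j)/(em)=Na+Nb\,(j/em)$. This is exactly the value at that point of the affine function \eqref{eq:base-changed-affine-transform}. Because the fiber coordinate $x=j/em$ is one coordinate of the valuation used for $\Omega$, the filtered cone of $\eta$ is the image of the filtered cone of $\chi$ under the linear shear $(x,y,\ell)\mapsto(x,y,\ell-Na-Nbx)$, applied degree-homogeneously. Concave and convex transforms commute with such a shear. Hence the transform of $\eta$ is $G_\chi-(Na+Nbx)=0$ on the interior of $\Omega$. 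Theorem~\ref{thm:filtered-jump-measures}, applied after a sign reversal (as in Lemma~\ref{lem:affine-initials-veronese}, degrees restricted to $e\mathbb N$), then gives convergence of the normalized entry laws to $0_*\mu_\Omega=\delta_0$. The factor $e$ between normalization by $d$ and by $m$ does not change a Dirac mass at $0$, so \eqref{eq:residual-measure-delta} follows.

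\textbf{The square estimate.} Two-sided linear boundedness gives a constant $C$ with $|i^\eta_{em,j,\gamma}|\leq Cm$. Therefore
\[
 \frac{1}{h_m m^2}\sum_{j,\gamma}\bigl(i^\eta_{em,j,\gamma}\bigr)^2=\int x^2\,d\mu_m(x),
\]
where $\mu_m$ is the normalized law in \eqref{eq:residual-measure-delta}. These laws are supported in $[-C,C]$, and $x^2$ is continuous and bounded there, so weak convergence to $\delta_0$ makes this integral tend to $0$. Since $h_m\asymp m^5$, the sum is $o(m^7)$.

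The main obstacle is the shear step: we must check that the Boucksom--Chen transform is equivariant under subtracting a function that is affine in the valuation coordinate. This needs the valuation to record $j$ exactly, which it does by construction, so that $w_N$ is linear on the value semigroup. It also needs the boundedness hypotheses to pass from $\chi$ to $\eta$, which the shift bound above handles.
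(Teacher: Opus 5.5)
Your proposal is correct and follows the paper's proof essentially step by step. The additive integral twist gives integrality, multiplicativity and two-sided linear bounds. Cancelling the affine profile gives a zero transform, hence the Dirac limit via Boucksom--Chen. The uniform support bound then converts weak convergence into the $o(m^7)$ estimate.

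Your shear argument just makes explicit the paper's one-line claim that the difference of the two transforms is zero. One harmless slip: since $0\le j\le em$, the shift bound should read $\lvert w_N(m,j)\rvert\le Ne(\lvert a\rvert+\lvert b\rvert)m=(\lvert\alpha_N\rvert+e\lvert\beta_N\rvert)m$ rather than $N(e\lvert a\rvert+\lvert b\rvert)m$. Any linear bound suffices, so this does not affect the argument.
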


\begin{proof}
We prove Lemma~\ref{lem:residual-quadratic-decay} in three steps.  First, the
inverse product profile gives
an integral multiplicative filtration, and its blockwise sum with $\chi$ is
$\eta$.  Second, the convex transforms cancel, so the filtered jump measures
converge to $\delta_0$.  Third, the common linear bound converts this weak
convergence into the quadratic estimate.

\medskip
\noindent\textbf{Step 1.} In this step, we prove the algebraic properties of
the residual filtration.

The negative product profile assigns the scalar jump $-w_N(m,j)$ to
$V_{em,j}$.  By \eqref{eq:product-comparator-jump}, it is additive under
multiplication:
\[
 -w_N(m+m',j+j')=-w_N(m,j)-w_N(m',j').
\]
Moreover, $0\leq j\leq em$ and
\eqref{eq:integral-affine-coefficients} give
\[
 \lvert w_N(m,j)\rvert
 \leq(\lvert\alpha_N\rvert+e\lvert\beta_N\rvert)m.
\]

Choose in each block a basis whose vectors of jump at most $\ell$ span the
$\ell$-th filtered piece of $\chi$.  Formula~\eqref{eq:residual-filtered-pieces}
shifts every basis jump in $V_{em,j}$ by $-w_N(m,j)$.  If
$u\in V_{em,j}$ and $v\in V_{em',j'}$, multiplicativity of $\chi$ and
additivity of the scalar profile give
\[
 i^\eta(uv)\leq i^\eta(u)+i^\eta(v).
\]
Thus $\eta$ is multiplicative.  The filtration $\chi$ is integral and
two-sided linearly bounded, while $w_N(m,j)$ is integral and satisfies
$\lvert w_N(m,j)\rvert
\leq(\lvert\alpha_N\rvert+e\lvert\beta_N\rvert)m$.  Hence $\eta$ is
integral and two-sided linearly bounded.

\medskip
\noindent\textbf{Step 2.} In this step, we determine the weak limit of the
residual jump measures.

The convex transform of $\chi$ is \eqref{eq:base-changed-affine-transform},
and \eqref{eq:product-comparator-jump} has precisely that affine transform.
Their difference is the zero transform.  After passing from increasing jumps
to the decreasing real extension, $\eta$ satisfies the hypotheses of
Theorem~\ref{thm:filtered-jump-measures} for the complete series of the ample
line bundle $A^e$.  The zero transform gives
\eqref{eq:residual-measure-delta}.

\medskip
\noindent\textbf{Step 3.} In this step, we deduce the quadratic decay from
\eqref{eq:residual-measure-delta}.

The two-sided linear bound places the supports of all measures in
\eqref{eq:residual-measure-delta} in one compact interval.  Choose a bounded
continuous function that agrees with $x^2$ on this interval.  Weak
convergence gives
\[
 \frac{1}{h_m}\sum_{j,\gamma}
 \left(\frac{i^\eta_{em,j,\gamma}}{m}\right)^2
 \to0.
\]
Since $X$ has dimension five, there exists a constant $c_e>0$ such that
\[
 h_m=c_e m^5+O(m^4).
\]
Multiplying the normalized quadratic sum by $m^2h_m$ proves
\eqref{eq:residual-square-little-o}.
\end{proof}

\begin{defn}[Initial filtrations oriented by the weights]
\label{defn:oriented-weight-initials}
Let $V=\bigoplus_jV_j$ carry an increasing filtration $F$.  Put
\begin{equation}\label{eq:highest-weight-initial}
 W_j^{\leq}=\bigoplus_{k\leq j}V_k,
 \qquad
 (F^{\mathrm{hi}})_\ell
 =\bigoplus_j\operatorname{pr}_j(F_\ell\cap W_j^{\leq}),
\end{equation}
and
\begin{equation}\label{eq:lowest-weight-initial}
 W_j^{\geq}=\bigoplus_{k\geq j}V_k,
 \qquad
 (F^{\mathrm{lo}})_\ell
 =\bigoplus_j\operatorname{pr}_j(F_\ell\cap W_j^{\geq}).
\end{equation}
We call these the initials obtained by taking the highest and lowest weight
components, respectively.  For an integer $q$, the $q$-oriented initial is
$F^{\mathrm{hi}}$ if $q>0$, is $F^{\mathrm{lo}}$ if $q<0$, and may be either
one if $q=0$.
\end{defn}

\begin{lem}[Smith spectrum of an oriented initial]
\label{lem:oriented-relative-smith}
Let
\begin{equation}
 V=\bigoplus_{j\in\mathbb Z}V_j
 \label{eq:oriented-weight-decomposition}
\end{equation}
be a finite-dimensional complex vector space with finitely many nonzero
summands, and let $F$ be an exhaustive separated increasing integer
filtration of $V$.  Fix integers $c,q$, and let $P$ be the split filtration
whose jump on $V_j\setminus\{0\}$ is
\begin{equation}
 p_j=c+qj.
 \label{eq:oriented-product-jump}
\end{equation}
Put $\mathcal O=\mathbb C[t]_{(t)}$ and $K=\mathbb C(t)$.  Associate to an
increasing filtration $E$ the lattice
\begin{equation}
 \Lambda_E=\sum_{\ell\in\mathbb Z}t^\ell E_\ell
 \subset V\otimes_{\mathbb C}K.
 \label{eq:oriented-filtration-lattice}
\end{equation}
Put $N=\dim_{\mathbb C}V$.  Let $r_1,\ldots,r_N$ be the relative Smith
elementary divisors of $\Lambda_P$ and $\Lambda_F$ in the convention of
Definition-Lemma~\ref{deflem:relative-smith-spectrum}, and let
$F_{\mathrm{or}}$ be the $q$-oriented initial in
Definition~\ref{defn:oriented-weight-initials}.  There exists a homogeneous basis
$v_1,\ldots,v_N$, with $v_i\in V_{j_i}$, and integers $a_i$ such that
$F_{\mathrm{or},\ell}$ is spanned by the vectors $v_i$ with
$a_i\leq\ell$.  For this basis,
\begin{equation}
 \{r_1,\ldots,r_N\}
 =\{a_i-(c+qj_i)\mid 1\leq i\leq N\}
 \label{eq:oriented-smith-multiset}
\end{equation}
as multisets.  In particular,
\begin{equation}
 \sum_{i=1}^N r_i^2
 =\sum_{i=1}^N(a_i-c-qj_i)^2.
 \label{eq:oriented-smith-square}
\end{equation}
\end{lem}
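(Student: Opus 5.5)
The plan is to reduce everything to a diagonal change of coordinates followed by an explicit adapted basis. By symmetry (replace the grading index $j$ by $-j$ and $q$ by $-q$, which exchanges $F^{\mathrm{hi}}$ and $F^{\mathrm{lo}}$), it suffices to treat $q>0$ and $F_{\mathrm{or}}=F^{\mathrm{hi}}$. The case $q=0$ is covered by either argument, because then the correction terms introduced below carry no extra powers of $t$ but also do not matter.

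\textbf{Step 1: dimension count and an adapted basis.} For fixed $\ell$, filter $F_\ell$ by the subspaces $F_\ell\cap W_j^{\leq}$. The map $\operatorname{pr}_j$ identifies the successive quotient $(F_\ell\cap W_j^{\leq})/(F_\ell\cap W_{j-1}^{\leq})$ with $\operatorname{pr}_j(F_\ell\cap W_j^{\leq})$. Hence
\[
 \dim F^{\mathrm{hi}}_\ell=\dim F_\ell,
\]
and $F^{\mathrm{hi}}$ is again an exhaustive, separated, increasing filtration. Working upward in $\ell$ and, inside each $\ell$, upward in $j$, I will choose vectors $f_i\in F_{a_i}\cap W_{j_i}^{\leq}$ with the following properties.
\begin{itemize}
 \item Their top components $v_i=\operatorname{pr}_{j_i}(f_i)\in V_{j_i}$ extend a basis of $F^{\mathrm{hi}}_{a_i-1}$ to one of $F^{\mathrm{hi}}_{a_i}$.
 \item The $f_i$ with $a_i\leq\ell$ span $F_\ell$.
\end{itemize}
Both properties hold because the vectors chosen at levels $\leq\ell$ have linearly independent images in the associated graded of $F_\ell$ for the $W^{\leq}$ filtration, and the dimension count above then shows they span. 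The resulting homogeneous basis $(v_i)$ with integers $(a_i)$ is the one asserted in the statement. Moreover $(f_i)$ is an $F$-adapted basis, so
\[
 \Lambda_F=\bigoplus_i\mathcal O\,t^{a_i}f_i,
 \qquad
 \Lambda_P=\bigoplus_j t^{p_j}V_j\otimes\mathcal O.
\]

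\textbf{Step 2: diagonal rescaling.} Let $g\in\operatorname{GL}(V\otimes K)$ act by $t^{-p_j}$ on $V_j$. Relative Smith spectra are invariant under applying the same $K$-linear automorphism to both lattices, by the uniqueness in Definition-Lemma~\ref{deflem:relative-smith-spectrum}. We have $g\Lambda_P=V\otimes\mathcal O$. For the other lattice, write
\[
 g\bigl(t^{a_i}f_i\bigr)
 =t^{a_i-c-qj_i}\Bigl(v_i+\sum_{j<j_i}t^{q(j_i-j)}\operatorname{pr}_j(f_i)\Bigr)
 =t^{a_i-c-qj_i}u_i .
\]
Since $q>0$, each $u_i$ lies in $V\otimes\mathcal O$ and satisfies $u_i\equiv v_i\pmod t$. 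Because the reductions $v_i$ form a $\mathbb C$-basis of $V$, Nakayama's lemma shows that $(u_i)$ is an $\mathcal O$-basis of $V\otimes\mathcal O$. Thus
\[
 g\Lambda_F=\bigoplus_i\mathcal O\,t^{a_i-c-qj_i}u_i
\]
is already in Smith form relative to $g\Lambda_P$. This gives \eqref{eq:oriented-smith-multiset}, and \eqref{eq:oriented-smith-square} follows by summing squares.

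\textbf{Main obstacle.} I expect the main care to go into Step 1: producing a single basis that is simultaneously adapted to $F$ and whose top-weight components are adapted to $F^{\mathrm{hi}}$. The crux is the dimension equality $\dim F^{\mathrm{hi}}_\ell=\dim F_\ell$, which converts the independence of the chosen vectors into spanning. A secondary point is the sign bookkeeping, namely that $q>0$ pairs with the highest-weight initial rather than the lowest. This is exactly where the positivity of the exponents $q(j_i-j)$ is used.
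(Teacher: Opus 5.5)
Your proposal is correct and follows essentially the paper's proof: the same reduction to $q>0$, the same basis adapted both to $F$ and to the weight flag, and the same diagonal gauge $t^{-(c+qj)}$ on $V_j$. Your lifting of bases of $\operatorname{pr}_j(F_\ell\cap W_j^{\leq})$ is the paper's choice of complements $C_{\ell,j}$, and the gauge makes the transition matrix congruent to the identity modulo $t$. One point to tighten is $q=0$: there $u_i=f_i$ rather than $u_i\equiv v_i \pmod t$, so Nakayama applies because the $f_i$ themselves form a basis of $V$; equivalently $\Lambda_P=t^c(V\otimes\mathcal O)$, as the paper notes.
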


\begin{proof}
We first construct a basis adapted to the filtration and to the weight flag.
We then gauge the product lattice to the standard lattice and read the Smith
integers from the gauged filtered basis.  It is enough to prove the case
$q>0$.  The case $q<0$ follows after replacing every weight $j$ by $-j$,
and the case $q=0$ follows from
$\Lambda_P=t^c(V\otimes\mathcal O)$.

\medskip
\noindent\textbf{Step 1.} In this step, we construct a homogeneous basis
for the initial obtained by taking the highest weight components.

Put
\[
 W_j=\bigoplus_{k\leq j}V_k.
\]
For each pair $(\ell,j)$, choose a complement $C_{\ell,j}$ of
\[
 (F_{\ell-1}\cap W_j)+(F_\ell\cap W_{j-1})
\]
inside $F_\ell\cap W_j$, omitting repeated filtration levels.  Choose a
basis of each complement.  Induction on the finite ordered grid of pairs
$(\ell,j)$ shows that the vectors chosen up to $(\ell,j)$ span
$F_\ell\cap W_j$.  Their union is therefore a basis
$f_1,\ldots,f_N$ of $V$.

For each $i$, let $a_i$ and $j_i$ be the indices of the complement
containing $f_i$.  Then $a_i$ is the $F$-jump of $f_i$, the vector $f_i$
belongs to $W_{j_i}$, and its image
\[
 v_i=\operatorname{pr}_{j_i}(f_i)\in V_{j_i}
\]
is nonzero.  For fixed $\ell,j$, the vectors $v_i$ with
$a_i\leq\ell$ and $j_i=j$ form a basis of
\[
 \operatorname{pr}_j(F_\ell\cap W_j)\subset V_j.
\]
Thus $v_1,\ldots,v_N$ is a homogeneous basis for $F^{\mathrm{hi}}$, and the
jump of $v_i$ is $a_i$.

\medskip
\noindent\textbf{Step 2.} In this step, we diagonalize the relative lattice
after gauging the product filtration.

Define a $K$-linear automorphism $D$ by
\[
 D\vert_{V_j}=t^{-(c+qj)}\operatorname{id}_{V_j}.
\]
Then $D\Lambda_P=V\otimes\mathcal O$.  Write
\[
 f_i=v_i+\sum_{k<j_i}f_{i,k},
 \qquad
 f_{i,k}\in V_k.
\]
We have
\[
 D(t^{a_i}f_i)
 =t^{a_i-c-qj_i}
 \left(v_i+\sum_{k<j_i}t^{q(j_i-k)}f_{i,k}\right).
\]
Put
\[
 u_i(t)=v_i+\sum_{k<j_i}t^{q(j_i-k)}f_{i,k}.
\]
Relative to the complex basis $v_1,\ldots,v_N$, the matrix with columns
$u_i(t)$ has constant term equal to the identity.  It belongs to
$\operatorname{GL}_N(\mathcal O)$.  Since the basis
$f_1,\ldots,f_N$ is adapted to $F$, we obtain
\[
 D\Lambda_F
 =\bigoplus_i\mathcal O\,t^{a_i-c-qj_i}u_i(t).
\]
The relative Smith spectrum is invariant under the common gauge
transformation $D$.  In the $\mathcal O$-basis
$u_1(t),\ldots,u_N(t)$, the two lattices are diagonal with exponents
$a_i-c-qj_i$.  This proves \eqref{eq:oriented-smith-multiset} and
\eqref{eq:oriented-smith-square}.
\end{proof}

\begin{rem}[Dependence on the orientation]
\label{rem:smith-orientation-warning}
Let $V=\mathbb C e_0\oplus\mathbb C e_1$, with comparator jumps $0$ on
$\mathbb C e_0$ and $1$ on $\mathbb C e_1$.  Let $F$ be given by
\begin{equation}
 F_{-1}=0,
 \qquad
 F_0=\mathbb C(e_0+e_1),
 \qquad
 F_1=V.
 \label{eq:orientation-example-filtration}
\end{equation}
The filtration $F^{\mathrm{lo}}$ in \eqref{eq:lowest-weight-initial} has
residual jumps $\{0,0\}$.  The relative
lattices are
\begin{equation}
 \Lambda_F=\mathcal O(e_0+e_1)+\mathcal O\,te_1,
 \qquad
 \Lambda_P=\mathcal O\,e_0+\mathcal O\,te_1.
 \label{eq:orientation-example-lattices}
\end{equation}
In the $\mathcal O$-basis $(e_0,te_1)$ of $\Lambda_P$, the transition matrix
is
\begin{equation}
 \begin{pmatrix}1&0\\ t^{-1}&1\end{pmatrix}.
 \label{eq:orientation-example-matrix}
\end{equation}
Its minimum entry valuation is $-1$ and its determinant has valuation $0$,
so the relative Smith multiset is $\{-1,1\}$ and its square sum is $2$.
Thus the opposite orientation does not compute the relative Smith spectrum
in this example, which explains the sign convention in
Lemma~\ref{lem:oriented-relative-smith}.
\end{rem}

\subsection{Descent from the integral comparator}
\label{subsec:descent-integral-comparator}

In this subsection, we eliminate the rounding introduced by deck descent and
prove Theorem~\ref{thm:equality-classification}.

\begin{lem}[Elimination of the ceiling error]
\label{lem:eliminate-ceiling-errors}
Let $e$ be a positive integer and let $a,b\in\mathbb Q$.  For every positive
integer $m$ and $0\leq j\leq em$, put
\begin{equation}
 g_{em,j}=\lceil aem+bj\rceil-(aem+bj).
 \label{eq:ceiling-errors}
\end{equation}
Assume that there exists a constant $C$ such that
\begin{equation}
 \sum_{j=0}^{em}g_{em,j}\leq C
 \label{eq:ceiling-row-bound}
\end{equation}
for all sufficiently large $m$.  Then
\begin{equation}
 b\in\mathbb Z,
 \qquad
 ea\in\mathbb Z.
 \label{eq:ceiling-integrality}
\end{equation}
Consequently,
\begin{equation}
 \lceil aem+bj\rceil=aem+bj
 \label{eq:ceiling-errors-vanish}
\end{equation}
for every positive integer $m$ and every $0\leq j\leq em$.
\end{lem}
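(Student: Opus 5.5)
The plan is to exploit the fact that the fractional parts $g_{em,j}\in[0,1)$ are periodic in $j$, so any non-integrality produces a row sum of linear size. Write $b=u/v$ in lowest terms with $v\geq1$. For fixed $m$, the quantity $aem+bj$ modulo $1$ depends only on $j$ modulo $v$. Moreover, as $j$ runs through $v$ consecutive integers, the values $bj$ modulo $1$ run through the full coset $\tfrac1v\mathbb Z/\mathbb Z$.

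First, I show $b\in\mathbb Z$. Suppose $v\geq2$. Then in every window of $v$ consecutive values of $j$ the numbers $aem+bj$ modulo $1$ form the coset $aem+\tfrac1v\mathbb Z$ modulo $1$. Such a coset contains at most one integer, so at least $v-1$ of its elements have nonzero fractional part. Each nonzero ceiling error from the coset is at least $\min$ of the positive values $\lceil y\rceil-y$ over the coset. Since the coset is a translate of $\tfrac1v\mathbb Z$, at least one element has ceiling error $\geq\tfrac12$ when $v\geq2$. More crudely, the sum of the ceiling errors over one full coset is at least $\sum_{k=1}^{v-1}k/v=(v-1)/2>0$ in the worst case, where the translate contains an integer. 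The row $0\leq j\leq em$ contains $\lfloor (em+1)/v\rfloor$ disjoint full windows. Hence
\[
 \sum_{j=0}^{em}g_{em,j}\geq\Bigl\lfloor\frac{em+1}{v}\Bigr\rfloor\frac{v-1}{2}\to\infty,
\]
which contradicts \eqref{eq:ceiling-row-bound}. So $v=1$.

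Second, with $b\in\mathbb Z$ we have $g_{em,j}=\lceil aem\rceil-aem$ for every $j$. The row sum is then $(em+1)(\lceil aem\rceil-aem)$. Boundedness forces $aem\in\mathbb Z$ for all sufficiently large $m$: otherwise, writing $ea=u'/v'$, the fractional part is at least $1/v'$ along an arithmetic progression of $m$, and the sum grows linearly. Taking two consecutive large values $m$ and $m+1$ gives $ea=ae(m+1)-aem\in\mathbb Z$. Then $aem+bj\in\mathbb Z$ for all $m,j$, which is \eqref{eq:ceiling-errors-vanish}.

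The only point needing care is the first step, which requires a uniform positive lower bound for the window sum that does not depend on $m$. This is handled by the coset observation: the multiset of fractional parts over a window is a translate of $\{0,\tfrac1v,\dots,\tfrac{v-1}v\}$. Its ceiling-error sum is minimized when the translate contains an integer, giving exactly $(v-1)/2$. Everything else is routine.
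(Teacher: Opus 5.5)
Your proof is correct and follows the paper's argument essentially step for step. You show $b\in\mathbb Z$ from the $(v-1)/2$ lower bound on each full window of $v$ consecutive values of $j$. You then get $ea\in\mathbb Z$ from the row sum $(em+1)\theta_m$ and two consecutive large values of $m$. Where you use the lower bound $1/v'$ on a nonzero ceiling error, the paper instead notes that $\theta_m$ takes finitely many values; this is the same observation.
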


\begin{proof}
Write $b=u/q$ in lowest terms, with $q>0$.  If $q>1$, then on any $q$
consecutive values of $j$, the fractional parts of $aem+bj$ form a translate
of the $q$-grid.  The sum of their ceiling errors is at least
\[
 \frac{q-1}{2}.
\]
The left-hand side of \eqref{eq:ceiling-row-bound} would grow linearly with
$m$, which is a contradiction.  Thus $q=1$, and $b\in\mathbb Z$.

The ceiling error is now independent of $j$.  Set
\[
 \theta_m=\lceil eam\rceil-eam.
\]
The bound in \eqref{eq:ceiling-row-bound} gives
\[
 (em+1)\theta_m\leq C,
\]
so $\theta_m\to0$.  Since $ea\in\mathbb Q$, the sequence $\theta_m$ takes
values in a finite set.  It follows that $\theta_m=0$ for all sufficiently
large $m$.  Two consecutive such integers $m$ show that $ea\in\mathbb Z$.
Substitution proves \eqref{eq:ceiling-errors-vanish}.
\end{proof}

\begin{proof}[Proof of Theorem~\ref{thm:equality-classification}]
The nonnegativity assertion is
Proposition~\ref{prop:semistability-all-exponents}.  Assume that
$\DF(\mathcal T)=0$.  Corollary~\ref{cor:rational-affine-data} gives the
rational coefficients $a,b$ of the common affine transform.
Proposition~\ref{prop:integral-product-comparator} gives a positive integer
$N$, the normalized base change $\mathcal T_N$, and the integral product
comparator $\mathcal P_N$.  They satisfy
\eqref{eq:integral-affine-coefficients},
\eqref{eq:product-comparator-jump}, and
\eqref{eq:base-changed-affine-transform}, and
$\DF(\mathcal T_N)=0$.

We prove the equality assertion in four steps.  First, the oriented initial
of Definition~\ref{defn:oriented-weight-initials}
identifies the relative Smith spectrum of $\mathcal T_N$ and $\mathcal P_N$
with the residual jumps.  Second, quadratic decay and marked rigidity identify
the normalized base change with the product comparator.  Third, deck
invariants descend the equality of the marked extension lattices to a
filtration that is scalar on each block and has ceiling jumps.  Finally, the uniform bound in
\eqref{eq:affine-initials-row-budget} removes the ceiling error.

\medskip
\noindent\textbf{Step 1.} In this step, we prove that the relative Smith
spectrum of $\mathcal T_N$ and $\mathcal P_N$ has quadratic sum $o(m^7)$.

Choose $F^{\mathrm{hi}}$ from \eqref{eq:highest-weight-initial} if
$\beta_N>0$, choose $F^{\mathrm{lo}}$ from
\eqref{eq:lowest-weight-initial} if $\beta_N<0$, and choose either filtration
if $\beta_N=0$.  Denote its jumps on $V_{em,j}$ by
\[
 i^s_{em,j,\gamma}.
\]
Let $r_{m,\gamma}$ be the relative Smith integers of the marked extension
lattices of $\mathcal P_N$ and $\mathcal T_N$.  In section degree $m$,
Lemma~\ref{lem:oriented-relative-smith}, with $c=\alpha_N m$ and
$q=\beta_N$, gives
\begin{equation}
 \sum_\gamma r_{m,\gamma}^2
 =\sum_{j,\gamma}
 \left(i^s_{em,j,\gamma}-w_N(m,j)\right)^2.
 \label{eq:actual-smith-residual}
\end{equation}

The projected initial just chosen is the Grassmannian initial used in
Set-up~\ref{setup:initial-filtrations}.  To prove this, fix $m$, let $F$ be the
degree-$m$ filtration of $\mathcal T_N$, and let $g_u$ act on $V_{em,j}$
by $u^j\operatorname{id}$.  For the highest-weight flag, choose by
successive complements on the finite grid of pairs $(\ell,j)$ a basis
$f_i$ simultaneously adapted to $F$ and to the flag $W_j^{\leq}$.  Write
$f_i=v_i+\sum_{k<j_i}f_{i,k}$, where
$0\neq v_i\in V_{em,j_i}$ and $f_{i,k}\in V_{em,k}$.  If $a_i$ is the
$F$-jump of $f_i$, then the $f_i$ with $a_i\leq\ell$ span $F_\ell$,
whereas the corresponding $v_i$ span $(F^{\mathrm{hi}})_\ell$.  The
normalized frames
$u^{-j_i}g_u(f_i)=v_i+\sum_{k<j_i}u^{k-j_i}f_{i,k}$ converge to the
linearly independent vectors $v_i$ as $u\to\infty$.  Hence
$g_u(F_\ell)\to(F^{\mathrm{hi}})_\ell$ in the Grassmannian.  Replacing
every weight $j$ by $-j$ gives
$g_u(F_\ell)\to(F^{\mathrm{lo}})_\ell$ as $u\to0$.  Since
$\lambda_-(z)=g_{z^{-1}}$ and $\lambda_+(z)=g_z$ in
\eqref{eq:opposite-fiber-scaling-subgroups}, this proves
$\chi^-=F^{\mathrm{hi}}$ and $\chi^+=F^{\mathrm{lo}}$.  Thus the selected
initial is $\chi^-$ when $\beta_N>0$, is $\chi^+$ when $\beta_N<0$, and
may be either one when $\beta_N=0$.

Lemma~\ref{lem:affine-initials-veronese}, applied to $\mathcal T_N$, now
shows that the selected initial is block preserving, integral,
multiplicative, and two-sided linearly bounded
on $R^{[e]}$.  By Proposition~\ref{prop:integral-product-comparator}, its
convex transform is \eqref{eq:base-changed-affine-transform}.  Hence
Lemma~\ref{lem:residual-quadratic-decay} applies and gives
\[
 \sum_{j,\gamma}
 \left(i^s_{em,j,\gamma}-w_N(m,j)\right)^2=o(m^7).
\]
Together with \eqref{eq:actual-smith-residual}, this proves
\begin{equation}
 \sum_\gamma r_{m,\gamma}^2=o(m^7).
 \label{eq:actual-smith-little-o}
\end{equation}

\medskip
\noindent\textbf{Step 2.} In this step, we identify the normalized base
change with the product comparator.

The test configurations $\mathcal T_N$ and $\mathcal P_N$ carry a common
generic marking.  On $\mathcal T_N$, this is the base change of the original
marking under $t=(t')^N$.  The pulled-back polarization has its canonical
deck linearization: the deck group acts trivially on the marked generic
line-bundle factor and sends $t'$ to $\zeta t'$.  This action lifts
functorially through normalization.  The product comparator is placed in
the common marked Laurent algebra with this linearization.  Twisting by an
additional $\mu_N$-character would define a different descent datum and is
not part of either marked base change.

Choose a common Veronese in section degree as in
Lemma~\ref{lem:normal-veronese-section-algebra}.  The estimate
\eqref{eq:actual-smith-little-o} remains valid after this fixed rescaling of
the section degree.  Since $\dim X=5$, the exponent in
\eqref{eq:smith-rigidity-hypothesis} is $5+2=7$.  Therefore
Proposition~\ref{prop:marked-smith-rigidity} gives a marked polarized
equivariant isomorphism
\[
 \mathcal T_N\simeq\mathcal P_N.
\]
Equivalently, if $\Lambda_{\mathcal T_N,m}$ is the marked degree-$m$
extension lattice, then
\begin{equation}
 \Lambda_{\mathcal T_N,m}
 =\bigoplus_{j=0}^{em}
 (t')^{\alpha_N m+\beta_N j}
 V_{em,j}\otimes_{\mathbb C}\mathbb C[t']
 \label{eq:base-changed-product-lattice}
\end{equation}
inside the marked Laurent extension.
The equality in \eqref{eq:base-changed-product-lattice} is equality of
submodules of that fixed ambient algebra, not an abstract isomorphism chosen
afterward.  The deck action on its right-hand side is consequently the
restriction of the canonical ambient action.

\medskip
\noindent\textbf{Step 3.} In this step, we descend
\eqref{eq:base-changed-product-lattice} by taking deck invariants.

The deck group $\mu_N$ acts on the normalized base change.  On an affine
normal chart $\Spec R$ of $\mathcal T$, the ordinary
base-change ring is
\[
 S=R[t']/\left((t')^N-t\right).
\]
If $\overline S$ is its normalization, then
\begin{equation}
 \overline S^{\mu_N}=R.
 \label{eq:deck-invariants-normalization}
\end{equation}
Proposition~\ref{prop:integral-product-comparator} shows that $S$ is a
domain.  Localizing this domain at $R\setminus\{0\}$ gives
\[
 \operatorname{Frac}(R)[X]/(X^N-t),
\]
which is again a domain; hence $X^N-t$ is irreducible over
$\operatorname{Frac}(R)$.  Monicity gives the free $R$-basis
$1,t',\ldots,(t')^{N-1}$.  Thus the extension in
\eqref{eq:deck-fixed-field} has degree $N$, and its displayed $\mu_N$-action
is the full Galois group.
The fraction fields satisfy
\begin{equation}\label{eq:deck-fixed-field}
 \operatorname{Frac}(\overline S)=\operatorname{Frac}(R)(t'),
 \qquad (t')^N=t,
 \qquad
 \operatorname{Frac}(\overline S)^{\mu_N}=\operatorname{Frac}(R).
\end{equation}
An invariant element therefore belongs to $\operatorname{Frac}(R)$ and is integral over
$R$.  Normality of $R$ places it in $R$.  Every element of $R$ maps to a
$\mu_N$-invariant element of $\overline S$, which gives the reverse inclusion.
Thus $\mathcal T_N/\mu_N=\mathcal T$.

Write $\mathcal T=(\mathcal Y,\mathcal H)$ and
$\mathcal T_N=(\mathcal Y_N,\mathcal H_N)$, and let
$q\colon\mathcal Y_N\to\mathcal Y$ be the quotient morphism.  The canonical
base-change linearization is carried by
$\mathcal H_N=q^*\mathcal H$.  The projection formula and
\eqref{eq:deck-invariants-normalization} give
\begin{equation}\label{eq:deck-polarization-descent}
 \left(q_*\mathcal H_N^m\right)^{\mu_N}
 =\mathcal H^m\otimes
   \left(q_*\mathcal O_{\mathcal Y_N}\right)^{\mu_N}
 =\mathcal H^m.
\end{equation}
Taking global sections in \eqref{eq:deck-polarization-descent} identifies
the downstairs degree-$m$ section lattice with the invariant part of the
upstairs degree-$m$ lattice.

Under the pulled-back generic marking, $\mu_N$ acts trivially on every
$V_{em,j}$ and sends $t'$ to $\zeta t'$ for $\zeta\in\mu_N$.  This marking is
the pullback of the original marking under $t=(t')^N$, so the deck group acts
only on the parameter $t'$; explicitly,
\begin{equation}\label{eq:deck-action-on-marked-ambient}
 s\otimes(t')^k\longmapsto
 s\otimes\zeta^k(t')^k.
\end{equation}
For every integer $w$, the invariant block is
\begin{equation}\label{eq:deck-invariant-block}
 \left((t')^wV_{em,j}\otimes_{\mathbb C}\mathbb C[t']\right)^{\mu_N}
 =t^{\lceil w/N\rceil}V_{em,j}\otimes_{\mathbb C}\mathbb C[t].
\end{equation}
A section downstairs pulls back to an invariant section upstairs, and
$\mathcal T$-lattice is the invariant lattice by
\eqref{eq:deck-polarization-descent}.  Deck invariance requires the
$t'$-exponent to be divisible by $N$.  On $V_{em,j}$, the least admissible
invariant exponent is
\[
 \min\{\ell\in N\mathbb Z\mid
       \ell\geq\alpha_Nm+\beta_Nj\}
 =
 N\left\lceil\frac{\alpha_N m+\beta_N j}{N}\right\rceil
 =N\lceil eam+bj\rceil.
\]
This identity is valid without a sign restriction on
$\alpha_Nm+\beta_Nj$.
Consequently, the increasing jump of the original filtration on
$V_{em,j}$ is
\begin{equation}
 \lceil eam+bj\rceil.
 \label{eq:descended-ceiling-jump}
\end{equation}

\medskip
\noindent\textbf{Step 4.} In this step, we remove the ceiling error and
identify the original test configuration with a product.

The filtration in \eqref{eq:descended-ceiling-jump} is scalar on every block
$V_{em,j}$.  Fiber scaling acts by a scalar on each block and
therefore preserves every filtered subspace.  Both opposite initials equal
the descended filtration.  In the actual degree $d=em$, its normalized jump
is
\[
 \frac{\lceil ad+bj\rceil}{d}.
\]
The difference from $a+b(j/d)$ is less than $1/d$, so the convex profile is
$a+bx$.  By \eqref{eq:affine-initials-row-gap}, the block-average gap for
the descended filtration is exactly
\begin{equation}
 g_{d,j}=\lceil ad+bj\rceil-(ad+bj).
 \label{eq:ceiling-row-gap}
\end{equation}
Lemma~\ref{lem:affine-initials-row-budget}, applied to this zero-invariant
initial, and \eqref{eq:affine-initials-row-budget} give a constant
$C_{\mathcal T}$ such that
\[
 \sum_{j=0}^d g_{d,j}\leq C_{\mathcal T}
\]
for all sufficiently large supported degrees $d=em$.

Lemma~\ref{lem:eliminate-ceiling-errors} now gives
\[
 \alpha:=ea\in\mathbb Z,
 \qquad
 \beta:=b\in\mathbb Z.
\]
Substitution in \eqref{eq:descended-ceiling-jump} gives the jump
$\alpha m+\beta j$ in \eqref{eq:final-affine-jump} on every block
$V_{em,j}$.  The marked section algebra is the product Rees algebra in which
fiber scaling contributes $\beta j$ and the scalar character contributes
$\alpha m$ to the increasing entry on $V_{em,j}$, in the convention of
Definition~\ref{defn:test-configuration}.  Relative $\operatorname{Proj}$ recovers $\mathcal T$ as the
polarized product test configuration described in
Theorem~\ref{thm:equality-classification}.
\end{proof}
 
\section{Proof of Theorem A}\label{sec:main-proof}

In this section, we prove Theorem~\ref{thm:main} and then derive
Corollaries~\ref{cor:scheme-strengthening} and~\ref{cor:literal-donaldson}.

\begin{proof}[Proof of Theorem~\ref{thm:main}]
We verify the three assertions in Theorem~\ref{thm:main} in the order in which
they are stated.  We retain the notation of
Construction~\ref{cons:explicit-fivefold} throughout the proof.
Lemma~\ref{lem:orthogonal-curves} gives curves with the genera in
\eqref{eq:intro-genera} and the vanishing in
\eqref{eq:intro-hom-orthogonality}.  With these curves,
Construction~\ref{cons:explicit-fivefold} defines $(X,A)$ from the degrees
in \eqref{eq:intro-degrees}.  Proposition~\ref{prop:basic-geometry} shows that
$X$ is a smooth projective fivefold, that $A$ is ample, and that
$\Aut^0(X)=\mathbb C^*$, where the action is fiber scaling.

We next prove K-polystability at every positive exponent.
Fix a positive integer $e$, and let $\mathcal T$ be a normal ample algebraic
test configuration whose generic polarized fiber is $(X,A^e)$.
Proposition~\ref{prop:semistability-all-exponents} gives
\begin{equation}\label{eq:main-proof-semistability}
 \DF(\mathcal T)\geq 0.
\end{equation}
If equality holds, Theorem~\ref{thm:equality-classification} identifies
$\mathcal T$ with the polarized product induced by an integral
one-parameter subgroup of fiber scaling together with a scalar character on
the polarization.  Since $e$ and $\mathcal T$ were arbitrary, this proves
the second assertion of Theorem~\ref{thm:main}.

Finally, we prove the analytic nonexistence assertion.
Proposition~\ref{prop:no-extremal-metric} shows that $c_1(A)$ contains no
extremal K\"ahler metric.  In particular, it contains no cscK metric.  Thus
$(X,A)$ satisfies the hypothesis of
Conjecture~\ref{conj:ytd-sufficiency} but not its conclusion.  This completes
the proof of Theorem~\ref{thm:main}.
\end{proof}

\subsection{Scheme-theoretic consequences}
\label{subsec:scheme-theoretic-consequences}

This subsection proves Corollary~\ref{cor:scheme-strengthening} for arbitrary
ample scheme-theoretic test configurations and then proves
Corollary~\ref{cor:literal-donaldson} for the unrepaired equality convention.

\begin{proof}[Proof of Corollary~\ref{cor:scheme-strengthening}]
Let $\nu\colon\mathcal Y^\nu\to\mathcal Y$ be the normalization, and put
$\mathcal H^\nu=\nu^*\mathcal H$.  We first verify that this is a normal
ample test configuration, then use the normalization comparison, and
finally determine the equality case.

\medskip
\noindent\textbf{Step 1.} In this step, we show that
$(\mathcal Y^\nu,\mathcal H^\nu)$ is a normal ample test configuration for
$(X,A^e)$.
The generic polarized fiber is the marked product with the smooth connected
variety $X$, and is therefore integral.  If
$\Spec R\subseteq\mathcal Y$ is
affine, flatness over $\mathbb C[t]$ makes the homomorphism
\begin{equation}\label{eq:scheme-localization-map}
 R\to R[t^{-1}]
\end{equation}
injective.  The target is the coordinate ring of an affine open subset of
$X\times\mathbb G_m$ and is a domain.  Hence $R$ is a domain, so
$\mathcal Y$ is integral.

The normalization is finite and inherits the $\mathbb C^*$-action and the
marking over $\mathbb G_m$.  Its coordinate rings are torsion free over the
principal ideal domain $\mathbb C[t]$, and hence the normalization remains
flat over $\mathbb A^1$.  Finite pullback preserves relative ampleness.
Consequently, $(\mathcal Y^\nu,\mathcal H^\nu)$ is a normal ample algebraic
test configuration for $(X,A^e)$.

\medskip
\noindent\textbf{Step 2.} In this step, we prove the nonnegativity of the
Donaldson--Futaki invariant.
Put
\begin{equation}\label{eq:scheme-normalization-quotient}
 \mathcal Q=
 \nu_*\mathcal O_{\mathcal Y^\nu}/\mathcal O_{\mathcal Y}.
\end{equation}
Lemma~\ref{lem:normalization-comparison} gives a number $c\geq0$ and the
identity
\begin{equation}\label{eq:scheme-normalization-comparison}
 \DF(\mathcal Y,\mathcal H)
 =\DF(\mathcal Y^\nu,\mathcal H^\nu)+\frac{2c}{A_0},
 \qquad A_0>0.
\end{equation}
Theorem~\ref{thm:equality-classification} gives
\begin{equation}\label{eq:scheme-normalized-nonnegative}
 \DF(\mathcal Y^\nu,\mathcal H^\nu)\geq0.
\end{equation}
Combining \eqref{eq:scheme-normalization-comparison} and
\eqref{eq:scheme-normalized-nonnegative} proves
\eqref{eq:intro-scheme-semistability}.

\medskip
\noindent\textbf{Step 3.} We determine the equality case in this step.
If $\DF(\mathcal Y,\mathcal H)=0$, then both terms on the right-hand side
of \eqref{eq:scheme-normalization-comparison} vanish.
Theorem~\ref{thm:equality-classification} identifies the normalization with
the polarized product induced by an integral one-parameter subgroup of
fiber scaling together with a scalar character.  Since $c=0$,
Lemma~\ref{lem:normalization-comparison} shows that $\mathcal Q$ is supported
in total-space codimension at least two.  The finite morphism $\nu$ is an
isomorphism away from this support.

Conversely, suppose that the normalization is the asserted polarized product
and that $\nu$ is an isomorphism away from a closed subset of total-space
codimension at least two.  The product has zero Donaldson--Futaki invariant: inverting both the
one-parameter subgroup and the scalar character yields another normal
ample product configuration whose invariant is the negative of the
first, and Proposition~\ref{prop:semistability-all-exponents} makes
both nonnegative.
The support condition gives $c=0$ in
\eqref{eq:scheme-normalization-comparison}.  Hence
$\DF(\mathcal Y,\mathcal H)=0$.  This completes the proof of
Corollary~\ref{cor:scheme-strengthening}.
\end{proof}

\begin{proof}[Proof of Corollary~\ref{cor:literal-donaldson}]
We construct an explicit exponent-six test configuration of
$(\mathbb P^1,\mathcal O_{\mathbb P^1}(1))$, compute its
Donaldson--Futaki invariant, and compare it with the Fubini--Study metric.

\medskip
\noindent\textbf{Step 1.} In this step, we construct a nonnormal
scheme-theoretic test configuration with trivial normalization.
Inside $\mathsf S=\mathbb C[t,x,y]$, let
\begin{equation}\label{eq:literal-defect-algebra}
 \mathsf A
 =\mathbb C[t,y,tx,x^2,x^3]
 =\mathbb C[t,y,u,a,b],
 \qquad
 u=tx,\quad a=x^2,\quad b=x^3.
\end{equation}
Give these generators the grading
\begin{equation}\label{eq:literal-defect-grading}
 \deg(t)=0,
 \qquad
 \deg(y)=\deg(u)=1,
 \qquad
 \deg(a)=2,
 \qquad
 \deg(b)=3.
\end{equation}
The algebra $\mathsf A$ is a finitely generated domain, and its degree-$m$
piece is the free $\mathbb C[t]$-module
\begin{equation}\label{eq:literal-defect-free-basis}
 \mathsf A_m
 =\mathbb C[t]y^m
 \oplus t\mathbb C[t]xy^{m-1}
 \oplus\bigoplus_{j=2}^m\mathbb C[t]x^jy^{m-j}.
\end{equation}
Thus
\begin{equation}\label{eq:literal-relative-proj}
 \mathcal Z=\operatorname{Proj}_{\mathbb C[t]}(\mathsf A),
 \qquad
 \mathcal L=\mathcal O_{\mathcal Z}(6)
\end{equation}
is flat over $\mathbb A^1$.  The standard presentation of the sixth
Veronese is generated in degree one, so $\mathcal L$ is a relatively ample
line bundle.  It follows that $(\mathcal Z,\mathcal L)$ is an algebraic test
configuration for
$(\mathbb P^1,\mathcal O_{\mathbb P^1}(6))$, and hence is an exponent-six
test configuration for $(\mathbb P^1,\mathcal O_{\mathbb P^1}(1))$.

The fraction fields of $\mathsf A$ and $\mathsf S$ agree because
$x=b/a$, and $x$ is integral over $\mathsf A$ because it satisfies
$X^2-a=0$.  Since $\mathsf S$ is integrally closed, it is the normalization
of $\mathsf A$.  Therefore the normalization of $\mathcal Z$ is the product
$\mathbb P^1\times\mathbb A^1$.  After inverting $t$, the identity
$x=u/t$ gives $\mathsf A[t^{-1}]=\mathsf S[t^{-1}]$, while after inverting
$a$, the identity $x=b/a$ gives
$\mathsf A[a^{-1}]=\mathsf S[a^{-1}]$.  The normalization quotient is
therefore supported on $V(t,a)$.  The relations $b^2=a^3$ and
$u^2=t^2a$ show that its support is the codimension-two locus
$V(t,a,b,u)$.  In relative Proj this is the single point defined by
\begin{equation}\label{eq:literal-defect-point}
 (t,a,b,u).
\end{equation}
Thus the normalization is an isomorphism away from that point.

The central fiber has the presentation
\begin{equation}\label{eq:literal-central-fiber}
 \mathsf A/t\mathsf A
 =\mathbb C[y,u,a,b]/(u^2,ua,ub,b^2-a^3).
\end{equation}
The basis \eqref{eq:literal-defect-free-basis} shows that the class of $u$
is nonzero.  Its annihilator in \eqref{eq:literal-central-fiber} is
$(u,a,b)$, whereas the minimal prime is $(u)$.  Hence $(u,a,b)$ is an
embedded associated prime.  In particular, the central fiber is nonreduced
and $\mathcal Z$ is not the product test configuration.

\medskip
\noindent\textbf{Step 2.} In this step, we compute the
Donaldson--Futaki invariant of $\mathcal Z$.
Let $\mathbb C^*$ act with weight one on $t$ and $u$, and with weight zero
on $x,y,a,b$.  This action covers the standard action on $\mathbb A^1$ and
becomes the trivial product action on the normalization.  In polarization
degree $k$, set $m=6k$.  The basis in
\eqref{eq:literal-defect-free-basis} gives
\begin{equation}\label{eq:literal-hilbert-weight}
 h(k)=6k+1,
 \qquad
 w(k)=1.
\end{equation}
Thus both functions in \eqref{eq:literal-hilbert-weight} are computed from
the free pieces of the sixth Veronese, which is the section algebra of the
chosen polarization $\mathcal L$.
Thus the leading and subleading coefficients of the total weight are both
zero, and
\begin{equation}\label{eq:literal-zero-df}
 \DF(\mathcal Z,\mathcal L)=0.
\end{equation}

\medskip
\noindent\textbf{Step 3.} We compare the test configuration with the cscK
metric and conclude the proof in this step.
The Fubini--Study metric is a cscK metric in
$c_1(\mathcal O_{\mathbb P^1}(1))$.  On the other hand,
\eqref{eq:literal-zero-df} holds and $\mathcal Z$ is not a product because
its central fiber in \eqref{eq:literal-central-fiber} is nonreduced.  These
facts violate the unmodified equality condition requiring an actual product.
This is the codimension-two phenomenon isolated by Stoppa as being trivial
in codimension two; see
\cite[p.~1 and Definition~1]{Sto11}.  Hence the literal
scheme-theoretic equality convention is not necessary for cscK existence.
This completes the proof of Corollary~\ref{cor:literal-donaldson}.
\end{proof}

\section{Vanishing of the reduced Donaldson--Futaki quotient}
\label{sec:uniform-threshold}

The goal of this section is to construct the test configurations in
Theorem~\ref{thm:uniform-threshold} and compute their reduced
Donaldson--Futaki quotient.

\begin{lem}[Sufficiently divisible Veronese gradings]
\label{lem:sufficiently-divisible-veronese}
Let $S=\bigoplus_{m\geq0}S_m$ be a finitely generated graded algebra over
the Noetherian ring $S_0$.  There exists a positive integer $r_0$ such that,
for every positive multiple $r$ of $r_0$, the Veronese algebra
\begin{equation}\label{eq:sufficiently-divisible-veronese}
 S^{(r)}=\bigoplus_{k\geq0}S_{rk}
\end{equation}
is generated over $S_0$ by $S_r$.
\end{lem}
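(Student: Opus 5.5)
The plan is to reuse the argument already given in Step~2 of the proof of Lemma~\ref{lem:normal-veronese-section-algebra}. There it was carried out over $\mathbb C[t]$, but it only used Noetherianity of the base and finite generation of $S$. So I would rerun it with $S_0$ in place of $\mathbb C[t]$, and then check that the conclusion passes to every positive multiple of the chosen $r_0$.

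\textbf{Step 1.} Choose homogeneous generators $x_1,\ldots,x_h$ of $S$ over $S_0$. Elements of degree zero may be discarded, since they already lie in $S_0$, so each $x_i$ has positive degree $d_i$. Let $d$ be a common multiple of the $d_i$, and set $B=S_0[x_1^{d/d_1},\ldots,x_h^{d/d_h}]\subset C:=S^{(d)}$, regraded so that every $x_i^{d/d_i}$ has degree one. Then $B$ is standard graded over $S_0$. Each $x_i$ is integral over $B$, so $S$ is a finitely generated $B$-algebra that is integral over $B$, hence a finite $B$-module. Because $S_0$ is Noetherian, $B$ is Noetherian, so the $B$-submodule $C\subset S$ is also finite.

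\textbf{Step 2.} Choose homogeneous $B$-module generators $z_1,\ldots,z_s$ of $C$ in degrees $e_i$, and put $n\geq\max_i e_i$ with $n\geq 1$. I claim $C^{(n')}$ is generated by $C_{n'}$ for every $n'\geq n$. Take $c\in C_{kn'}$ with $k\geq2$ and write $c=\sum_i b_iz_i$ with $b_i\in B_{kn'-e_i}$. Since $B$ is standard graded, $B_{kn'-e_i}=B_{n'-e_i}B_{(k-1)n'}$. Hence each $b_iz_i$ lies in $C_{n'}\cdot C_{(k-1)n'}$, and induction on $k$ proves the claim.

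\textbf{Step 3.} Set $r_0=dn$. A positive multiple $r$ of $r_0$ has the form $r=dn'$ with $n'$ a positive multiple of $n$, so $n'\geq n$. Then $S^{(r)}=C^{(n')}$ is generated by $C_{n'}=S_r$, which proves the lemma.

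The main point to watch is Step~2. The bound has to hold for every $n'\geq n$, not just for $n$ itself, and the surjectivity $B_{n'-e_i}\otimes B_{(k-1)n'}\to B_{kn'-e_i}$ needs $n'-e_i\geq0$. Both are guaranteed by the choice $n\geq\max_i e_i$. The other point is Noetherianity of $S_0$, which is what makes $C$ a finite $B$-module in Step~1.
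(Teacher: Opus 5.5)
Your proposal is correct and follows the paper's proof essentially verbatim: the same standard-graded subalgebra $B=S_0[x_1^{d/d_1},\ldots,x_h^{d/d_h}]$, the same homogeneous $B$-module generators of $C=S^{(d)}$, the same surjectivity $B_{n-e_i}\otimes B_{(k-1)n}\to B_{kn-e_i}$, and the same choice $r_0=dn$. The only cosmetic difference is in passing to multiples of $r_0$: you rerun the argument for every $n'\geq n$, whereas the paper observes that a Veronese of the degree-one-generated algebra $S^{(r_0)}$ is again generated in degree one.
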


\begin{proof}
Choose homogeneous algebra generators $x_1,\ldots,x_h$ of positive degrees
$d_1,\ldots,d_h$, and let $d$ be a common multiple of the $d_i$.  The
algebra $C=S^{(d)}$, with its rescaled grading, is finite over the algebra
\begin{equation}\label{eq:veronese-standard-subalgebra}
 B=S_0[x_1^{d/d_1},\ldots,x_h^{d/d_h}],
\end{equation}
which is generated in degree one.  Choose homogeneous $B$-module generators
$z_1,\ldots,z_s$ of $C$, of degrees $e_1,\ldots,e_s$, and choose
$n\geq\max_i e_i$.

Let $k\geq2$ and write an element of $C_{kn}$ as
\begin{equation}\label{eq:veronese-module-expansion}
 c=\sum_i b_i z_i,
 \qquad b_i\in B_{kn-e_i}.
\end{equation}
Since $B$ is generated in degree one, multiplication from
$B_{n-e_i}\otimes B_{(k-1)n}$ onto $B_{kn-e_i}$ is surjective.  Each term
in \eqref{eq:veronese-module-expansion} is therefore a sum of products of
an element of $C_n$ and an element of $C_{(k-1)n}$.  Induction on $k$ shows
that $C^{(n)}$ is generated in degree one.  Put $r_0=dn$.  If $r$ is a
positive multiple of $r_0$, then $S^{(r)}$ is a Veronese of the
degree-one-generated algebra $S^{(r_0)}$ and is again generated in degree
one.
\end{proof}

\begin{thm}[Openness of relative ampleness,
{\cite[Tag~0D2N]{Sta26}}]
\label{thm:relative-ampleness-near-fiber}
Let $f\colon Y\to S$ be a proper morphism of schemes with $S$ Noetherian,
let $H$ be a line bundle on $Y$, and let $s\in S$.  If $H\vert_{Y_s}$ is
ample, then there exists an open neighborhood $U$ of $s$ such that
\begin{equation}\label{eq:relative-ampleness-near-fiber}
 H\vert_{f^{-1}(U)}\text{ is ample relative to }U.
\end{equation}
\end{thm}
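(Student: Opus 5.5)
This is a cited result (Stacks Tag~0D2N), so the plan is to reduce it to standard facts. I will work locally on $S$ and use the cohomological criterion for relative ampleness over an affine Noetherian base.

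\textbf{Reduction to an affine base.} Since ampleness relative to the base is local on the base, I may shrink $S$ to an affine neighborhood $\Spec A$ of $s$, with $A$ Noetherian.

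\textbf{Step 1: a very ample power on the fiber.} Since $H\vert_{Y_s}$ is ample on the proper $\kappa(s)$-scheme $Y_s$, there is an $n>0$ with two properties. First, $H^n\vert_{Y_s}$ is very ample. Second, $H^i(Y_s,H^{nk}\vert_{Y_s})=0$ for all $i>0$ and $k\geq1$. Such an $n$ comes from Serre vanishing, applied to finitely many twists $H^{a}$ for $0\le a<n$.

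\textbf{Step 2: spreading vanishing and surjectivity to a neighborhood.} By semicontinuity and cohomology and base change for the proper morphism $f$ and the coherent sheaves $H^{nk}$, the higher direct images vanish near $s$. On a neighborhood of $s$ they commute with base change, so sections of $f_*H^n$ surject onto $H^0(Y_s,H^n\vert_{Y_s})$. By Nakayama's lemma, the relative evaluation map $f^*f_*H^n\to H^n$ is then surjective along $Y_s$. The locus where it fails to be surjective is closed, and since $f$ is proper its image in $S$ is closed and misses $s$. This gives a neighborhood $U$ over which $H^n$ is $f$-globally generated, defining a $U$-morphism $\phi\colon f^{-1}(U)\to\mathbb P(f_*H^n)$.

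\textbf{Step 3: finiteness, which is the main obstacle.} I need to show that $\phi$ is finite over a possibly smaller neighborhood. On $Y_s$ the map $\phi_s$ is a closed immersion, so $\phi$ is quasi-finite at every point of $Y_s$. The quasi-finite locus of the proper morphism $\phi$ is open. Its complement is closed, and its image in $S$ is closed, by properness of $f$, and avoids $s$. Shrinking $U$ therefore makes $\phi$ proper and quasi-finite, hence finite.

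\textbf{Conclusion.} Over $U$, $H^n=\phi^*\mathcal O(1)$ is the pullback of a relatively ample bundle under a finite morphism, so it is $f$-ample. Alternatively, one can apply the fiberwise criterion directly, as in Theorem~\ref{thm:finite-descent-ampleness}, over each point of $U$. Hence $H\vert_{f^{-1}(U)}$ is ample relative to $U$, which is \eqref{eq:relative-ampleness-near-fiber}.
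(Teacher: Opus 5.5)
Your Step~2 has a genuine gap. Semicontinuity and cohomology and base change are theorems about coherent sheaves that are \emph{flat} over $S$, and the statement does not assume $f$ flat. Without flatness, the vanishing of $H^i(Y_s,H^{nk}\vert_{Y_s})$ for $i>0$ does not imply that $f_*H^n\otimes\kappa(s)\to H^0(Y_s,H^n\vert_{Y_s})$ is surjective. So an $n$ chosen purely from fiber data, as in Step~1, need not make $H^n$ relatively globally generated near $Y_s$.

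Here is a counterexample. Let $C\subset\mathbb P^2$ be a smooth plane quartic, let $S\subset C$ be an affine open neighborhood of a point $s$, and let $\mathcal F_2$ be a line bundle on $C\times S$ with fibers $K_C\otimes\mathcal O_C(t-s)$ over $t\in S$. Thus $\mathcal F_2\vert_{C\times\{s\}}\cong K_C\cong\mathcal O_C(1)$, and nearby fibers have $h^0=2$. Let $Y$ be the scheme-theoretic union of $\mathbb P^2\times\{s\}$ and $C\times S$ in $\mathbb P^2\times S$. Glue $\mathcal O_{\mathbb P^2}(1)$ and $\mathcal F_2$ along $C$ to a line bundle $H$. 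Then $Y_s=\mathbb P^2$, and $H\vert_{Y_s}=\mathcal O(1)$ satisfies every requirement of your Step~1 with $n=1$. However, a section of $\mathcal O(1)$ on $Y_s$ lifts to $f_*H$ only if its restriction to $C$ lies in the image of $(f_*\mathcal F_2)_s$. That image has dimension at most $2$ inside the $3$-dimensional space $H^0(C,K_C)\cong H^0(\mathbb P^2,\mathcal O(1))$. Hence the relative sections of $H$ have a base point on $Y_s$, and your morphism $\phi$ does not exist. The theorem itself still holds in this example, but only via the square of $H$.

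What is missing is the non-flat version of the vanishing and surjectivity statement, in which the exponent is not controlled by fiber cohomology alone. It says there is $d_0$ such that for all $d\geq d_0$ one has $R^pf_*(H^d)_s=0$ for $p>0$, and $f_*(H^d)_s\otimes\kappa(s)\to H^0(Y_s,H^d\vert_{Y_s})$ is surjective. It is proved with the theorem on formal functions, after base change to $\Spec\mathcal O_{S,s}$:
\begin{itemize}
\item The sheaves $\mathfrak m_s^k\mathcal O_Y/\mathfrak m_s^{k+1}\mathcal O_Y$ are coherent on $Y_s$. Their direct sum is a quotient of $\mathcal O_{Y_s}\otimes_{\kappa(s)}\operatorname{Sym}(\mathfrak m_s/\mathfrak m_s^2)$, hence a coherent sheaf on $Y_s\times\mathbb A^r$.
\item Serre vanishing there gives $H^p(Y_s,\mathfrak m_s^k\mathcal O_Y/\mathfrak m_s^{k+1}\mathcal O_Y\otimes H^d)=0$ for $p>0$, uniformly in $k$.
\item Induction over the infinitesimal thickenings of $Y_s$ then kills $(R^pf_*H^d)^\wedge_s$ and makes all restriction maps on $H^0$ surjective.
\end{itemize}
With $n$ chosen this way, the rest of your argument is correct: Nakayama, openness of the quasi-finite locus, proper plus quasi-finite implies finite, and pullback of a relatively ample bundle by a finite morphism.

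If you instead add the hypothesis that $f$ is flat, your proof works as written. That covers the paper's only application, the flat family $\mathcal X_\mu\to\mathbb A^1$ in Proposition~\ref{prop:rational-single-crease}, although the paper itself simply quotes \cite[Tag~0D2N]{Sta26}.

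Finally, drop your alternative ending. Theorem~\ref{thm:finite-descent-ampleness} concerns absolute ampleness on proper $\mathbb C$-schemes, not ampleness relative to $U$. Moreover, the fiberwise criterion for relative ampleness is normally deduced from the very statement you are proving.
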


\begin{proof}
This is \cite[Tag~0D2N]{Sta26}.
\end{proof}

\begin{thm}[Relative ampleness and projectivity,
{\cite[Tags~01VJ and~0B45]{Sta26}}]
\label{thm:relative-ampleness-and-projectivity}
Let $f\colon Y\to S$ be a quasi-compact morphism, and let $H$ be a line
bundle on $Y$.  If $S$ has an affine open covering $S=\bigcup_iU_i$ such
that $H\vert_{f^{-1}(U_i)}$ is ample for every $i$, then $H$ is
$f$-ample.  If, in addition, $S$ has an ample line bundle and $f$ is proper,
then $f$ is projective.
\end{thm}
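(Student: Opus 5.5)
The statement just before this point is Theorem~\ref{thm:relative-ampleness-and-projectivity}, which the paper quotes from the Stacks Project. The plan is therefore to reduce it to standard facts about relative ampleness rather than to prove anything new.

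For the first assertion, I would use the definition of $f$-ampleness in \cite[Tag~01VH]{Sta26}. That definition requires that for \emph{every} affine open $V\subseteq S$, the restriction $H\vert_{f^{-1}(V)}$ is ample on $f^{-1}(V)$. The hypothesis only gives this for the members $U_i$ of one affine covering, so the content is the locality of relative ampleness on the base.

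To get locality, fix an affine open $V$ and cover it by standard opens $D(g)$ that are simultaneously standard opens in some $U_i$. This is possible because opens that are standard in both $V$ and some $U_i$ form a basis of $S$. On each such piece, $H$ restricted to $f^{-1}(D(g))$ is ample: it is the restriction of the ample bundle on $f^{-1}(U_i)$ to the open $f^{-1}(U_i)_g$, and ampleness is preserved under passage to such principal opens.

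It remains to show that ampleness of $H\vert_{f^{-1}(V)}$ over the affine scheme $V$ follows from ampleness over a finite standard cover $V=\bigcup D(g_k)$. Here I would use the characterization that, over an affine base, $H$ is ample exactly when the sets $f^{-1}(V)_s$ for sections $s\in H^0(f^{-1}(V),H^{n})$ cover and are affine. A section $s$ over $f^{-1}(D(g_k))$ extends after multiplying by a power $g_k^N$, and quasi-compactness of $f$ makes $N$ uniform. The resulting sections $g_k^N s$ have nonvanishing loci $f^{-1}(D(g_k))_s$, which are affine and cover $f^{-1}(V)$. This is exactly \cite[Tag~01VJ]{Sta26}, and it is the only step with real content. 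The main obstacle is the uniform extension of sections, which is where quasi-compactness is essential.

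For the projectivity assertion, suppose $f$ is proper, $H$ is $f$-ample, and $S$ carries an ample line bundle. The plan is to cite \cite[Tag~0B45]{Sta26}, which states that a proper morphism admitting an $f$-ample line bundle over a base with an ample invertible sheaf is projective. The mechanism is that, with $S$ quasi-compact and carrying an ample bundle, $f_*H^{n}$ is a quotient of a sum of negative powers of the ample bundle on $S$. This produces a closed immersion of $Y$ into the projectivization of a finite locally free sheaf, $\mathbb P_S(\mathcal E)$. Since both parts are exactly the cited Stacks tags, the proof in the paper should consist of those two citations.
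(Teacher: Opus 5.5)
Your proposal matches the paper, whose proof consists of exactly these two citations: the implication (3)$\Rightarrow$(1) of \cite[Tag~01VJ]{Sta26} for relative ampleness, and the implication (6)$\Rightarrow$(1) of \cite[Tag~0B45]{Sta26} for projectivity. One small correction to your optional sketch of the locality step: the extension of $g_k^N s$ to $f^{-1}(V)$ may be nonzero over $V(g_k)$, so you should multiply by one more factor of $g_k$ before identifying its nonvanishing locus with the affine open $f^{-1}(D(g_k))_s$. The extension lemma also needs $f^{-1}(V)$ to be quasi-separated, which holds here because each $f^{-1}(U_i)$ carries an ample sheaf and is therefore separated.
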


\begin{proof}
The first assertion is \cite[Tag~01VJ]{Sta26}, and the second is the
implication from condition (6) to condition (1) in
\cite[Tag~0B45]{Sta26}.
\end{proof}

\begin{prop}[Rational single creases]\label{prop:rational-single-crease}
Let $\mu=c/q\in\mathbb Q\cap(0,1)$, where $c$ and $q$ are coprime positive
integers, and let $d$ be a positive integer such that
$d\mu\in\mathbb Z$.  Put
\begin{equation}\label{eq:rational-crease-function}
 f_\mu(x)=(x-\mu)_+.
\end{equation}
For every sufficiently divisible positive integer $r$, the saturated
epigraph of $df_\mu$, with its $r$-th Veronese polarization in section degree,
defines a normal nonproduct $T$-equivariant algebraic test configuration
$\mathcal T(d,\mu;r)$ of exponent $r$ for $(X,A)$.  For every $k\geq1$,
its increasing filtration entry on
$V_{rk,j}=H^0(B,M^{rk}\otimes L^j)$ is
\begin{equation}\label{eq:rational-crease-weight}
 drk f_\mu(j/(rk))=\max\{0,dj-d\mu rk\},
 \qquad 0\leq j\leq rk.
\end{equation}
Its Donaldson--Futaki invariant is
\begin{equation}\label{eq:rational-crease-df}
 \DF\bigl(\mathcal T(d,\mu;r)\bigr)=\frac{dF(\mu)}{a_0}.
\end{equation}
The limiting distribution of the normalized fiber coordinate $j/(rk)$ is the
probability measure
\begin{equation}\label{eq:rational-crease-density}
 d\nu(x)=\frac{p(x)}{a_0}\,dx
 \qquad\text{on }[0,1].
\end{equation}
The product test configuration generated by fiber scaling satisfies
\begin{equation}\label{eq:fiber-product-df}
 \DF_{\mathrm{fib}}=-\frac{F(1)}{a_0}=0.
\end{equation}
\end{prop}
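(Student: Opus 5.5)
The plan is to build $\mathcal T(d,\mu;r)$ directly as the relative $\operatorname{Proj}$ of an explicit $T$-invariant Rees algebra, read off its weights blockwise, and evaluate the Donaldson--Futaki invariant by the same scalar-profile bookkeeping as in Step~3 of Lemma~\ref{lem:affine-initials-row-budget}.

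\textbf{Construction.} In section degree $n$ (actual $A$-degree $n$), put the filtration that is scalar on each block $V_{n,j}$, with increasing entry $w(n,j)=\max\{0,dj-d\mu n\}$. This is an integer because $d\mu\in\mathbb Z$. Since $(n,j)\mapsto w(n,j)$ is convex and positively homogeneous, it is subadditive: $w(n+n',j+j')\leq w(n,j)+w(n',j')$. The filtration is therefore multiplicative. Its Rees algebra
\[
\mathcal R=\bigoplus_{n,j}t^{w(n,j)}V_{n,j}\otimes\mathbb C[t]
\]
is the semigroup algebra of the saturated epigraph cone $\{(n,j,\ell):0\le j\le n,\ \ell\ge w(n,j)\}$, tensored blockwise with the base section ring. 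The fiber $j/n=\mu$ splits $\Omega$ into two pieces on which $w$ is linear.

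\textbf{Properties of the algebra.}
\begin{itemize}
\item Finite generation: use Gordan's lemma on the rational polyhedral cone, together with finite generation of $\bigoplus H^0(B,M^n\otimes L^j)$ over the lattice points.
\item Normality: the ring is the $T\times\mathbb G_m$-invariant part of a normal ring. Concretely, it is the Cox-type ring of $X$ with a toric cone saturated, so it is a ring of invariants of a normal domain under a diagonalizable group. Saturation of the epigraph is exactly what this requires.
\item Veronese generation: Lemma~\ref{lem:sufficiently-divisible-veronese} gives an $r$ for which the $r$-th Veronese is generated in degree one, so $\mathcal O(1)$ is relatively very ample.
\end{itemize}
Theorems~\ref{thm:relative-ampleness-near-fiber} and~\ref{thm:relative-ampleness-and-projectivity} then give a projective flat family. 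Flatness holds because the algebra is $t$-torsion free. The family is $T$-equivariant because every piece is a sum of blocks, and its generic fiber is $(X,A^r)$. It is not a product: a polarized product has entries affine in $(n,j)$, whereas \eqref{eq:rational-crease-weight} has a genuine kink at $j=\mu rk$.

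\textbf{Donaldson--Futaki invariant.} The measure \eqref{eq:rational-crease-density} comes from \eqref{eq:affine-initials-block-RR}: $n_{d,j}=d^4p(j/d)+O(d^3)$ with a Riemann sum. The total weight is $W(n)=\sum_j n_{n,j}\,w(n,j)$. This is the expansion \eqref{eq:affine-initials-sharp-scalar-weight-expansion} with $\phi=d f_\mu$ and $\epsilon_0=\epsilon_1=0$. Using Lemma~\ref{lem:endpoint-trapezoidal-summation}, or the exact kernel argument that gives an $O(d^4)$ remainder, the expansion yields coefficients $b_0^{\rm sc},b_1^{\rm sc}$. Formula \eqref{eq:affine-initials-scalar-DF-coefficients} combined with the integration-by-parts identity \eqref{eq:affine-initials-scalar-DF} then gives
\[
\DF=\frac1{a_0}\int F\,d(df_\mu)''=\frac{dF(\mu)}{a_0},
\]
since $(f_\mu)''=\delta_\mu$.

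The delicate point here is that the exponent $r$ is rescaled consistently. The normalization is homogeneous of the correct degree, exactly as in the $e^{10}$ computation, so the DF value is unchanged. One should still double-check that the sign conventions of Definition~\ref{defn:test-configuration} match \eqref{eq:affine-initials-scalar-DF}.

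\textbf{Fiber-scaling product.} Its profile is the linear function $x$, so $\phi''=0$. The DF then comes only from boundary terms. Integrating $\int x(2q-\overline Sp)=\int xF''$ by parts gives $F'(1)-F(1)+F(0)$ against the endpoint terms $p(1)$. By \eqref{eq:boundary-right-end} this reduces to $-F(1)/a_0=0$.

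The main obstacle I expect is the normality and saturation check of the epigraph algebra at the crease $j=\mu n$, where the two linear pieces meet. The approach I would try first is to exhibit the algebra as an intersection of two normal product-type Rees algebras, one for each linear piece. An intersection of normal domains with a common fraction field is normal, which would settle the crease.
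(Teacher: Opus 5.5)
Your proof is correct, but the construction takes a genuinely different route from the paper's; the Donaldson--Futaki half is the same.

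\textbf{The Donaldson--Futaki computation.} This is the paper's own argument: block Riemann--Roch, the two leading weight coefficients, and integration by parts against $F$ using $F(0)=F(1)=0$, $F'(0)=p(0)$, $F'(1)=-p(1)$, with the same boundary bookkeeping for fiber scaling. The only difference is in the summation. The paper enlarges $r$ so that $q\mid r$ and splits the sum at the lattice point $rk\mu$. The kernel bound behind Lemma~\ref{lem:endpoint-trapezoidal-summation} that you invoke is uniform in the crease position and needs no divisibility.

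\textbf{The construction.} The paper builds the family locally. It takes the toric degeneration $T_\Sigma=\operatorname{Proj}_{\mathbb A^1}\mathbb C[\Sigma]^{(r)}$ of $\mathbb P^1$ and glues the products $U_\alpha\times T_\Sigma$ into $\operatorname{Proj}_{B\times\mathbb A^1}\mathcal S$. This makes finite generation and normality purely toric. The price is that it must then prove ampleness relative to $\mathbb A^1$. It does so by identifying the two central components with polarized copies of $\mathbb P_B(\mathcal O_B\oplus L)$ and invoking Lemmas~\ref{lem:split-projective-bundle-ampleness} and~\ref{lem:componentwise-ampleness} together with Theorems~\ref{thm:relative-ampleness-near-fiber} and~\ref{thm:relative-ampleness-and-projectivity}.

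You instead take $\operatorname{Proj}_{\mathbb A^1}$ of the global Rees algebra $\mathcal R\subset R[t,t^{-1}]$, where $R=\bigoplus_nH^0(X,A^n)$. Relative very ampleness and projectivity are then automatic once $\mathcal R^{(r)}$ is generated in degree one over $\mathbb C[t]$, so the two openness theorems you cite are unnecessary. The burden moves instead to global finite generation and normality.
\begin{itemize}
\item Your Gordan bullet alone does not give finite generation, because $\mathcal R$ is a diagonal subring rather than $R\otimes\mathbb C[\Sigma]$.
\item The description $\mathcal R\cong(R\otimes\mathbb C[\Sigma])^{(\mathbb C^*)^2}$ from your normality bullet does give finite generation and normality at once.
\item Your intersection idea also works verbatim. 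The set $\{\ell\geq w\}$ is the intersection of $\{\ell\geq0\}$ and $\{\ell\geq dj-d\mu n\}$. So $\mathcal R$ is the intersection, inside $\operatorname{Frac}(R)(t)$, of $R[t]$ and its isomorphic twist $\bigoplus_{n,j}t^{dj-d\mu n}V_{n,j}\otimes\mathbb C[t]$, hence normal. The crease is therefore not an obstacle.
\end{itemize}

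\textbf{Steps to add.} First, include the argument the paper carries out for $T_\Sigma$, applied to your family $\mathcal Y$. Normality of $\mathcal R^{(r)}$ and codimension at least two of $V(\mathcal R^{(r)}_{>0})$ give $H^0(\mathcal Y,\mathcal O(k))=\mathcal R_{rk}$ for every $k\geq1$. This is what ``reading off the weights'' needs for the all-$k$ claim in \eqref{eq:rational-crease-weight}. Second, the nonproduct claim uses $\Aut^0(X)=\mathbb C^*$ from Proposition~\ref{prop:basic-geometry}, so cite it.

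In exchange for its longer argument, the paper obtains an explicit reduced two-component central fiber, which the statement itself does not require.
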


\begin{proof}
We first construct the test configuration from its saturated epigraph and
then compute its invariant from the exact dimensions of the blocks in large
degree.  We
use the notation of Set-up~\ref{setup:boundary-data}.

\medskip
\noindent\textbf{Step 1.} In this step, we construct
$\mathcal T(d,\mu;r)$ and prove its normality and nonproductness.
The function $df_\mu$ has integral slopes and intercepts.  Put
\begin{equation}\label{eq:rational-crease-homogeneous-profile}
 g(m,j)=dmf_\mu(j/m)
 \qquad(m\geq1,\ 0\leq j\leq m),
\end{equation}
and put $g(0,0)=0$.  This is the positively homogeneous extension of
$df_\mu$.  Its epigraph semigroup is
\begin{equation}\label{eq:rational-crease-semigroup}
 \Sigma=\{(m,j,\ell)\in\mathbb Z^3\mid
 m\geq0,\ 0\leq j\leq m,\ \ell\geq g(m,j)\}.
\end{equation}
Triangulate the rational polyhedral cone given by the epigraph into rational simplicial
cones and choose an integral generator on each ray.  Every lattice point in
one of these cones is the sum of a lattice point in the bounded fundamental
parallelepiped and nonnegative integral multiples of its ray generators.
Only finitely many lattice points lie in these parallelepipeds, so $\Sigma$
is finitely generated.  It is saturated by its definition as the
full set of lattice points in the cone.  Hence the semigroup algebra
$\mathbb C[\Sigma]$, graded by $m$ and with degree-zero part $\mathbb C[t]$,
is a finitely generated normal graded $\mathbb C[t]$-algebra; here $t$
corresponds to $(0,0,1)$.

Enlarge the divisibility condition on $r$ so that $q\mid r$.  By
Lemma~\ref{lem:sufficiently-divisible-veronese}, the algebra
$\mathbb C[\Sigma]^{(r)}$ is generated over $\mathbb C[t]$ by its degree-one
part.  Its semigroup is saturated in its group, so the Veronese algebra is
normal.  Each homogeneous piece is a finite free $\mathbb C[t]$-module, and
the coordinate ring of every standard affine chart of its relative
projective spectrum is torsion free over $\mathbb C[t]$ and hence flat.
After $t$ is inverted, localization removes the lower bound on the third
semigroup coordinate and identifies the relative projective spectrum with
$\mathbb P^1\times\mathbb G_m$.  Thus
\begin{equation}\label{eq:rational-crease-local-toric-family}
 T_\Sigma=\operatorname{Proj}_{\mathbb A^1}
 \mathbb C[\Sigma]^{(r)}
\end{equation}
is a normal flat toric degeneration of $\mathbb P^1$ with an invertible
relatively ample polarization $\mathcal O_{T_\Sigma}(1)$.

We now globalize this family over $B$.  Define a graded
$\mathcal O_B[t]$-algebra by
\begin{equation}\label{eq:rational-crease-global-algebra}
 \mathcal S_k
 =\bigoplus_{j=0}^{rk}
  (M^{rk}\otimes L^j)t^{g(rk,j)}\mathcal O_B[t],
 \qquad
 \mathcal S=\bigoplus_{k\geq0}\mathcal S_k.
\end{equation}
To make the gluing explicit, choose an affine cover $\{U_\alpha\}$ that
trivializes $M$ and $L$, and denote their transition functions by
$m_{\alpha\beta}$ and $l_{\alpha\beta}$.  On the $(k,j)$-summand of
\eqref{eq:rational-crease-global-algebra}, the transition multiplier is
\begin{equation}\label{eq:rational-crease-transition-multiplier}
 m_{\alpha\beta}^{rk}l_{\alpha\beta}^{j},
 \qquad t\longmapsto t.
\end{equation}
The exponents $(rk,j)$ are additive, so these multipliers preserve
multiplication and every binomial relation of the semigroup algebra.  The
cocycle law is inherited from those of $M$ and $L$.  Moreover,
$l_{\alpha\beta}^{j}$ is the fiber-torus transition and
$m_{\alpha\beta}^{rk}=(m_{\alpha\beta}^r)^k$ glues the degree-one
polarization.  Hence
\begin{equation}\label{eq:rational-crease-relative-proj}
 \mathcal X_\mu=\operatorname{Proj}_{B\times\mathbb A^1}\mathcal S,
 \qquad
 \mathcal B_\mu=\mathcal O_{\mathcal X_\mu}(1)
\end{equation}
is obtained on every $U_\alpha$ from $U_\alpha\times T_\Sigma$.  Thus the
local families, the test action, the fiber action, and their polarizations
glue to a normal flat $T$-equivariant family with a proper morphism
\begin{equation}\label{eq:rational-crease-projective-map}
 \rho\colon\mathcal X_\mu\to B\times\mathbb A^1.
\end{equation}
On each trivializing open subset $U\subseteq B$,
\eqref{eq:rational-crease-local-toric-family} shows that the restriction of
$\rho$ over $U\times\mathbb A^1$ is projective and that $\mathcal B_\mu$ is
ample relative to this restriction.  Properness and relative ampleness are
local on the target, so $\rho$ is proper and $\mathcal B_\mu$ is
$\rho$-ample.  Since $B$ is projective, the composite
$\mathcal X_\mu\to\mathbb A^1$ is proper.  After inverting $t$, its polarized
fiber is $(X,A^r)$.

We verify ampleness relative to this composite.  Put
$R_\Sigma=\mathbb C[\Sigma]^{(r)}$.  The quotient $R_\Sigma/(t)$ has the
boundary-monomial basis
\begin{equation}\label{eq:rational-crease-central-boundary-basis}
 \chi^{(rk,j,g(rk,j))},
 \qquad k\geq0,
 \qquad 0\leq j\leq rk,
\end{equation}
because a monomial whose third coordinate is strictly larger than
$g(rk,j)$ is divisible by $t$.  The product of two boundary monomials
survives modulo $t$ exactly when their exponent vectors lie in a common
linearity cone of $g$.  For vectors in the interiors of the two different
cones, the strict inequality
\[
 g(u+v)<g(u)+g(v)
\]
makes their product divisible by $t$.

Let $\Sigma_0$ and $\Sigma_1$ be the two lower-face semigroups.  Retaining
the monomials on $\Sigma_a$ and killing the other boundary monomials defines
a homomorphism to $\mathbb C[\Sigma_a]$.  The two homomorphisms give an
homomorphism
\begin{equation}\label{eq:rational-crease-central-face-injection}
 R_\Sigma/(t)\to
 \mathbb C[\Sigma_0]\times\mathbb C[\Sigma_1].
\end{equation}
Within either face, the retained boundary monomials map to distinct monomials
and are therefore linearly independent.  Since every boundary monomial belongs
to at least one face, the two kernels have zero intersection.
Thus the homomorphism in
\eqref{eq:rational-crease-central-face-injection} is injective.  The two
kernels are incomparable prime ideals.  Thus
the local central fiber is reduced, has no embedded component, and has
exactly the two components determined by the linearity intervals.  The
transition multipliers in
\eqref{eq:rational-crease-transition-multiplier} preserve both face
semigroups, so this description glues over $B$.

Put $j_0=0$, $j_1=r\mu$, and $j_2=r$.  The two irreducible components of the
reduced central fiber correspond to the two linearity intervals
$[j_0/r,j_1/r]$ and $[j_1/r,j_2/r]$.  On the component corresponding to
$[j_a/r,j_{a+1}/r]$, the associated fiber over $B$ is a projective line
bundle whose two fixed sections correspond to the fiber weights $j_a$ and
$j_{a+1}$.  If this component is denoted by $Y_a$ and its projection by
$\pi_a$, put $s_a=j_{a+1}-j_a$.  After removing the affine $t$-character,
the graded face algebra of this component is
\begin{equation}\label{eq:rational-crease-face-algebra}
 \bigoplus_{k\geq0}\ \bigoplus_{h=0}^{s_ak}
 M^{rk}\otimes L^{kj_a+h}.
\end{equation}
This is the $s_a$-th Veronese of the symmetric algebra of
$\mathcal O_B\oplus L$, twisted in degree $k$ by
$M^{rk}\otimes L^{kj_a}$.  Consequently,
\begin{equation}\label{eq:rational-crease-central-component}
 Y_a\simeq\mathbb P_B(\mathcal O_B\oplus L),
 \qquad
 \mathcal B_\mu\vert_{Y_a}
 \simeq\mathcal O_{Y_a}(j_{a+1}-j_a)
 \otimes\pi_a^*(M^r\otimes L^{j_a}).
\end{equation}
The restrictions of $\mathcal B_\mu$ to the two fixed sections of $Y_a$
are
\begin{equation}\label{eq:rational-crease-central-endpoints}
 M^r\otimes L^{j_a}
 \quad\text{and}\quad
 M^r\otimes L^{j_{a+1}}.
\end{equation}
For $a=0$ and $a=1$, the common face has algebra
$\bigoplus_{k\geq0}M^{rk}\otimes L^{kj_1}$.  Hence the two components meet
along their common fixed section $B$, and both restrictions of
$\mathcal B_\mu$ to this intersection equal $M^r\otimes L^{j_1}$.
For every $0\leq j\leq r$ and every curve factor $C_i$, the degree of
$M_i^r\otimes L_i^j$ is
\begin{equation}\label{eq:rational-crease-central-degrees}
 r\alpha_i+j\delta_i=r\ell_i(j/r)>0
\end{equation}
by \eqref{eq:four-affine-factors}.
Lemma~\ref{lem:split-projective-bundle-ampleness}, applied with
$N=M^r\otimes L^{j_a}$ and $s=j_{a+1}-j_a$, shows that
$\mathcal B_\mu$ is ample on each component of the reduced central fiber.
Lemma~\ref{lem:componentwise-ampleness} then shows that it is ample on the central
fiber.  Theorem~\ref{thm:relative-ampleness-near-fiber}
gives a neighborhood of $0$ over which $\mathcal B_\mu$ is relatively
ample.  Over $\mathbb G_m$, the polarized family is the product with
$(X,A^r)$ and is relatively ample there.  These two open subsets cover
$\mathbb A^1$, so Theorem~\ref{thm:relative-ampleness-and-projectivity} shows
that $\mathcal B_\mu$ is ample relative to $\mathbb A^1$ and that the proper
morphism to $\mathbb A^1$ is projective.  This polarized family is
$\mathcal T(d,\mu;r)$.

It remains to identify the induced filtration.  Put
$I=(R_\Sigma)_{>0}$.  Since $R_\Sigma$ is generated in degree one,
$\mathcal O_{T_\Sigma}(1)$ is invertible and the punctured cone
\begin{equation}\label{eq:rational-crease-punctured-cone}
 U_\Sigma=\operatorname{Spec}R_\Sigma\setminus V(I)
 \to
 T_\Sigma=\operatorname{Proj}_{\mathbb C[t]}R_\Sigma
\end{equation}
is the associated $\mathbb G_m$-torsor.  The group generated by the
semigroup of $R_\Sigma$ has rank three, whereas
$R_\Sigma/I=\mathbb C[t]$.  Thus $V(I)$ has codimension two.  Normality and
the codimension statement give
\begin{equation}\label{eq:rational-crease-punctured-sections}
 \Gamma(U_\Sigma,\mathcal O_{U_\Sigma})=R_\Sigma.
\end{equation}
A regular function on $U_\Sigma$ defines an element of
$\operatorname{Frac}(R_\Sigma)$ lying in $(R_\Sigma)_{\mathfrak p}$ for
every height-one prime $\mathfrak p$, since no such prime belongs to
$V(I)$.  A normal noetherian domain is the intersection of its height-one
localizations inside its fraction field by \cite[Tag~031T]{Sta26}, so
this element belongs to
$R_\Sigma$; the reverse inclusion is immediate.  Taking the weight-$k$
part on the torsor in \eqref{eq:rational-crease-punctured-cone} gives, for
every $k\geq0$,
\begin{equation}\label{eq:rational-crease-exact-proj-sections}
 H^0(T_\Sigma,\mathcal O_{T_\Sigma}(k))=(R_\Sigma)_k.
\end{equation}
This identity is compatible with the transition multipliers in
\eqref{eq:rational-crease-transition-multiplier}; it therefore glues to
\begin{equation}\label{eq:rational-crease-relative-section-pushforward}
 \rho_*\mathcal B_\mu^k=\mathcal S_k.
\end{equation}
Taking sections over $B\times\mathbb A^1$ gives the exact section lattice
\begin{equation}\label{eq:rational-crease-section-lattice}
 \bigoplus_{j=0}^{rk}
 t^{drkf_\mu(j/(rk))}V_{rk,j}\otimes_{\mathbb C}\mathbb C[t]
\end{equation}
inside the marked generic algebra.  This proves
\eqref{eq:rational-crease-weight}.  Finally, the crease is interior, so the
entries are not affine in $(rk,j)$.  Since every one-parameter subgroup of
$\Aut(X)$ lies in $\Aut^0(X)=\mathbb C^*$, the resulting test configuration
is not a product.

\medskip
\noindent\textbf{Step 2.} In this step, we compute the Hilbert and weight
coefficients and obtain the formula given by point evaluation.
For all sufficiently large $k$, every line bundle appearing in the summands
$V_{rk,j}$ on the four curve factors has degree greater than $2g_i-2$ and
therefore has dimension equal to its degree plus $1-g_i$.  The tensor-product
decomposition gives,
uniformly for $0\leq j\leq rk$,
\begin{equation}\label{eq:rational-crease-block-rr}
 \dim V_{rk,j}
 =\prod_{i=0}^3(rk\alpha_i+j\delta_i+\beta_i)
 =(rk)^4p(j/(rk))+(rk)^3q(j/(rk))+O(k^2).
\end{equation}
For a polynomial $\psi$, expansion into monomials and the exact power-sum
identities give
\begin{equation}\label{eq:rational-crease-polynomial-sum}
 \sum_{j=0}^{N}\psi(j/N)
 =N\int_0^1\psi(x)\,dx
  +\frac{\psi(0)+\psi(1)}2+O(N^{-1}).
\end{equation}
Applying \eqref{eq:rational-crease-polynomial-sum} to the two polynomials in
\eqref{eq:rational-crease-block-rr} gives
\begin{equation}\label{eq:rational-crease-hilbert-expansion}
 h_r(k)=r^5a_0k^5+r^4a_1k^4+O(k^3).
\end{equation}
For a rational convex piecewise-linear function $f$, define
\begin{equation}\label{eq:rational-crease-weight-coefficients}
 b_0(f)=\int_0^1f(x)p(x)\,dx,
 \qquad
 b_1(f)=\frac{f(0)p(0)+f(1)p(1)}2
       +\int_0^1f(x)q(x)\,dx.
\end{equation}
For $f=f_\mu$, the integer $rk\mu$ is a lattice point.  Apply
\eqref{eq:rational-crease-polynomial-sum} separately on the two sides of
this point to the polynomial pieces of $fp$ and $fq$.  The two endpoint
terms with coefficient $1/2$ at the crease, followed by the subtraction of
the duplicated lattice value, cancel because $f$ is continuous.  Thus only the endpoints $0$ and
$1$ contribute to the second coefficient, and the total increasing entry in
test-configuration degree $k$ is
\begin{equation}\label{eq:rational-crease-weight-expansion}
 w_r(k)=dr^6b_0(f)k^6+dr^5b_1(f)k^5+o(k^5).
\end{equation}
The exponent factors cancel in the Donaldson--Futaki normalization:
\begin{equation}\label{eq:rational-crease-veronese-df}
 \DF(f)
 =\frac{2\bigl((dr^5b_1)(r^5a_0)-(r^4a_1)(dr^6b_0)\bigr)}
 {(r^5a_0)^2}
 =\frac{2d(b_1a_0-a_1b_0)}{a_0^2}.
\end{equation}
Since $\overline S=2a_1/a_0$ and $F''=2q-\overline S p$,
\eqref{eq:rational-crease-veronese-df} gives
\begin{equation}\label{eq:rational-crease-integration-by-parts}
 \frac{1}{d}\DF(f)
 =\frac1{a_0}\left(
 f(0)p(0)+f(1)p(1)+\int_0^1F''(x)f(x)\,dx\right)
 =\frac1{a_0}\int_0^1F(x)\,d(f'(x)).
\end{equation}
Here we used the four boundary identities
$F(0)=F(1)=0$, $F'(0)=p(0)$, and $F'(1)=-p(1)$ from
Lemma~\ref{lem:boundary-polynomial}.  For $f=f_\mu$, the measure
$d(f')$ is the unit point mass at $\mu$.  This proves
\eqref{eq:rational-crease-df}.

The expansion \eqref{eq:rational-crease-block-rr} also shows that the normalized
block-counting measures converge to the measure in
\eqref{eq:rational-crease-density}.  Taking $f(x)=x$ in the first expression
of \eqref{eq:rational-crease-integration-by-parts} gives
\begin{equation}\label{eq:fiber-product-df-calculation}
 \DF_{\mathrm{fib}}
 =\frac{p(1)+\int_0^1xF''(x)\,dx}{a_0}
 =\frac{p(1)+F'(1)-F(1)+F(0)}{a_0}
 =0,
\end{equation}
where the last equality uses the boundary identities in
Lemma~\ref{lem:boundary-polynomial}.  This proves
Proposition~\ref{prop:rational-single-crease}.
\end{proof}

\begin{proof}[Proof of Theorem~\ref{thm:uniform-threshold}]
We argue in four steps.  First, we construct the Fibonacci test
configurations and prove convergence of their crease parameters.  We then
compute their Donaldson--Futaki invariants, compute their reduced
non-Archimedean $J$-functionals, and finally compare these quantities with
the relative non-Archimedean Mabuchi functional.  We write
$\DF_n=\DF(\mathcal T_n)$ and
$J^{\mathrm{NA}}_{T,n}=J_T^{\mathrm{NA}}(\mathcal T_n)$ throughout.

\medskip
\noindent\textbf{Step 1.} In this step, we construct $\mathcal T_n$ and
prove the convergence in \eqref{eq:intro-fibonacci-limit}.
The Fibonacci recurrence gives
$\gcd(p_n,q_n)=\gcd(Q_n,Q_{n+2})=\gcd(Q_n,Q_{n+1})=1$, so $\mu_n$ is in
lowest terms.  For $n\geq3$,
Proposition~\ref{prop:rational-single-crease} applied with
\begin{equation}\label{eq:uniform-choice-of-crease}
 d=q_n,
 \qquad
 \mu=\mu_n=\frac{p_n}{q_n}
\end{equation}
gives a normal nonproduct $T$-equivariant test configuration
$\mathcal T_n$ of exponent $r_n$ whose block entry in test-configuration
degree $k$ is
\begin{equation}\label{eq:uniform-fibonacci-weight}
 q_nr_nk f_{\mu_n}(j/(r_nk))
 =\max\{0,q_nj-p_nr_nk\},
 \qquad 0\leq j\leq r_nk.
\end{equation}
Since $q_n=Q_{n+1}+Q_n$, the inequalities
$Q_{n+1}>Q_n$ and $Q_{n+1}<2Q_n$ give
\begin{equation}\label{eq:uniform-fibonacci-interval}
 \frac13<\mu_n<\frac12.
\end{equation}
The recurrence gives
\begin{equation}\label{eq:uniform-cassini}
 \begin{split}
 p_n^2-3p_nq_n+q_n^2
 &=Q_{n+1}^2-Q_nQ_{n+1}-Q_n^2\\
 &=Q_{n+1}Q_{n-1}-Q_n^2=(-1)^n.
 \end{split}
\end{equation}
After division by $q_n^2$, every limit point of $\mu_n$ in
$[1/3,1/2]$ is a root of $x^2-3x+1$.  The only such root in this interval
is
\begin{equation}\label{eq:uniform-irrational-limit}
 \lambda=\frac{3-\sqrt5}{2}.
\end{equation}
This proves the asserted convergence.

\medskip
\noindent\textbf{Step 2.} In this step, we compute $\DF_n$ exactly.
Write
\begin{equation}\label{eq:uniform-boundary-constants}
 C_{\mathrm{bd}}=461999\cdot2327\cdot356\cdot52,
 \qquad
 K=90C_{\mathrm{bd}}.
\end{equation}
Lemma~\ref{lem:boundary-polynomial} and
\eqref{eq:uniform-cassini} give
\begin{equation}\label{eq:uniform-boundary-value}
 F(\mu_n)
 =K\mu_n(1-\mu_n)(\mu_n^2-3\mu_n+1)^2
 =\frac{K\mu_n(1-\mu_n)}{q_n^4}.
\end{equation}
By Proposition~\ref{prop:rational-single-crease},
\begin{equation}\label{eq:uniform-df-rate}
 \DF_n
 =\frac{q_nF(\mu_n)}{a_0}
 =\frac{K\mu_n(1-\mu_n)}{a_0q_n^3}>0.
\end{equation}

\medskip
\noindent\textbf{Step 3.} In this step, we compute the reduced
non-Archimedean $J$-functional and obtain its uniform lower bound.
We use the conventions of Nitta--Saito
\cite[Section~3.2 and Definitions~3.6.1 and~3.6.3]{NS21}.  By
\eqref{eq:rational-crease-density}, the limiting fiber-coordinate measure is
\begin{equation}\label{eq:uniform-coordinate-measure}
 d\nu(x)=\frac{p(x)}{a_0}\,dx.
\end{equation}
In test-configuration degree $k$, the Duistermaat--Heckman normalization
divides the entries by the exponent times the degree, namely $rk$.  Formula
\eqref{eq:rational-crease-weight} therefore gives the normalized entry
$df_\mu(x)$, where $x=j/(rk)$.  A real twist by the fiber-scaling torus adds
a real multiple of $x$; after division by $d$, we denote that multiple by
$s$.  The Duistermaat--Heckman measure in the definition of
$J^{\mathrm{NA}}$ is the central-weight measure.  Since central weights are
the negatives of increasing entries by Definition~\ref{defn:test-configuration},
$J^{\mathrm{NA}}$ is the entry mean minus the minimum entry.  Therefore
\begin{equation}\label{eq:uniform-reduced-j-minimization}
 \frac{J_T^{\mathrm{NA}}(\mathcal T(d,\mu;r))}{d}
 =\inf_{s\in\mathbb R}
 \left(
 \int_0^1(f_\mu(x)+sx)\,d\nu(x)
 -\min_{0\leq x\leq1}\{f_\mu(x)+sx\}
 \right).
\end{equation}
Put
\begin{equation}\label{eq:uniform-first-moment}
 m_1=\int_0^1x\,d\nu(x),
 \qquad
 I_\mu=\int_0^1f_\mu(x)\,d\nu(x),
\end{equation}
so that $0<m_1<1$.  The convex piecewise-linear function
$f_\mu(x)+sx$ has slopes $s$ and $s+1$ on the two sides of $\mu$.
Consequently, the objective in
\eqref{eq:uniform-reduced-j-minimization} is
\begin{equation}\label{eq:uniform-piecewise-linear-objective}
 \begin{cases}
 I_\mu+s m_1,&s\geq0,\\
 I_\mu+s(m_1-\mu),&-1\leq s\leq0,\\
 I_\mu+\mu-1+s(m_1-1),&s\leq-1.
 \end{cases}
\end{equation}
The first branch is minimized at $s=0$, the third at $s=-1$, and the
middle branch is affine.  Its minimum is therefore attained at one of those
two endpoints.  Hence
\begin{equation}\label{eq:uniform-reduced-j-formula}
 \frac{J_T^{\mathrm{NA}}(\mathcal T(d,\mu;r))}{d}
 =\min\left\{
 \int_0^\mu(\mu-x)\,d\nu(x),
 \int_\mu^1(x-\mu)\,d\nu(x)
 \right\}.
\end{equation}

Write
\begin{equation}\label{eq:uniform-density-polynomial}
 P(x)=(1+29x)(6-5x)(3-2x)(5-2x),
 \qquad
 p(x)=C_{\mathrm{bd}}P(x).
\end{equation}
We have $P(x)\leq2700$ on $[0,1]$, and hence
$a_0\leq2700C_{\mathrm{bd}}$.  Suppose that
$\mu\in[1/3,1/2]$.  On $[3/4,1]$ we have
$x-\mu\geq1/4$ and $P(x)\geq273/4$, while on $[0,1/4]$ we have
$\mu-x\geq1/12$ and $P(x)\geq855/16$.  It follows that
\begin{align}
 \int_\mu^1(x-\mu)\,d\nu(x)
 &\geq\frac{(1/4)(1/4)(273/4)}{2700}
 =\frac{91}{57600},
 \label{eq:uniform-right-j-lower-bound}\\
 \int_0^\mu(\mu-x)\,d\nu(x)
 &\geq\frac{(1/4)(1/12)(855/16)}{2700}
 =\frac{19}{46080}.
 \label{eq:uniform-left-j-lower-bound}
\end{align}
Since $19/46080<91/57600$, formula
\eqref{eq:uniform-reduced-j-formula} gives
\begin{equation}\label{eq:uniform-reduced-j-lower-bound}
 \frac{J_T^{\mathrm{NA}}(\mathcal T(d,\mu;r))}{d}
 \geq\frac{19}{46080}.
\end{equation}
Taking $d=q_n$ and $\mu=\mu_n$, using
$\mu_n(1-\mu_n)\leq1/4$, and combining \eqref{eq:uniform-df-rate} with
\eqref{eq:uniform-reduced-j-lower-bound}, we obtain
\begin{equation}\label{eq:uniform-df-j-ratio}
 0<\frac{\DF_n}{J^{\mathrm{NA}}_{T,n}}
 \leq
 \frac{K}{4a_0(19/46080)q_n^4}
 \to0.
\end{equation}
This proves \eqref{eq:intro-zero-reduced-threshold}.

\medskip
\noindent\textbf{Step 4.} We compare the ratio in
\eqref{eq:uniform-df-j-ratio} with the relative non-Archimedean Mabuchi
functional and conclude the proof in this step.
The product configuration generated by fiber scaling has zero
Donaldson--Futaki invariant by \eqref{eq:fiber-product-df}.  For a product
configuration this invariant is a positive multiple of the classical Futaki
invariant of the generating vector field \cite[Section~2.2]{Don02}; since
$\operatorname{Lie}(T)$ is one-dimensional by
Proposition~\ref{prop:basic-geometry}, the Futaki character vanishes
identically.  The nondegenerate Futaki--Mabuchi pairing therefore makes the
extremal vector zero; see \cite[Definitions~2.3.2 and~2.3.4]{NS21}.  The extremal correction
in the relative non-Archimedean Mabuchi functional vanishes by
\cite[Definition~3.3.5(2)]{NS21}, so
\begin{equation}\label{eq:uniform-relative-equals-ordinary}
 M^{\mathrm{NA}}_{\mathrm{rel}}(\mathcal T_n)
 =M^{\mathrm{NA}}(\mathcal T_n).
\end{equation}
The comparison formula in \cite[Definition~3.4]{SD19} gives
\begin{equation}\label{eq:uniform-mabuchi-df-comparison}
 M^{\mathrm{NA}}_{\mathrm{rel}}(\mathcal T_n)
 =M^{\mathrm{NA}}(\mathcal T_n)
 \leq\DF_n.
\end{equation}
Together with \eqref{eq:uniform-df-j-ratio}, this yields
\begin{equation}\label{eq:uniform-relative-ratio}
 \limsup_{n\to\infty}
 \frac{M^{\mathrm{NA}}_{\mathrm{rel}}(\mathcal T_n)}
 {J^{\mathrm{NA}}_{T,n}}
 \leq0.
\end{equation}
Thus no $\delta>0$ can satisfy
\begin{equation}\label{eq:uniform-relative-inequality}
 M^{\mathrm{NA}}_{\mathrm{rel}}(\mathcal T)
 \geq\delta J_T^{\mathrm{NA}}(\mathcal T)
\end{equation}
for every normal $T$-equivariant test configuration $\mathcal T$.  By
\cite[Definitions~3.3.5(2) and~3.7.1(4)]{NS21}, this is precisely the failure of
uniform relative K-polystability asserted in
Theorem~\ref{thm:uniform-threshold}.  This completes
the proof of Theorem~\ref{thm:uniform-threshold}.
\end{proof}

\appendix

\section{Use of generative AI}\label{sec:generative-ai-use}

\begin{center}
{\scshape by Bin Dong\footnote{Beijing International Center for
Mathematical Research \& Center for Machine Learning Research, Peking
University, No.~5 Yiheyuan Road, Haidian District, Beijing 100871,
China.  Email: \texttt{dongbin@math.pku.edu.cn}}, Guoxiong
Gao\footnote{School of Mathematical Sciences, Peking University,
No.~5 Yiheyuan Road, Haidian District, Beijing 100871, China.
Email: \texttt{samggx@stu.pku.edu.cn}}}, and Jihao Liu
\end{center}

\medskip

Artificial intelligence has lately proven strikingly good at producing
counterexamples, and this paper too disproves a conjecture by exhibiting
an explicit variety.  It belongs, however, to a different genre than
most counterexamples found by AI so far.  The statement that an object
is a counterexample can be of two kinds.  It may reduce to a finite
certificate, checkable by a finite mechanical computation once the
object is written down: the recent counterexample to the Jacobian
conjecture \cite{Alp26} is typical, an explicit polynomial map whose
Jacobian determinant is a nonzero constant by direct expansion and
whose non-injectivity is witnessed by two points with the same image.
Or it may admit no finite certificate even in principle, because it is
itself a theorem, typically universally quantified: the recent
construction of a non-sofic group \cite{OAI26} is of this kind, since
non-soficity asserts the nonexistence of approximate embeddings into
finite symmetric groups of any size and can be witnessed by no
computation with the group itself.  A counterexample of the second kind
is, in substance, a proof.

The distinction is older than AI.  Proposed disproofs of the Hodge
conjecture have never been short of candidate classes; they founder on
proving that the candidate is the class of no algebraic cycle
whatsoever.  Rationally connected varieties are expected not to be
unirational in general, and plausible candidates abound---a very
general hypersurface of bidegree $(2,4)$ in
$\mathbb P^2\times\mathbb P^2$ may well be one---but no known technique
can prove any rationally connected variety non-unirational, so no
candidate can be certified.

This paper is of the second kind.  Candidate manifolds of the present
shape have been available since \cite{ACG+08}, and producing candidates
is precisely what contemporary AI does well; what had been missing, and
what constitutes the mathematical content of this paper, is the proof
that the mechanism works---that every normal ample algebraic test
configuration of the fivefold has nonnegative Donaldson--Futaki
invariant, with equality only for products, a statement quantified over
all degenerations and established by the classification of
Sections~\ref{sec:affine-initials}--\ref{sec:main-proof}.  This, we
believe, is the axis along which AI-assisted mathematics should be
judged: not whether the conclusion is a counterexample, but whether the
assertion that it is one is a finite certificate or a proof.

In the course of exploring a number of open problems with Claude Code,
the author found initial signs of a possible breakthrough on the
problem studied here, and then had Claude Code (Fable 5), Codex
(GPT-5.6-sol), and Danus \cite{Liu+26} work on it in collaboration;
the three systems together produced the counterexample and its proof.
Human input was crucial at one point: the author realized that the
example constructed in Theorem~\ref{thm:main} is either a
counterexample to the Codogni--Stoppa conjecture or a counterexample
to the cscK Yau--Tian--Donaldson conjecture---either case would be
striking---and asked the agents to go all-in on this particular
example and determine which conjecture is false.  To understand what
the AI systems had in fact contributed, the problem was then attempted
afresh by an improved version of Danus\footnote{This improved version
of Danus will be open-sourced shortly.} working alone.  Given only the
original problem and none of the earlier findings, it settled the
problem 5 hours and 29 minutes into its run, in less time than the
collaboration had taken and without the human input that had been
key to the first attempt, and produced a counterexample and a complete proof of this paper.  The text of the main part of the paper
then went through several rounds of discussion between the author and
Danus to improve its presentation, none of which altered the
mathematics, and a final round of human polishing and checking.

Danus is a mathematical agent built on top of Rethlas \cite{Ju+26} by
the same team and designed for long-horizon reasoning.  A main agent
orchestrates Rethlas agents as workers on a single problem: the
workers produce facts, which accumulate into a fact graph whose edges
record their dependencies, and the run stops only when the target
statement itself appears in the graph as a fact.  The design is
described in \cite{Liu+26} and the accompanying open-source
code.\footnote{\texttt{https://github.com/frenzymath/Danus}}  The
improved version differs from it in several respects.  The Rethlas
workers now run on GPT-5.6-sol, and the main agent moved to a
Codex-based implementation using the same model as well, so the whole
system runs on a single API.  With models of this strength, a design in which the main
agent does no mathematics itself and defers high-level planning to a
strategic consultation of GPT-5.5 Pro leaves their capability
underused; the consultation was removed, and the main agent now does
the mathematical thinking and directs the global strategy, dispatching
Codex subagents---three in the present work---to extend its
mathematical reach and to relieve the context burden of a large fact
graph and memory.  Its global strategic reflection was strengthened,
so that it does not stall in a dead end and lose sight of the need to
change direction, and fact granularity was retuned so that workers
produce longer, more complex local results.  Together these changes
let Danus make fuller use of the current generation of models.

The authors also tested how other agent systems perform on this
problem, posed to them as a request to prove or disprove the
conjecture: QED \cite{An+26}, ProofCouncil \cite{Sch+26}, MechMath
\cite{Cao+26}, Codex (GPT-5.6-sol), Claude Code (Fable 5), and
GPT-5.6-sol Pro asked directly through its web interface.  The first
three all run on GPT-5.6-sol, with the effort level of their Codex
components or GPT API set to ``xhigh''; Codex and Claude Code were set to
``max'' effort.  Given the original problem, the first three ran for
twelve hours without producing a solution; Codex produced a proof,
rejected it in its own verification, and gave no valid result within
the time limit; Claude Code reported that this is a well-known conjecture and
offered some possible approaches but no solution, and GPT-5.6-sol Pro,
after 133 minutes of thinking, reached much the same assessment and
likewise claimed no solution.  On the easier task of being given the
counterexample of this paper and asked only for a proof that it is
one, none of the six produced a complete proof under the same
twelve-hour limit and settings.  Verifying this counterexample is
therefore not trivial, as the second kind of counterexample described
above requires.

The original run, the one this paper comes from, had eight Rethlas
workers, with the Codex effort level set to ``xhigh'' for four of them
and ``high'' for the other four.  Its final fact graph contains 616
verified facts, and its global memory contains 353 conclusions, 143
identified obstacles, 96 directions, 33 proof attempts, 26 plans, 17
counterexamples, 2 recorded dead ends, and 924 verification records.
Table~\ref{tab:closure} records how the 616 facts divide: 88 lie in
the supporting closure of the final main-theorem fact, computed from
the dependencies recorded with each fact, and the remaining 528 lie
outside it, unused by the paper.

\begin{table}[ht]
\centering
\begin{tabular}{lrr}
\toprule
& Count & Share of all facts \\
\midrule
Main-theorem closure & 88 & 14\% \\
Outside the closure, unused by the paper & 528 & 86\% \\
\midrule
Total & 616 & 100\% \\
\bottomrule
\end{tabular}
\caption{Distribution of facts in the fact graph.}
\label{tab:closure}
\end{table}

Table~\ref{tab:roles} describes what the 88 in-closure facts do.  The
decomposition is approximate: a fact often serves several purposes at
once, and each is placed by the role of its final clause.  Six of the
88 are theorems from the literature, restated by a worker so that
other facts could cite them; the remaining 82 the swarm proved itself.
Only a minority appear in the paper as statements in their own right;
the rest are internal steps beneath the argument the paper presents.

\begin{table}[ht]
\small
\centering
\begin{tabular}{@{}p{3.3cm}r@{\hskip 10pt}p{9.4cm}@{}}
\toprule
Role & Count & Function in the proof \\
\midrule
Veronese gradings and rays & 20 &
Fix the gradings under which a test configuration is read as a
filtration, then work one rational ray at a time: sufficiently
divisible Veronese exponents with normal, finitely generated section
algebras, multiplication along blocks and rays, and the Chow weight
carried by each ray. \\[2pt]
The scalar profile & 14 &
Expand the block dimensions and total weights in the degree, average
them over the fiber index into a one-variable convex profile, evaluate
the Donaldson--Futaki invariant of that profile against the boundary
data, and bound the accumulated blockwise gaps by summability
estimates. \\[2pt]
Newton--Okounkov convex transforms & 10 &
Compute the product-box Newton--Okounkov body of the split bundle and
control the Boucksom--Chen transforms on it: convexity and rigidity of
the slice averages, and the theorem that both opposite initials of a
zero-invariant configuration have affine transforms. \\[2pt]
Duistermaat--Heckman measures & 10 &
Constrain the limit weight measure: invariance under specialization,
rational breakpoints, coefficients and moments, and the exclusion, by
rationality alone, of a crease at the irrational double zero. \\[2pt]
Semistability and polystability & 8 &
Signs and equality cases quantified over whole classes of degenerations
of $(X,A)$, from nonnegativity for every equivariant filtration up to
K-semistability at every exponent, the classification of the
zero-invariant case as products, and the capstone. \\[2pt]
Nonexistence of extremal metrics & 7 &
The analytic half: admissible metrics whose scalar-curvature error
tends to zero, the automorphism and maximal-compactness input that
makes the admissible ansatz exhaustive, and the conclusion that
$c_1(A)$ contains no extremal metric. \\[2pt]
Comparator and Smith spectrum & 7 &
Prepare a zero-invariant configuration for comparison against an
integral product comparator: normalization priced against the
invariant, ramified base change realized as a weight dilation, and the
relative Smith elementary divisors identified with the residual jumps.
\\[2pt]
The admissible boundary polynomial & 6 &
Fix the curve datum with its genera and degrees, produce the
admissible boundary polynomial with the required endpoint data, and
certify its nonnegativity on the interval with a single interior
double zero at an irrational point. \\[2pt]
Recorded external results & 6 &
Published theorems restated in internal notation with no derived
content, among them the admissible-extremal criterion of \cite{ACG+08}
and standard identifications of the Duistermaat--Heckman measure. \\
\bottomrule
\end{tabular}
\caption{Functional decomposition of the 88 facts in the main-theorem
closure.}
\label{tab:roles}
\end{table}

Table~\ref{tab:outside} summarizes the 528 facts outside the closure.
Unused here means unused by the final proof, not mathematically
meaningless or unhelpful to the search: some were proved for arbitrary
data rather than for the fivefold, and the proof needs only particular
instances of them; some prove the Donaldson--Futaki inequality one class of
degenerations at a time, later absorbed by the general classification;
some record that a step fails, and so kept workers from retrying it;
others rule out simpler candidates, and so fixed the shape of the
example.

\begin{table}[ht]
\small
\centering
\begin{tabular}{@{}p{3.3cm}r@{\hskip 10pt}p{9.4cm}@{}}
\toprule
Cluster & Count & Content \\
\midrule
Superseded duplicates & 146 &
Earlier or duplicate forms of conclusions the graph later carries in
better shape, including the endgame theorem itself, which several
workers wrote out independently. Only one instance of each conclusion
can enter the closure. \\[2pt]
Lemmas beyond the fivefold & 113 &
Statements proved for arbitrary graded algebras, filtrations,
Newton--Okounkov bodies and products of arbitrarily many curves, where
the paper instantiates a handful of them once each. Staying general is
what let one lemma serve every worker and every exponent. \\[2pt]
Abandoned torus-specialization route & 64 &
Structural analysis of a hypothetical zero-invariant non-product
configuration along the Codogni--Stoppa specialization strategy. The
final proof reaches the same conclusion through the affine transforms
and the oriented Smith spectrum instead. \\[2pt]
Restricted-class nonnegativity & 61 &
Sign results for the fivefold proved one named class of degenerations
at a time, blow-ups, monomial and flag filtrations among them, each a
restricted case of Theorem~\ref{thm:main}(2) that the final
classification later absorbs. \\[2pt]
Undischarged hypotheses & 58 &
Positive results resting on an antecedent the run never verified,
chiefly finite generation. The most consequential is the criterion a
boundary polynomial would have to meet, which the explicit four-factor
datum was then built to satisfy. \\[2pt]
Refuted proof steps & 56 &
Facts whose whole content is that a proposed step fails: an implication
that does not hold, an estimate that cannot exist, an ansatz that is the
flat specialization of no test configuration. Each closes a branch. \\[2pt]
Excluded candidate families & 24 &
Negative existence results over whole families of candidates, showing
that split line bundles over one, two or three curves satisfy the
conjecture. This is why the base is a product of four curves. \\[2pt]
Off-path external results & 6 &
Published results transcribed into the run's own notation so that
workers could apply them inside the graph, on paths the final proof does
not take. \\
\bottomrule
\end{tabular}
\caption{Clusters among the 528 facts outside the main-theorem closure.}
\label{tab:outside}
\end{table}

\FloatBarrier


\begin{thebibliography}{99}

\bibitem[Alp26]{Alp26}
L.~Alp\"oge,
announcement of a counterexample to the Jacobian conjecture,
post on X (formerly Twitter), July 20, 2026,
\texttt{https://x.com/i/status/2079028340955197566}.

\bibitem[An$^+$26]{An+26}
C.~An, Q.~Ye, M.~Pan, and J.~Zhang,
\textit{QED: An Open-Source Multi-Agent System for Generating
Mathematical Proofs on Open Problems},
arXiv:2604.24021.

\bibitem[ACG$^+$08]{ACG+08}
V.~Apostolov, D.~M.~J.~Calderbank, P.~Gauduchon, and
C.~W.~T\o nnesen-Friedman,
\textit{Hamiltonian $2$-forms in K\"ahler geometry. III. Extremal metrics
and stability},
Invent. Math. \textbf{173} (2008), no.~3, 547--601.

\bibitem[ACG$^+$11]{ACG+11}
V.~Apostolov, D.~M.~J.~Calderbank, P.~Gauduchon, and
C.~W.~T{\o}nnesen-Friedman,
\textit{Extremal K\"ahler metrics on projective bundles over a curve},
Adv. Math. \textbf{227} (2011), no.~6, 2385--2424.

\bibitem[AH15]{AH15}
V.~Apostolov and H.~Huang,
\textit{A splitting theorem for extremal K\"ahler metrics},
J. Geom. Anal. \textbf{25} (2015), no.~1, 149--170.

\bibitem[APS26]{APS26}
V.~Apostolov, B.~Pym, and J.~Streets,
\textit{Poisson K-stability and the semiclassical Yau--Tian--Donaldson
correspondence},
arXiv:2607.06688.

\bibitem[Ber16]{Ber16}
R.~J.~Berman,
\textit{K-polystability of $\mathbb Q$-Fano varieties admitting
K\"ahler--Einstein metrics},
Invent. Math. \textbf{203} (2016), no.~3, 973--1025.

\bibitem[BB17]{BB17}
R.~J.~Berman and B.~Berndtsson,
\textit{Convexity of the K-energy on the space of K\"ahler metrics and
uniqueness of extremal metrics},
J. Amer. Math. Soc. \textbf{30} (2017), no.~4, 1165--1196.

\bibitem[BBJ21]{BBJ21}
R.~J.~Berman, S.~Boucksom, and M.~Jonsson,
\textit{A variational approach to the Yau--Tian--Donaldson conjecture},
J. Amer. Math. Soc. \textbf{34} (2021), no.~3, 605--652.

\bibitem[BDL20]{BDL20}
R.~J.~Berman, T.~Darvas, and C.~H.~Lu,
\textit{Regularity of weak minimizers of the K-energy and applications to
properness and K-stability},
Ann. Sci. \'Ec. Norm. Sup\'er. (4) \textbf{53} (2020), no.~2, 267--289.

\bibitem[BJ20]{BJ20}
H.~Blum and M.~Jonsson,
\textit{Thresholds, valuations, and K-stability},
Adv. Math. \textbf{365} (2020), Paper No.~107062.

\bibitem[BX19]{BX19}
H.~Blum and C.~Xu,
\textit{Uniqueness of K-polystable degenerations of Fano varieties},
Ann. of Math. (2) \textbf{190} (2019), no.~2, 609--656.

\bibitem[BC11]{BC11}
S.~Boucksom and H.~Chen,
\textit{Okounkov bodies of filtered linear series},
Compos. Math. \textbf{147} (2011), no.~4, 1205--1229.

\bibitem[BHJ17]{BHJ17}
S.~Boucksom, T.~Hisamoto, and M.~Jonsson,
\textit{Uniform K-stability, Duistermaat--Heckman measures and singularities
of pairs},
Ann. Inst. Fourier (Grenoble) \textbf{67} (2017), no.~2, 743--841.

\bibitem[BHJ19]{BHJ19}
S.~Boucksom, T.~Hisamoto, and M.~Jonsson,
\textit{Uniform K-stability and asymptotics of energy functionals in K\"ahler
geometry},
J. Eur. Math. Soc. (JEMS) \textbf{21} (2019), no.~9, 2905--2944.

\bibitem[BHJ22]{BHJ22}
S.~Boucksom, T.~Hisamoto, and M.~Jonsson,
\textit{Erratum to Uniform K-stability and asymptotics of energy
functionals in K\"ahler geometry},
J. Eur. Math. Soc. (JEMS) \textbf{24} (2022), no.~2, 735--736.

\bibitem[BJ18]{BJ18}
S.~Boucksom and M.~Jonsson,
\textit{A non-Archimedean approach to K-stability},
arXiv:1805.11160.

\bibitem[BJ22]{BJ22}
S.~Boucksom and M.~Jonsson,
\textit{Global pluripotential theory over a trivially valued field},
Ann. Fac. Sci. Toulouse Math. (6) \textbf{31} (2022), no.~3, 647--836.

\bibitem[BJ23]{BJ23}
S.~Boucksom and M.~Jonsson,
\textit{A non-Archimedean approach to K-stability. II. Divisorial stability
and openness},
J. Reine Angew. Math. \textbf{805} (2023), 1--53.

\bibitem[BJ25a]{BJ25a}
S.~Boucksom and M.~Jonsson,
\textit{A non-Archimedean approach to K-stability. I. Metric geometry of
spaces of test configurations and valuations},
Ann. Inst. Fourier (Grenoble) \textbf{75} (2025), no.~2, 829--927.

\bibitem[BJ25b]{BJ25b}
S.~Boucksom and M.~Jonsson,
\textit{On the Yau--Tian--Donaldson conjecture for weighted cscK metrics},
arXiv:2509.15016.

\bibitem[Cal82]{Cal82}
E.~Calabi,
\textit{Extremal K\"ahler metrics},
in: \textit{Seminar on Differential Geometry},
Ann. of Math. Stud. \textbf{102}, Princeton Univ. Press, Princeton, NJ
(1982), 259--290.

\bibitem[Cal85]{Cal85}
E.~Calabi,
\textit{Extremal K\"ahler metrics. II},
in: \textit{Differential Geometry and Complex Analysis},
Springer, Berlin (1985), 95--114.

\bibitem[Cao$^+$26]{Cao+26}
Y.~Cao, R.~Qiu, J.~Liu, J.~Wang, D.~Guo, R.~Feng, L.~Zhi, and
X.-S.~Gao,
\textit{MechMath Agent Team: LLM Driven Agents for Mathematical
Research},
arXiv:2607.04394.

\bibitem[CC18]{CC18}
X.~Chen and J.~Cheng,
\textit{On the constant scalar curvature K\"ahler metrics, general
automorphism group},
arXiv:1801.05907.

\bibitem[CC21a]{CC21a}
X.~Chen and J.~Cheng,
\textit{On the constant scalar curvature K\"ahler metrics (I)---A priori
estimates},
J. Amer. Math. Soc. \textbf{34} (2021), no.~4, 909--936.

\bibitem[CC21b]{CC21b}
X.~Chen and J.~Cheng,
\textit{On the constant scalar curvature K\"ahler metrics (II)---Existence
results},
J. Amer. Math. Soc. \textbf{34} (2021), no.~4, 937--1009.

\bibitem[CDS15a]{CDS15a}
X.~Chen, S.~Donaldson, and S.~Sun,
\textit{K\"ahler--Einstein metrics on Fano manifolds. I: Approximation of
metrics with cone singularities},
J. Amer. Math. Soc. \textbf{28} (2015), no.~1, 183--197.

\bibitem[CDS15b]{CDS15b}
X.~Chen, S.~Donaldson, and S.~Sun,
\textit{K\"ahler--Einstein metrics on Fano manifolds. II: Limits with cone
angle less than $2\pi$},
J. Amer. Math. Soc. \textbf{28} (2015), no.~1, 199--234.

\bibitem[CDS15c]{CDS15c}
X.~Chen, S.~Donaldson, and S.~Sun,
\textit{K\"ahler--Einstein metrics on Fano manifolds. III: Limits as cone
angle approaches $2\pi$ and completion of the main proof},
J. Amer. Math. Soc. \textbf{28} (2015), no.~1, 235--278.

\bibitem[CLS14]{CLS14}
B.~Chen, A.-M.~Li, and L.~Sheng,
\textit{Uniform K-stability for extremal metrics on toric varieties},
J. Differential Equations \textbf{257} (2014), no.~5, 1487--1500.

\bibitem[CPZ15]{CPZ15}
X.~Chen, M.~P\u{a}un, and Y.~Zeng,
\textit{On deformation of extremal metrics},
arXiv:1506.01290.

\bibitem[CSW18]{CSW18}
X.~Chen, S.~Sun, and B.~Wang,
\textit{K\"ahler--Ricci flow, K\"ahler--Einstein metric, and K-stability},
Geom. Topol. \textbf{22} (2018), no.~6, 3145--3173.

\bibitem[CS19]{CS19}
G.~Codogni and J.~Stoppa,
\textit{Torus equivariant K-stability},
in: \textit{Moduli of K-stable varieties}, Springer INdAM Ser. \textbf{31},
Springer, Cham (2019), 15--35.

\bibitem[DL20]{DL20}
T.~Darvas and C.~H.~Lu,
\textit{Geodesic stability, the space of rays and uniform convexity in
Mabuchi geometry},
Geom. Topol. \textbf{24} (2020), no.~4, 1907--1967.

\bibitem[DR17a]{DR17a}
T.~Darvas and Y.~A.~Rubinstein,
\textit{Tian's properness conjectures and Finsler geometry of the space of
K\"ahler metrics},
J. Amer. Math. Soc. \textbf{30} (2017), no.~2, 347--387.

\bibitem[DZ24]{DZ24}
T.~Darvas and K.~Zhang,
\textit{Twisted K\"ahler--Einstein metrics in big classes},
Comm. Pure Appl. Math. \textbf{77} (2024), no.~12, 4289--4327.

\bibitem[DZ25]{DZ25}
T.~Darvas and K.~Zhang,
\textit{A YTD correspondence for constant scalar curvature metrics},
arXiv:2509.15173.

\bibitem[DS16]{DS16}
V.~Datar and G.~Sz\'ekelyhidi,
\textit{K\"ahler--Einstein metrics along the smooth continuity method},
Geom. Funct. Anal. \textbf{26} (2016), no.~4, 975--1010.

\bibitem[Der16]{Der16}
R.~Dervan,
\textit{Uniform stability of twisted constant scalar curvature K\"ahler
metrics},
Int. Math. Res. Not. IMRN (2016), no.~15, 4728--4783.

\bibitem[Der18]{Der18}
R.~Dervan,
\textit{Relative K-stability for K\"ahler manifolds},
Math. Ann. \textbf{372} (2018), no.~3--4, 859--889.

\bibitem[Der25]{Der25}
R.~Dervan,
\textit{The constant scalar curvature K\"ahler condition is very general},
arXiv:2504.15195.

\bibitem[DR24]{DR24}
R.~Dervan and R.~Reboulet,
\textit{Arcs, stability of pairs and the Mabuchi functional},
arXiv:2409.13617.

\bibitem[DR17b]{DR17b}
R.~Dervan and J.~Ross,
\textit{K-stability for K\"ahler manifolds},
Math. Res. Lett. \textbf{24} (2017), no.~3, 689--739.

\bibitem[DT92]{DT92}
W.~Ding and G.~Tian,
\textit{K\"ahler--Einstein metrics and the generalized Futaki invariant},
Invent. Math. \textbf{110} (1992), no.~1, 315--335.

\bibitem[Don97]{Don97}
S.~K.~Donaldson,
\textit{Remarks on gauge theory, complex geometry and $4$-manifold topology},
in: \textit{Fields Medallists' Lectures},
World Sci. Ser. 20th Century Math. \textbf{5}, World Scientific, River Edge,
NJ (1997), 384--403.

\bibitem[Don99]{Don99}
S.~K.~Donaldson,
\textit{Symmetric spaces, K\"ahler geometry and Hamiltonian dynamics},
in: \textit{Northern California Symplectic Geometry Seminar},
Amer. Math. Soc. Transl. Ser. 2 \textbf{196}, Amer. Math. Soc.,
Providence, RI (1999), 13--33.

\bibitem[Don01]{Don01}
S.~K.~Donaldson,
\textit{Scalar curvature and projective embeddings. I},
J. Differential Geom. \textbf{59} (2001), no.~3, 479--522.

\bibitem[Don02]{Don02}
S.~K.~Donaldson,
\textit{Scalar curvature and stability of toric varieties},
J. Differential Geom. \textbf{62} (2002), no.~2, 289--349.

\bibitem[Don05a]{Don05a}
S.~K.~Donaldson,
\textit{Interior estimates for solutions of Abreu's equation},
Collect. Math. \textbf{56} (2005), no.~2, 103--142.

\bibitem[Don05b]{Don05b}
S.~K.~Donaldson,
\textit{Lower bounds on the Calabi functional},
J. Differential Geom. \textbf{70} (2005), no.~3, 453--472.

\bibitem[Don08]{Don08}
S.~K.~Donaldson,
\textit{Extremal metrics on toric surfaces: a continuity method},
J. Differential Geom. \textbf{79} (2008), no.~3, 389--432.

\bibitem[Don09]{Don09}
S.~K.~Donaldson,
\textit{Constant scalar curvature metrics on toric surfaces},
Geom. Funct. Anal. \textbf{19} (2009), no.~1, 83--136.

\bibitem[ELM$^+$06]{ELM+06}
L.~Ein, R.~Lazarsfeld, M.~Musta\c{t}\u{a}, M.~Nakamaye, and M.~Popa,
\textit{Asymptotic invariants of base loci},
Ann. Inst. Fourier (Grenoble) \textbf{56} (2006), no.~6, 1701--1734.

\bibitem[ELM$^+$23]{ELM+23}
L.~Ein, R.~Lazarsfeld, M.~Musta\c{t}\u{a}, M.~Nakamaye, and M.~Popa,
\textit{Erratum to the paper: Asymptotic invariants of base loci},
arXiv:2309.16722.

\bibitem[FT14]{FT14}
M.~Franciosi and E.~Tenni,
\textit{The canonical ring of a $3$-connected curve},
Rend. Lincei Mat. Appl. \textbf{25} (2014), 37--51.

\bibitem[Fuj92]{Fuj92}
A.~Fujiki,
\textit{Moduli spaces of polarized algebraic manifolds and K\"ahler metrics},
Sugaku Expositions \textbf{5} (1992), no.~2, 173--191.

\bibitem[Fuj19]{Fuj19}
K.~Fujita,
\textit{A valuative criterion for uniform K-stability of
$\mathbb Q$-Fano varieties},
J. Reine Angew. Math. \textbf{751} (2019), 309--338.

\bibitem[FO18]{FO18}
K.~Fujita and Y.~Odaka,
\textit{On the K-stability of Fano varieties and anticanonical divisors},
T\^ohoku Math. J. (2) \textbf{70} (2018), no.~4, 511--521.

\bibitem[Fut83]{Fut83}
A.~Futaki,
\textit{An obstruction to the existence of Einstein K\"ahler metrics},
Invent. Math. \textbf{73} (1983), no.~3, 437--443.

\bibitem[Gie82]{Gie82}
D.~Gieseker,
\textit{Lectures on moduli of curves},
Tata Inst. Fund. Res. Lectures on Math. and Phys. \textbf{69},
Tata Inst. Fund. Res., Bombay; Springer-Verlag, Berlin--New York, 1982.

\bibitem[Hat26]{Hat26}
M.~Hattori,
\textit{A decomposition formula for J-stability and its applications},
Michigan Math. J. \textbf{76} (2026), no.~3,
621--659.

\bibitem[He19]{He19}
W.~He,
\textit{On Calabi's extremal metric and properness},
Trans. Amer. Math. Soc. \textbf{372} (2019), no.~8, 5595--5619.

\bibitem[His16]{His16}
T.~Hisamoto,
\textit{Stability and coercivity for toric polarizations},
arXiv:1610.07998.

\bibitem[JSS19]{JSS19}
W.~Jian, Y.~Shi, and J.~Song,
\textit{A remark on constant scalar curvature K\"ahler metrics on minimal
models},
Proc. Amer. Math. Soc. \textbf{147} (2019), no.~8, 3507--3513.

\bibitem[Jos06]{Jos06}
J.~Jost,
\textit{Compact Riemann Surfaces: An Introduction to Contemporary
Mathematics},
3rd ed., Universitext, Springer-Verlag, Berlin, 2006.

\bibitem[Ju$^+$26]{Ju+26} H.~Ju, G.~Gao, J.~Jiang, B.~Wu, Z.~Sun, S.~Liu, L.~Chen, Y.~Wang, Y.~Wang, Z.~Wang, W.~He, P.~Wu, L.~Xiao, R.~Liu, B.~Dai, and B.~Dong, \textit{Automated Conjecture Resolution with Formal Verification}, arXiv:2604.03789.

\bibitem[JY26]{JY26}
S.~Jubert and C.~Yin,
\textit{Relative uniform Yau--Tian--Donaldson correspondence for projective
bundles over a curve},
arXiv:2602.13133.

\bibitem[LS93]{LS93}
C.~LeBrun and S.~R.~Simanca,
\textit{On the K\"ahler classes of extremal metrics},
in: \textit{Geometry and Global Analysis (Sendai, 1993)}, Tohoku Univ.,
Sendai (1993), 255--271.

\bibitem[Li17]{Li17}
C.~Li,
\textit{K-semistability is equivariant volume minimization},
Duke Math. J. \textbf{166} (2017), no.~16, 3147--3218.

\bibitem[Li22a]{Li22a}
C.~Li,
\textit{$G$-uniform stability and K\"ahler--Einstein metrics on Fano
varieties},
Invent. Math. \textbf{227} (2022), no.~2, 661--744.

\bibitem[Li22b]{Li22b}
C.~Li,
\textit{Geodesic rays and stability in the cscK problem},
Ann. Sci. \'Ec. Norm. Sup\'er. (4) \textbf{55} (2022), no.~6, 1529--1574.

\bibitem[LTW21]{LTW21}
C.~Li, G.~Tian, and F.~Wang,
\textit{On the Yau--Tian--Donaldson conjecture for singular Fano varieties},
Comm. Pure Appl. Math. \textbf{74} (2021), no.~8, 1748--1800.

\bibitem[LTW22]{LTW22}
C.~Li, G.~Tian, and F.~Wang,
\textit{The uniform version of Yau--Tian--Donaldson conjecture for singular
Fano varieties},
Peking Math. J. \textbf{5} (2022), no.~2, 383--426.

\bibitem[LX14]{LX14}
C.~Li and C.~Xu,
\textit{Special test configuration and K-stability of Fano varieties},
Ann. of Math. (2) \textbf{180} (2014), no.~1, 197--232.

\bibitem[Liu$^+$26]{Liu+26} J.~Liu, G.~Gao, Z.~Sun, B.~Wu, S.~Liu, J.~Jiang, H.~Ju, L.~Chen, R.~Cheng, X.~Zhang, and B.~Dong, \textit{Danus: Orchestrating Mathematical Reasoning Agents with Fact-Graph Memory}, arXiv:2607.06447.

\bibitem[LXZ22]{LXZ22}
Y.~Liu, C.~Xu, and Z.~Zhuang,
\textit{Finite generation for valuations computing stability thresholds and
applications to K-stability},
Ann. of Math. (2) \textbf{196} (2022), no.~2, 507--566.

\bibitem[Luo98]{Luo98}
H.~Luo,
\textit{Geometric criterion for Gieseker--Mumford stability of polarized
manifolds},
J. Differential Geom. \textbf{49} (1998), no.~3, 577--599.

\bibitem[Mab86]{Mab86}
T.~Mabuchi,
\textit{K-energy maps integrating Futaki invariants},
T\^ohoku Math. J. (2) \textbf{38} (1986), 575--593.

\bibitem[Mab08]{Mab08}
T.~Mabuchi,
\textit{K-stability of constant scalar curvature polarization},
arXiv:0812.4093.

\bibitem[Mab09]{Mab09}
T.~Mabuchi,
\textit{A stronger concept of K-stability},
arXiv:0910.4617.

\bibitem[Mat57]{Mat57}
Y.~Matsushima,
\textit{Sur la structure du groupe d'hom\'eomorphismes analytiques d'une
certaine vari\'et\'e kaehl\'erienne},
Nagoya Math. J. \textbf{11} (1957), 145--150.

\bibitem[MP25]{MP25}
P.~Mesquita-Piccione,
\textit{A non-Archimedean theory of complex spaces and the cscK problem},
Adv. Math. \textbf{481} (2025), Paper No.~110543.

\bibitem[MPWN25]{MPWN25}
P.~Mesquita-Piccione and D.~Witt Nystr\"om,
\textit{A transcendental non-Archimedean Calabi--Yau Theorem with
applications to the cscK problem},
arXiv:2509.09442.

\bibitem[Mum77]{Mum77}
D.~Mumford,
\textit{Stability of projective varieties},
Enseign. Math. (2) \textbf{23} (1977), no.~1--2, 39--110.

\bibitem[NS21]{NS21}
Y.~Nitta and S.~Saito,
\textit{A uniform version of the Yau--Tian--Donaldson correspondence for
extremal K\"ahler metrics on polarized toric manifolds},
arXiv:2110.10386.

\bibitem[Oda12]{Oda12}
Y.~Odaka,
\textit{The Calabi conjecture and K-stability},
Int. Math. Res. Not. IMRN (2012), no.~10, 2272--2288.

\bibitem[Oda13]{Oda13}
Y.~Odaka,
\textit{The GIT stability of polarized varieties via discrepancy},
Ann. of Math. (2) \textbf{177} (2013), no.~2, 645--661.

\bibitem[Oda15]{Oda15}
Y.~Odaka,
\textit{On parametrization, optimization and triviality of test
configurations},
Proc. Amer. Math. Soc. \textbf{143} (2015), no.~1, 25--33.

\bibitem[OAI26]{OAI26}
OpenAI,
\textit{Ten Advances in Mathematics and Theoretical Computer Science},
2026, \texttt{https://cdn.openai.com/pdf/ten-proofs-oai.pdf}.

\bibitem[PS07]{PS07}
D.~H.~Phong and J.~Sturm,
\textit{Test configurations for K-stability and geodesic rays},
J. Symplectic Geom. \textbf{5} (2007), no.~2, 221--247.

\bibitem[PS09]{PS09}
D.~H.~Phong and J.~Sturm,
\textit{Lectures on stability and constant scalar curvature},
in: \textit{Current Developments in Mathematics 2007}, Int. Press,
Somerville, MA, 2009, 101--176.

\bibitem[RT06]{RT06}
J.~Ross and R.~Thomas,
\textit{An obstruction to the existence of constant scalar curvature
K\"ahler metrics},
J. Differential Geom. \textbf{72} (2006), no.~3, 429--466.

\bibitem[RT07]{RT07}
J.~Ross and R.~Thomas,
\textit{A study of the Hilbert--Mumford criterion for the stability of
projective varieties},
J. Algebraic Geom. \textbf{16} (2007), no.~2, 201--255.

\bibitem[Sch$^+$26]{Sch+26}
J.~Schmitt, T.~Gehrunger, J.~Dekoninck, G.~B\'erczi, U.~Kreitner,
L.~Price, and D.~Holmes,
\textit{ProofCouncil: An LLM Agent for Solving Open Mathematical
Problems},
arXiv:2607.09474.

\bibitem[SD18]{SD18}
Z.~Sj\"ostr\"om Dyrefelt,
\textit{K-semistability of cscK manifolds with transcendental cohomology
class},
J. Geom. Anal. \textbf{28} (2018), no.~4, 2927--2960.

\bibitem[SD19]{SD19}
Z.~Sj\"ostr\"om Dyrefelt,
\textit{A partial comparison of stability notions in K\"ahler geometry},
in \textit{Moduli of K-stable varieties} (G.~Codogni, R.~Dervan, and
F.~Viviani, eds.), Springer INdAM Ser., vol.~31, Springer, Cham, 2019,
pp.~103--139.

\bibitem[SD20]{SD20}
Z.~Sj\"ostr\"om Dyrefelt,
\textit{On K-polystability of cscK manifolds with transcendental cohomology
class},
Int. Math. Res. Not. IMRN (2020), no.~9, 2769--2817; with an appendix by
R.~Dervan.

\bibitem[SW08]{SW08}
J.~Song and B.~Weinkove,
\textit{On the convergence and singularities of the J-flow with
applications to the Mabuchi energy},
Comm. Pure Appl. Math. \textbf{61} (2008), no.~2, 210--229.

\bibitem[Sta26]{Sta26}
The Stacks project authors,
\textit{The Stacks project},
published electronically at \url{https://stacks.math.columbia.edu}, 2026.

\bibitem[Sto09]{Sto09}
J.~Stoppa,
\textit{K-stability of constant scalar curvature K\"ahler manifolds},
Adv. Math. \textbf{221} (2009), no.~4, 1397--1408.

\bibitem[Sto11]{Sto11}
J.~Stoppa,
\textit{A note on the definition of K-stability},
arXiv:1111.5826.

\bibitem[SS11]{SS11}
J.~Stoppa and G.~Sz\'ekelyhidi,
\textit{Relative K-stability of extremal metrics},
J. Eur. Math. Soc. (JEMS) \textbf{13} (2011), no.~4, 899--909.

\bibitem[Sze06]{Sze06}
G.~Sz\'ekelyhidi,
\textit{Extremal metrics and K-stability},
Ph.D. thesis, Imperial College London, 2006.

\bibitem[Sze07]{Sze07}
G.~Sz\'ekelyhidi,
\textit{Extremal metrics and K-stability},
Bull. Lond. Math. Soc. \textbf{39} (2007), no.~1, 76--84.

\bibitem[Sze15]{Sze15}
G.~Sz\'ekelyhidi,
\textit{Filtrations and test-configurations},
with an appendix by S.~Boucksom,
Math. Ann. \textbf{362} (2015), no.~1--2, 451--484.

\bibitem[Tia97]{Tia97}
G.~Tian,
\textit{K\"ahler--Einstein metrics with positive scalar curvature},
Invent. Math. \textbf{130} (1997), no.~1, 1--37.

\bibitem[Tia15a]{Tia15a}
G.~Tian,
\textit{Corrigendum: K-stability and K\"ahler--Einstein metrics},
Comm. Pure Appl. Math. \textbf{68} (2015), no.~11, 2082--2083.

\bibitem[Tia15b]{Tia15b}
G.~Tian,
\textit{K-stability and K\"ahler--Einstein metrics},
Comm. Pure Appl. Math. \textbf{68} (2015), no.~7, 1085--1156.

\bibitem[TF98]{TF98}
C.~W.~T{\o}nnesen-Friedman,
\textit{Extremal K\"ahler metrics on minimal ruled surfaces},
J. Reine Angew. Math. \textbf{502} (1998), 175--197.

\bibitem[Tru24]{Tru24}
A.~Trusiani,
\textit{A relative Yau--Tian--Donaldson conjecture and stability thresholds},
Adv. Math. \textbf{441} (2024), Paper No.~109537.

\bibitem[Tru26]{Tru26}
A.~Trusiani,
\textit{A solution to the Yau--Tian--Donaldson conjecture through special
Fujita approximations},
arXiv:2605.30063.

\bibitem[WZ04]{WZ04}
X.-J.~Wang and X.~Zhu,
\textit{K\"ahler--Ricci solitons on toric manifolds with positive first
Chern class},
Adv. Math. \textbf{188} (2004), no.~1, 87--103.

\bibitem[WN12]{WN12}
D.~Witt Nystr\"om,
\textit{Test configurations and Okounkov bodies},
Compos. Math. \textbf{148} (2012), no.~6, 1736--1756.

\bibitem[Xu21]{Xu21}
C.~Xu,
\textit{K-stability of Fano varieties: an algebro-geometric approach},
EMS Surv. Math. Sci. \textbf{8} (2021), no.~1--2, 265--354.

\bibitem[Xu25]{Xu25}
C.~Xu,
\textit{K-stability of Fano Varieties},
New Mathematical Monographs \textbf{50}, Cambridge Univ. Press, Cambridge,
2025.

\bibitem[Yau93]{Yau93}
S.-T.~Yau,
\textit{Open problems in geometry},
in: \textit{Differential Geometry: Partial Differential Equations on
Manifolds}, Proc. Sympos. Pure Math. \textbf{54}, Part~1, Amer. Math. Soc.,
Providence, RI (1993), 1--28.

\bibitem[Yu19]{Yu19}
C.-F.~Yu,
\textit{Chow's theorem for semi-abelian varieties and bounds for splitting
fields of algebraic tori},
Acta Math. Sin. (Engl. Ser.) \textbf{35} (2019), no.~9, 1453--1463.

\bibitem[Zar00]{Zar00}
Yu.~G.~Zarhin,
\textit{Hyperelliptic Jacobians without complex multiplication},
Math. Res. Lett. \textbf{7} (2000), no.~1, 123--132.

\bibitem[Zha96]{Zha96}
S.~Zhang,
\textit{Heights and reductions of semi-stable varieties},
Compos. Math. \textbf{104} (1996), no.~1, 77--105.

\bibitem[Zha24]{Zha24}
K.~Zhang,
\textit{A quantization proof of the uniform Yau--Tian--Donaldson conjecture},
J. Eur. Math. Soc. (JEMS) \textbf{26} (2024), no.~12, 4763--4778.

\bibitem[Zhu21]{Zhu21}
Z.~Zhuang,
\textit{Optimal destabilizing centers and equivariant K-stability},
Invent. Math. \textbf{226} (2021), no.~1, 195--223.

\end{thebibliography}
\end{document}